\PassOptionsToPackage{unicode}{hyperref}
\PassOptionsToPackage{hyphens}{url}
\PassOptionsToPackage{dvipsnames,svgnames,x11names}{xcolor}
\documentclass[
  12pt]{article}

\usepackage{amsmath,amssymb,amsthm,amsfonts}
\usepackage{iftex}
\ifPDFTeX
  \usepackage[T1]{fontenc}
  \usepackage[utf8]{inputenc}
  \usepackage{textcomp} 
\else 
  \usepackage{unicode-math}
  \defaultfontfeatures{Scale=MatchLowercase}
  \defaultfontfeatures[\rmfamily]{Ligatures=TeX,Scale=1}
\fi
\usepackage{lmodern}
\ifPDFTeX\else  
\fi
\IfFileExists{upquote.sty}{\usepackage{upquote}}{}
\IfFileExists{microtype.sty}{
  \usepackage[]{microtype}
  \UseMicrotypeSet[protrusion]{basicmath} 
}{}
\makeatletter
\@ifundefined{KOMAClassName}{
  \IfFileExists{parskip.sty}{%
    \usepackage{parskip}
  }{
    \setlength{\parindent}{0pt}
    \setlength{\parskip}{6pt plus 2pt minus 1pt}}
}{
  \KOMAoptions{parskip=half}}
\makeatother
\usepackage{xcolor}
\makeatletter
\ifx\paragraph\undefined\else
  \let\oldparagraph\paragraph
  \renewcommand{\paragraph}{
    \@ifstar
      \xxxParagraphStar
      \xxxParagraphNoStar
  }
  \newcommand{\xxxParagraphStar}[1]{\oldparagraph*{#1}\mbox{}}
  \newcommand{\xxxParagraphNoStar}[1]{\oldparagraph{#1}\mbox{}}
\fi
\ifx\subparagraph\undefined\else
  \let\oldsubparagraph\subparagraph
  \renewcommand{\subparagraph}{
    \@ifstar
      \xxxSubParagraphStar
      \xxxSubParagraphNoStar
  }
  \newcommand{\xxxSubParagraphStar}[1]{\oldsubparagraph*{#1}\mbox{}}
  \newcommand{\xxxSubParagraphNoStar}[1]{\oldsubparagraph{#1}\mbox{}}
\fi
\makeatother

\usepackage{longtable,booktabs,array}
\usepackage{calc} 
\usepackage{etoolbox}
\makeatletter
\patchcmd\longtable{\par}{\if@noskipsec\mbox{}\fi\par}{}{}
\makeatother
\IfFileExists{footnotehyper.sty}{\usepackage{footnotehyper}}{\usepackage{footnote}}
\makesavenoteenv{longtable}
\usepackage{graphicx}
\makeatletter
\def\maxwidth{\ifdim\Gin@nat@width>\linewidth\linewidth\else\Gin@nat@width\fi}
\def\maxheight{\ifdim\Gin@nat@height>\textheight\textheight\else\Gin@nat@height\fi}
\makeatother
\setkeys{Gin}{width=\maxwidth,height=\maxheight,keepaspectratio}
\makeatletter
\def\fps@figure{htbp}
\makeatother

\makeatletter
\@ifpackageloaded{caption}{}{\usepackage{caption}}
\AtBeginDocument{%
\ifdefined\contentsname
  \renewcommand*\contentsname{Table of contents}
\else
  \newcommand\contentsname{Table of contents}
\fi
\ifdefined\listfigurename
  \renewcommand*\listfigurename{List of Figures}
\else
  \newcommand\listfigurename{List of Figures}
\fi
\ifdefined\listtablename
  \renewcommand*\listtablename{List of Tables}
\else
  \newcommand\listtablename{List of Tables}
\fi
\ifdefined\figurename
  \renewcommand*\figurename{Figure}
\else
  \newcommand\figurename{Figure}
\fi
\ifdefined\tablename
  \renewcommand*\tablename{Table}
\else
  \newcommand\tablename{Table}
\fi
}
\@ifpackageloaded{float}{}{\usepackage{float}}
\floatstyle{ruled}
\@ifundefined{c@chapter}{\newfloat{codelisting}{h}{lop}}{\newfloat{codelisting}{h}{lop}[chapter]}
\floatname{codelisting}{Listing}

\makeatother
\makeatletter
\@ifpackageloaded{caption}{}{\usepackage{caption}}
\@ifpackageloaded{subcaption}{}{\usepackage{subcaption}}
\makeatother

\ifLuaTeX
  \usepackage{selnolig}  
\fi
\usepackage[]{natbib}
\usepackage{bookmark}

\IfFileExists{xurl.sty}{\usepackage{xurl}}{} 
\hypersetup{
  pdftitle={Horseshoe Predictive Inference},
  pdfauthor={Percy S. Zhai; Veronika Ro\v{c}kov\'{a}},
  pdfkeywords={Horseshoe prior;
        Kullback-Leibler divergence;
        Minimax optimality;
        Shrinkage prior;
        Sparsity.},
  colorlinks=true,
  linkcolor={blue},
  filecolor={Maroon},
  citecolor={Blue},
  urlcolor={Blue},
  pdfcreator={LaTeX}}

\newcommand{\anon}{1}

\usepackage{booktabs}
\usepackage{bm}
\usepackage{algorithm}
\usepackage{algpseudocode}
\usepackage{subcaption}
\usepackage{setspace}

\numberwithin{equation}{section}
\theoremstyle{plain}

\newtheorem{theorem}{Theorem}[section]
\newtheorem{assumption}{Assumption}[section]
\newtheorem{lemma}{Lemma}[section]

\newtheorem{corollary}{Corollary}[section]
\theoremstyle{remark}
\newtheorem{remark}{Remark}[section]

\newcommand*\diff{\mathop{}\!\mathrm{d}}
\newcommand{\one}[1]{\mathbf{1}\left\{ #1 \right\}}

\def\argmax{\mathop{\arg\max}}

\def\P{\mathbb{P}}
\def\E{\mathbb{E}}
\def\R{\mathbb{R}}

\begin{document}

\def\spacingset#1{\renewcommand{\baselinestretch}%
{#1}\small\normalsize} \spacingset{1}


\if1\anon
{
  \title{\bf Horseshoe Predictive Inference}
  \author{Percy S. Zhai \quad and \quad Veronika Ro\v{c}kov\'{a}\thanks{Veronika Ro\v{c}kov\'{a}'s research is partially supported by Grant NSF/DMS-2515709.}\\
  Booth School of Business, The University of Chicago\\
  Chicago, IL 60601, USA}
  \maketitle
} \fi

\if0\anon
{
  \bigskip
  \bigskip
  \bigskip
  \begin{center}
    {\LARGE\bf Horseshoe Predictive Inference}
  \end{center}
  \medskip
} \fi

\bigskip
\begin{abstract}
Predictive inference in the sparse Gaussian sequence model has received considerably less attention than its non-sparse, finite-sample counterpart.
Existing work has largely been confined to discrete mixture priors.
In this paper, we study predictive inference under a widely used continuous mixture prior, the Horseshoe.
We provide new theoretical results establishing exact asymptotic minimax optimality of the predictive Bayes estimator when the sparsity level is known.
Furthermore, through a Gaussian-mixture representation of the posterior predictive density (which we term Horseshoe spectroscopy), the phase-transition in the local shrinkage scale is inherited by the predictive mechanism, producing behavior similar to that of previous thresholding/switching estimators.
When sparsity is unknown, we adopt a fully Bayesian approach using a hierarchical Horseshoe prior and show that it performs adaptive, as opposed to manual, switching.
Under a theta-min condition, the resulting predictive risk admits an upper bound over a restricted parameter class that is sharper than the minimax rate over the full class.
We demonstrate the practical value of predictive Horseshoe shrinkage on data such as images and time series that can be naturally modeled as sparse Gaussian sequences.
We illustrate this approach on facial recognition across varying facial expressions and study region-wise atypical brain lateralization in autism spectrum disorder.
\end{abstract}

\noindent%
{\it Keywords:}
Horseshoe prior;
Kullback-Leibler divergence;
Minimax optimality;
Shrinkage prior;
Sparsity.
\vfill

\newpage
\spacingset{1.8} 

\section{Introduction}

Predictive inference for \emph{sparse} Gaussian sequence model under the Kullback--Leibler (KL) loss is a foundational problem that remains relatively understudied.
One observes $\bm Y \sim N_n(\bm\theta, I_n)$, and seeks a predictive density $\hat p(\tilde{\bm y}\mid \bm y)$ for the future vector $\tilde{\bm Y} \sim N_n(\bm\theta, r I_n)$ when the unknown mean $\bm\theta$ is sparse, in a sense that
$\bm\theta \in \Theta_n(s_n)=\{\bm\theta\in\R^n:\|\bm\theta\|_0\le s_n\}$.
The quality of the predictive density is quantified by the KL loss,
\[
L_n(\bm\theta, \hat p(\cdot \mid \bm y)) = \int \pi(\tilde{\bm y} \mid \bm\theta) \log \frac{\pi(\tilde{\bm y} \mid \bm\theta)}{\hat p(\tilde{\bm y} \mid \bm y)} \diff \tilde{\bm y},
\]
and its corresponding predictive risk, $\rho_n(\bm\theta, \hat p) = \E_{\bm Y\mid \bm\theta}[L_n(\bm\theta, \hat p(\cdot \mid \bm Y))]$.
When $s_n/n\to 0$, the minimax predictive KL risk over $\Theta_n(s_n)$ is asymptotically of order
\begin{equation}\label{eq:minimax.rate}
    \inf_{\hat p} \sup_{\bm\theta \in \Theta_n(s_n)}\rho_n(\bm\theta, \hat p) \sim \frac{s_n}{1+r}\log\frac{n}{s_n}.
\end{equation}
This minimax risk is first identified in \cite{mukherjee2015exact}.
For brevity, we define $v = (1 + r^{-1})^{-1}$ throughout this paper.
This sparse sequence formulation is a standard asymptotic model for high-dimensional sparsity \citep{johnstone2004needles,mccullagh2018statistical}. At the same time, predictive density estimation is not merely a reformulation of point estimation.
It has been established in the literature that shrinkage can substantially improve predictive performance even in unconstrained Gaussian models \citep{aitchison1975goodness, komaki2001shrinkage, george2006improved, brown2008admissible}, while in sparse models even good point estimators may lead to suboptimal plug-in predictive densities \citep{mukherjee2015exact}.

Outside the fully Bayesian framework, \cite{mukherjee2015exact} constructed a thresholding estimator that uses the uniform-prior predictive density, when the past observation is large, and a predictive density induced by a sparse univariate cluster prior, otherwise.
Estimators with a genuine Bayesian motivation emerged more recently, mainly focusing on discrete  grid priors or discrete mixture priors.
Notably, \cite{mukherjee2022minimax} established asymptotic minimaxity based on a nonadaptive spike-and-slab prior with a finitely supported uniform slab. The spike-and-slab viewpoint was further investigated by \cite{rockova2023adaptive} who obtained rate-minimaxity  results for practically used Laplace spike-and-slab mixtures and, more importantly,
for hierarchical variants to establish the first adaptive rate-minimax guarantees when the sparsity level is unknown. In addition, adaptive rate results in \cite{rockova2023adaptive} cover not only  sparse normal means but also high-dimensional sparse regression.
This paper continues the investigation in the context of another widely used shrinkage prior which is a continuous (as opposed to discrete) mixture.
The Horseshoe prior \citep{carvalho2009handling} has since risen to be one of the more popular alternatives to Bayesian variable selection with spike-and-slab priors.

Continuous shrinkage priors were introduced as continuous approximations to Bayesian model averaging, with one of the earliest  constructions (that predates the Horseshoe) occurring in \cite{denison2012bayesian}.
The Horseshoe prior has acquired many variants and extensions \citep{bhadra2016horseshoe+, piironen2017sparsity, lee2024tail}
that have occurred across a broad range of applications  including graphical models \citep{li2019graphical,li2021joint,sagar2024precision},
function-space and nonparametric models \citep{shin2020functional,agapiou2024heavy,agapiou2025heavy},
and deep Bayesian architectures \citep{ghosh2019model,bhadra2020horseshoe,castillo2025deep}. Theoretical properties have also been studied.
In the sparse normal means problem,  desirable shrinkage behavior has been described in \cite{carvalho2010horseshoe} and then rigorously quantified by  \cite{van2014horseshoe}, and \cite{van2017adaptive}.
\cite{datta2013asymptotic} studies asymptotic Bayes-risk optimality properties under sparse multiple testing.
In regression, \cite{bhadra2019prediction} shows that the Horseshoe shrinkage can also improve plug-in prediction.  
Despite an extensive literature on point estimation and posterior concentration, formal predictive inference theory for the Horseshoe has been missing.
This gap is more than just technical, since good estimation behavior does not automatically imply good predictive performance. Most existing results provide high-probability posterior concentration bounds while our analysis studies the KL risk of the posterior predictive Bayes estimator. We provide new insights into how global-local Horseshoe shrinkage manifests itself in the posterior predictive distribution, and how it  affects the KL loss.
The main focus of this paper is to explore the Horseshoe prior, as well as its hierarchical version, and to develop sharp predictive guarantees.

Let $\tau>0$ be the \emph{global shrinkage scale} that captures the overall sparsity information across all dimensions.
With $\tau$ fixed, the multivariate Horseshoe prior can thus be written as a separable product,
$\pi(\bm{\theta} \mid \tau) = \prod_{i=1}^n \pi(\theta_i \mid \tau)$,
with each univariate prior $\pi(\theta_i \mid \tau)$ being a continuous variance mixture of Gaussian,
\begin{align}\label{eq:HS.prior}
        \theta_i \mid \lambda_i , \tau \sim N(0, \lambda_i^2\tau^2),\qquad
        \lambda_i \sim C^+(0,1).
\end{align}
Here, $C^+(0,1)$ is a half-Cauchy distribution, with pdf $\nu(\lambda_i) = \frac{2}{\pi}\frac{1}{1+\lambda_i^2}\one{\lambda_i>0}$.
For each dimension, $\lambda_i>0$ is the \emph{local shrinkage scale} that characterizes univariate contraction.

Our first finding is a Horseshoe-specific analysis of the posterior predictive density for fixed global shrinkage scale $\tau$.
Each univariate predictive density can be represented as a continuous mixture of conjugate Gaussian predictive densities indexed by the local shrinkage scale $\lambda$.
The corresponding KL loss (and subsequently the predictive KL risk) can be tightly upper bounded by a similar spectral representation.
We refer to this continuous mixture viewpoint as \emph{Horseshoe spectroscopy}.
This view is not only intuitive but also useful in the delicate control of the predictive KL risk.

Our second finding is a refined description of the Horseshoe phase transition at the local-shrinkage level.
Indeed, existing Horseshoe theory emphasizes the automatic separation between strongly-shrunk noise coordinates and signal coordinates that escape shrinkage \citep{carvalho2010horseshoe, van2014horseshoe, van2017adaptive}.
We sharpen this picture, and find \emph{when this separation occurs} by analyzing the posterior of the local shrinkage scale $\lambda$ directly.
We show that this posterior is bimodal.
The switch of dominant mode occurs when $|Y_i|$ reaches the order $\sqrt{\log(1/\tau)}$.
This transition occurs at the same scale as the usual detection threshold, which is also inherited by the predictive behavior of the Horseshoe.

These structural results lead to our first main theorem, which shows that a continuous global-local prior can attain the exact asymptotic minimax predictive KL risk over $\Theta_n(s_n)$.
When the sparsity level $s_n$ is known, by carefully calibrating the global shrinkage scale $\tau$, the Horseshoe predictive risk is not only rate-optimal but also sharp at the minimax constant of one.
At a high level, our argument builds on the risk-decomposition strategy developed in \cite{mukherjee2015exact} and a similar proof framework to \cite{rockova2023adaptive}.
The Horseshoe setting, however, requires new prior-specific ingredients, including the aforementioned spectroscopy representation and phase transition.
We also apply a similar mathematical toolbox as in \cite{van2014horseshoe} for delicate control.

We then turn to the more realistic setting in which $s_n$ is unknown.
We adopt a full-Bayes approach with a hierarchical Horseshoe prior, which places a hyperprior on the global shrinkage scale $\tau$.
Our analysis differs from existing adaptive Horseshoe theory, e.g., as in \cite{van2017adaptive}, since we do not restrict the hyperprior to $[1/n,1]$.
Instead, we work with an exponential prior on $(0,\infty)$ and study the resulting posterior of $\tau$, which is crucial in manipulating the \emph{adaptive} predictive risk.
As a technical input, we obtain an explicit expression for the posterior density $\pi(\tau\mid \bm Y)$, a convenient starting point for the adaptive analysis.

The main adaptive result is obtained under a theta-min condition that requires the minimum signal strength to be large enough.
On this restricted parameter class, we prove two complementary lemmas showing that the posterior of $\tau$ neither \emph{overshoots} the oracle calibration nor \emph{undershoots} it too severely.
Combined with the predictive risk decomposition and spectroscopy arguments, they yield an upper bound for the \emph{adaptive} predictive risk under the hierarchical Horseshoe prior.
The theorem therefore identifies a regime in which full-Bayes Horseshoe prediction adapts successfully to unknown sparsity.

We track the posterior behavior of $\tau$ and examine finite-sample predictive risks with numerical studies.
A consistent pattern is that, when weak nonzero coordinates are present, the posterior of $\tau$ is governed more by the number of strong signals than by the total number of nonzero coordinates, and the predictive risk exhibits the same behavior.

We demonstrate in real data analysis that sparse Gaussian predictive inference can be used not only for forecasting, but also for uncertainty-aware \emph{pairwise verification} and symmetry assessment.
Specifically, we may evaluate the extent to which a second observation is plausibly generated from the same latent source as the first, by comparing the former against the latter's predictive distribution.
Our first example considers image facial recognition on the JAFFE dataset \citep{lyons2020coding}.
A transformation into wavelet coefficients satisfies the Gaussian sequence model setup.
Testing whether the two wavelet coefficients are based on the same underlying sparse mean enables subject recognition despite the elastic deformation caused by facial expressions.
Our second example considers brain lateralization analysis on the ABIDE dataset \citep{di2014autism}.
The functional observations from multiple brain regions are reduced to Gaussian observations using functional principal component analysis.
We use predictive inference to measure discrepancy between contralateral brain regions, and then detect atypical brain lateralization in autism.

The rest of the paper is organized as follows.
Section \ref{sec:horseshoe.fixed.tau} develops the fixed-$\tau$ theory for the separable Horseshoe prior, including Horseshoe spectroscopy, the local phase transition, and the exact asymptotic minimax theorem.
Section \ref{sec:minimax.adaptive} studies the hierarchical Horseshoe prior, derives the posterior expression for $\tau$, and proves the adaptive predictive-risk bound under the theta-min condition.
Section \ref{sec:simulation} provides numerical experiments.
Finally, real data experiments are given in Section \ref{sec:real.data.analysis}.


\section{Separable Horseshoe Prior}\label{sec:horseshoe.fixed.tau}
We first consider the convenient setup where the sparsity level $s_n$ is known.
In this case, a Horseshoe prior with a fixed global shrinkage scale $\tau$ would suffice.
Since this fixed-$\tau$ prior is separable, the predictive density $\hat p(\tilde{\bm y} \mid \bm y)$ can be written as a product of univariate predictive densities $\hat p(\tilde y_i\mid y_i)$, and the KL predictive loss and risk are additive.
In this regard, it is convenient to analyze univariate Horseshoe prior as a starting point.
We show that the univariate predictive risk of a Gaussian mixture prior (including the Horseshoe) also attains a decomposition similar to \cite{mukherjee2015exact} and \cite{rockova2023adaptive}, which is deferred to Appendix \ref{sec:risk.decomp}.

\subsection{Horseshoe Spectroscopy}\label{sec:spectroscopy}

Since the Horseshoe prior is a continuous Gaussian scale mixture, it is convenient to make use of the closed-form predictive density of a Gaussian prior, which is also Gaussian.
The Horseshoe predictive density can thus be decomposed as a continuous mixture of Gaussian.
Consequently, the Horseshoe predictive KL loss attains a tight upper bound, a mixture of KL losses under the Gaussian prior, which has a closed form.
This explains the name {\it spectroscopy}, and could be very useful in controlling the predictive risk.

The univariate prior of $\theta$ is a combination of Gaussian priors.
For brevity, we omit the subscripts and adopt the following notation for the remainder of this section.
We have $\theta \mid \lambda,\tau \sim N(0,\lambda^2\tau^2)$, and $\lambda \sim C^+(0,1)$.
We can show that the predictive density can be expressed as $\hat{p}_\pi(\tilde{y}\mid y) = \int \hat{p}_\lambda(\tilde{y}\mid y)\pi(\lambda\mid y)\diff\lambda$,
where $\hat p_\lambda$ is the predictive density under the Gaussian prior with fixed $\lambda$.
The predictive KL loss $L(\theta, \hat p_\pi (\cdot \mid y))$ can thus be upper bounded by
\begin{equation}\label{eq:loss.spectroscopy}
    L(\theta, \hat p_\pi(\cdot \mid y)) \leq \int L(\theta, \hat p_\lambda(\cdot \mid y)) \pi(\lambda \mid y) \diff \lambda,
\end{equation}
where $L(\theta, \hat p_\lambda(\cdot \mid y))$ is the predictive KL loss under the Gaussian prior with fixed $\lambda$.
The proof is given in Appendix \ref{sec:pf.lm.loss.spectroscopy}.
An important message that these spectroscopy results deliver is that the weight of the spectral combination is exactly the posterior density of the local shrinkage scale $\lambda$.
Therefore, the behavior of the posterior of $\lambda$ will be inherited by the shrinkage behavior of the predictive density.
We shall see in the following subsection that there is indeed a phase transition of $\pi(\lambda\mid y)$ with respect to $|y|$ under the Horseshoe prior.

In fact, given any fixed value of $\tau$ and $\lambda$, the predictive density of $\tilde y$ is Gaussian.
Recall that $v = (1 + r^{-1})^{-1}$. Then we can write
\begin{equation}\label{eq:predictive.density.fixed.lambda}
\hat{p}_\lambda(\tilde y\mid y) \sim N\left(\frac{\lambda^2\tau^2}{1+\lambda^2\tau^2}y, \frac{v+\lambda^2\tau^2}{(1-v)(1+\lambda^2\tau^2)}\right).
\end{equation}
The corresponding KL loss $L(\theta,\hat{p}_\lambda)$ also has a closed-form expression,
\begin{multline*}
    \frac{1-v}{2}\frac{1+\lambda^2\tau^2}{v+\lambda^2\tau^2}\left(\frac{\lambda^2\tau^2}{1+\lambda^2\tau^2}y - \theta\right)^2 +\frac{1}{2}\log\left(1+(1-v)\frac{\lambda^2\tau^2}{v+\lambda^2\tau^2}\right) - \frac{1}{2}(1-v)\frac{\lambda^2\tau^2}{v+\lambda^2\tau^2}\\
    \leq \frac{1-v}{2}\frac{1+\lambda^2\tau^2}{v+\lambda^2\tau^2}\left(\frac{\lambda^2\tau^2}{1+\lambda^2\tau^2}y - \theta\right)^2.
\end{multline*}
The inequality is due to $\log(1+x)\leq x$ for $x\geq 0$.
With a calibration of $\tau$ close to zero, this bound is particularly tight.
Now we arrive at the following lemma that provides an upper bound on the predictive KL risk.
\begin{lemma}[Upper bound of KL risk from spectroscopy]\label{lm:risk.spectroscopy}
    Under the Horseshoe prior with fixed $\tau$, the predictive risk is upper bounded by
    \begin{multline}\label{eq:risk.spectroscopy.nonzero}
            \rho(\theta, \hat p) \leq \frac{1-v}{6}\E_{Y \mid \theta}\left[\frac{\Phi_1(1,1,\frac{5}{2}, -\frac{Y^2}{2}, 1-\tau^2)}{\Phi_1(1,1,\frac{3}{2}, -\frac{Y^2}{2}, 1-\tau^2)}(Y-\theta)^2\right]\\
            + \frac{1-v}{3}\E_{Y \mid \theta}\left[ \frac{\Phi_1(2,1,\frac{5}{2}, -\frac{Y^2}{2}, 1-\tau^2)}{\Phi_1(1,1,\frac{3}{2}, -\frac{Y^2}{2}, 1-\tau^2)} \left(\frac{\theta^2}{v} - 2\theta(Y-\theta)\right)\right].
    \end{multline}
    Specifically, in the case that $\theta=0$, this reduces to
    \begin{equation}\label{eq:risk.spectroscopy}
        \rho(0, \hat p) \leq \frac{1-v}{6}\E\left[\frac{\Phi_1(1,1,\frac{5}{2}, -\frac{Z^2}{2}, 1-\tau^2)}{\Phi_1(1,1,\frac{3}{2}, -\frac{Z^2}{2}, 1-\tau^2)}Z^2\right],
    \end{equation}
    where $Z \sim N(0,1)$.
\end{lemma}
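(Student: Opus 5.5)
The plan is to take the spectral bound \eqref{eq:loss.spectroscopy} together with the closed-form quadratic bound on $L(\theta,\hat p_\lambda)$ derived just before the lemma. Then I will integrate against $\pi(\lambda\mid y)$ and average over $Y\mid\theta$. It is convenient to reparametrize by the shrinkage weight $\kappa=1/(1+\lambda^2\tau^2)\in(0,1)$, or by $t=1-\kappa$. With this change of variable the prefactor becomes
\[
\frac{1+\lambda^2\tau^2}{v+\lambda^2\tau^2}=\frac{1}{1-(1-v)\kappa},
\]
and the residual decomposes as
\[
\Bigl(\tfrac{\lambda^2\tau^2}{1+\lambda^2\tau^2}y-\theta\Bigr)=(1-\kappa)(y-\theta)-\kappa\theta .
\]
Squaring splits the integrand into three pieces:
\[
(1-\kappa)^2(y-\theta)^2,\qquad -2\kappa(1-\kappa)\theta(y-\theta),\qquad \kappa^2\theta^2,
\]
each multiplied by $\frac{1-v}{2}\cdot\frac{1}{1-(1-v)\kappa}$.

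Next I would bound the $\kappa$-dependent prefactors by simple monomials so that every term becomes a posterior moment of $\kappa$. Since $1-(1-v)\kappa\ge 1-\kappa$, the first term is at most $(1-\kappa)(y-\theta)^2$. Since $1-(1-v)\kappa\ge v$, the third term is at most $\kappa^2\theta^2/v$, and a further loosening turns this into a moment matching the $\theta^2/v$ piece of \eqref{eq:risk.spectroscopy.nonzero}.

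I then need the posterior of $\kappa$ (equivalently of $t$). A direct change of variables in $\pi(\lambda\mid y)\propto \nu(\lambda)\,(1+\lambda^2\tau^2)^{-1/2}e^{-y^2\lambda^2\tau^2/(2(1+\lambda^2\tau^2))}$ gives
\[
\pi(t\mid y)\propto t^{-1/2}(1-t)^{0}\,\bigl(1-(1-\tau^2)t\bigr)^{-1}e^{-ty^2/2}
\]
up to harmless factors; the exact exponents must be checked carefully. Posterior moments $\E[t^k(1-t)^m\mid y]$ are then ratios of Euler-type integrals. These are matched to the confluent Appell representation
\[
\Phi_1(a,b,c;x,z)=\frac{\Gamma(c)}{\Gamma(a)\Gamma(c-a)}\int_0^1 s^{a-1}(1-s)^{c-a-1}(1-zs)^{-b}e^{xs}\,\diff s ,
\]
with $x=-Y^2/2$ and $z=1-\tau^2$. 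The normalizer corresponds to $(a,b,c)=(1,1,\tfrac32)$ in a suitable variable. Raising $a$ or $c$ by one produces the $(1,1,\tfrac52)$ and $(2,1,\tfrac52)$ ratios, and the Gamma-function ratios produce the constants $\tfrac{1}{6}$ and $\tfrac{1}{3}$ (combined with the $\tfrac{1-v}{2}$). Setting $\theta=0$ kills the last two terms and gives \eqref{eq:risk.spectroscopy} with $Y=Z$.

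I expect the main obstacle to be the cross term $-2\kappa(1-\kappa)\theta(y-\theta)/(1-(1-v)\kappa)$, which has indefinite sign. There I cannot simply replace the denominator by a monomial bound, and I must also ensure that it is paired with the same posterior moment as the $\theta^2/v$ term, as the statement requires. My first attempt would be to write
\[
\frac{1}{1-(1-v)\kappa}=1+\frac{(1-v)\kappa}{1-(1-v)\kappa}.
\]
I would then absorb the remainder into the nonnegative square by completing the square, i.e.\ using $-2ab\le a^2+b^2$-type rearrangements, so that only a term proportional to $\E[\kappa(1-\kappa)\mid y]$, i.e.\ the $\Phi_1(2,1,\tfrac52)$ ratio, multiplies $\theta^2/v-2\theta(Y-\theta)$. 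Only after this would I take the outer expectation over $Y\mid\theta$. Verifying that the constants come out exactly as $\tfrac{1-v}{6}$ and $\tfrac{1-v}{3}$ is routine bookkeeping once the correct parametrization of the $\kappa$-posterior is fixed.
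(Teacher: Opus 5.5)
Your bounds on the first and third pieces match the paper's: it works with $u=1-\kappa$ and uses $\frac{1}{u+v(1-u)}\le\frac1u$ and $\frac{1}{u+v(1-u)}\le\frac{1}{v(1-u)}$. Your plan for the cross term, however, cannot work, because you want the statement's integrand as a bound for each fixed $y$ before averaging over $Y$. Note that $\frac{\Phi_1(1,1,\frac52,\cdot)}{\Phi_1(1,1,\frac32,\cdot)}=3\E[1-\kappa\mid y]$ and $\frac{\Phi_1(2,1,\frac52,\cdot)}{\Phi_1(1,1,\frac32,\cdot)}=\frac32\E[\kappa\mid y]$, so the multiplier is $\E[\kappa\mid y]$, not $\E[\kappa(1-\kappa)\mid y]$. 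What you need is therefore
\[
\E\Bigl[\frac{\{(1-\kappa)(y-\theta)-\kappa\theta\}^2}{1-(1-v)\kappa}\Bigm| y\Bigr]\le \E[1-\kappa\mid y]\,(y-\theta)^2+\E[\kappa\mid y]\Bigl(\frac{\theta^2}{v}-2\theta(y-\theta)\Bigr).
\]
This is false. Fix $y$ with $\theta(y-\theta)>0$ and let $\tau\to0$; the posterior of $\kappa$ concentrates at $1$, so the left side tends to $\theta^2/v$ and the right side to $\theta^2/v-2\theta(y-\theta)$. This is not slack in the spectral step, since the true loss also tends to $\theta^2/(2r)=\frac{1-v}{2}\cdot\frac{\theta^2}{v}$. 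So no completing-the-square at fixed $y$ can produce the cross term. The paper's application of $\frac{1}{u+v(1-u)}\le\frac1u$ to the sign-indefinite term $-2u(1-u)\theta(y-\theta)$ fails for the same reason.

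Granting the quadratic bound, the route that works has two steps. First, discard the cross term pointwise by $\frac{(p+q)^2}{A+B}\le\frac{p^2}{A}+\frac{q^2}{B}$ with $A=1-\kappa$, $B=v\kappa$; this yields exactly the $(y-\theta)^2$ and $\theta^2/v$ terms. Second, show $\theta\,\E_{Y\mid\theta}[f(Y)(Y-\theta)]\le0$ for $f(y)=\E[\kappa\mid y]$. Because the posterior of $1-\kappa$ is an exponential family in $y^2/2$, $f$ is even with $f'(y)=-y\,\mathrm{Var}(\kappa\mid y)$. Stein's identity then gives $\E_{Y\mid\theta}[f(Y)(Y-\theta)]=\int_0^\infty f'(y)\{\phi(y-\theta)-\phi(y+\theta)\}\diff y$, whose sign is opposite to that of $\theta$.

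There is also a more basic problem in your first step, which you take (as the paper does) from the text before the lemma: the bound $L(\theta,\hat p_\lambda)\le\frac{1-v}{2}\frac{1+\lambda^2\tau^2}{v+\lambda^2\tau^2}(\cdots)^2$ is false. Let $s^2=r+\frac{\lambda^2\tau^2}{1+\lambda^2\tau^2}>r$ be the predictive variance and $x=\frac{(1-v)\lambda^2\tau^2}{v+\lambda^2\tau^2}$. The non-quadratic part of the Gaussian KL is $\frac12\{\log(s^2/r)+r/s^2-1\}=\frac12\{-\log(1-x)-x\}\ge0$; the text writes $\log(1+x)$ where $-\log(1-x)$ belongs. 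For example, at $y=\theta=0$ the loss is positive while the bound is $0$.

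This is fatal for the lemma as stated for arbitrary fixed $\tau$. At $\theta=0$ its right-hand side equals $\frac{1-v}{2}\E[\E[1-\kappa\mid Z]Z^2]\le\frac{1-v}{2}$. In Lemma~\ref{lm:risk.decomp}, by contrast, the ratio $N^{HS}_{0,1}(Z)/N^{HS}_{0,v}(Z)$ stays in $[\sqrt v,1/\sqrt v]$ and tends to $1/\sqrt v$ as $\tau\to\infty$. Hence $\rho(0,\hat p)\to\frac12\log(1/v)>\frac{1-v}{2}$. A correct version must restrict to small $\tau$ and must control the extra nonnegative term $\frac12\E[-\log(1-x)-x\mid y]$, or proceed by a different route. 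Neither your proposal nor the paper's proof does this.
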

\begin{proof}
    See Appendix \ref{sec:pf.lm.risk.spectroscopy}.
\end{proof}
The bound in \eqref{eq:risk.spectroscopy.nonzero} might not necessarily be helpful in bounding $\sup_{\theta\in\R}\rho(\theta,\hat p)$.
Due to the presence of a $\theta^2$ factor, the second term goes to infinity by taking a supremum over $\theta\in\R$.
However, for the case when $\theta=0$, the bound in \eqref{eq:risk.spectroscopy} may be a good supplement to the risk decomposition in Lemma \ref{lm:risk.decomp}.

\subsection{Phase Transition in Posterior of $\lambda$}\label{sec:lambda.posterior}
By \eqref{eq:predictive.density.fixed.lambda}, define $\kappa = (1+\lambda^2\tau^2)^{-1}$, then the predictive density under fixed $\lambda$ is a Gaussian with mean $(1-\kappa)y$.
Here, $\kappa$ can be interpreted as a posterior shrinkage factor, where $\kappa=0$ yields no shrinkage and describes signals, and $\kappa=1$ yields near-total shrinkage and describes noise \citep{carvalho2010horseshoe}.
The half-Cauchy prior imposed on $\lambda$ is equivalent to a compound confluent hypergeometric prior, $\kappa \sim \text{CCH}\left(\frac{1}{2}, \frac{1}{2}, 1, 0, 1, \frac{1}{\tau^2}\right)$ (see \cite{gordy1998generalization}).
The prior has a spike on both $\kappa = 0$ and $\kappa = 1$.
When $\tau$ decreases, the spike on $\kappa=1$ becomes higher, meaning more full shrinkage.
The posterior of $\kappa$, $\kappa \mid y \sim \text{CCH}\left(1, \frac{1}{2}, 1, \frac{y^2}{2}, 1, \frac{1}{\tau^2}\right)$, has already been well-studied in the Horseshoe literature.
It attains a phase transition with respect to the scale of observation, $|y|$.
However, to answer the question when this phase transition occurs, we shall take a closer look at the posterior of $\lambda$.

The posterior density of $\lambda$ has the following closed form,
\begin{equation}\label{eq:lambda.posterior}
    \pi(\lambda\mid y) = \frac{1}{\tau}\frac{1}{\Phi_1(1,1,3/2,-y^2/2, 1-\tau^2)}\frac{1}{\sqrt{1+\lambda^2\tau^2}}\exp\left(-\frac{y^2}{2(1+\lambda^2\tau^2)}\right)\frac{1}{1+\lambda^2}.
\end{equation}
\begin{figure}[t]
    \centering
    \includegraphics[width=4cm]{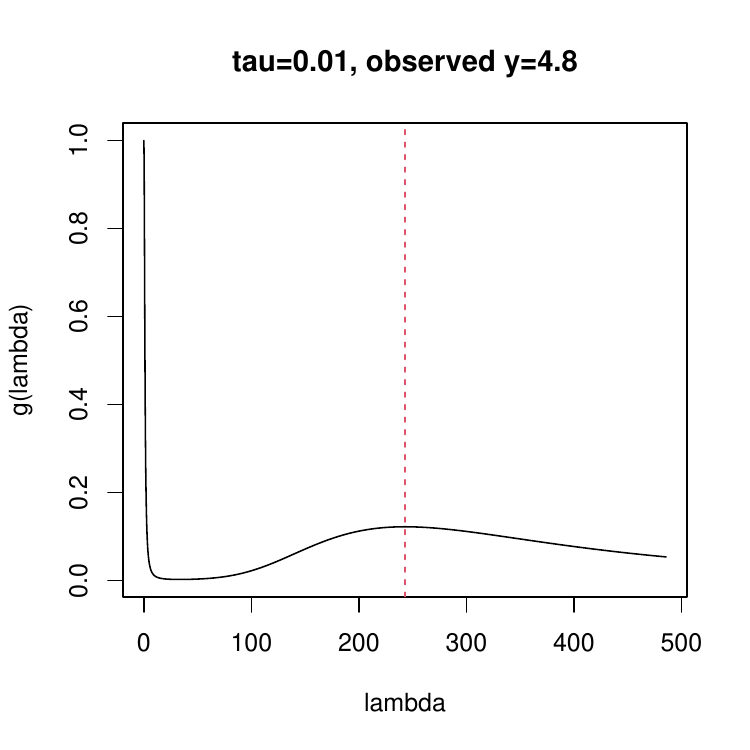}
    \includegraphics[width=4cm]{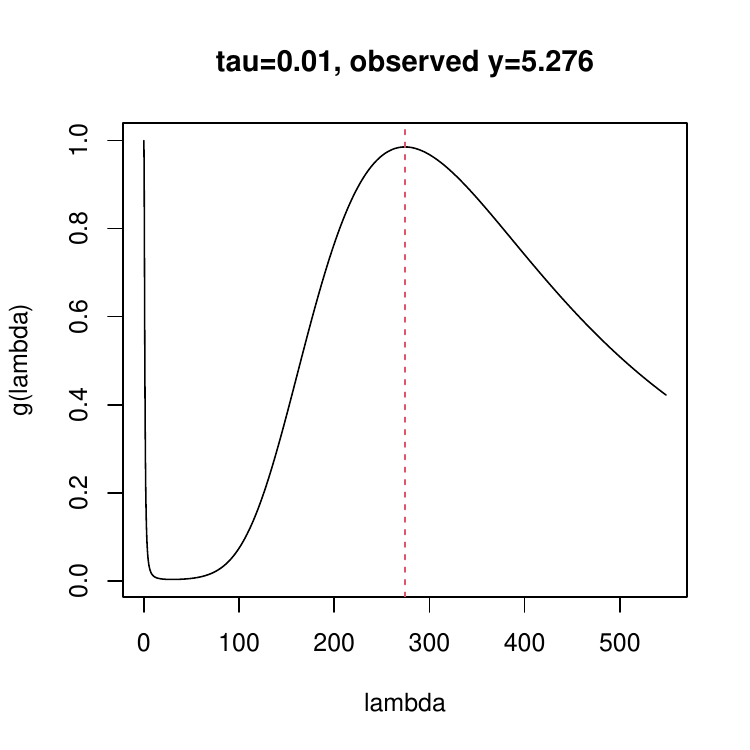}
    \includegraphics[width=4cm]{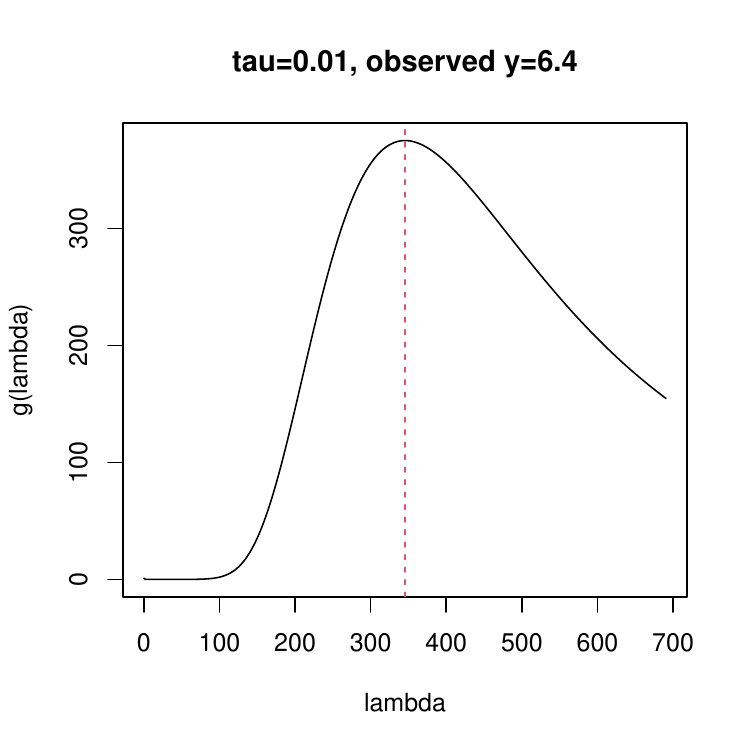}
    \caption{An example of $g(\lambda)$ under different $y$ values. The dashed red line depicts $\lambda_2^\ast$, the theoretical local maximum of $g(\lambda)$. Note that $g(\lambda)$ is normalized posterior density $\pi(\lambda\mid y)$, such that $g(0)=1$.}
    \label{fig:posterior}
\end{figure}
The posterior under different values of $y$ are plotted in Figure \ref{fig:posterior}.
Note that the posterior density $\pi(\lambda\mid y)$ always has a mode on $\lambda = 0$.
As $|y|$ increases from zero, a second mode appears.
As $\tau \rightarrow 0$, for the two modes to be comparable in order, it is required that $|y| = O( \sqrt{\log(1/\tau)})$.
For smaller observations, the first mode on $\lambda=0$ dominates, and the posterior of $\lambda$ is concentrated around zero, which corresponds to full shrinkage toward zero.
For larger observations, the second mode dominates, and the mass around $\lambda=0$ becomes negligible, which corresponds to no shrinkage for prominent signals.

Recall that the posterior behavior of the local shrinkage scale $\lambda$ is inherited by the shrinkage behavior of the predictive density.
The phase transition point, $|y| \asymp \sqrt{\log(1/\tau)}$, prompts an important proof technique in controlling the predictive KL risk.
Indeed, we shall split the cases when the observation is smaller or larger than this order, and discuss their respective contribution to the KL risk.
This will be a key method in the proof of our main results.

\subsection{Asymptotic Exact Minimaxity of KL Risk}\label{sec:minimax.fixed.tau}
We are now ready to demonstrate the first main result of this paper.
When the sparsity level $s_n$ is known, by carefully calibrating the global shrinkage scale $\tau$, the Horseshoe prior achieves an exact-minimax predictive KL risk asymptotically.

\begin{theorem}\label{thm:minimax.fixed.tau}
    Assume the Horseshoe prior \eqref{eq:HS.prior}.
    Denote $v = 1/(1+1/r)$ and let $\tau$ take the value of $\tau_{n,\alpha} = s_n\log^\alpha(n/s_n) / n$, with $\alpha\in[0,1/2]$.
    Then with $s_n / n \rightarrow 0$, for any fixed $r\in(0,\infty)$ (i.e. $v\in(0,1)$), we have
    \begin{equation}
        \sup_{\bm \theta \in \Theta_n(s_n)}\rho_n(\bm \theta, \hat p_\pi) \leq \left(1 + \frac{2}{\sqrt{\pi}\log^{1/2 - \alpha}(n/s_n)}\right) \frac{s_n}{1+r} \log \frac{n}{s_n} + \tilde C_n(v, \tau_{n,\alpha}).
    \end{equation}
    Here, $\tilde C_n(v,\tau)$ is defined in \eqref{eq:C.n.def}, and has a lower order than $s_n\log(n/s_n)$.
\end{theorem}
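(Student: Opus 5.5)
By separability, the multivariate risk is a sum of univariate risks: $\rho_n(\bm\theta,\hat p_\pi)=\sum_{i}\rho(\theta_i,\hat p_\pi)$. For $\bm\theta\in\Theta_n(s_n)$ at most $s_n$ coordinates are nonzero, so
\[
\sup_{\bm\theta\in\Theta_n(s_n)}\rho_n(\bm\theta,\hat p_\pi)\le s_n\sup_{\theta\in\R}\rho(\theta,\hat p_\pi)+(n-s_n)\rho(0,\hat p_\pi).
\]
The plan is therefore to prove two univariate bounds with $\tau=\tau_{n,\alpha}$. The first is a \emph{signal bound}: $\sup_\theta\rho(\theta,\hat p_\pi)\le(1-v)\log(1/\tau)(1+o(1))$. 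Since $\log(1/\tau_{n,\alpha})=\log(n/s_n)-\alpha\log\log(n/s_n)$ and $1-v=1/(1+r)$, this gives the leading term $\frac{s_n}{1+r}\log\frac{n}{s_n}$. The second is a \emph{null bound}: $\rho(0,\hat p_\pi)\lesssim(1-v)\,\tau\sqrt{\log(1/\tau)}\cdot\frac{2}{\sqrt\pi}$ up to lower-order terms. Multiplying by $n$ and substituting $n\tau=s_n\log^\alpha(n/s_n)$ gives $\frac{2}{\sqrt\pi}\frac{s_n}{1+r}\log^{\alpha+1/2}(n/s_n)$. This equals the stated correction factor $\frac{2}{\sqrt\pi}\log^{-(1/2-\alpha)}(n/s_n)$ times the main term. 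All remaining pieces are collected into $\tilde C_n(v,\tau)$.

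For the null bound I would use \eqref{eq:risk.spectroscopy} from Lemma \ref{lm:risk.spectroscopy}. The ratio $\Phi_1(1,1,\frac52,\cdot)/\Phi_1(1,1,\frac32,\cdot)$ is analyzed with the phase transition of Section \ref{sec:lambda.posterior}, splitting at $|Z|=\zeta_\tau:=\sqrt{2\log(1/\tau)}$.
\begin{itemize}
\item For $|Z|\le\zeta_\tau$, the posterior of $\lambda$ concentrates near $0$. Using the integral representation, the ratio is of order $\tau e^{Z^2/2}/|Z|$ (as in \cite{van2014horseshoe}-type bounds), so $\E[\cdot\,Z^2\one{|Z|\le\zeta_\tau}]\lesssim\tau\int_0^{\zeta_\tau}z\,dz\cdot\sqrt{2/\pi}$. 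I would track the constants carefully here.
\item For $|Z|>\zeta_\tau$, the ratio is bounded by a constant (at most $3$, since $\Phi_1$ ratios are at most $3/5\cdot\ldots$; I would check this). The Gaussian tail $\E[Z^2\one{|Z|>\zeta_\tau}]\asymp\zeta_\tau\phi(\zeta_\tau)\asymp\tau\sqrt{\log(1/\tau)}$ then matches the order claimed.
\end{itemize}
Choosing the cutoff precisely, possibly $\sqrt{2\log(1/\tau)}$ minus lower-order terms, is what should produce the constant $2/\sqrt\pi$.

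For the signal bound, the weighted spectroscopy bound \eqref{eq:risk.spectroscopy.nonzero} fails because of the $\theta^2$ term, so I would use the risk decomposition of Lemma \ref{lm:risk.decomp} (Appendix), in the spirit of \cite{mukherjee2015exact,rockova2023adaptive}. That lemma expresses $\rho(\theta,\hat p_\pi)$ through marginal-likelihood ratios of the form $\E\log(m_\pi(\cdot)/\ldots)$ at noise levels $1$ and $v$. The key input is a lower bound on the Horseshoe marginal, $m_\tau(y)\gtrsim\tau\,y^{-2}$ for large $|y|$, coming from the Cauchy-type polynomial tail of the prior. This makes $-\log m(y)\le\log(1/\tau)+O(\log(1+y^2))$, and the difference between the two noise levels yields the factor $(1-v)$ in front of $\log(1/\tau)$. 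The polynomial correction $\E\log(1+Y^2)$ grows with $\theta$. Near the threshold I would handle it by case-splitting $|\theta|$ relative to $\zeta_\tau$. For $|\theta|\gg\zeta_\tau$, the posterior of $\lambda$ sits on the non-shrinkage mode, so the predictive density is close to the uniform-prior density with risk $\frac12\log(1+r^{-1}\cdot\ldots)=O(1)$. I would then absorb the $\log\theta$ terms by comparing with that flat density.

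The main obstacle is getting the sharp constant $1$ uniformly in $\theta$. In particular, the intermediate range $|\theta|\approx\zeta_\tau$, where both modes of $\pi(\lambda\mid y)$ matter, must still yield at most $(1-v)\log(1/\tau)(1+o(1))$. The $\log\log$ losses from the polynomial tail must also be shown to enter only $\tilde C_n$.
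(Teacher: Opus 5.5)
Your architecture matches the paper's: additivity over coordinates, the spectroscopy bound \eqref{eq:risk.spectroscopy} for $\rho(0,\hat p)$, and the decomposition of Lemma~\ref{lm:risk.decomp} for $\sup_\theta\rho(\theta,\hat p)$. The gap is that the signal bound, which carries the entire leading term, is not proved, and the mechanism you sketch would not produce the factor $1-v$. The tail bound $m\gtrsim\tau y^{-2}$ only gives $-\log N_{\theta,v}\le\log(1/\tau)-(\sqrt vZ+\theta)^2/(2v)+O(\log|\sqrt vZ+\theta|)$. The constant $1-v$ comes from trading this against $-\log N_{\theta,v}\le O(\tau)$, which leaves $\theta^2/(2r)$. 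Heuristically $\rho(\theta,\hat p)\approx\min\{\theta^2/(2r),(\log(1/\tau)-\theta^2/2)_+\}$, and the two branches cross at $\theta^2=2v\log(1/\tau)$ with value $(1-v)\log(1/\tau)$. So the split must be on the observation, at $|Z+\theta|=\sqrt{2v\log(1/\tau)}$, not on $|\theta|$ against $\sqrt{2\log(1/\tau)}$.

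Above that threshold the paper uses the form \eqref{eq:N.HS.form.B}. There $\theta^2/(2r)$ cancels exactly against the Gaussian prefactors, leaving $\log\sqrt v+(1-v^{-1/2})\theta Z$, which is nonpositive in expectation on that event by \eqref{eq:cross.term.exp.neg}, plus the log-ratio of two integrals. In that ratio $\tau$ enters only through the piece $\int_s^1$, of size $\lesssim y^2e^{-sy^2/2}/\tau$. This is largest at the boundary and gives $(1-vs)\log(1/\tau)+O(\log\log(1/\tau))$ with $s$ close to $1$. The polynomial factors you worried about cancel between the two noise levels up to $2\log\bigl(1+(v^{-1/2}-1)|\theta|/|Z+\theta|\bigr)$. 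Its expectation is bounded uniformly in $\theta$ by \eqref{eq:theta.Z.ratio.bound}, so no comparison with the flat-prior density is needed.

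Below the threshold, $g\le\theta^2/(2r)+\log N_{\theta,1}+O(\tau)$, and this is exactly the intermediate-range difficulty you anticipated. To keep the constant $1$ you need two facts. First, $\log N_{\theta,1}=o(\log(1/\tau))$ there: split the integral in \eqref{eq:N.HS.form.A} at a small $\epsilon$ to get $N_{\theta,1}\le\tau^{-v\epsilon}+O(\epsilon^{-1/2}\tau^{1-v})$. Second, $\frac{\theta^2}{2r}\P(|Z+\theta|\le\sqrt{2v\log(1/\tau)})\le(1-v)\log(1/\tau)\,\P(\cdot)+O(\sqrt{\log(1/\tau)})$. The paper's appendix treats this region with $(a+b)^2\le 2a^2+2b^2$ and $N_{\theta,1}\le e^{y^2/2}$. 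Each of these costs an extra multiple of $(1-v)\log(1/\tau)$, and the resulting $\theta^2/(2r)$ contribution is missing from \eqref{eq:Eg.small.obs}. So the sharper version above is what is actually needed.

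For the null bound, your route on $\{|Z|\le\sqrt{2\log(1/\tau)}\}$ is the paper's, but the order you state is wrong. For $|Z|\ge 1$ the ratio in \eqref{eq:risk.spectroscopy} is $\lesssim\tau e^{Z^2/2}/Z^2$, not $\tau e^{Z^2/2}/|Z|$. The denominator integral is at least $1/\tau$ from $u\in(0,\tau^2)$ alone. Splitting the numerator at $\tau^2$ and $1/a$ bounds it by $2\sqrt a\,e^{z^2/2}/z^2$ plus lower-order terms. After multiplying by $z^2\phi(z)$, the integrand is therefore essentially the constant $(1-v)\sqrt a\,\tau/\sqrt{2\pi}$. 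Integrating over $|z|\le\sqrt{2\log(1/\tau)}$ gives $\frac{2\sqrt a}{\sqrt\pi}(1-v)\tau\sqrt{\log(1/\tau)}$. Your version integrates $z$ instead of $1$ and yields order $\tau\log(1/\tau)$. Since $n\tau_{n,\alpha}\log(1/\tau_{n,\alpha})\sim s_n\log^{1+\alpha}(n/s_n)$, that would destroy the constant at $\alpha=0$ and the rate for $\alpha>0$.

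On the tail $\{|Z|>\sqrt{2\log(1/\tau)}\}$, bounding the ratio by $3$ gives $\frac{1-v}{2}\E[Z^2\one{|Z|>\sqrt{2\log(1/\tau)}}]\approx\frac{1}{\sqrt\pi}(1-v)\tau\sqrt{\log(1/\tau)}$. This is the same order as the bulk, so you would end with $3/\sqrt\pi$ instead of the stated $2/\sqrt\pi$. That is harmless for exact minimaxity when $\alpha<1/2$, but it is not the inequality claimed. The paper instead uses the decomposition in form \eqref{eq:N.HS.form.B} on this tail, where $g(z,0,v)\le\log\sqrt v+(1-s)\log(1/\tau)+\log(C\log(1/\tau))$. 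This is negligible after multiplying by $\P(|Z|>\sqrt{2\log(1/\tau)})\le\tau/\sqrt{\pi\log(1/\tau)}$ and letting $s\uparrow 1$.

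Your cutoff idea also works, but in the opposite direction from the one you suggest. The bulk bound holds for every $z$, so raise the cutoff to $\sqrt{2\log(1/\tau)+\delta_\tau}$ with $\delta_\tau\to\infty$ and $\delta_\tau=o(\log(1/\tau))$. This inflates the bulk only by a factor $1+o(1)$, while the tail drops to $O(\tau e^{-\delta_\tau/2}\sqrt{\log(1/\tau)})$. Lowering the cutoff makes the tail larger.
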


\begin{proof}
    See Appendix \ref{sec:apdx.pf.minimax.fixed.tau}.
\end{proof}

\begin{remark}
    With $\alpha \in [0,1/2)$, we get
    $\sup_{\bm \theta \in \Theta_n(s_n)}\rho_n(\bm \theta, \hat p_\pi) \leq  (1+r)^{-1} s_n \log (n/s_n)(1+o(1))$,
    i.e., asymptotic exact minimax risk.
    When $s_n$ is known, recall that \cite{mukherjee2022minimax} reaches exact minimaxity with a discrete dual-grid-shaped spike-and-slab prior.
    \cite{rockova2023adaptive} considers Dirac-spike-and-Laplace-slab prior and spike-and-slab Lasso prior, both reaching rate minimaxity up to a universal constant.
    Theorem \ref{thm:minimax.fixed.tau} is the first result that reaches exact minimaxity using a continuous prior that the authors are aware of.
\end{remark}

\begin{remark}
    Our optimal calibration of $\tau$ in the predictive inference problem is consistent with the optimal choice in the estimation problem \citep{van2014horseshoe}.
    With the same calibration of $\tau$, at most at the order $\tau_{n,1/2}$, the Horseshoe estimator (the posterior mean) contracts to the true parameter $\bm\theta$ at the near-minimax rate for quadratic loss over sparse models.
    These two complementary results in two different problems show that $\tau$ is regarded as an effective sparsity rate, instead of merely a technical tuning parameter.
    The fact that this same sparsity calibration leads to exact minimax predictive risk, with the exact variance-dependent constant $(1+r)^{-1}$, is noteworthy.
\end{remark}

Since a separable prior ensures additivity of the predictive risk, it suffices to provide an upper bound for two univariate risks, the maximum risk for the signals and the risk at zero for the noise.
The phase transition in posterior of $\lambda$ indicates that we may split by the order of observation with respect to $\tau$.
Indeed, we split by $|Y_i| = \sqrt{2v\log(1/\tau)}$ for the signal case $\theta_i \neq 0$, and split by $|Y_i| = \sqrt{2\log(1/\tau)}$ for the noise case $\theta_i = 0$.
The aforementioned Horseshoe spectroscopy results are useful in the proof for the noise case, while in the signal case we use risk decomposition and carefully split the integrals.

\section{Adaptiveness to Unknown Sparsity}\label{sec:minimax.adaptive}
Instead of the ideal case scenario with a known sparsity level $s_n$, for the real-world application, practitioners are rather more concerned about the case where $s_n$ is unknown.
In this case, we are no longer able to calibrate $\tau$ as a fixed value with respect to $s_n$ as in Theorem \ref{thm:minimax.fixed.tau}.
When $\tau$ is fixed, following the last step of proof of Theorem \ref{thm:minimax.fixed.tau}, the best we can do is to calibrate $\tau = \log^{\alpha}(n) / n$.
For any $\alpha\in[0,1/2)$, we obtain
\begin{equation}\label{eq:adaptive.risk.fixed.tau}
\sup_{\bm \theta \in \Theta_n(s_n)}\rho_n(\bm \theta, \hat p_\pi) \leq \frac{s_n}{1+r} \log n (1+o(1)).
\end{equation}

To obtain a sharper rate, setting a fixed deterministic value of $\tau$ may not be sufficient.
There are two general directions in the literature, namely a full-Bayes approach by imposing a prior distribution on $\tau$ (e.g., Half-Cauchy as in \cite{carvalho2010horseshoe}), and an empirical Bayes approach by plugging in an estimated $\hat\tau$ (e.g., MMLE as in \cite{van2017adaptive}).
Both methods are capable of achieving rate-minimax quadratic risk in the estimation problem \citep{van2017adaptive}.

For the predictive problem, we consider the full-Bayes approach.
Let $\pi(\tau)$ be the prior distribution of $\tau$.
Under this hierarchical structure, we can still establish risk decomposition and spectroscopy results similar to those in the fixed-$\tau$ scenario.
They are deferred to Appendix \ref{sec:hierarchical.decomp.spec}.
While these results are directly helpful in the proof of an adaptive predictive risk bound, an important yet challenging task is to control the posterior $\pi(\tau\mid \bm Y)$.
Quite interestingly, it has a closed-form expression under the Horseshoe prior.
\begin{lemma}[Posterior of $\tau$]\label{lm:post.tau}
    Under the hierarchical Horseshoe prior with $\pi(\tau)$ being the hyperprior imposed on the global shrinkage parameter $\tau$, we have
    \[
    \pi(\tau \mid \bm Y) \propto \pi(\tau)\cdot \tau^n \prod_{i=1}^n \Phi_1\left(1,1,\frac{3}{2}, -\frac{Y_i^2}{2}, 1-\tau^2 \right),
    \]
    where $\Phi_1$ is the confluent hypergeometric function with two variables.
\end{lemma}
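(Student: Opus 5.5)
The plan is a direct Bayes-rule computation: integrate out $\bm\theta$ and $\bm\lambda$ coordinate by coordinate, then identify the remaining one-dimensional integral over $\lambda_i$ with the Humbert function $\Phi_1$. This is the same integral that produces the normalizing constant in \eqref{eq:lambda.posterior}.

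\textbf{Step 1: factorize the marginal likelihood.} Write $\pi(\tau\mid\bm Y)\propto \pi(\tau)\, m(\bm Y\mid\tau)$. Given $\tau$, the prior \eqref{eq:HS.prior} is separable and the observations are independent, so
\[
m(\bm Y\mid\tau)=\prod_{i=1}^n m(Y_i\mid\tau).
\]
For each coordinate, integrating out $\theta_i$ first gives $Y_i\mid\lambda_i,\tau\sim N(0,1+\lambda_i^2\tau^2)$. Hence
\[
m(Y_i\mid\tau)=\frac{2}{\pi\sqrt{2\pi}}\int_0^\infty \frac{1}{\sqrt{1+\lambda^2\tau^2}}\exp\!\left(-\frac{Y_i^2}{2(1+\lambda^2\tau^2)}\right)\frac{\diff\lambda}{1+\lambda^2}.
\]
The constants $2/(\pi\sqrt{2\pi})$ do not depend on $\tau$ and can be dropped.

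\textbf{Step 2: reduce the $\lambda$-integral to $\Phi_1$.} Substitute $\kappa=(1+\lambda^2\tau^2)^{-1}$, so that $\lambda=\tau^{-1}\sqrt{(1-\kappa)/\kappa}$. Then
\[
\diff\lambda=\frac{1}{2\tau}\,\kappa^{-3/2}(1-\kappa)^{-1/2}\diff\kappa,
\qquad
1+\lambda^2=\frac{\tau^2\kappa+1-\kappa}{\tau^2\kappa}=\frac{1-(1-\tau^2)\kappa}{\tau^2\kappa}.
\]
Collecting the powers of $\kappa$ and $\tau$, the integrand becomes
\[
\frac{\tau}{2}\,(1-\kappa)^{-1/2}\,e^{-Y_i^2\kappa/2}\,\bigl(1-(1-\tau^2)\kappa\bigr)^{-1}\diff\kappa \quad\text{on } (0,1).
\]
Compare this with the Euler-type integral representation
\[
\Phi_1(a,b,c;x,y)=\frac{\Gamma(c)}{\Gamma(a)\Gamma(c-a)}\int_0^1 t^{a-1}(1-t)^{c-a-1}(1-yt)^{-b}e^{xt}\diff t .
\]
Taking $a=1$, $c=3/2$, $b=1$, $x=-Y_i^2/2$ and $y=1-\tau^2$ matches the integrand exactly, with $\Gamma(3/2)/(\Gamma(1)\Gamma(1/2))=1/2$. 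Therefore
\[
m(Y_i\mid\tau)\propto \tau\,\Phi_1\!\left(1,1,\tfrac32,-\tfrac{Y_i^2}{2},1-\tau^2\right).
\]
This agrees with the normalizer in \eqref{eq:lambda.posterior}, which serves as a consistency check.

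\textbf{Step 3: assemble.} Multiplying the $n$ factors gives $\tau^n\prod_i\Phi_1(\cdot)$, and multiplying by $\pi(\tau)$ gives the claimed posterior.

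\textbf{Main obstacle.} The only delicate point is Step 2: tracking the Jacobian and the factor $1+\lambda^2$ so that the power of $\tau$ comes out exactly as $\tau^1$ per coordinate. Two checks are needed there.
\begin{itemize}
\item The representation must be valid for all $\tau>0$, not only $\tau<1$. For $\tau>1$ we have $1-\tau^2<0$, so the term $1-(1-\tau^2)\kappa$ stays positive on $[0,1]$. The integral therefore converges and defines $\Phi_1$ for all $\tau>0$, including the unrestricted support of the exponential hyperprior.
\item Fubini's theorem justifies integrating out $\theta_i$ and $\lambda_i$ in either order, since all integrands are nonnegative.
\end{itemize}
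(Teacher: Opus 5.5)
Your proof is correct and follows the paper's own route. The paper gives no standalone proof, but the lemma comes from the same marginalization of $\theta_i$ and $\lambda_i$, using the substitution $u=\lambda^2\tau^2/(1+\lambda^2\tau^2)$ (your $1-\kappa$) from the computation of $D(Y_i)$ in Lemma \ref{lm:g.tilde.decomp}, which gives $\pi(Y_i\mid\tau)=\phi(Y_i)D(Y_i)=\frac{2\tau}{\pi\sqrt{2\pi}}\,\Phi_1\bigl(1,1,\frac32,-\frac{Y_i^2}{2},1-\tau^2\bigr)$.

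Your $(1-\kappa)^{-1/2}$ integrand matches \eqref{eq:Nv.D.form.2} and the normalizer in \eqref{eq:lambda.posterior}, so it is the representation to rely on. The $u^{-1/2}$ integral displayed for $H(y,\tau)$ in the proof of Lemma \ref{lm:tau.not.undershooting} is not equal to $\tau\Phi_1\bigl(1,1,\frac32,-\frac{y^2}{2},1-\tau^2\bigr)$: at $y=0$ it gives $\tau\,{}_2F_1(\frac12,1;\frac32;1-\tau^2)$ instead of $\tau\,{}_2F_1(1,1;\frac32;1-\tau^2)$.
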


A simulation of the behavior of this posterior of $\tau$ under different setups of $\bm\theta$ is given in Section \ref{sec:posterior.experiment}.
We have discussed that $\tau$ plays a role of effective sparsity rate.
Unlike in the known-$s_n$ case, we expect the posterior of $\tau$ to capture the number of signals automatically.
From its form in Lemma \ref{lm:post.tau}, the only usable evidence is the scale of the observations, $Y_i^2$.
But when the size of the underlying signal $|\theta_i|$ is not large enough, the posterior of $\tau$ might not be able to distinguish between a small signal and pure noise.
For a more accurate control, we introduce the following assumption of minimal signal strength.
\begin{assumption}[Theta-min Condition]\label{assumption:beta.min}
    There exists some $c>0$, such that $\bm\theta \in \Theta_n(s_n,c)$, where
    \begin{equation}\label{eq:min.signal.set}
        \Theta_n(s_n, c) = \Theta_n(s_n) \cap \left\{\min_{i: \theta_i \neq 0} |\theta_i| > c\sqrt{2\log n}\right\}.
    \end{equation}
\end{assumption}
Assumption \ref{assumption:beta.min} requires the true underlying parameter to be either zero or larger than the detection threshold of order $\sqrt{2\log n}$.
This eliminates the possibility of having signals that are too small to be recognized.
For the remainder of this section, we consider the following hyperprior of the global shrinkage scale $\tau$,
\begin{equation}\label{eq:prior.of.tau}
    \pi(\tau) = ne^{-n\tau}\one{\tau>0}.
\end{equation}
We shall see that under the theta-min condition, the full-Bayes method with this hyperprior of $\tau$ captures the underlying sparsity level accurately, and thus still guarantees a rate-minimax predictive risk.
A vital step in the proof is to show that the posterior of $\tau$ is concentrated around the oracle rates $\tau_{n,\alpha}$ as in the fixed-$\tau$ result in Theorem \ref{thm:minimax.fixed.tau}.
The following two lemmas provide an insight on such concentration.

\begin{lemma}\label{lm:tau.not.overshooting}
    Under the theta-min condition in Assumption \ref{assumption:beta.min}, if we impose the exponential hyperprior \eqref{eq:prior.of.tau} on $\tau$, then for $c>\sqrt{6}$,
    \begin{equation}
        \sup_{\bm\theta\in\Theta_n(s_n, c)}\E_{\bm Y \mid \bm\theta}\E[\tau\mid\bm Y] \leq K\frac{s_n}{n} (1+o(1)).
    \end{equation}
    Here, $K$ is a universal constant that satisfies $K - \log K - \log(9 e^{4}/2) > 0$.
\end{lemma}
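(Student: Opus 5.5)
The plan is to write $\E[\tau\mid\bm Y]$ as a ratio of two integrals over $\tau$ and treat the numerator and denominator separately. The numerator is controlled in expectation and the denominator by a deterministic lower bound. First I rewrite Lemma \ref{lm:post.tau} in terms of marginal likelihoods. Let $m_\tau(y)=\int\phi(y-\theta)\pi(\theta\mid\tau)\diff\theta$ be the Horseshoe marginal. Up to a factor independent of $\tau$, $\tau\,\Phi_1(1,1,\tfrac32,-\tfrac{y^2}{2},1-\tau^2)$ equals $m_\tau(y)/\phi(y)$. Writing $S=\{i:\theta_i\neq0\}$, $|S|\le s_n$, and $R_\tau(y)=m_\tau(y)/\phi(y)$, we get
\[
\E[\tau\mid\bm Y]=\frac{\int_0^\infty \tau\, e^{-n\tau}\prod_{i\in S}m_\tau(Y_i)\prod_{i\notin S}R_\tau(Y_i)\diff\tau}{\int_0^\infty e^{-n\tau}\prod_{i\in S}m_\tau(Y_i)\prod_{i\notin S}R_\tau(Y_i)\diff\tau}.
\]
The key observation for the noise coordinates is that $\E_{Z}[R_\tau(Z)]=\int m_\tau=1$ for every $\tau$. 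Also, $R_\tau(y)=\int e^{\theta y-\theta^2/2}\pi(\theta\mid\tau)\diff\theta$ is bounded below by Jensen, applied after restricting $\lambda$ to a bounded set, using the symmetry of the prior. This gives $R_\tau(y)\ge e^{-C\tau}$ uniformly in $y$ for small $\tau$.

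Step 1 handles the signal coordinates. Under Assumption \ref{assumption:beta.min}, with probability $1-o(1/n)$ uniformly over $\Theta_n(s_n,c)$, every $i\in S$ satisfies $|Y_i|\ge (c-\sqrt2)\sqrt{2\log n}$, say. On this event I use the tail expansion of the Horseshoe marginal in the style of \cite{van2014horseshoe}. For $|y|\gg\sqrt{\log(1/\tau)}$, the density $\pi(\theta\mid\tau)$ behaves like $\tfrac{\sqrt{2/\pi}}{\pi}\,\tau/\theta^2$ near $\theta\approx y$, so $m_\tau(y)=\tau\,h(y)(1+o(1))$ with $h$ independent of $\tau$, uniformly over $\tau$ up to a polynomial size. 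The condition $c>\sqrt6$ is what I expect to make the remainder (the contribution of $\theta$ near $0$, of order $e^{-y^2/2}$, relative to $\tau/y^2$) negligible even when $\tau$ is as small as $s_n/n^{2}$ or so. Hence $\prod_{i\in S}m_\tau(Y_i)\approx \tau^{s}\prod h(Y_i)$, with constant factors per coordinate absorbed into a $(9/2)^{s}$-type loss.

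Step 2 lower-bounds the denominator. I restrict to $\tau\in[s/n,2s/n]$ and use $R_\tau\ge e^{-C\tau}$ for all noise coordinates, giving a factor $e^{-2Cs}$. Combined with Step 1, the denominator is at least $c_0^{\,s}(s/n)^{s+1}\prod h(Y_i)\,e^{-O(s)}$.

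Step 3 bounds the numerator in expectation over the noise coordinates. I take $\E$ conditional on $\{Y_i\}_{i\in S}$ and apply Fubini; the noise product integrates to exactly $1$. I split the numerator into $\tau\le Ks/n$, which contributes at most $(Ks/n)$ times the full ratio, and $\tau>Ks/n$. For the latter the bound is
\[
\prod_{i\in S} h(Y_i)\int_{Ks/n}^\infty \tau^{s+1}e^{-n\tau}\diff\tau .
\]
This is a Gamma$(s+2,n)$ upper tail, of size $\approx (s/n)^{s+2}(Ke)^{s}e^{-Ks}$ by a Chernoff bound. Dividing by the Step 2 denominator leaves $(s/n)\exp\{-s(K-\log K-\log(9e^4/2))\}$, once the constants from Steps 1 and 2 are collected into $9e^4/2$. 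This is $o(s/n)$ precisely under the stated condition on $K$. The complement event, where some signal $|Y_i|$ is small, has probability $o(1/n)$. On that event I bound $\E[\tau\mid\bm Y]$ crudely by a polynomial in $n$, using the exponential prior together with the fact that $R_\tau$ grows at most linearly in $\tau$ for large $\tau$, so its contribution is $o(s_n/n)$.

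The main obstacle is Step 1: obtaining a bound $m_\tau(y)/m_{\tau'}(y)\le C\,\tau/\tau'$ for large $|y|$ that is uniform over the whole range of $\tau$, including $\tau\to\infty$ in the numerator and $\tau\sim s/n$ in the denominator. I would attack it through the explicit $\Phi_1$ representation, using a Laplace-type estimate of the $\lambda$-integral around its second mode $\lambda\approx|y|/\tau$ from Section \ref{sec:lambda.posterior}. The ratio in Step 3 is not literally $\E$ of a ratio, so I will make the argument rigorous by working on the event where the deterministic lower bound on the denominator holds.
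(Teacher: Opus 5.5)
Your outline is the paper's proof in all structural respects. It splits the numerator at $Ks_n/n$ and lower-bounds the denominator over the window $[s_n/n,2s_n/n]$. For the noise coordinates it uses $R_\tau(y)\ge e^{-C\tau}$ uniformly in $y$, together with $\E R_\tau(Z)=1$ and Fubini. A Gamma-tail estimate then produces the condition $K-\log K-\mathrm{const}>0$, and the argument ends with a good-event/complement split.

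The genuine difference is in the signal coordinates. The paper uses no tail asymptotics. It takes explicit bounds from Lemma \ref{lm:HS.pdf.UB.LB}: $\pi(Y_i\mid\tau)\ge C_{1,n}(Y_i)\tau/Y_i^2$ on the window, and $\pi(Y_i\mid\tau)\le(2\pi)^{-1/2}e^{-Y_i^2/8}+8\tau/(\sqrt{2\pi^3}\,Y_i^2)$ for \emph{all} $\tau$. It then averages $Y_i^2e^{-Y_i^2/8}$ over $Y_i\sim N(\theta_i,1)$, which is where theta-min and $c>\sqrt6$ enter. Your pointwise cancellation of $h(Y_i)$ on the good event is a viable alternative. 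The near-zero contribution $O(\phi(Y_i))$ is $o(\tau/Y_i^2)$ for $\tau\ge s_n/n$ once $(c-\sqrt2)^2>1$. Two points, however, need repair.

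\textbf{The main obstacle.} The step you flag as the main obstacle is a one-line identity, valid for every $y$ with constant one. Substituting $u=\lambda\tau$,
\[
m_\tau(y)=\int_0^\infty \frac{e^{-y^2/(2(1+u^2))}}{\sqrt{2\pi(1+u^2)}}\,\frac{2}{\pi}\,\frac{\tau}{\tau^2+u^2}\diff u ,
\]
and $\tau_1/(\tau_1^2+u^2)\le\tau_1/(\tau_2^2+u^2)$ give $m_{\tau_1}(y)\le(\tau_1/\tau_2)\,m_{\tau_2}(y)$ for $\tau_1\ge\tau_2$. This is the paper's \eqref{eq:marginal.monotone}.

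Take $h(y)=m_{\tau_0}(y)/\tau_0$ with $\tau_0=2s_n/n$. The inequality then gives both your lower bound on the window and your upper bound on $[Ks_n/n,\infty)$, for every $\bm Y$. So for this lemma the tail expansion and the good event, and with them theta-min, can be dispensed with.

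Two claims in your Step 1 should be dropped. The two-sided equivalence $m_\tau(y)=\tau h(y)(1+o(1))$ "uniformly up to polynomial $\tau$" is false once $\tau\gtrsim|y|$, where $m_\tau(y)$ decays like $\tau^{-1}\log\tau$. Only the one-sided bound survives, and that is all you use. The remark about $\tau\approx s_n/n^2$ also fails for $c$ near $\sqrt6$, since $(c-\sqrt2)^2\approx1.07$. This is harmless because you only need $\tau\ge s_n/n$.

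\textbf{The complement event.} If you keep the good-event route, this step does not close as written. A crude bound polynomial in $n$, times a probability $o(1/n)$ (here $O(s_n/n^2)$), is not $o(s_n/n)$ unless the bound is $O(1)$, or at least $o(n)$.

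The exact monotonicity supplies an $O(1)$ bound. Since $\tau\mapsto\pi(\bm Y\mid\tau)/\tau^n$ is nonincreasing, the posterior is a $\Gamma(n+1,n)$ density reweighted by a nonincreasing function. The covariance inequality then gives $\E[\tau\mid\bm Y]\le(n+1)/n$ for every $\bm Y$, which is exactly how the paper treats $\mathcal B_n^c$.

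A Laplace-type estimate would not do here if its per-coordinate constant is $C>1$, or if it holds only for large $|y|$. On this event some $Y_i$ are small, and the $n$-fold product would produce a factor $C^n$.
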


\begin{lemma}\label{lm:tau.not.undershooting}
    Under the theta-min condition in Assumption \ref{assumption:beta.min}, if we impose the exponential hyperprior \eqref{eq:prior.of.tau} on $\tau$, then for $c>\sqrt{6}$,
    \begin{equation}
        \sup_{\bm\theta\in\Theta_n(s_n, c)}\E_{\bm Y \mid \bm\theta}\E\left[\log \frac 1 \tau\mid\bm Y\right] \leq \log\frac{n}{s_n} + O(1).
    \end{equation}
\end{lemma}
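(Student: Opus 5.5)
We prove Lemma \ref{lm:tau.not.undershooting}, i.e.\ that the posterior does not put too much mass on very small $\tau$. We work directly with the closed form in Lemma \ref{lm:post.tau}. Write
\[
f(\tau)=\tau^n\prod_{i}\Phi_1\left(1,1,\tfrac32,-\tfrac{Y_i^2}{2},1-\tau^2\right).
\]
For $\tau\le 1$ we use the integral representation
\[
\tau\,\Phi_1\left(1,1,\tfrac32,-\tfrac{y^2}{2},1-\tau^2\right)=c\int \frac{\tau}{\sqrt{1+\lambda^2\tau^2}}\,e^{-y^2/(2(1+\lambda^2\tau^2))}\,\frac{1}{1+\lambda^2}\,\diff\lambda ,
\]
which is, up to the factor $e^{y^2/2}$, the marginal likelihood $m_\tau(y)/\phi(y)$. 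This yields two-sided bounds on each factor.

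\emph{Noise factors} ($\theta_i=0$). Here $m_\tau(y)/\phi(y)=1+O(\tau\,\cdot)$ in the sense of the van der Pas et al.\ expansions. In particular it is bounded below by $1-C\tau$ uniformly, and above by $1+C\tau e^{y^2/2}/|y|$-type terms.

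\emph{Signal factors.} For large $|y|$ the second mode dominates, giving
\[
m_\tau(y)/\phi(y)\asymp \tau\,e^{y^2/2}/y^2 \quad\text{for } |y|\gtrsim\sqrt{2\log(1/\tau)}.
\]
In general it is bounded below by $c\,\tau e^{y^2/2}/(1+y^2)$, and this lower bound holds for all $\tau\le1$.

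The key step is a ratio bound. For $\tau<\tau_0:=s_n/n$, we compare the unnormalized posterior at $\tau$ with its values on $[\tau_0,2\tau_0]$. The signal factors contribute roughly $(\tau/\tau_0)^{s_n}$, since each signal factor is approximately linear in $\tau$ once $|Y_i|^2\ge 2\log(1/\tau)$. The noise factors contribute at most $\prod(1+C\tau_0 e^{Y_i^2/2}/|Y_i|)$. We control this noise product on the event where $\sum_{i:\theta_i=0}\tau_0 e^{Y_i^2/2}\wedge$(truncation) is $O(s_n)$; this event has expectation control by Lemma \ref{lm:tau.not.overshooting}-style moment computations.

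Consequently, with high probability,
\[
\pi(\tau\mid\bm Y)/\pi([\tau_0,2\tau_0]\mid\bm Y)\lesssim \tau_0^{-1}(\tau/\tau_0)^{s_n}e^{O(s_n)}
\]
for $\tau<\tau_0$; the prior $ne^{-n\tau}$ is essentially flat there. Under the theta-min condition with $c>\sqrt6$, we have $|Y_i|\ge(c-\epsilon)\sqrt{2\log n}$ with probability $1-n^{-K}$, so the signal approximation holds down to $\tau\ge n^{-3}$, say. For even smaller $\tau$ we use the crude bound $f(\tau)\le \tau^{s_n}\cdot$const.

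We then write
\[
\E[\log(1/\tau)\mid\bm Y]=\log(1/\tau_0)+\int_0^{\tau_0}\pi(\tau\mid\bm Y)\log(\tau_0/\tau)\,\diff\tau+(\text{nonpositive part}).
\]
The middle integral is bounded by
\[
\int_0^1 u^{s_n}\log(1/u)\,\diff u\,\cdot e^{O(s_n)}\cdot\frac{\tau_0}{\tau_0}\;\lesssim\; e^{O(s_n)}/(s_n+1)^2 .
\]
Here the $e^{O(s_n)}$ needs care: it must be absorbed by refining the comparison interval to $[\tau_0,K\tau_0]$, so that the signal gain $K^{s_n}$ beats $e^{Cs_n}$. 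This reduces the middle integral to $O(1)$. On the complement event, of probability $O(n^{-2})$, we bound $\E[\log(1/\tau)\mid\bm Y]$ by a polynomial in $\log n$ and $\max Y_i^2$, giving a negligible contribution after taking $\E_{\bm Y\mid\bm\theta}$.

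The main obstacle is the uniform (in $\tau\in(0,\tau_0)$) lower bound on the signal factors relative to the noise product, i.e.\ showing the noise coordinates do not inflate the posterior at small $\tau$ by more than $e^{o(s_n)}$ or a constant per signal. I would first try bounding
\[
\E\left[\log\left(1+C\tau_0e^{Z^2/2}/|Z|\right)\right]\lesssim \tau_0\sqrt{\log(1/\tau_0)},
\]
so that summing over $n-s_n$ noise coordinates gives $O(s_n\sqrt{\log(n/s_n)})$. If that is too large, I would exploit monotonicity in $\tau$ of each noise factor: the noise factors are increasing in $\tau$, so they only favor larger $\tau$ and can be dropped in the ratio for $\tau<\tau_0$. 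That monotonicity argument, if verified, removes the obstacle entirely.
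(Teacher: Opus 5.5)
Your skeleton matches the paper's: split $\E[\log(1/\tau)\mid\bm Y]$ at a point of order $s_n/n$, lower-bound the normalizer by the posterior mass on $[s_n/n,2s_n/n]$, let each signal contribute a factor $\tau/\tau_0$, and integrate $\log(\tau_0/\tau)$ against $(\tau/\tau_0)^{s_n}$. The gap is the step you flag yourself, namely absorbing the factor $e^{Cs_n}$. That factor comes from the variation of the prior over $(0,2\tau_0)$, from noise bounds such as $\pi(Y_i\mid\tau')\ge\phi(Y_i)e^{-\tau'}$, and from the constants in the signal bounds. The remedy you propose does not work. Enlarging the comparison interval to $[\tau_0,K\tau_0]$ gives no gain that grows with $K$. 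The signal gain $(\tau'/\tau_0)^{s_n}$ is paid for by the exponential prior, because $\tau'^{s_n}e^{-n\tau'}$ is maximized exactly at $\tau'=s_n/n=\tau_0$. At $\tau'=K\tau_0$ the net factor is $K^{s_n}e^{-(K-1)s_n}\le 1$, and the noise lower bound $e^{-(n-s_n)\tau'}$ only makes this worse, so the middle integral stays of order $e^{Cs_n}/(s_n+1)^2$. The fix goes the other way: keep the comparison interval and move the cut down to $\tau_n^\ast=C_\tau s_n/n$ with $C_\tau$ a small constant, i.e.\ use $\E[\log(1/\tau)\mid\bm Y]\le\log(1/\tau_n^\ast)+\E[\log(\tau_n^\ast/\tau)\one{\tau<\tau_n^\ast}\mid\bm Y]$. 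The signals then contribute $C_\tau^{s_n}$, which beats $e^{Cs_n}$ once $C_\tau<e^{-C}$, at the price of an additive $\log(1/C_\tau)=O(1)$. This is precisely the paper's choice of $\tau_n^\ast$.

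The noise coordinates also need different handling, because your fallback monotonicity is false. We have $\pi(y\mid\tau)/\phi(y)=\E_{\mu\sim\pi(\cdot\mid\tau)}[e^{y\mu-\mu^2/2}]$, which at $y=0$ equals $\frac{2}{\pi}\int_0^\infty(1+\lambda^2)^{-1}(1+\lambda^2\tau^2)^{-1/2}\diff\lambda$ and is strictly decreasing in $\tau$. So noise coordinates near zero favor small $\tau$ and cannot be dropped from the ratio. Moreover, $\tau\Phi_1(1,1,\frac32,-\frac{y^2}{2},1-\tau^2)=\frac{\pi}{2}e^{-y^2/2}\,\pi(y\mid\tau)/\phi(y)$, so the same objection applies to the monotonicity of $H(y,\tau)$ that the paper invokes on its exceptional event $\mathcal C_n^c$; do not borrow that step.

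What the paper does on the main event is never take logarithms of the noise product. After lower-bounding the normalizer, $\Pi\{\tau<\tau_n^\ast e^{-t}\mid\bm Y\}$ is bounded by an integral that is linear in $\prod_{i\notin\mathcal S}\pi(Y_i\mid\tau)/\phi(Y_i)$. Each factor has expectation exactly one under $Y_i\sim N(0,1)$, so Fubini removes the noise without any event or pathwise control. On $\{\min_{i\in\mathcal S}|Y_i|\ge 1\}$ the signal factors then have expectation at most $(C\tau+C\delta_n)^{s_n}$ with $\delta_n\lesssim n^{-c^2/5}\log n$.

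That floor $\delta_n$ also corrects your small-$\tau$ step: $f(\tau)\le\mathrm{const}\cdot\tau^{s_n}$ is false. As $\tau\downarrow 0$ we have $\pi(Y_i\mid\tau)\to\phi(Y_i)$, so the likelihood tends to a positive limit, and a signal factor is linear in $\tau$ only above a level of order $Y_i^2e^{-Y_i^2/2}$. Finally, a bound on your exceptional event that is polynomial in $\max_i Y_i^2$ is not uniform over $\Theta_n(s_n,c)$, since $|\theta_i|$ is unbounded there. The contribution of large coordinates has to be kept out of that bound.
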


Lemma \ref{lm:tau.not.overshooting} guarantees that the posterior distribution of $\tau$ does not overshoot, in that the posterior mean of $\tau$ does not exceed the order of $\tau_{n,0}$.
On the contrary, Lemma \ref{lm:tau.not.undershooting} ensures that the posterior does not undershoot either, in that the posterior mean of $\log(1/\tau)$ is asymptotically upper bounded by exactly $\log(n/s_n)$.
The proof of Lemma \ref{lm:tau.not.overshooting} is given in Appendix \ref{sec:pf.lm.tau.not.overshooting}, while the proof of Lemma \ref{lm:tau.not.undershooting} is given in Appendix \ref{sec:pf.lm.tau.not.undershooting}.

The following Theorem \ref{thm:adaptive.beta.min} guarantees that under the theta-min condition, the full-Bayes Horseshoe prior achieves a sharper rate of predictive risk than \eqref{eq:adaptive.risk.fixed.tau}.
\begin{theorem}\label{thm:adaptive.beta.min}
    Under Assumption \ref{assumption:beta.min} and the hierarchical Horseshoe prior that imposes a hyperprior \eqref{eq:prior.of.tau} on $\tau$, for any $c>\sqrt{6}$, up to a universal constant $C>0$,
    \begin{equation}\label{eq:adaptive.beta.min}
        \sup_{\bm\theta \in \Theta_n(s_n,c)} \rho_n(\bm\theta, \hat p_\pi)
        \leq C \frac{s_n}{1+r} \sqrt{\log\frac{n}{s_n}} (1+o(1)).
    \end{equation}
\end{theorem}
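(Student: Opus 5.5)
The plan is to bound the risk of the hierarchical predictive density by averaging fixed-$\tau$ quantities against the posterior $\pi(\tau\mid\bm Y)$. Then we control that average with Lemmas \ref{lm:tau.not.overshooting} and \ref{lm:tau.not.undershooting}.

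\textbf{Reduction to per-coordinate terms.} Under the hierarchical prior, the predictive density is $\hat p_\pi(\tilde{\bm y}\mid\bm y)=\int \prod_i \hat p_{\tau}(\tilde y_i\mid y_i)\,\pi(\tau\mid\bm y)\diff\tau$, where $\hat p_\tau$ is the univariate fixed-$\tau$ Horseshoe predictive. Convexity of KL in its second argument then gives
\[
L_n(\bm\theta,\hat p_\pi)\le \int \sum_{i=1}^n L(\theta_i,\hat p_\tau(\cdot\mid Y_i))\,\pi(\tau\mid\bm Y)\diff\tau .
\]
This is the analogue of the decomposition and spectroscopy results deferred to Appendix \ref{sec:hierarchical.decomp.spec}, which we invoke. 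Within each $\tau$ we apply \eqref{eq:loss.spectroscopy} again to reduce to the Gaussian-prior losses $L(\theta_i,\hat p_\lambda)$, weighted by $\pi(\lambda_i\mid Y_i,\tau)$. We split the coordinates into noise ($\theta_i=0$) and signal ($|\theta_i|>c\sqrt{2\log n}$) coordinates.

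\textbf{Noise coordinates.} For $\theta_i=0$ we use the pointwise form of \eqref{eq:risk.spectroscopy}, namely the integrand $\frac{1-v}{6}\frac{\Phi_1(1,1,5/2,\cdot)}{\Phi_1(1,1,3/2,\cdot)}Y_i^2$, together with the split at $|Y_i|=\sqrt{2\log(1/\tau)}$ from the phase transition. Below the split, the ratio is $O(\tau\sqrt{\log(1/\tau)})$, up to factors handled as in van der Pas et al. Above the split, the Gaussian tail gives $O(\tau\sqrt{\log(1/\tau)})$. So the noise contribution is at most $(1-v)\,C\,(n-s_n)\,\tau\sqrt{\log(1/\tau)}$ per value of $\tau$.

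We must then integrate this against $\pi(\tau\mid\bm Y)$, and here is a subtlety: $\tau$ depends on $\bm Y$, including the noise coordinates. I would first use $\tau\sqrt{\log(1/\tau)}\le \tau\log(1/\tau)+\tau$. Then I would bound $\E[\tau\log(1/\tau)\mid\bm Y]$ by splitting the range of $\tau$ at $K s_n/n$. On $\tau\le Ks_n/n$ the monotonicity of $x\log(1/x)$ handles the term. On $\tau> Ks_n/n$ we use Markov-type control from Lemma \ref{lm:tau.not.overshooting}, whose proof gives exponential tails of the posterior above $Ks_n/n$. This yields a noise contribution of order $(1-v)s_n\sqrt{\log(n/s_n)}$. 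To decouple $Y_i$ from $\tau$, I would bound the integrand uniformly in $\tau$ on each piece, that is, take a supremum over $\tau$ in a dyadic shell. This is where I expect the most technical friction.

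\textbf{Signal coordinates and the main obstacle.} For $\theta_i\ne0$ I would use the fixed-$\tau$ signal analysis from the proof of Theorem \ref{thm:minimax.fixed.tau}, split at $|Y_i|=\sqrt{2v\log(1/\tau)}$. That analysis gives a per-signal bound of roughly $(1-v)\log(1/\tau)$. Since $|\theta_i|>c\sqrt{2\log n}$ with $c>\sqrt6$, the event that $|Y_i|$ falls below the threshold has probability $n^{-\text{const}}$, which kills the large-loss branch. On the complementary event, $\lambda$ escapes shrinkage and the loss is essentially $\frac{1-v}{2}\cdot\frac{1}{v}(Y_i-\theta_i)^2$ plus a bias term. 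That bias term is controlled by the posterior mass near $\lambda=0$, and it decays like $\tau$ times $e^{Y_i^2/2}$ inverse, which is small.

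The residual dependence on $\tau$ then enters through $\log(1/\tau)$ weights from the spectroscopy mass at small $\lambda$. By Lemma \ref{lm:tau.not.undershooting}, their expectation is at most $\log(n/s_n)+O(1)$. With the theta-min condition these weights appear only through the square root of the tail-integrated contribution, and Cauchy–Schwarz gives $\sqrt{\E\log(1/\tau)}$. Combining gives $C(1-v)s_n\sqrt{\log(n/s_n)}$ for the signals. Since $1-v=(1+r)^{-1}$, the result is \eqref{eq:adaptive.beta.min}.

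I expect the main obstacle to be handling the joint dependence of $\pi(\tau\mid\bm Y)$ on all coordinates when taking $\E_{\bm Y\mid\bm\theta}$. My first attempt would be to bound each per-coordinate integrand by a function $f(\tau)$ monotone in $\tau$, times a $Y_i$-dependent factor with uniformly bounded moments, and then apply Hölder's inequality together with the two lemmas.
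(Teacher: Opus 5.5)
Your first step (convexity of KL in $\tau$, then per-coordinate terms and spectroscopy) matches the paper. The noise-coordinate step, however, has a genuine gap, and it is the one you flag yourself. The bound ``$(1-v)C\tau\sqrt{\log(1/\tau)}$ per value of $\tau$'' bounds $\E_{Y_i}[\,\cdot\,]$ at fixed $\tau$, and it cannot be integrated against $\pi(\tau\mid\bm Y)$, because that posterior depends on $Y_i$. Pointwise in $Y_i$, the dominant term of the spectroscopy bound below the split is $(1-v)\sqrt a\,\tau e^{Y_i^2/2}$. What you actually need to control is therefore $\E_{\bm Y}\bigl[\E[\tau\mid\bm Y]\,e^{Y_i^2/2}\one{|Y_i|\le\zeta_{n,1}}\bigr]$.

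Your proposed remedy (a monotone $f(\tau)$ times a $Y_i$-factor with bounded moments, then H\"older) fails here. The $Y_i$-factor is $e^{Y_i^2/2}\one{|Y_i|\le\zeta_{n,1}}$, and for any fixed $q>1$ its $L^q$ norm is of order $(n/s_n)^{1-1/q}$ up to logarithmic factors. So even granting $\|\E[\tau\mid\bm Y]\|_{L^p}\lesssim s_n/n$, H\"older gives $(s_n/n)^{1/q}$ per noise coordinate, a total of $s_n(n/s_n)^{1-1/q}$, which is polynomially too large. Only $q=1$ works, and that requires a bound on the posterior mean of $\tau$ that is uniform in the value of $Y_i$.

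This is the missing idea. The paper conditions on $Y_i=y$, uses $e^{y^2/2}\phi(y)=(2\pi)^{-1/2}$, and applies the leave-one-out Corollary \ref{cor:loo.overshoot.fixedy}: $\E_{\bm Y_{-i}}\E[\tau\mid\bm Y_{-i},Y_i=y]\le K\tau_{n,0}(1+o(1))$ for every $y$. That corollary rests on the scaling inequality \eqref{eq:marginal.monotone}, $\pi(y\mid\tau_1)\le(\tau_1/\tau_2)\pi(y\mid\tau_2)$ for $\tau_1\ge\tau_2$. It guarantees that a single coordinate, whatever its value, tilts the posterior of $\tau$ by at most a factor linear in $\tau$. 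Integrating over $|y|\le\zeta_{n,1}$ then gives $\frac{2K}{\sqrt\pi}\frac{s_n}{n}\sqrt{\log(n/s_n)}$ per noise coordinate.

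Relatedly, the paper splits at the deterministic threshold $\zeta_{n,1}=\sqrt{2\log(n/s_n)}$, not at the random $\sqrt{2\log(1/\tau)}$. Above it, it uses the $\tau$-free bound $L(0,\hat p(\cdot\mid y,\tau))\le\frac{1-v}{2}y^2$, so that piece needs no posterior control at all.

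There are two further problems.

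First, your step $\tau\sqrt{\log(1/\tau)}\le\tau\log(1/\tau)+\tau$ discards exactly the factor the theorem is about. At $\tau\asymp s_n/n$ it yields $\frac{s_n}{n}\log\frac{n}{s_n}$ per noise coordinate, i.e.\ a total of order $s_n\log(n/s_n)$ (the minimax rate), not $s_n\sqrt{\log(n/s_n)}$.

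Second, in the signal case the residual you describe is a decreasing function of $\tau$ that blows up when the posterior undershoots. In your version it is a bias term of size roughly $\theta_i^2\min\{1,Y_i^2e^{-Y_i^2/2}/\tau\}$; in the paper's version it is $\log(1+Y_i^2e^{-Y_i^2/4}/\tau)$. Lemma \ref{lm:tau.not.undershooting} controls only $\E\log(1/\tau)$, and the Cauchy--Schwarz step you sketch does not connect to such a term. The paper instead shows that $\tau\mapsto M_{\bm Y}(\tau)=\prod_i H(Y_i,\tau)$ is increasing for every $\bm Y$. Hence the posterior expectation of any nonincreasing function of $\tau$ is dominated by its expectation under the $\mathrm{Exp}(n)$ prior. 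This gives $n\,\E[Y_i^2e^{-Y_i^2/4}]=o(1)$ under the theta-min condition, and hence an $O(1)$ bound per signal. Lemmas \ref{lm:tau.not.overshooting} and \ref{lm:tau.not.undershooting} are invoked only on polynomially small events: small $|Y_i|$, small $|Y_i+\tilde Y_i/r|$, and $\tau\ge 1$.
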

\begin{proof}
    See Appendix \ref{sec:pf.thm.adaptive.beta.min}.
\end{proof}

\begin{remark}
    The rate in \eqref{eq:adaptive.beta.min} is even lower than the minimax rate \eqref{eq:minimax.rate} over $\Theta_n(s_n)$.
    This is not a contradiction, since Theorem \ref{thm:adaptive.beta.min} holds over a restricted set $\Theta_n(s_n,c) \subseteq \Theta_n$.
    With the theta-min condition of Assumption \ref{assumption:beta.min}, the signal case $\theta_i \neq 0$ becomes much easier to control, and has a negligible contribution on the adaptive predictive risk upper bound.
    The main pillar becomes the noise case $\theta_i = 0$ instead.
\end{remark}

\section{Numerical Study}\label{sec:simulation}

In this section, we present a simulation study demonstrating the accuracy of predictive inference using both the Horseshoe prior and the spike-and-slab family of priors.

\subsection{How Posterior of $\tau$ Adapts to Unknown Sparsity}\label{sec:posterior.experiment}

With a random global shrinkage parameter $\tau$ in place, the Horseshoe predictive density is written as
\begin{equation}\label{eq:pred.density.ada}
    \hat p_\pi(\bm{\tilde y} \mid \bm y) = \int \prod_{i=1}^n \left\{\int \hat p_{\lambda_i}(\tilde y_i \mid y_i,\tau) \pi(\lambda_i \mid y_i) \diff \lambda_i \right\} \pi(\tau\mid \bm y) \diff \tau.
\end{equation}
The posterior of global shrinkage factor $\tau$ is thus worth inspecting.
Recall that $\pi(\tau \mid \bm y)$ has a closed-form solution that depends on the hyperprior of $\tau$; see Lemma \ref{lm:post.tau}.
We hope that the posterior of $\tau$ automatically concentrates around the oracle fixed value $\tau_{n,\alpha}$ specified in Theorem \ref{thm:minimax.fixed.tau}.
In that case, the posterior $\pi(\tau \mid \bm y)$ filters out proper global shrinkage parameters such that the predictive density, as if we knew the true underlying sparsity level $s_n$.
Thus can we obtain an adaptive predictive density comparable to that of the oracle case (i.e. $s_n$ known).

In the following experiment, we inspect the posterior distribution $\pi(\tau \mid \bm y)$ and compare it to the oracle calibration $\tau_{n,\alpha}$.
We compare the following two setups:
\begin{itemize}
    \item {\bf Setup 1}. The case when signals and noise are perfectly separated: $\bm\theta \in \R^n$ contains $s_n$ many signals of size $3\sqrt{2\log n}$, else zero.
    \item {\bf Setup 2}. The case when weak signals less than detection threshold are present: $\bm\theta\in \R^n$ contains $s_n$ many strong signals of size $3\sqrt{2\log n}$, $(n/2-s_n)$ many weak signals of size $0.3\sqrt{2\log n}$, and else zero.
\end{itemize}
We choose an exponential distribution with rate $n$ as the hyperprior of $\tau$, randomly sample a $\bm y$ from the likelihood, and plot the posterior $\pi(\tau \mid \bm y)$ with the range of $n = \{100, 300, 600, 1000\}$.
For this experiment, we set $s_n = n/10$.
The result is shown on Figure \ref{fig:post.tau}.
Along with the posterior, two oracle calibrations $\tau_{n,\alpha}$ with $\alpha = 0$ and $\alpha = 1/2$ have also been marked in these plots.

\begin{figure}[h]
    \centering
    \includegraphics[width=3cm]{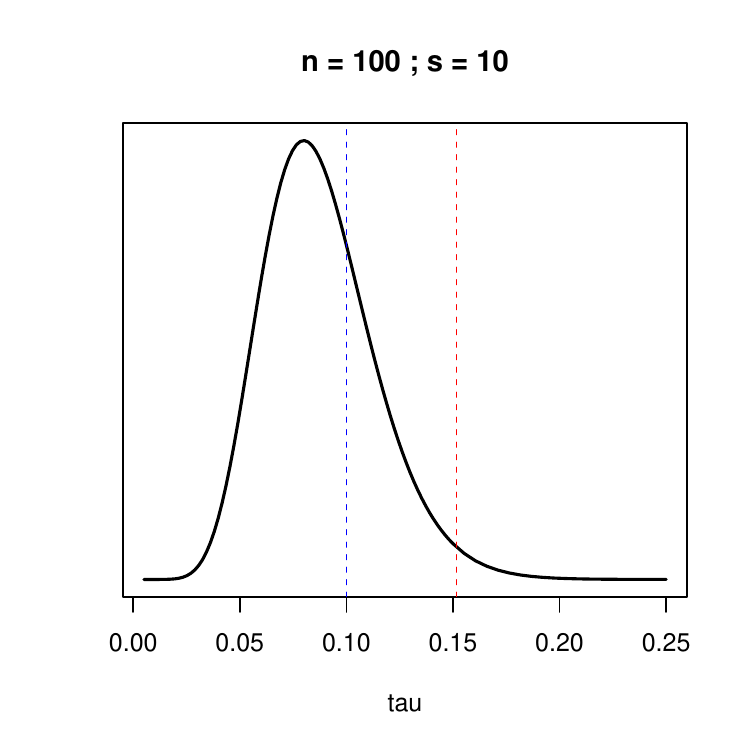}
    \includegraphics[width=3cm]{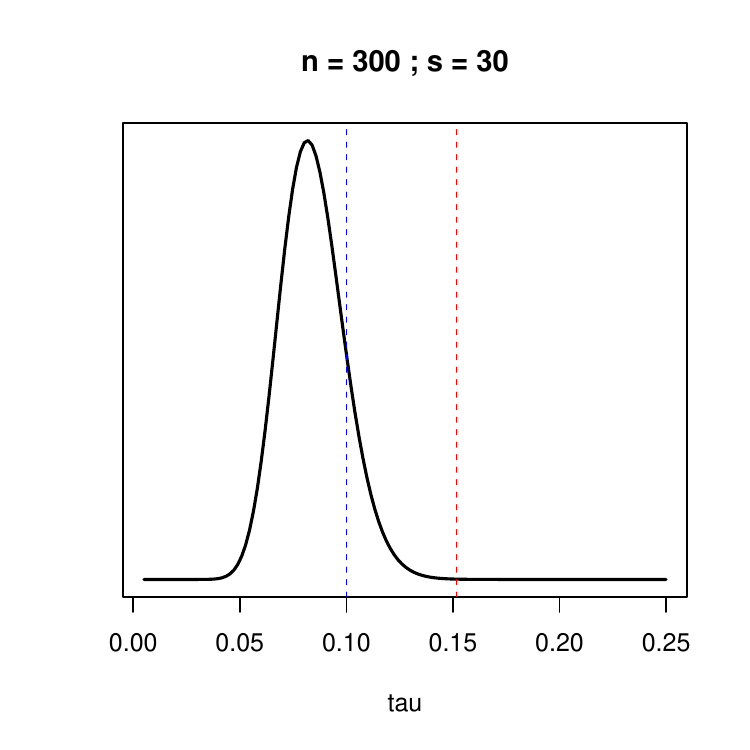}
    \includegraphics[width=3cm]{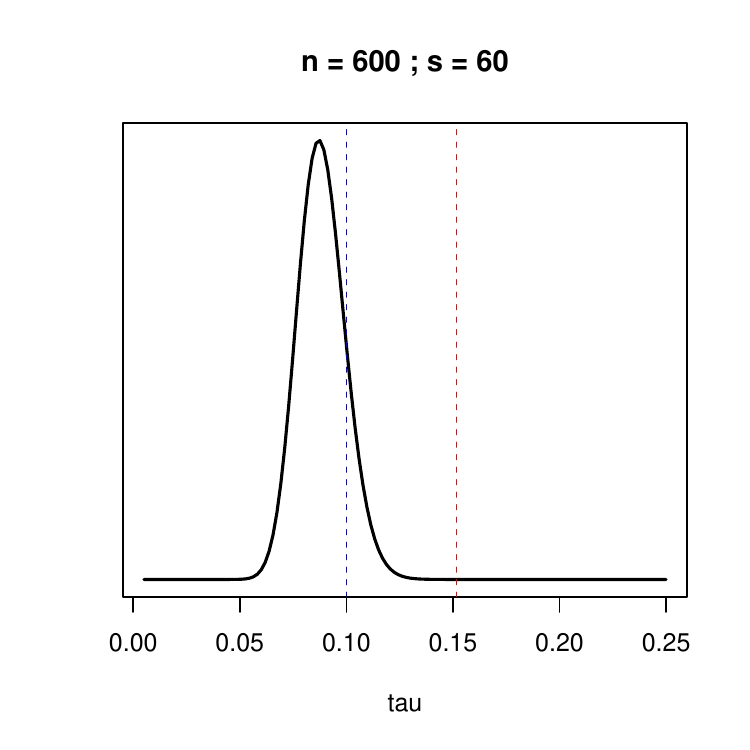}
    \includegraphics[width=3cm]{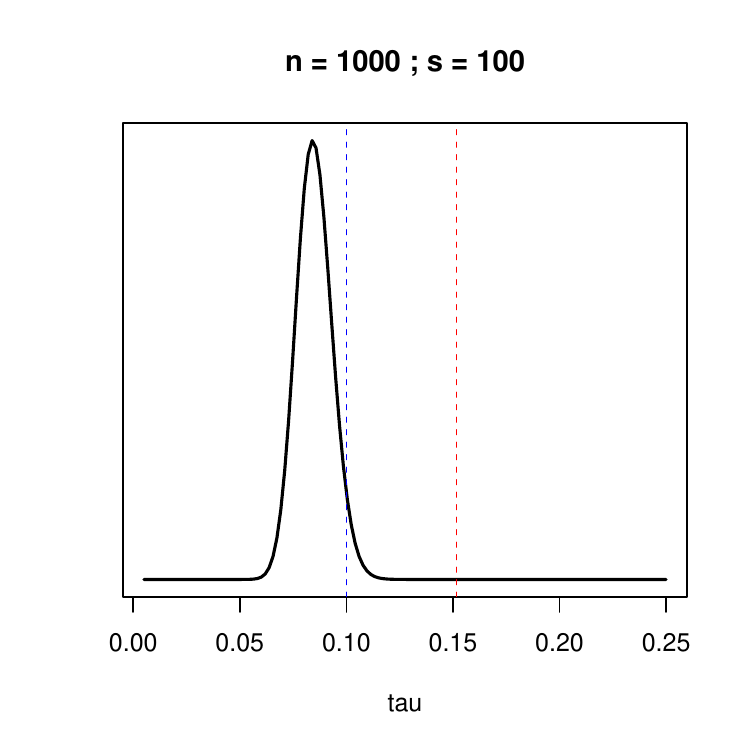}

    \includegraphics[width=3cm]{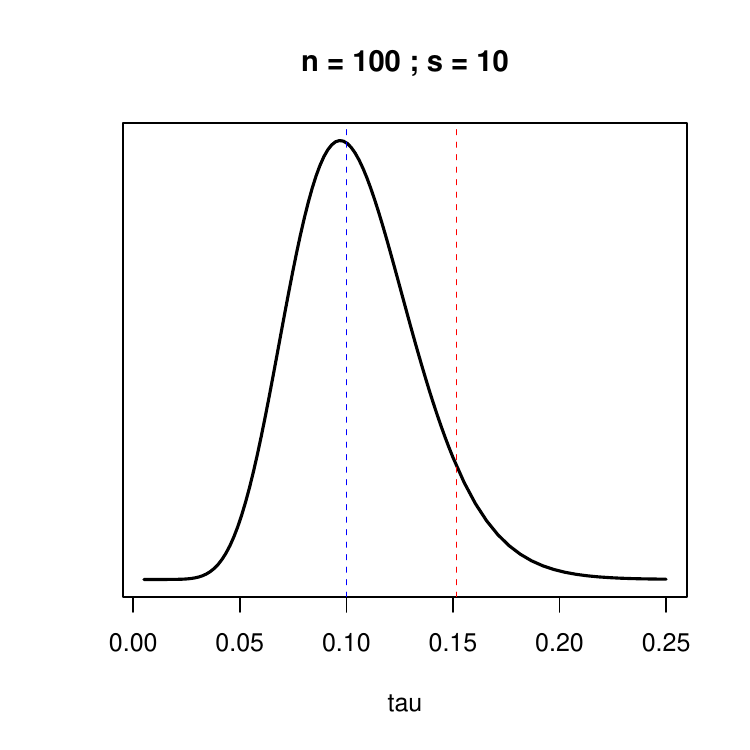}
    \includegraphics[width=3cm]{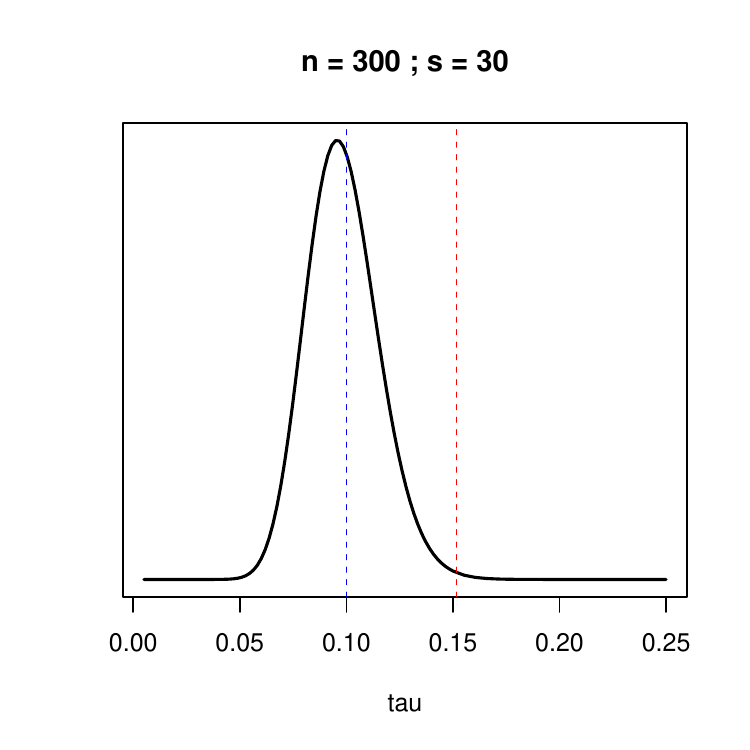}
    \includegraphics[width=3cm]{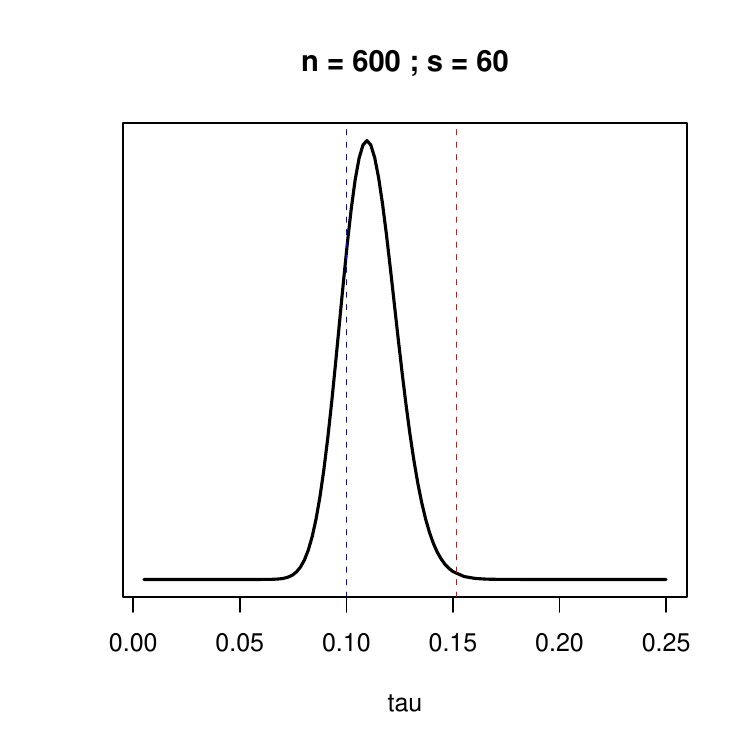}
    \includegraphics[width=3cm]{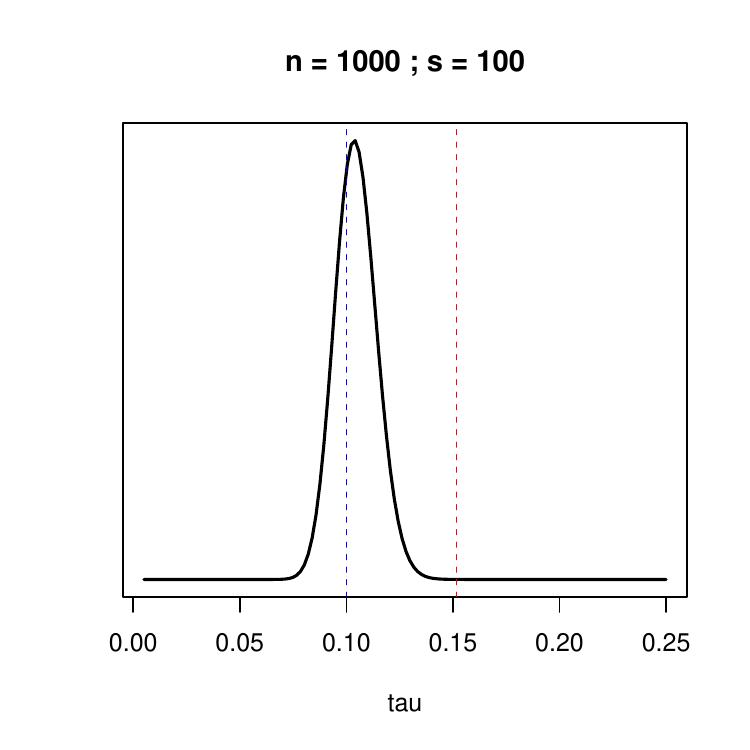}
    \caption{The posterior of $\tau$, $\pi(\tau \mid \bm y)$, under the hierarchical Horseshoe model.
    Here, $\bm y$ is randomly generated from a fixed $\bm\theta$ under two different settings.
    On the first row, $\bm\theta$ follows Setup 1.
    On the second row, $\bm\theta$ follows Setup 2.
    The hyperprior of $\tau$ was selected as an exponential distribution with rate $n$, and we set $s_n = n/10$ all the time.
    The blue dashed vertical line represents the minimum oracle calibration $\tau_{n,0} = s_n/n$, while the red dashed vertical line represents the maximum oracle calibration $\tau_{n,1/2} = s_n\sqrt{\log(n/s_n)}/n$.
    }
    \label{fig:post.tau}
\end{figure}

The first observation from Figure \ref{fig:post.tau}
is that the posterior of both hyperparameters shrink with an increasing $n$.
The posterior appears both upper and lower bounded when significant signal is present.
Another important finding is the ability to detect weak signals.
Note that the only difference between Setups 1 and 2 is the presence of $(n/2 - s_n)$ many weak signals, at the value of $0.3 \sqrt{2\log n}$.
These weak signals indeed pushed the posterior slightly to the right, but it is still concentrated around $s_n / n$.
This phenomenon indicates that the hierarchical Horseshoe with exponential hyperprior on $\tau$
is capturing only the strong signals while almost ignoring the weak ones.
This statement is also evident in the following quantitative experiment.

\subsection{Simulating Predictive Risks}\label{sec:KL.risk.experiment}

This experiment computes the predictive KL risk for various priors under a certain underlying model.
For the maximum KL risk for these separable priors over $\Theta_n(s_n)$, see Appendix \ref{sec:max.KL}.

An objective of this numerical study is to explore how the full-Bayes methods adapt to unknown sparsity levels.
To do this, we apply the following setup where there exist both strong and weak signals.
For a fixed $n$, we assign multiple sparsity levels $s_n$.
For each sparsity level, we let $s_n^\ast$ many entries of $\bm\theta$ be $c\sqrt{2\log n}$, $(300-s_n^\ast)$ entries be $\sqrt{2\log n}$, and the rest zero.
Here, the factor $c>0$ represents the signal level, which is set to be 2, 3, or 4.
Throughout this experiment, we let $n = 500$, and $s_n^\ast \in \{25, 50, 100\}$.
This setup contains weak signals not exceeding the detection threshold, which is representative of many real-world applications where a sloppy separation between signal and noise is present.
We have also conducted a similar simulation with only strong signals and noise; see Appendix \ref{sec:risk.sim.setup.A}.
The detailed technical setup and the algorithm are deferred to Appendix \ref{sec:risk.sim.detail}.
In this experiment, we compare the predictive risk in the following three scenarios.
\begin{itemize}
    \item {\bf As if $s_n$ were known}: we calibrate the hyperparameters of all methods to their respective theoretically-proven optimal fixed values.
    For our setup, specifically, there are two sub-cases: either {\bf $s_n$ known as $s_n^\ast$}, the number of strong signals, or {\bf $s_n$ known as $300$}, the number of all signals.
    \item {\bf Does not adapt to $s_n$}: fixed-value priors are still used, but we no longer know the sparsity level $s_n$. Hyperparameters are calibrated to their respective optimal fixed values, with $s_n$ replaced by 1, which is the worst-case scenario.
    \item {\bf Adapts to $s_n$}: we use the hierarchical methods by imposing a hyperprior on the hyperparameter.
\end{itemize}

When $s_n$ is known, we consider the following calibrations.
First, we compute the KL risk for the bi-grid prior with the choice of $\eta = s_n/n$ \citep{mukherjee2022minimax}.
Next, we compute the KL risk for Dirac spike-and-slab (DSnS) prior with the choice of $\eta$ satisfying $\eta / (1-\eta) = s_n/n$ \citep{rockova2023adaptive}, with the rate of the Laplace slab $\lambda$ chosen over a grid (see Appendix \ref{sec:risk.sim.detail} for details).
Finally, we consider the Horseshoe (HS) prior with calibrations $\tau_{n,0}$ and $\tau_{n,1/2}$.
These same calibrations are also considered in the fixed-hyperparameter case that does not adapt to unknown sparsity level.
For the bi-grid method, we choose $\eta = 1/n$.
For Dirac spike-and-slab, we choose $\eta / (1-\eta) = 1/n$.
For the Horseshoe prior, our calibration is $\tau = 1/n$ or $\tau = \sqrt{\log n}/n$.

In the adaptive experiment with unknown $s_n$, we consider the Dirac spike-and-slab prior with the hyperprior $\eta \sim \text{Beta}(2, n+1)$ (denoted by DSnS-Beta) \citep{rockova2023adaptive}, where the Laplace slab rate $\lambda$ is chosen out-of-sample over a grid (see Appendix \ref{sec:risk.sim.detail} for details).
For the Horseshoe prior, we consider the exponential hyperprior with rate $n$ on $\tau$, denoted by HS-Exp.
The main point of interest is to inspect whether the hierarchical models imitate the KL risks of fixed-hyperprior methods as if $s_n = s_n^\ast$ or as if $s_n = 300$ --- a proxy of whether the full-Bayes approach is capable of detecting weak signals.

\begin{table}[t]
\centering
\caption{Kullback-Leibler risks for parameter vector $\bm\theta$.
When adapting to $s_n$, the risks are computed on 1000 data vectors $\bm Y$ and $\tilde{\bm Y}$ generated from $\bm\theta$ for full-Bayes approach.
}
\footnotesize
\label{tab:KL.risk.setup.B}
\renewcommand{\arraystretch}{1.15}
\begin{tabular}{l *{9}{r}}
\toprule
 &
 \multicolumn{3}{c}{$s_n^\ast=25$} &
 \multicolumn{3}{c}{$s_n^\ast=50$} &
 \multicolumn{3}{c}{$s_n^\ast=100$} \\
\cmidrule(lr){2-4}\cmidrule(lr){5-7}\cmidrule(lr){8-10}
Signal Level $c$ & 2 & 3 & 4 & 2 & 3 & 4 & 2 & 3 & 4 \\
\midrule
\multicolumn{10}{c}{\textit{As if $s_n$ were known as $s_n^\ast$, the number of significant signals}}\\
Bi-Grid
& 135.34 & 135.32 & 135.31
& 128.75 & 128.74 & 128.71
& 124.81 & 124.80 & 124.80\\
DSnS
& 143.80 & 143.79 & 143.79
& 135.32 & 135.31 & 135.31
& 127.78 & 127.77 & 127.77\\
HS, $\alpha=1/2$  
& 130.23 & 129.47 & 129.25
& 120.69 & 119.19 & 118.75
& 115.49 & 112.52 & 111.65\\
HS, $\alpha=0$  
& 138.75 & 137.98 & 137.76
& 127.15 & 125.64 & 125.20
& 117.85 & 114.87 & 113.99\\

\addlinespace[2pt]
\multicolumn{10}{c}{\textit{As if $s_n$ were known as 300, the number of all signals}}\\

Bi-Grid
& 118.19 & 118.19 & 118.19
& 119.86 & 119.86 & 119.86
& 123.20 & 123.20 & 123.20\\

DSnS
& 115.47 & 115.47 & 115.47
& 118.43 & 118.43 & 118.43
& 121.29 & 121.29 & 121.29 \\

HS, $\alpha=1/2$
& 106.10 & 105.38 & 105.17
& 108.07 & 106.63 & 106.19
& 112.00 & 109.12 & 108.25 \\

HS, $\alpha=0$
& 104.30 & 103.61 & 103.40
& 106.70 & 105.31 & 104.88
& 111.48 & 108.71 & 107.85 \\

\addlinespace[2pt]
\multicolumn{10}{c}{\textit{$s_n$ unknown; calibration fixed as if $s_n=1$}}\\

Bi-Grid
& 168.39 & 164.21 & 168.25
& 176.49 & 168.13 & 176.21
& 192.69 & 175.96 & 192.13 \\

DSnS
& 160.30 & 160.19 & 160.19
& 155.59 & 155.35 & 155.35
& 145.99 & 145.42 & 145.42 \\
HS, $\alpha=1/2$  
& 158.32 & 157.45 & 157.23
& 154.85 & 153.12 & 152.68
& 147.92 & 144.46 & 143.58 \\
HS, $\alpha=0$  
& 160.44 & 159.59 & 159.37
& 156.76 & 155.06 & 154.62
& 149.40 & 146.01 & 145.12 \\

\addlinespace[2pt]
\multicolumn{10}{c}{\textit{Adapts to $s_n$ --- full-Bayes approach}}\\
DSnS-Beta
& 139.88 & 140.92 & 139.98
& 134.11 & 133.70 & 133.21
& 128.05 & 126.82 & 127.75 \\
HS-Exp
& 132.00 & 131.25 & 130.19
& 124.45 & 122.45 & 122.08
& 117.74 & 114.20 & 113.11 \\

\bottomrule
\end{tabular}
\end{table}

The results are given in Table \ref{tab:KL.risk.setup.B}.
We would like to specifically point out that it is not intended to identify a uniformly best prior.
In the case where the signal is well-separated from the noise (like in Table \ref{tab:KL.risk.setup.A} of Appendix \ref{sec:risk.sim.setup.A}), the spike-and-slab family with a clear Dirac spike on zero attains lower KL risk in most fixed-hyperparameter setups.
With weak signals present, the Horseshoe with a soft continuous spike near zero achieves lower KL risk in most fixed-hyperparameter setups.
It may be wise to choose a more suitable prior with respect to the assumptions in the data.

The most valuable insight reveals itself if we treat fixed-hyperparameter results as benchmarks for the notion of sparsity learned by each hierarchical prior.
DSnS-Beta and HS-Exp are the only two hierarchical full-Bayes methods for predictive inference that, to our knowledge, currently have theoretical guarantees.
Their performance closely imitates the oracle settings with sparsity level known as the number of significant signals (as if $s_n = s_n^\ast$), as opposed to the number of all signals (as if $s_n=300$).
This indicates that, while able to adapt to an unknown sparsity, these full-Bayes hierarchical models only capture the strong signals, and usually cannot distinguish between the weak signals and the noise.
Such a finding agrees with what we learned from the posterior of the hyperparameters in Section \ref{sec:posterior.experiment}.
Overall, this phenomenon may explain the reason why we imposed a theta-min condition (Assumption \ref{assumption:beta.min}) in theory.

\section{Horseshoe Predictive Inference in Practice}\label{sec:real.data.analysis}

In this section, we demonstrate the practical efficacy of our predictive inference method on two real-world datasets, one in image processing, the other in functional analysis of time series.
The Gaussian sequence model that we have considered in this paper can be seamlessly connected to these setups, despite the apparently strict conditions.
We may transform the data into frequency scale using wavelet decomposition for images and functional principal component analysis for time series.
Because of the underlying smoothness, the coefficients obtained by these procedures can be regarded as realizations from a Gaussian distribution with a sparse mean vector and unknown sparsity level.
Our proposed adaptive predictive inference method based on Gaussian sequence model is then a suitable tool.

There is a notable distinction between estimation and predictive inference.
For the tasks of identifying the true underlying mean, e.g. signal denoising, Gaussian mean estimation generally suffices.
Inferential tasks, on the contrary, requires uncertainty quantification, which falls into the scope of this paper.
Such tasks encompass a wide range of real-world applications.
The most immediate is forecasting, where the goal is to quantify the uncertainty of a future observation.
We would like to emphasize, however, a powerful yet usually overlooked application of {\it anomaly detection}.
Based on the observation $\bm Y$, we may construct a predictive set that can be interpreted as the likely range for the next observation to occur, given that the baseline distribution is consistent.
We may thus test whether the observed $\tilde{\bm Y}$ constitutes a significant deviation in distribution.
In this context, predictive inference serves as a rigorous quality control mechanism; it allows us to determine whether $\tilde{\bm Y}$ falls within the same probabilistic envelope as $\bm Y$, effectively testing for discrepancies.

\subsection{Facial Recognition Study on JAFFE Dataset}\label{sec:JAFFE}

The first example is the Japanese Female Facial Expression (JAFFE) Dataset \citep{lyons2020coding, lyons2021excavating} which contains 213 greyscale images (size $256\times256$) of 10 female posers with different facial expressions.
Our goal is to test whether any two images belong to the same person, despite the potentially different facial expressions.
Repeating this task throughout all pairs of images recovers a graph, where each image is a node, and each edge represents identification.
It is then possible to cluster images into groups.
The images within the same group is then recognized as the same person.

\begin{figure}[t]
    \spacingset{1} 
    \centering
    
    \includegraphics[width=0.15\textwidth, height=0.15\textwidth, keepaspectratio=false]{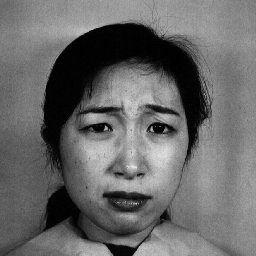}
    \includegraphics[width=0.15\textwidth, height=0.15\textwidth, keepaspectratio=false]{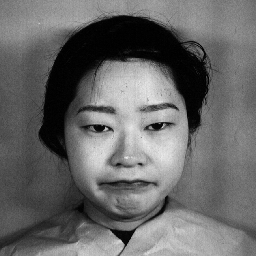}
    \includegraphics[width=0.15\textwidth, height=0.15\textwidth, keepaspectratio=false]{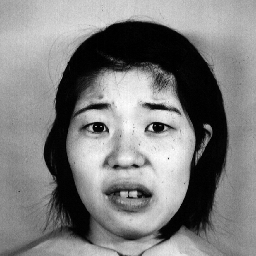}
    \includegraphics[width=0.15\textwidth, height=0.15\textwidth, keepaspectratio=false]{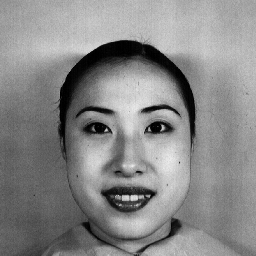}
    \includegraphics[width=0.15\textwidth, height=0.15\textwidth, keepaspectratio=false]{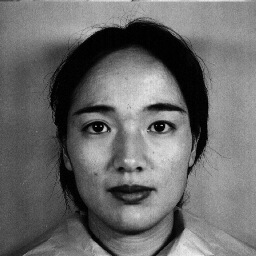}
    
    \vspace{0.05in} 
    
    \includegraphics[width=0.15\textwidth, height=0.15\textwidth, keepaspectratio=false]{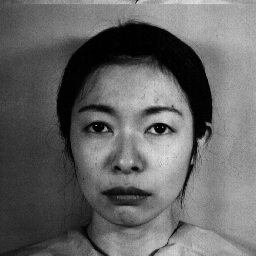}
    \includegraphics[width=0.15\textwidth, height=0.15\textwidth, keepaspectratio=false]{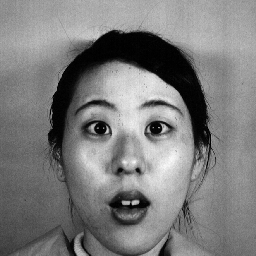}
    \includegraphics[width=0.15\textwidth, height=0.15\textwidth, keepaspectratio=false]{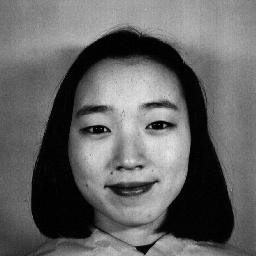}
    \includegraphics[width=0.15\textwidth, height=0.15\textwidth, keepaspectratio=false]{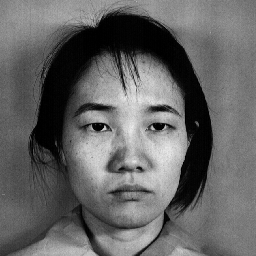}
    \includegraphics[width=0.15\textwidth, height=0.15\textwidth, keepaspectratio=false]{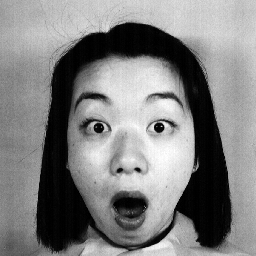}
    
    \caption{Samples from the JAFFE dataset.}
    \label{fig:jaffe.samples}
\end{figure}

We process each image with Daubechies-4 wavelet decomposition (see Appendix \ref{sec:daubechies} for details).
The coefficients satisfy the Gaussian sequence model setup, and can be used for Horseshoe predictive inference.
Consider that we observe a vectorized wavelet coefficient $\bm y_{i_1} \in \R^{n}$ from image $i_1$, standardized to unit variance.
The proposed Horseshoe predictive inference method is capable of recovering noisy observations $\{\hat{\bm y}_{i_1}^{(l)}\}_{1\leq l\leq N}$ from the same mean vector.
Here, the predictive sample size is taken as $N=10,000$ throughout our experiment.
We may then compare these samples from the predictive distribution to the Daubechies-4 wavelet coefficient $\bm y_{i_2}$ from another image $i_2$.
Its proximity to the distribution is measured by the \emph{energy score}:
\begin{equation}\label{eq:JAFFE.energy.score}
E_{i_1, i_2} = \frac{1}{N} \sum_{l=1}^N \| \hat{\bm y}_{i_1}^{(l)} - \bm y_{i_2} \|_2 - \frac{1}{2N^2} \sum_{k=1}^N \sum_{l=1}^N \| \hat{\bm y}_{i_1}^{(k)} - \hat{\bm y}_{i_1}^{(l)} \|_2.
\end{equation}
An energy score closer to zero means that the other image is close to the Bayesian predictive distribution, which suggests that the other observed image may be based on the same sparse mean vector.
On the contrary, a large energy score represents a large discrepancy detected.
For all 213 images, we can therefore run pairwise evaluations and obtain a $213\times 213$ square matrix $\bm E$.
The diagonal entries of $\bm E$ are set to be zero.
Note that $\bm E$ is not necessarily symmetric.
For any $1\leq i_1\ne i_2 \leq 213$, generally $E_{i_1,i_2} \neq E_{i_2,i_1}$.
Figure \ref{fig:JAFFE.E.matrix} shows a visualization of matrix $\bm E$.

\begin{figure}[t]
    \centering
    \begin{subfigure}[b]{0.3\textwidth}
        \centering
        \includegraphics[width=\textwidth]{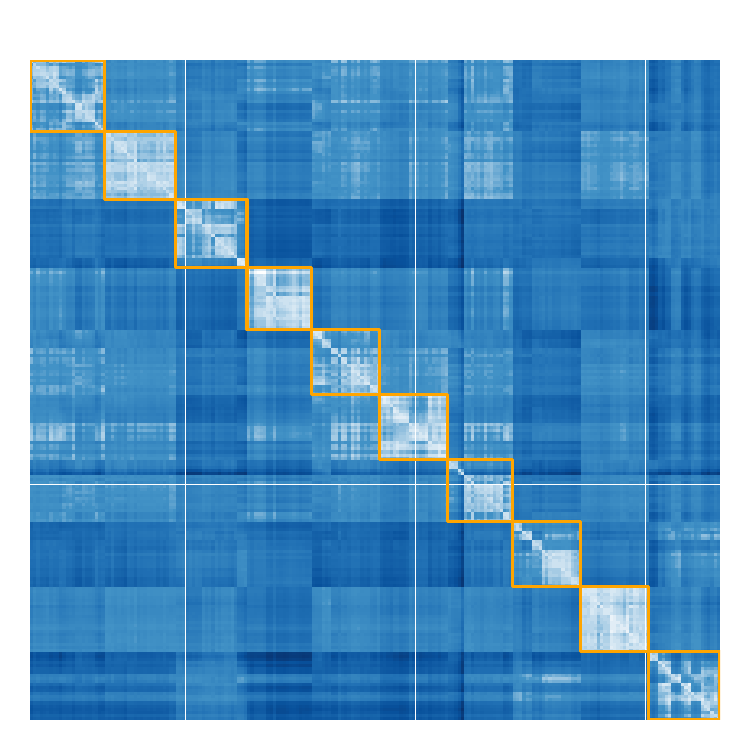}
        \caption{Energy score matrix $\bm E$}
        \label{fig:JAFFE.E.matrix}
    \end{subfigure}
    \hfill 
    \begin{subfigure}[b]{0.3\textwidth}
        \centering
        \includegraphics[width=\textwidth]{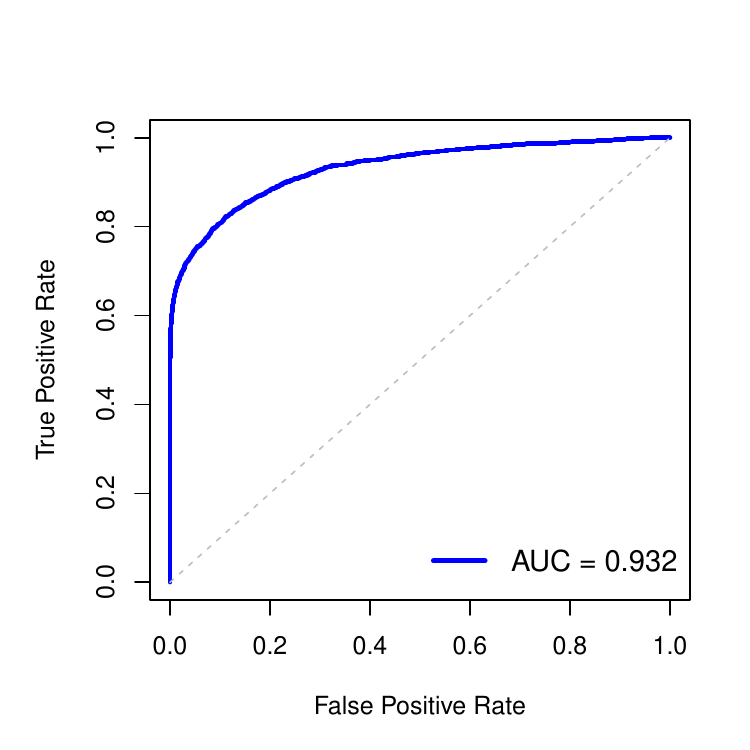}
        \caption{ROC Curve}
        \label{fig:JAFFE.ROC.E}
    \end{subfigure}
    \hfill 
    \begin{subfigure}[b]{0.3\textwidth}
        \centering
        \includegraphics[width=\textwidth]{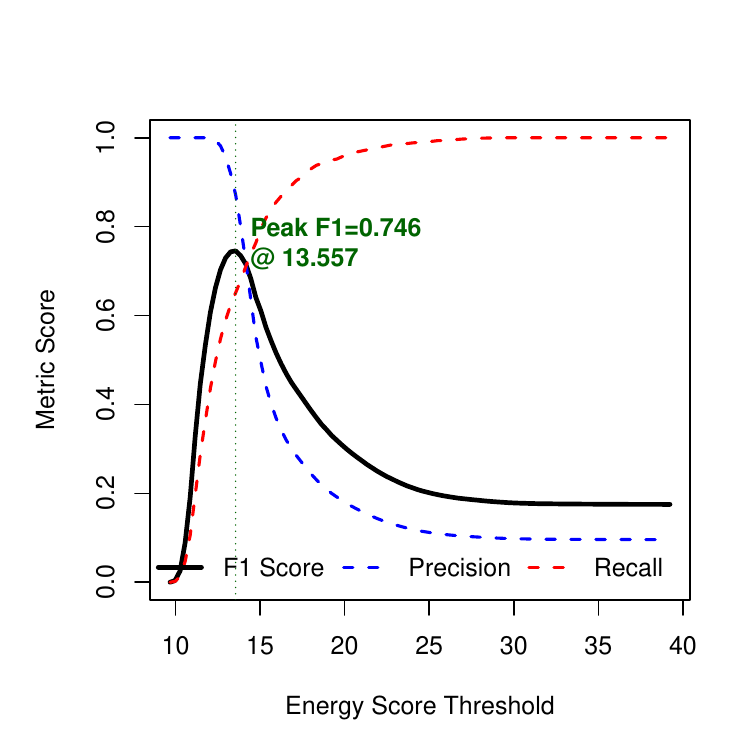}
        \caption{Accuracy of varying $\bar E$}
        \label{fig:JAFFE.threshold.E}
    \end{subfigure}
    
    \caption{Results of the Horseshoe predictive inference method on the JAFFE dataset, using energy score as the metric.
    (a) Heatmap of energy score matrix $\bm E$. 
    Each row and column represents an image.
    Larger energy scores are represented by darker blue.
    The yellow boxes represent images of one subject.
    (b) The ROC curve obtained by varying the pairwise testing cutoff value $\bar E$.
    (c) In-sample selection of optimal threshold $\bar E$, with precision, recall, and F1 score plotted against varying $\bar E$.}
    \label{fig:JAFFE.E.varying}
\end{figure}

To test any pair of images $i_1$ and $i_2$, we build a symmetric matrix $\tilde{\bm E}$, such that
$\tilde E_{i_1, i_2} = \tilde E_{i_2, i_1} = (E_{i_1,i_2} + E_{i_2,i_1}) / 2$,
and accept that the pair of images $i_1$ and $i_2$ are from the same subject if $\tilde E_{i_1, i_2}$ is less than a threshold $\bar E$.
This threshold can be treated as a tuning parameter.
A valid pairwise test should maximize the true positive rate (i.e., accepting when the images are indeed of the same subject), while minimizing the false positive rate (i.e., accepting when the images are of different subjects).
By varying the cutoff value, we obtain a ROC curve, as shown in Figure \ref{fig:JAFFE.ROC.E}.
Our Horseshoe predictive inference method achieves an area under curve (AUC) of 0.932.
We can also plot global precision, recall, and F1 score versus the varying threshold $\bar E$; see Figure \ref{fig:JAFFE.threshold.E}.
When $\bar E = 13.557$, we reach a maximum global F1 score of 0.746.

One caveat, however, is that this choice of $\bar E$ is based on knowledge of true labels.
It is not a valid method to choose thresholds.
Alternatively, we can consider the following three ways for a data-driven choice of $\bar E$:
\begin{itemize}
    \item In the applications where we have a certain amount of labeled images beforehand, we can use those images as a held-out set, on which we run an in-sample selection of optimal threshold $\bar E$.
    In our example, we randomly select one third of the images as held-out set.
    This led to the choice of $\bar E = 13.27$.

    \item In the application cases where we know approximately the number of subjects, we may focus on the number of resulting clusters.
    In our example, we target the number of clusters to be exactly 10, which corresponds to an interval of $\bar E$.
    Its center is selected as the cutoff value.
    This led to $\bar E = 13.62$.

    \item In the fully unsupervised case where none of such information is available, we may consider drawing a histogram of all energy scores.
    The distribution is dual-modal, separating matches and non-matches.
    We can therefore find the valley between the two modes.
    This led to $\bar E = 12.78$.
\end{itemize}
The accuracy of the Horseshoe predictive inference method under each choice of $\bar E$ is demonstrated in Figure \ref{fig:JAFFE.E.results}, in terms of subject-wise precision, recall, and F1 score.

\begin{figure}[t]
    \centering
    \begin{subfigure}[b]{0.24\textwidth}
        \centering
        \includegraphics[width=\textwidth]{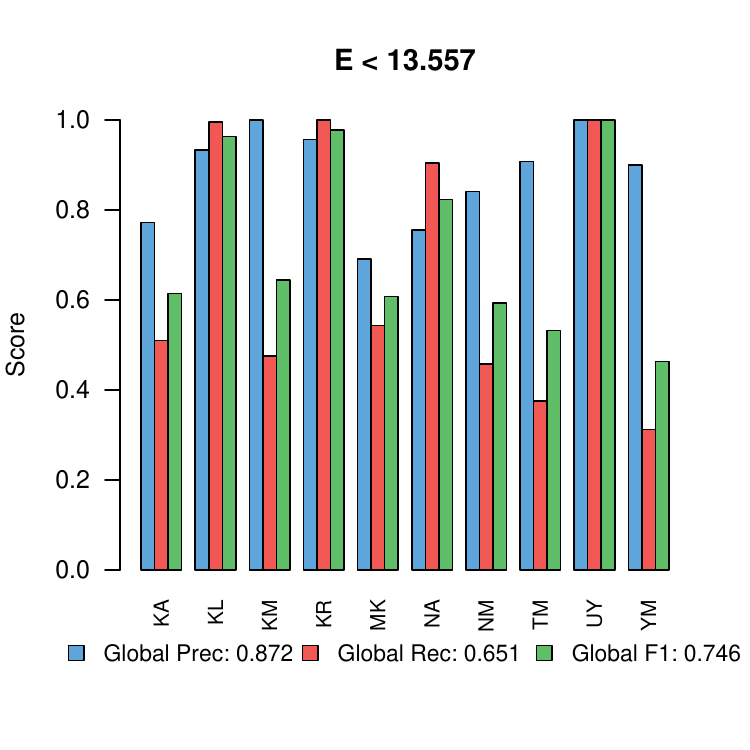}
        \caption{Oracle choice $\bar E$}
        \label{fig:E.MEAN.oracle}
    \end{subfigure}
    \begin{subfigure}[b]{0.24\textwidth}
        \centering
        \includegraphics[width=\textwidth]{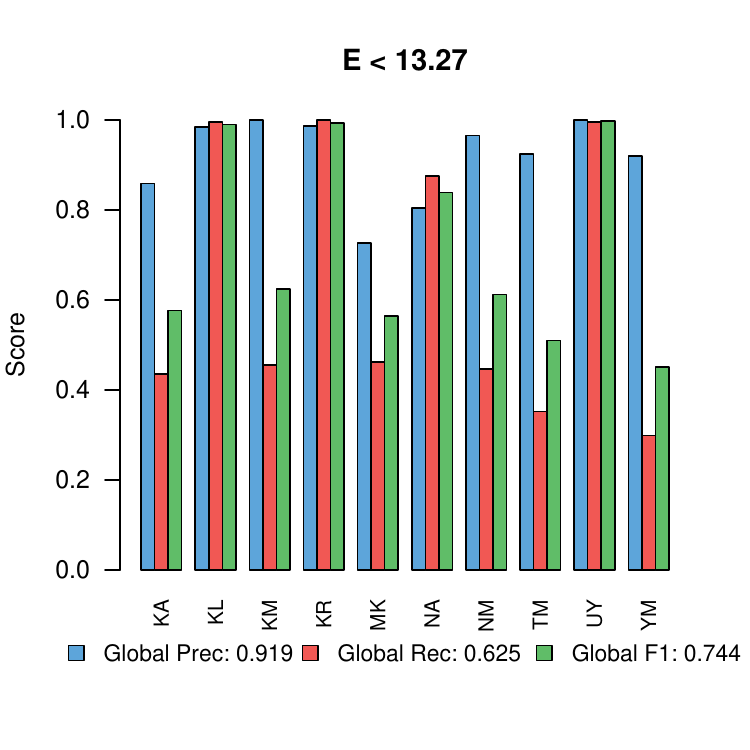}
        \caption{$\bar E$ from held-out set}
        \label{fig:E.MEAN.heldout}
    \end{subfigure}
    \begin{subfigure}[b]{0.24\textwidth}
        \centering
        \includegraphics[width=\textwidth]{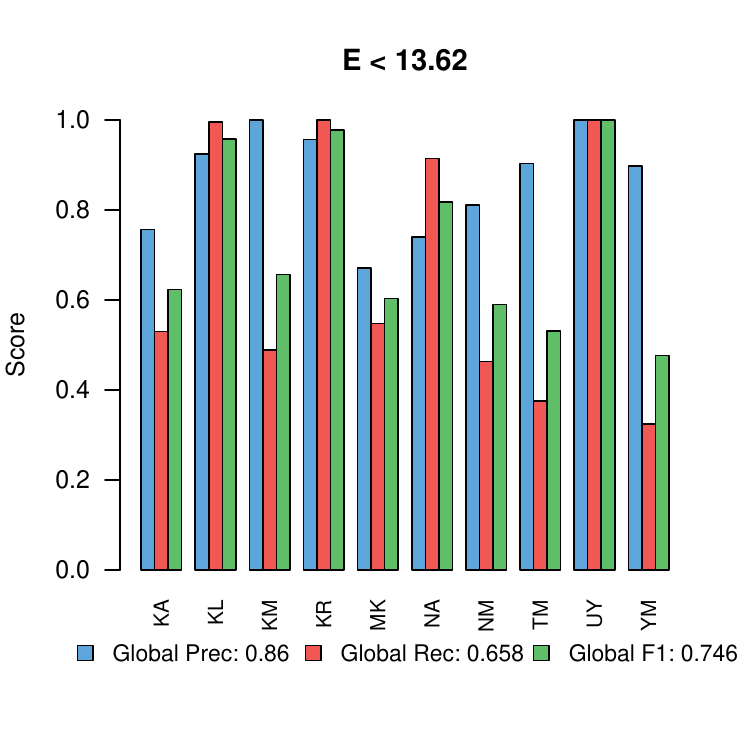}
        \caption{$\bar E$ for 10 clusters}
        \label{fig:E.MEAN.cluster}
    \end{subfigure}
    \begin{subfigure}[b]{0.24\textwidth}
        \centering
        \includegraphics[width=\textwidth]{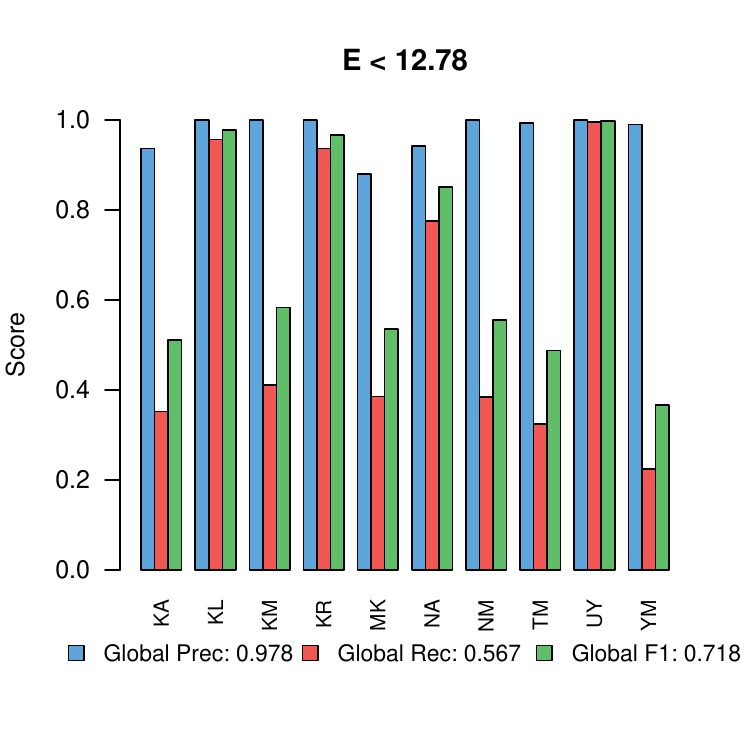}
        \caption{Valley $\bar E$}
        \label{fig:E.MEAN.valley}
    \end{subfigure}
    
    \caption{Accuracy of JAFFE facial recognition under different choices of cutoff $\bar E$. The predictive inference is done using the Horseshoe prior.}
    \label{fig:JAFFE.E.results}
\end{figure}

As a comparison, we repeat the experiment by calculating the predictive distribution using standard Gaussian prior $\theta_i \sim N(0,1)$ instead of the Horseshoe prior.
Without a shrinkage prior, the performance on the JAFFE dataset becomes less satisfactory.
The AUC-ROC of the Gaussian prior is 0.856 compared to the Horseshoe prior's 0.932.
The maximum global F1 score achieved by the Gaussian prior is 0.627, as opposed to the Horseshoe prior's 0.746.
A noteworthy issue caused by the lack of shrinkage is the misclassification of the entire subject KM; see, e.g., Figure \ref{fig:E.MEAN.oracle.Gaussian}.
The performance also deteriorates when choosing the data-adaptive cutoff $\bar E$ using certain methods like targeting 10 clusters (see Figure \ref{fig:E.MEAN.cluster.Gaussian}) and finding valley between two modes (see Figure \ref{fig:E.MEAN.valley.Gaussian}).

\begin{figure}[t]
    \centering
    \begin{subfigure}[b]{0.3\textwidth}
        \centering
        \includegraphics[width=\textwidth]{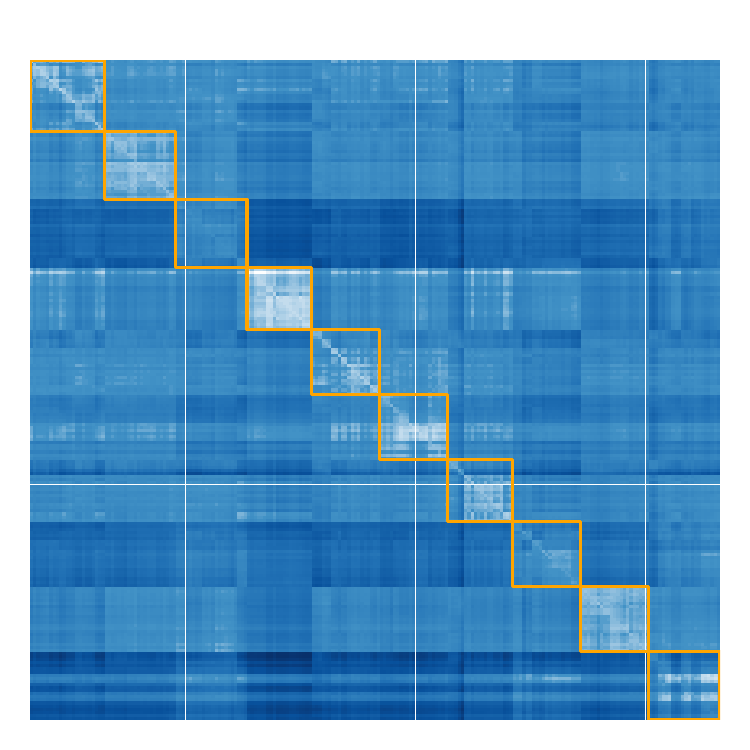}
        \caption{Energy score matrix $\bm E$}
        \label{fig:JAFFE.E.matrix.Gaussian}
    \end{subfigure}
    \hfill 
    \begin{subfigure}[b]{0.3\textwidth}
        \centering
        \includegraphics[width=\textwidth]{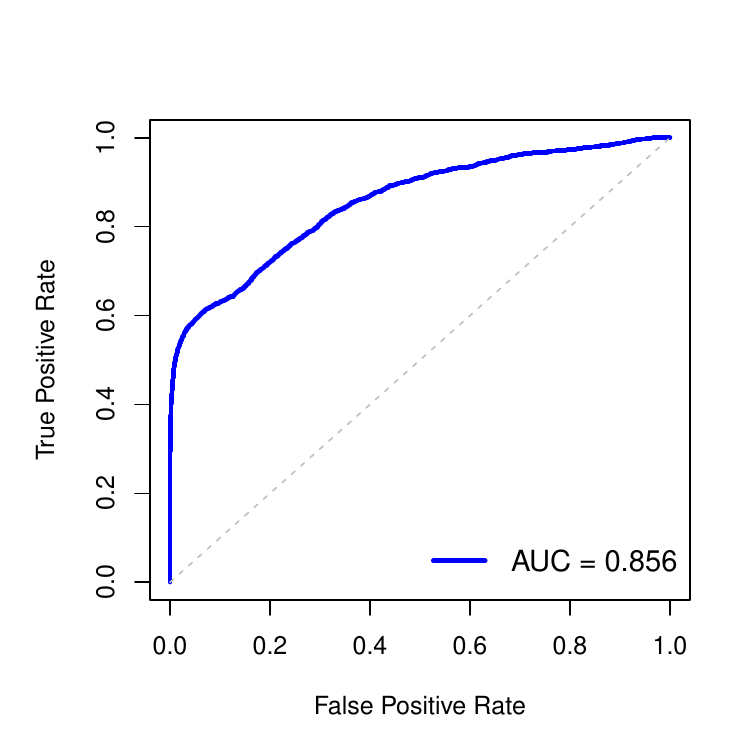}
        \caption{ROC Curve}
        \label{fig:JAFFE.ROC.E.Gaussian}
    \end{subfigure}
    \hfill 
    \begin{subfigure}[b]{0.3\textwidth}
        \centering
        \includegraphics[width=\textwidth]{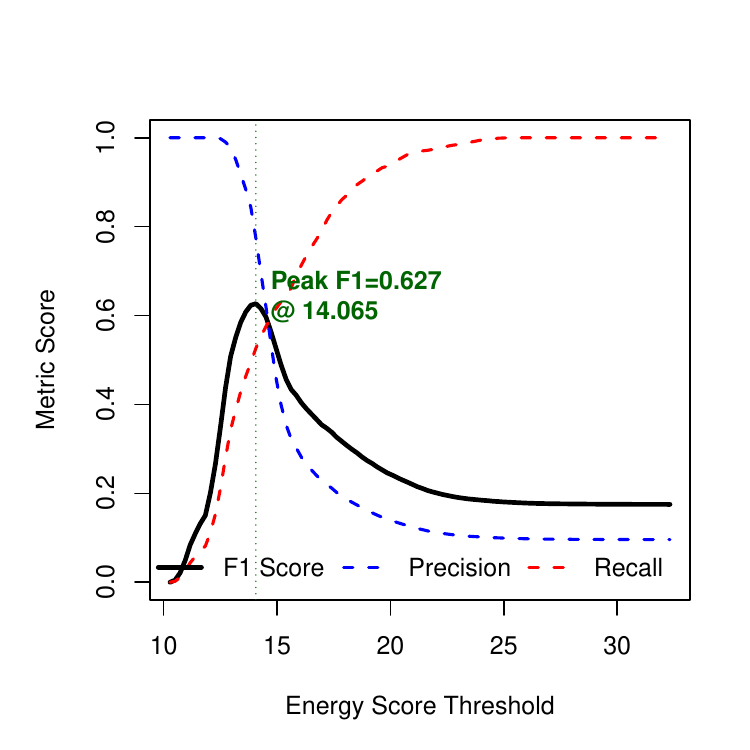}
        \caption{Accuracy of varying $\bar E$}
        \label{fig:JAFFE.threshold.E.Gaussian}
    \end{subfigure}
    
    \caption{Results of the Gaussian predictive inference method on the JAFFE dataset, using energy score as the metric.
    (a) Heatmap of energy score matrix $\bm E$. 
    Each row and column represents an image.
    Larger energy scores are represented by darker blue.
    The yellow boxes represent images of one subject.
    (b) The ROC curve obtained by varying the pairwise testing cutoff value $\bar E$.
    (c) In-sample selection of optimal threshold $\bar E$, with precision, recall, and F1 score plotted against varying $\bar E$.}
    \label{fig:JAFFE.E.varying.Gaussian}
\end{figure}

\begin{figure}[t]
    \centering
    \begin{subfigure}[b]{0.24\textwidth}
        \centering
        \includegraphics[width=\textwidth]{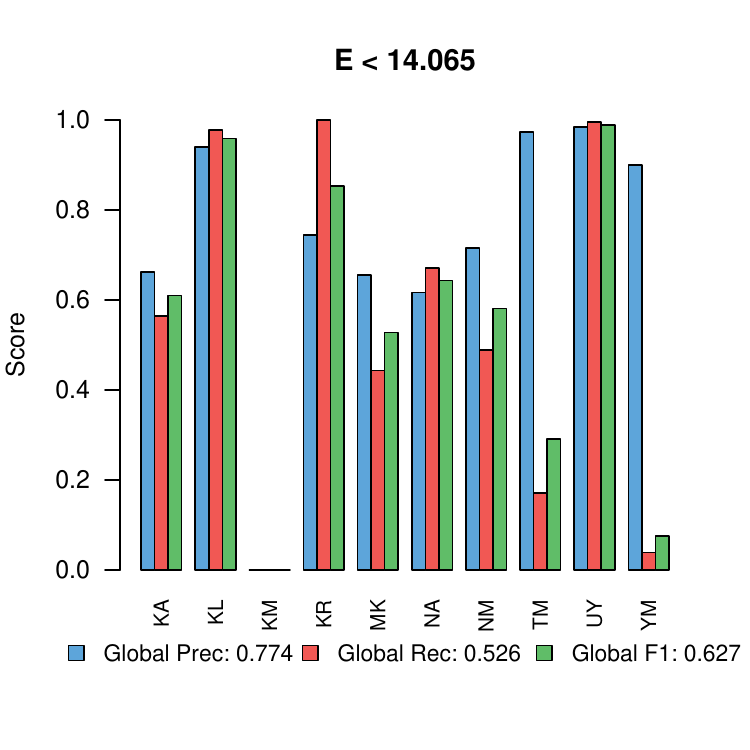}
        \caption{Oracle $\bar E$}
        \label{fig:E.MEAN.oracle.Gaussian}
    \end{subfigure}
    \begin{subfigure}[b]{0.24\textwidth}
        \centering
        \includegraphics[width=\textwidth]{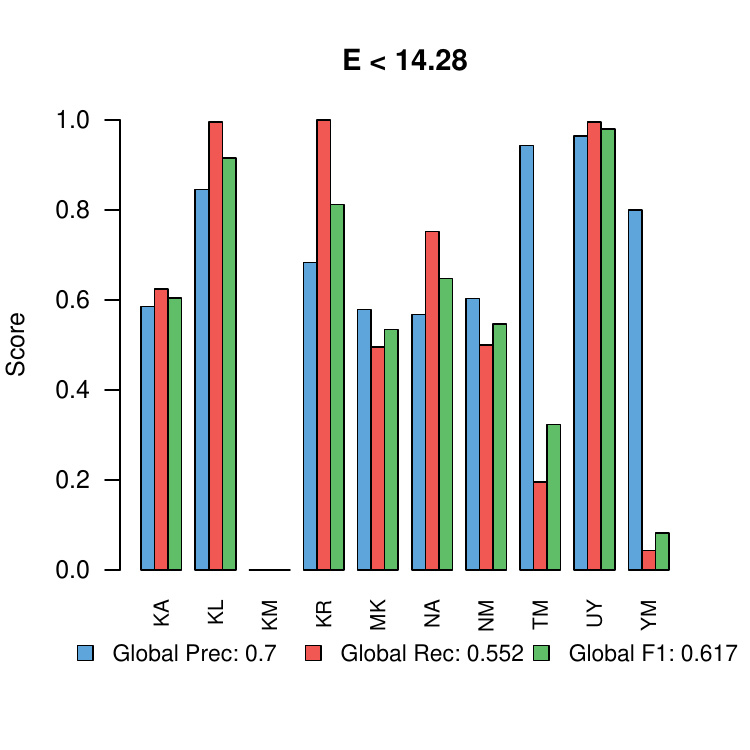}
        \caption{$\bar E$ from held-out set}
        \label{fig:E.MEAN.heldout.Gaussian}
    \end{subfigure}
    \begin{subfigure}[b]{0.24\textwidth}
        \centering
        \includegraphics[width=\textwidth]{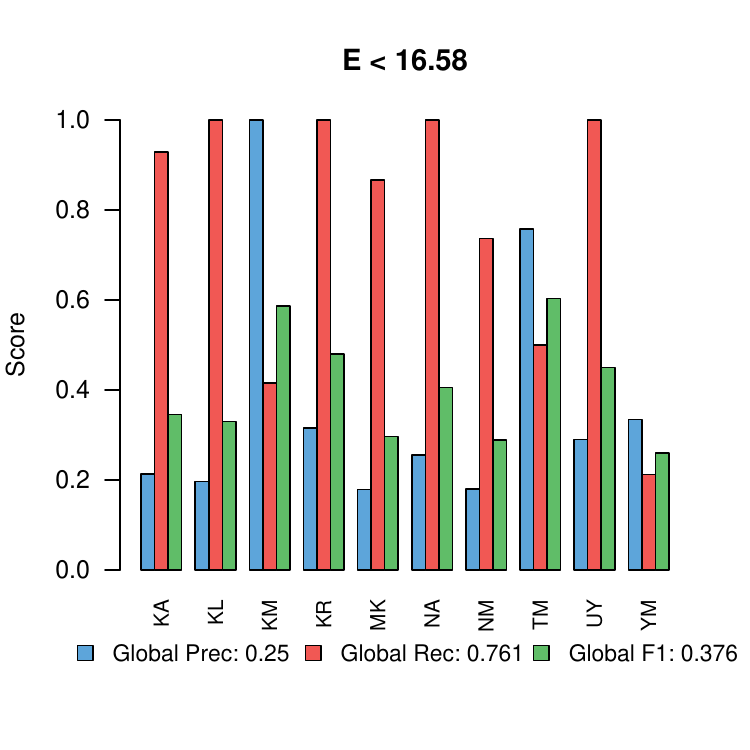}
        \caption{$\bar E$ for 10 clusters}
        \label{fig:E.MEAN.cluster.Gaussian}
    \end{subfigure}
    \begin{subfigure}[b]{0.24\textwidth}
        \centering
        \includegraphics[width=\textwidth]{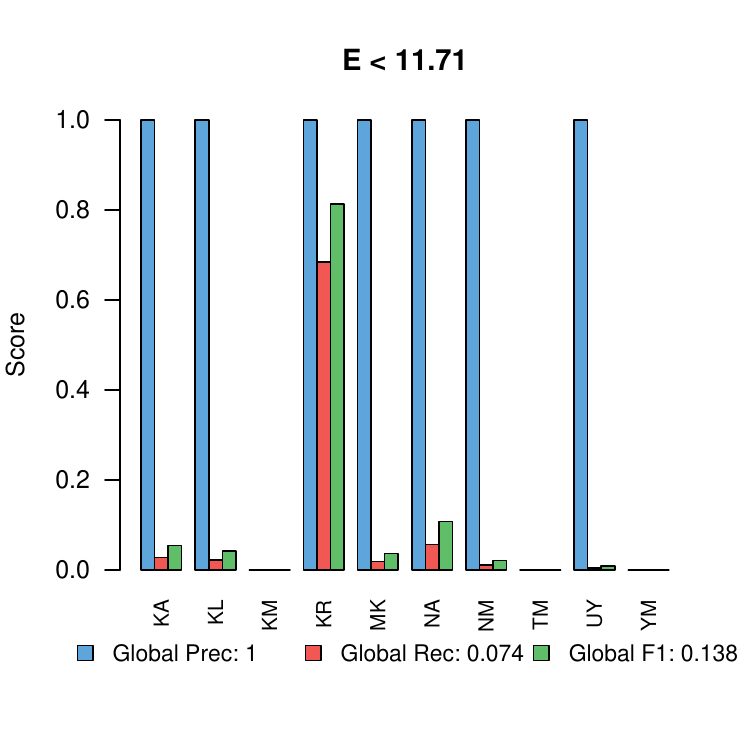}
        \caption{Valley $\bar E$}
        \label{fig:E.MEAN.valley.Gaussian}
    \end{subfigure}
    
    \caption{Accuracy of JAFFE facial recognition under different choices of cutoff $\bar E$. The predictive inference is done using Gaussian prior.}
    \label{fig:JAFFE.E.results.Gaussian}
\end{figure}

\subsection{Brain Lateralization Study on ABIDE Dataset}\label{sec:ABIDE}

The second experiment is focused on functional magnetic resonance imaging (fMRI) brain signal analysis on the Autism Brain Imaging Data Exchange (ABIDE) dataset \citep{di2014autism}.
The time series observations are regarded as noisy realizations of true underlying signals as a continuous function in time.
We analyze fMRI scans from 73 individuals with Autism Spectrum Disorder (ASD), a chronic neurodevelopmental disorder \citep{christensen2016prevalence}, and 98 healthy controls.
Raw brain magnetic resonance images are segmented into temporal signals for 116 regions of interest (ROIs) using the automatic anatomic labeling (AAL) parcellation approach.
We use the preprocessed data by \cite{craddock2013neuro} that contains $p=116$ time series for each patient, each representing a signal in the respective ROI.

For patient $i$, the time series of $j$-th ROI, denoted by $g_{ij}(t)$, can be treated as a noisy observation of a time-dependent functional.
We use functional principal component analysis (FPCA), a data-driven method that converts a functional observation to a set of basis and coefficients \citep{hsing2015theoretical}, the latter also named FPC scores.
In practice, we may estimate the FPC scores of the first $M$ components, $\{\hat a_{ijm}\}_{m=1}^M$ for each patient $i$ and ROI $j$, standardized by the estimated eigenvalues $\hat\lambda_{jm}$ of covariance kernel.
We use the standardized FPC scores $y_{ijm} = \hat{a}_{ijm} / \hat\lambda_{jm}^{1/2}$.
The resulting vector $\bm y_{ij} = (y_{ij1}, \cdots, y_{ijM})$ can thus be regarded as an independent random vector that is approximately Gaussian with unit variance due to the Karhunen-Lo\`eve Theorem \citep{bosq2000linear}.
The reasoning of FPCA method is given in Appendix \ref{sec:apdx.FPCA}.
A more detailed discussion can be found in, e.g. \cite{zhao2024high}.

Excluding vermis, a midline structure in the cerebellum, 108 of the 116 ROIs in the AAL atlas form 54 symmetric left-right pairs.
We can thus investigate atypical inter-hemispheric coordination, an important topic in ASD studies.
Neurotypical brains exhibit some degree of functional lateralization.
Specific tasks such as language processing are usually localized to one hemisphere.
Our predictive inference method provides a framework for detecting ASD-related \emph{localized} deviations from baseline.
By testing whether the signal in one hemisphere accurately predicts its contralateral counterpart in all 54 pairs of ROIs, our method pinpoints which regions in ASD brains suffer from abnormal hyper- or hypo-symmetry.

For any pair of ROIs, $j_1$ and $j_2$, we may sample $N$ instances from the predictive set of $\bm y_{ij_1}$ with our proposed Horseshoe predictive inference method.
These instances are denoted by $\{\hat{\bm y}_{ij_1}^{(l)}\}_{1\leq l\leq N}$.
We can then compare their distribution to $\bm y_{ij_2}$ observed in the other ROI.
Specifically, we consider the $k$-th pair of ROIs, and let $j_1 = 2k-1$, and $j_2 = 2k$.
Their proximity can be measured by the energy score.
Similar to \eqref{eq:JAFFE.energy.score}, we define the energy score from ROI $j_1$ to ROI $j_2$ for subject $i$ as
\begin{equation}\label{eq:ABIDE.energy.score}
    E^i_{j_1, j_2} = \frac{1}{N}\sum_{l=1}^N \| \hat{\bm y}_{ij_1}^{(l)} - \bm y_{ij_2}\|_2 - \frac{1}{2N^2} \sum_{k=1}^N\sum_{l=1}^N \| \hat{\bm y}_{ij_1}^{(k)} - \hat{\bm y}_{ij_1}^{(l)}\|_2.
\end{equation}
This energy score itself measures the extent of symmetry between any given corresponding left-right pair of ROIs, which is interesting on its own.
However, the practitioners may find it more useful to study the increase or decrease of the extent of symmetry.
For such a test for ROI pair $k$ that contains regions $j_1$ and $j_2$, we consider the aggregated energy score $\tilde E^i_{j_1,j_2} = (E^i_{j_1, j_2} + E^i_{j_2, j_1})/2$.
We compare the distribution of $\{\tilde E^i_{j_1,j_2}\}_{i\in \mathcal I_{\text{ASD}}}$ versus that of $\{\tilde E^i_{j_1,j_2}\}_{i\in \mathcal I_{\text{Ctrl}}}$ with Wilcoxon rank-sum test, where $\mathcal I_{\text{ASD}}$ and $\mathcal I_{\text{Ctrl}}$ are the index sets of ASD group and Control group patients, respectively.
A significant change in distribution of energy scores represents a change in symmetry pattern of ASD group compared to controls.
Hyper- or hypo-symmetry can be treated as a proxy of change in activity level of these brain regions, and may indicate a different pattern in ASD brains compared to neurotypical ones.

In this experiment, we choose specifically that $M=5$. There are two reasons for this choice.
First, higher-order FPCA basis functions usually capture the high-frequency patterns.
However, our observed time series data is based on mid- to low-frequency signals.
With only 172 time points, higher-order FPC scores will not be able to capture high-frequency signals as expected.
Second, the higher-order eigenvalues are usually very small.
The estimation of $\hat\lambda_{jm}$ for large $m$ may not be reliable, leading to an unstable distribution of $\bm y_{ij}$, whose variance may be far from one.
Overall, if we choose a large $M$, the signals in low-order FPC scores will be dominated by the unstable noise in high-order FPC scores.

Figure \ref{fig:ABIDE.E.boxplot} provides boxplots that provide the distribution of energy scores of both the ASD group and the Control group across all ROI pairs.
The pairs demonstrating significant hypo-symmetry and hyper-symmetry according to the two-sided Wilcoxon rank-sum tests are highlighted in blue or red, respectively.
The corresponding p-values of these tests and their respective Benjamini--Yekutieli corrections are given in Table \ref{tab:ABIDE.wilcoxon} of Appendix \ref{sec:apdx.BY}.
For a more convenient visualization, we also provide a 3-dimensional brain anatomical topography in Figure \ref{fig:ABIDE.E.brain.image}, where the ROIs with significant hypo- or hyper-symmetry are highlighted in blue or red.

\begin{figure}[p]
    \centering
    
    \includegraphics[width=0.95\linewidth]{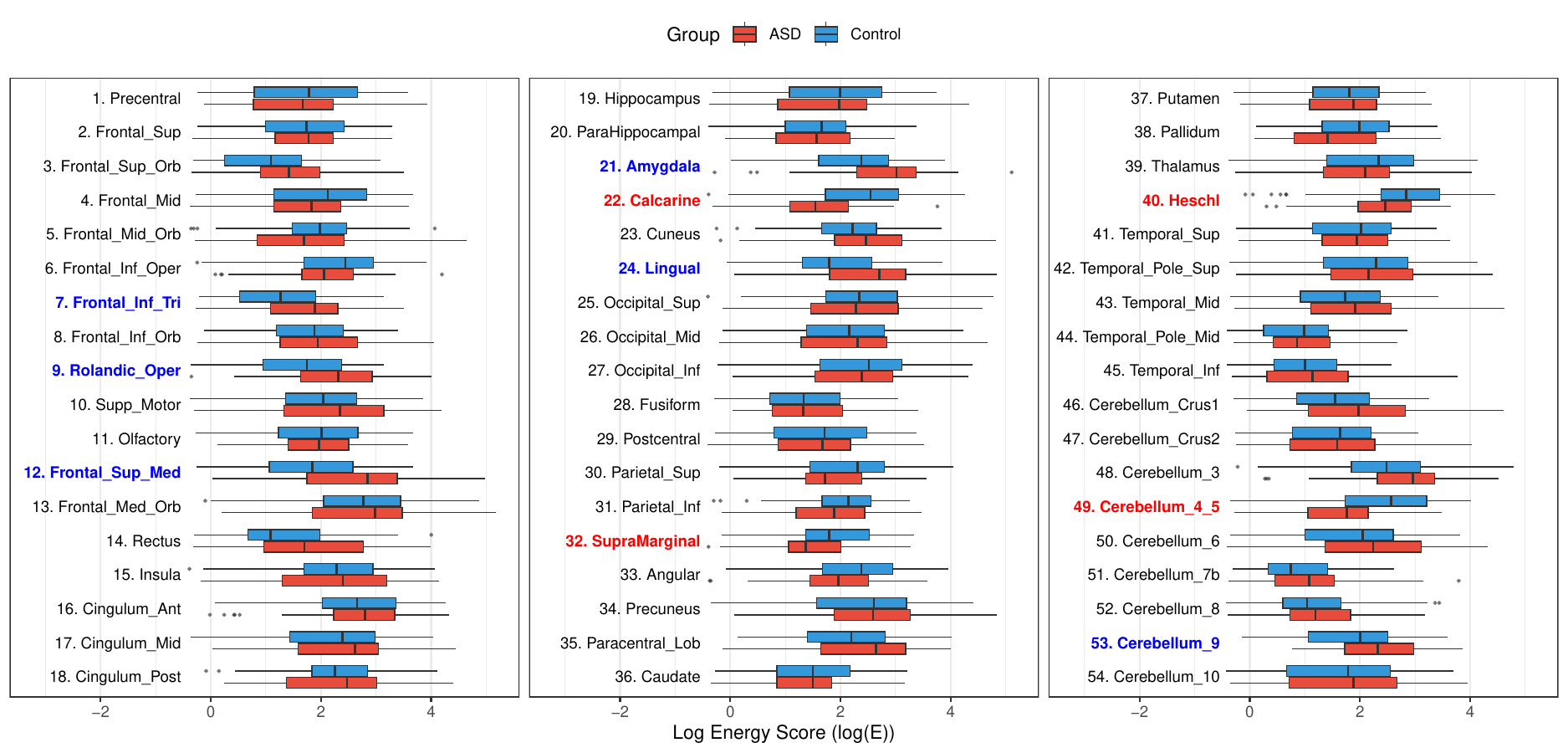}
    \caption{Distribution of energy scores across 54 brain regions.
    Boxplots display the distribution of log energy scores for the ASD group (red) and control group (blue).
    Region labels are color-coded based on significant group differences in Wilcoxon rank-sum test, subject to Benjamini--Yekutieli correction.
    Hypo-symmetric (i.e., ASD less symmetric than control) regions are marked in blue, while hyper-symmetric (i.e., ASD more symmetric than control) regions are in red.}
    \label{fig:ABIDE.E.boxplot}
    
    \vspace{1cm} 
    
    \includegraphics[width=0.45\linewidth]{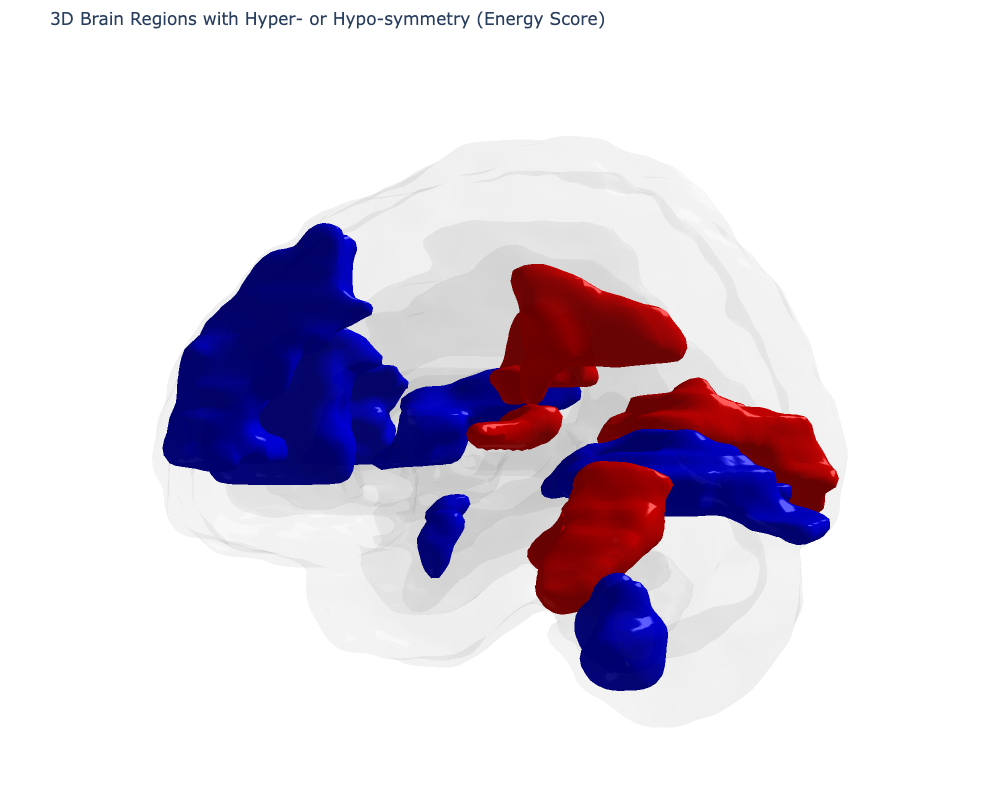}
    \includegraphics[width=0.45\linewidth]{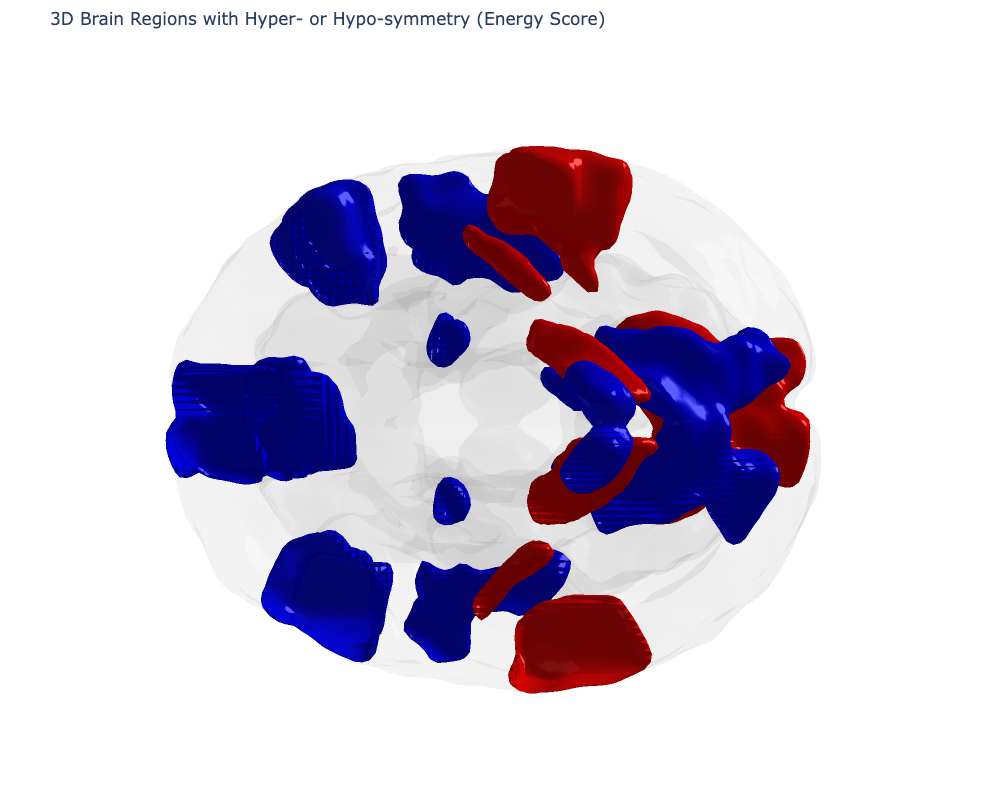}
    \caption{Anatomical topography of brain regions with significant discrepancy between ASD and control groups.
    Hypo-symmetric regions are marked in blue, while hyper-symmetric regions are in red.
    The first image shows the view from the left, while the second image shows the view from above.
    In both images, the front of the brain is on the left, and the rear is on the right.}
    \label{fig:ABIDE.E.brain.image}
    
\end{figure}

An immediate first impression from Figure \ref{fig:ABIDE.E.brain.image} is that, compared to neurotypical brains, the frontal lobe (located in the front) of the ASD brains generally exhibits more lateralization, while the parietal lobe (located in the top-back) exhibits more synchronization.
This broad pattern is consistent with the frontal underconnectivity and posterior overconnectivity discussed by \cite{just2004cortical}.
These findings suggest that the symmetry patterns detected by our predictive approach may align with previously reported differences between higher-order frontal systems and posterior sensory-association systems in ASD.

Among the regions with the strongest hypo-symmetry, the {\it amygdala} is involved in processing emotional salience and facial expressions.
The increased lateralization observed there is therefore consistent with atypical socio-emotional processing often reported in ASD.
The {\it frontal inferior triangularis} and {\it rolandic operculum} are parts of language-related {\it Broca's Area} in the frontal cortex, whose hypo-symmetry may be relevant to communication-related differences that are typical in ASD.
The {\it frontal superior medial}, located in the medial prefrontal cortex, a node in default mode network (DMN), is crucial in higher-order social cognition like interpretation of others' thoughts.
Taken together, these regions overlap with systems frequently implicated in social, emotional, and communication-related differences in ASD.

By contrast, the most hyper-symmetric regions include {\it Heschl's gyrus}, which lies in the auditory cortex and is involved in early auditory processing, including pitch and intensity.
Greater interhemispheric synchronization in this region may be related to atypical auditory sensitivity in ASD, and may also be compatible with reports of enhanced pitch discrimination in some individuals.
Meanwhile, the {\it supramarginal} region, together with other parietal areas, has been associated with attention and sensory integration.
The increased symmetry observed there may be consistent with altered attentional allocation or sensory integration in ASD.

Another interesting pattern is the contrast between the {\it lingual gyrus} and the {\it calcarine sulcus}, two adjacent regions in the medial occipital lobe.
The calcarine sulcus contains primary visual cortex that receives raw information from the retina, whereas the lingual gyrus is involved in subsequent visual processing.
In our results, the calcarine region appears more hyper-symmetric, while the lingual region appears more hypo-symmetric in the ASD group.
Interpreted cautiously, this contrast is consistent with the view in \cite{bertone2005enhanced} and \cite{happe2006weak} that ASD may involve relatively enhanced low-level visual processing together with weaker higher-order visual integration.
It is also broadly compatible with reported differences in face processing \citep{kleinhans2008abnormal} and color perception \citep{franklin2008color}.

\section{Concluding Remarks}

This paper studies the Horseshoe prior from a predictive inference viewpoint in the sparse Gaussian sequence models under the KL loss.
This setup has a wide range of real-world applications.
For instance, wavelet decompositions of images and functional principal component analysis for time series often lead to sparse Gaussian sequence models.
Predictive inference enables pairwise tests that associate images or time series with structural similarities.

Empirically, we find  that the Horseshoe prior is a good fit for such problems.
Theoretically, we find that the posterior of the local shrinkage scale $\lambda$ serves as a key spectral weight.
Its phase transition controls the predictive risk, leading to a smooth thresholding behavior similar to the one seen in discrete mixtures.
With known sparsity level, the Horseshoe prior achieves the exact-minimax predictive KL risk asymptotically.
Moreover, the full-Bayes approach attains a sharp rate on the predictive KL risk under the theta-min condition.

\vspace{-1cm}
\section*{Data availability statement and Acknowledgement}

The data that support the findings of this study are available from public repositories.
The JAFFE dataset is available at \url{https://doi.org/10.5281/zenodo.14974867}.
The ABIDE dataset is available at \url{https://datacatalog.med.nyu.edu/dataset/10452}.
The authors used ChatGPT (OpenAI, GPT-5.4) to assist with language editing and improving clarity of expression during manuscript preparation.
All scientific content, analysis, interpretations, and conclusions were developed and verified by the authors.

\vspace{-1cm}


\begingroup
\renewcommand{\newline}{} 
\renewcommand{\url}[1]{} 
\renewcommand{\harvardurl}[1]{} 
\bibliography{BP_literature}       
\endgroup

\newpage
\begin{appendix}

\section{Extra Discussions on Separable Horseshoe Prior}

We begin with the following useful lemma for the bound of univariate Horseshoe prior, which is an extension of Theorem 1 of \cite{carvalho2010horseshoe}.
We shall see that the univariate Horseshoe prior has a natural, continuous spike near zero, and a Cauchy-like heavy tail.
\begin{lemma}\label{lm:HS.pdf.UB.LB}
    For a fixed $\tau$, the pdf of the univariate Horseshoe prior satisfies
    \[
    \frac{1}{2\sqrt{2\pi^3}\tau}\log\left(1+\frac{4\tau^2}{\theta^2}\right) < \pi(\theta\mid\tau) < \frac{1}{\sqrt{2\pi^3}\tau}\log\left(1+\frac{2\tau^2}{\theta^2}\right).
    \]
\end{lemma}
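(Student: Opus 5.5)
The plan is to reduce to $\tau=1$ by scaling, write the density as an exponential integral, and then apply the classical two-sided bound on that integral.

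\textbf{Step 1 (scaling).} Since $\theta\mid\lambda,\tau\sim N(0,\lambda^2\tau^2)$ and $\lambda$ does not depend on $\tau$, we have $\theta=\tau\theta_1$, where $\theta_1$ has the $\tau=1$ Horseshoe law. Hence
\[
\pi(\theta\mid\tau)=\frac{1}{\tau}\,\pi_1(\theta/\tau).
\]
Substituting $\theta/\tau$ turns $\log(1+4/(\theta/\tau)^2)$ into $\log(1+4\tau^2/\theta^2)$, which is exactly the form in the statement. So it suffices to prove
\[
\tfrac{K}{2}\log(1+4/\theta^2)<\pi_1(\theta)<K\log(1+2/\theta^2),\qquad K=(2\pi^3)^{-1/2}.
\]
This is Theorem 1 of \cite{carvalho2010horseshoe}, but I would re-derive it in Steps 2 and 3 so the constant is explicit.

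\textbf{Step 2 (integral representation).} Start from
\[
\pi_1(\theta)=\int_0^\infty \frac{2}{\pi}\frac{1}{1+\lambda^2}\frac{1}{\sqrt{2\pi}\,\lambda}e^{-\theta^2/(2\lambda^2)}\diff\lambda
\]
and substitute $z=\lambda^{-2}$, so $\diff\lambda=-\tfrac12 z^{-3/2}\diff z$. The factors of $z$ cancel and give
\[
\pi_1(\theta)=\frac{1}{\pi\sqrt{2\pi}}\int_0^\infty \frac{e^{-\theta^2 z/2}}{1+z}\diff z .
\]
Since $\pi\sqrt{2\pi}=\sqrt{2\pi^3}$, the prefactor is $K$. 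Writing $a=\theta^2/2$ and shifting $w=1+z$ gives $\int_0^\infty e^{-az}/(1+z)\,\diff z=e^{a}E_1(a)$, where $E_1$ is the exponential integral.

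\textbf{Step 3 (exponential integral bounds).} Use the classical strict inequality (Abramowitz--Stegun 5.1.20)
\[
\tfrac12\log(1+2/a)<e^{a}E_1(a)<\log(1+1/a),\qquad a>0.
\]
With $a=\theta^2/2$ this is exactly $\tfrac12\log(1+4/\theta^2)<e^aE_1(a)<\log(1+2/\theta^2)$. Multiplying by $K$ and undoing the scaling of Step 1 yields the lemma.

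\textbf{Main obstacle.} The only nontrivial ingredient is the inequality in Step 3. If a self-contained proof is wanted, I would set
\[
f(a)=e^aE_1(a)-\log(1+1/a),\qquad g(a)=e^aE_1(a)-\tfrac12\log(1+2/a).
\]
The identity $(e^aE_1)'=e^aE_1-1/a$ gives the derivatives of $f$ and $g$. Both differences tend to $0$ as $a\to\infty$. As $a\to0$, $e^aE_1(a)\sim-\log a-\gamma$, so the limits of $f$ and $g$ are $-\gamma$ and $\tfrac12\log 2-\gamma$, which have the right signs. One then shows monotonicity, or argues by contradiction at a critical point. This part is routine but is where the care is needed.
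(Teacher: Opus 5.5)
Your argument is correct and is essentially the paper's proof: the paper also substitutes $u=1/\lambda^2$ and then $t=(1+u)\theta^2/(2\tau^2)$ to get $\pi(\theta\mid\tau)=\frac{1}{\sqrt{2\pi^3}\,\tau}e^{\theta^2/(2\tau^2)}E_1\bigl(\theta^2/(2\tau^2)\bigr)$, and then applies the same two-sided exponential-integral bound, so your preliminary scaling to $\tau=1$ is only cosmetic.

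There is one slip, in your optional self-contained sketch. As $a\to0^+$ you have $g(a)=-\gamma-\tfrac12\log 2-\tfrac12\log a+o(1)\to+\infty$, because the factor $\tfrac12$ stops the $\log a$ terms from cancelling. Your stated limit $\tfrac12\log 2-\gamma\approx-0.23$ is wrong, and it would in fact have the wrong sign for $g>0$. The monotonicity route does not need the limits at $0$ at all. It uses $f'(a)=e^aE_1(a)-\frac{1}{a+1}>0$ and $g'(a)=e^aE_1(a)-\frac{a+1}{a(a+2)}<0$ (two classical bounds on $e^aE_1(a)$), together with $f,g\to0$ as $a\to\infty$.
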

\begin{proof}
    Since $\theta \sim N(0, \lambda^2\tau^2)$, $\lambda \sim C^+(0,1)$, we have
    \[
    \pi(\theta\mid\tau) = \int_0^\infty \frac{1}{\sqrt{2\pi}\lambda\tau} e^{-\frac{\theta^2}{2\lambda^2\tau^2}} \frac{2}{\pi}\frac{1}{1+\lambda^2} \diff \lambda.
    \]
    This can be further expressed using the exponential integral function
    $E_1(z) = \int_z^\infty e^{-t}/t \diff t$.
    With a change of variable $u = 1/\lambda^2$,
    \[
    \pi(\theta\mid\tau) = \frac{1}{\sqrt{2\pi^3}} \int_0^\infty e^{-\frac{\theta^2}{2\tau^2}u} \frac{1}{1+u} \diff u.
    \]
    Another change of variable $t = (1+u)\theta^2/2\tau^2$ leads to
    \[
    \pi(\theta\mid\tau) = \frac{1}{\sqrt{2\pi^3}\tau} e^{\frac{\theta^2}{2\tau^2}} \int_{\theta^2/2\tau^2}^\infty \frac{e^{-t}}{t}\diff t = \frac{1}{\sqrt{2\pi^3}\tau} e^{\frac{\theta^2}{2\tau^2}} E_1\left(\frac{\theta^2}{2\tau^2}\right).
    \]
    The proof of the assertion is finished by noticing that the exponential integral function satisfies the following tight upper and lower bounds,
    \[
    \frac{1}{2}e^{-t} \log\left(1+\frac{2}{t}\right) < E_1(t) < e^{-t} \log\left(1+\frac{1}{t}\right).
    \]
\end{proof}

\subsection{Risk Decomposition}\label{sec:risk.decomp}
The following lemma characterizes the exact behavior of univariate Bayesian predictive risk under the continuous Gaussian mixture prior.
\begin{lemma}\label{lm:risk.decomp}
    The univariate Bayesian predictive risk $\rho(\theta, \hat{p})$ has the following decompositions. Let $Z \sim N(0,1)$, and $v^{-1} = 1+r^{-1}$.
    \begin{enumerate}

        \item Assume that the prior of $\theta$ is a scale-mixture of Gaussians, $\theta \mid \lambda \sim N(0, \lambda^2)$, where the prior of $\lambda$ is $\nu(\lambda)$, then
        \begin{equation}\label{eq:risk.decomp.b}
            \rho(\theta,\hat{p}) = \frac{\theta^2}{2r} - \E_\theta \log N^{GM}_{\theta,v}(Z) + \E_\theta \log N^{GM}_{\theta,1}(Z),
        \end{equation}
        where
        \[
        N^{GM}_{\theta,v}(Z) = \int_0^\infty \sqrt{\frac{v}{\lambda^2 + v}} \exp\left[\frac{\lambda^2}{\lambda^2 + v}\frac{(\sqrt{v}Z+\theta)^2}{2v}\right] \nu(\lambda) \diff \lambda.
        \]

        \item Specifically, if $\theta$ follows Horseshoe prior with a fixed $\tau>0$, then
        \begin{equation}\label{eq:risk.decomp.c}
            \rho(\theta,\hat{p}) = \frac{\theta^2}{2r} - \E_\theta \log N^{HS}_{\theta,v}(Z) + \E_\theta \log N^{HS}_{\theta,1}(Z),
        \end{equation}
        where $N^{HS}_{\theta,v}(Z)$ takes any of the following equivalent forms:
        \begin{align}
            N^{HS}_{\theta,v}(Z) &= \frac{\tau}{\pi\sqrt{v}} \int_0^1 u^{-1/2}\frac{1}{\tau^2/v+(1-\tau^2/v)u} \exp\left[\frac{(\sqrt{v}Z + \theta)^2}{2v}u\right] \diff u \label{eq:N.HS.form.A}\\
            &= \frac{\tau}{\pi\sqrt{v}} e^{\frac{(\sqrt{v}Z+\theta)^2}{2v}} \int_0^1 (1-u)^{-1/2}\frac{1}{1-(1-\tau^2/v)u} \exp\left[-\frac{(\sqrt{v}Z + \theta)^2}{2v}u\right] \diff u\label{eq:N.HS.form.B}\\
            &= \frac{2\tau}{\pi\sqrt{v}} e^{\frac{(\sqrt{v}Z+\theta)^2}{2v}} \Phi_1\left(1,1,\frac{3}{2},-\frac{(\sqrt{v}Z+\theta)^2}{2v},1-\frac{\tau^2}{v}\right).\label{eq:N.HS.form.C}
        \end{align}
        Here, $\Phi_1$ is confluent hypergeometric function for two variables; see \cite{gradshteyn2014table}, 9.261.
    \end{enumerate}
\end{lemma}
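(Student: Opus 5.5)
The plan is to derive part 1 from the standard representation of a Bayes predictive density in the Gaussian location model, in terms of marginal densities at two noise levels. Part 2 then follows by substituting the Horseshoe mixing density and performing explicit changes of variables.

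\textbf{Step 1 (marginal identity).} For any prior $\pi$ on $\theta$, write $m_\sigma(x)=\int \phi_\sigma(x-\theta')\pi(\theta')\diff\theta'$ for the marginal density of an observation with noise variance $\sigma$. Set $W=(rY+\tilde Y)/(1+r)$. Completing the square in $\theta'$ inside $\int \phi_1(y-\theta')\phi_r(\tilde y-\theta')\pi(\theta')\diff\theta'$ gives the identity of \cite{george2006improved}:
\[
\hat p_\pi(\tilde y\mid y)=\frac{m_v(w)}{m_1(y)}\,\hat p_U(\tilde y\mid y).
\]
Here $\hat p_U$ is the uniform-prior predictive density, namely $N(y,1+r)$. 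Under $\theta$ we have $W\sim N(\theta,v)$ and $Y\sim N(\theta,1)$. Taking the expectation of the log-ratio yields
\[
\rho(\theta,\hat p_\pi)=\rho(\theta,\hat p_U)+\E\log m_1(Y)-\E\log m_v(W),
\]
and a direct Gaussian KL computation gives $\rho(\theta,\hat p_U)=\tfrac12\log(1/v)$.

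\textbf{Step 2 (factor out the Gaussian).} For the scale mixture, $m_\sigma(x)=\int (2\pi(\lambda^2+\sigma))^{-1/2}e^{-x^2/(2(\lambda^2+\sigma))}\nu(\lambda)\diff\lambda$. The identity
\[
-\frac{x^2}{2(\lambda^2+\sigma)}=-\frac{x^2}{2\sigma}+\frac{\lambda^2}{\lambda^2+\sigma}\frac{x^2}{2\sigma}
\]
gives $m_\sigma(x)=(2\pi\sigma)^{-1/2}e^{-x^2/2\sigma}N^{GM}(x)$. Writing $W=\sqrt v Z+\theta$ and $Y=Z+\theta$ identifies $N^{GM}$ with $N^{GM}_{\theta,v}$ and $N^{GM}_{\theta,1}$ respectively.

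Substituting into Step 1, the $\log 2\pi$ terms cancel. The remaining deterministic terms are
\[
\tfrac12\log(1/v)+\tfrac12\log v+\frac{v+\theta^2}{2v}-\frac{1+\theta^2}{2},
\]
using $\E W^2=v+\theta^2$ and $\E Y^2=1+\theta^2$. These simplify to $\tfrac{\theta^2}{2}(1/v-1)=\theta^2/(2r)$, which proves \eqref{eq:risk.decomp.b}.

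\textbf{Step 3 (Horseshoe forms).} The Horseshoe is the scale mixture with mixing variable $\lambda\tau$, where $\lambda\sim C^+(0,1)$; the decomposition from Steps 1--2 is stated directly in terms of $\lambda$ with prior variance $\lambda^2\tau^2$, which gives \eqref{eq:risk.decomp.c}. To obtain \eqref{eq:N.HS.form.A}, substitute $u=\lambda^2\tau^2/(\lambda^2\tau^2+v)$. Then $\sqrt{v/(\lambda^2\tau^2+v)}=\sqrt{1-u}$, $\lambda^2=vu/(\tau^2(1-u))$, and the Cauchy factor $\frac{2}{\pi}(1+\lambda^2)^{-1}\diff\lambda$ converts into the $u^{-1/2}(\tau^2/v+(1-\tau^2/v)u)^{-1}$ kernel with prefactor $\tau/(\pi\sqrt v)$. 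Verifying this constant is the only step that requires any care, and I would check it via the Jacobian $\diff\lambda=\tfrac{\sqrt v}{2\tau}u^{-1/2}(1-u)^{-3/2}\diff u$.

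Form \eqref{eq:N.HS.form.B} follows from the reflection $u\mapsto1-u$, pulling out $e^{x^2/2v}$. Form \eqref{eq:N.HS.form.C} follows by comparing with the Euler-type integral representation
\[
\Phi_1(\alpha,\beta,\gamma,x,y)=\frac{\Gamma(\gamma)}{\Gamma(\alpha)\Gamma(\gamma-\alpha)}\int_0^1 t^{\alpha-1}(1-t)^{\gamma-\alpha-1}(1-yt)^{-\beta}e^{xt}\diff t.
\]
Take $\alpha=\beta=1$, $\gamma=3/2$, $x=-(\sqrt vZ+\theta)^2/2v$ and $y=1-\tau^2/v$. Since $\Gamma(3/2)/\Gamma(1/2)=1/2$, this accounts for the factor $2$.
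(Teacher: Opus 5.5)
Your proof is correct and follows essentially the paper's route. The paper cites Lemma 2.1 of \cite{rockova2023adaptive} for the general-prior decomposition with $N_{\theta,v}(Z)=\int_\R \exp\{\mu Z/\sqrt v+\mu\theta/v-\mu^2/(2v)\}\pi(\mu)\diff\mu$, which is exactly your ratio $m_v(W)$ divided by the $N(0,v)$ density at $W=\sqrt v Z+\theta$; your Steps 1--2 simply re-derive that lemma from the George--Liang--Xu identity and then integrate out $\mu$ for the scale mixture, and your Step 3 uses the same substitution $u=\lambda^2\tau^2/(v+\lambda^2\tau^2)$, reflection $u\mapsto 1-u$, and Euler-integral identification of $\Phi_1$ (in the ordering with the exponential argument first, as the paper uses it).
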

\begin{proof}
    By Lemma 2.1 of \cite{rockova2023adaptive}, For any prior $\pi(\theta)$,
\begin{equation}\label{eq:risk.decomp.a}
    \rho(\theta,\hat{p}) = \frac{\theta^2}{2r} - \E_\theta \log N_{\theta,v}(Z) + \E_\theta \log N_{\theta,1}(Z),
\end{equation}
where $N_{\theta,v}(Z) = \int_\R \exp\left\{\frac{\mu Z}{\sqrt{v}} + \frac{\mu\theta}{v} - \frac{\mu^2}{2v}\right\} \pi(\mu) \diff\mu$, and $v^{-1} = 1+r^{-1}$.
For the first assertion, note that
\begin{align*}
    N_{\theta,v}^{GM}(Z) &= \int_\R \int_0^\infty \frac{1}{\sqrt{2\pi}\lambda\tau} \exp\left\{\frac{\mu Z}{\sqrt{v}} + \frac{\mu\theta}{v} - \frac{\mu^2}{2v} - \frac{\mu^2}{2\sigma^2}\right\}\nu(\sigma) \diff \mu \diff \sigma\\
    &= \int_0^\infty \sqrt{\frac{v}{\sigma^2 + v}} \int_\R \frac{\sqrt{\sigma^2+v}}{\sqrt{2\pi v}\sigma} \exp\left\{-\frac{\sigma^2+v}{2v\sigma^2}\left(\mu - \frac{\sigma^2}{\sigma^2+v}(\sqrt{v}Z+\theta)\right)^2\right\} \diff \mu\\
    &\hspace{183pt} \cdot\exp\left(\frac{\sigma^2}{\sigma^2+v}\frac{(\sqrt{v}Z+\theta)^2}{2v}\right) \nu(\sigma) \diff \sigma\\
    &= \int_0^\infty \sqrt{\frac{v}{\sigma^2 + v}} \exp\left\{\frac{\sigma^2}{\sigma^2 + v}\frac{(\sqrt{v}Z+\theta)^2}{2v}\right\} \nu(\sigma) \diff \sigma.
\end{align*}
For the second assertion, note that a Horseshoe prior is a Gaussian mixture with $\sigma = \lambda\tau$ and a Half Cauchy prior $\lambda\sim C^+(0,1)$.
By changing variable $u = \frac{\lambda^2\tau^2}{v+\lambda^2\tau^2}$,
\begin{align*}
    N_{\theta,v}^{HS}(Z) &= \frac{2}{\pi} \int_0^\infty \sqrt{\frac{v}{\lambda^2\tau^2 + v}} \exp\left[\frac{\lambda^2\tau^2}{\lambda^2\tau^2 + v}\frac{(\sqrt{v}Z+\theta)^2}{2v}\right] \frac{1}{1+\lambda^2} \diff \lambda\\
    &= \frac{\sqrt{v}}{\pi\tau}\int_0^1 (1-u)^{1/2}\exp\left[\frac{(\sqrt{v}Z+\theta)^2}{2v}u\right]\frac{1-u}{1+(\frac{v}{\tau^2}-1)u} u^{-1/2}(1-u)^{-3/2}\diff u\\
    &= \frac{\tau}{\pi\sqrt{v}}\int_0^1 u^{-1/2}\frac{1}{\tau^2/v+(1-\tau^2/v)u} \exp\left[\frac{(\sqrt{v}Z + \theta)^2}{2v}u\right] \diff u.
\end{align*}
This proves expression \eqref{eq:N.HS.form.A}.
With a change of variable from $u$ to $1-u$, we get expression \eqref{eq:N.HS.form.B}.
In fact, $N_{\theta,v}^{HS}(Z)$ can also be written as
\[
\frac{2\sqrt{v}}{\pi\tau} \Phi_1\left(\frac{1}{2}, 1, \frac{3}{2}, \frac{(\sqrt{v}Z+\theta)^2}{2v}, 1-\frac{v}{\tau^2}\right),
\]
or
\[
\frac{2\tau}{\pi\sqrt{v}} e^{\frac{(\sqrt{v}Z+\theta)^2}{2v}} \Phi_1\left(1,1,\frac{3}{2},-\frac{(\sqrt{v}Z+\theta)^2}{2v},1-\frac{\tau^2}{v}\right),
\]
where $\Phi_1$ is confluent hypergeometric function for two variables; see \cite{gradshteyn2014table}, 9.261.
\end{proof}

Lemma \ref{lm:risk.decomp} is a generalization of Theorem 2.1 in \cite{mukherjee2015exact} for point-mass spike-and-slab priors and Lemma 2.6 in \cite{rockova2023adaptive} for any spike-and-slab prior.
Compared to these previous results, our first assertion presents a decomposition of predictive risk under any Gaussian mixture prior.

\begin{figure}
    \centering
    \includegraphics[width=0.4\linewidth]{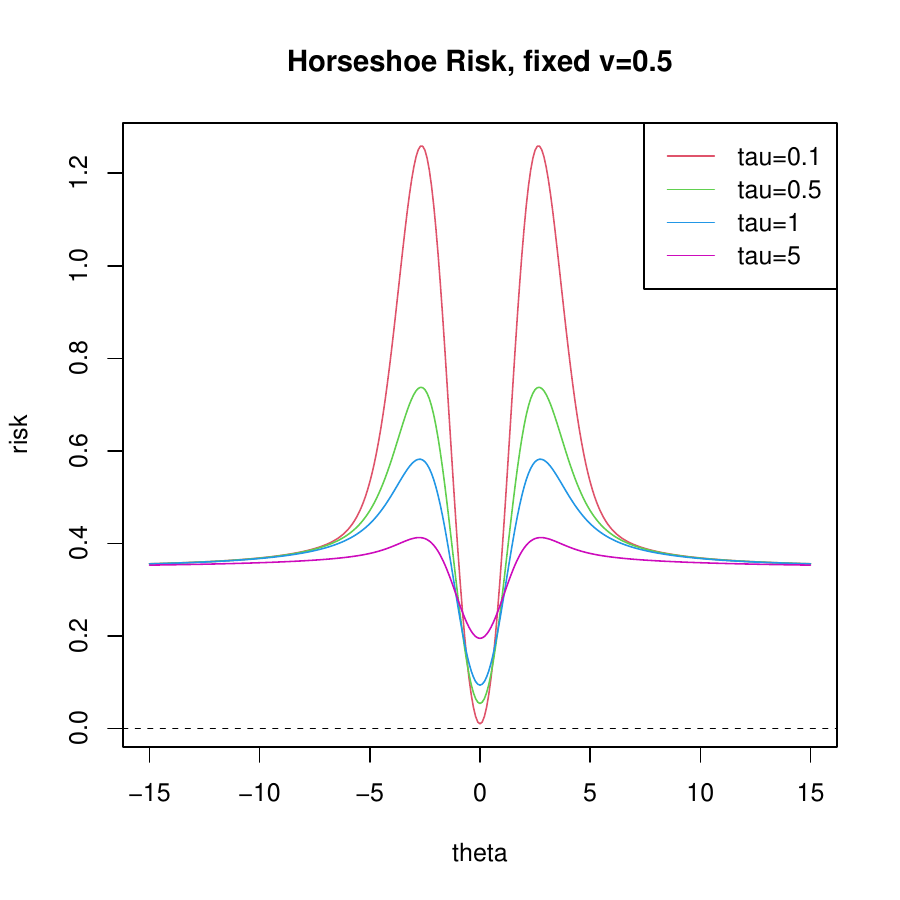}
    \includegraphics[width=0.4\linewidth]{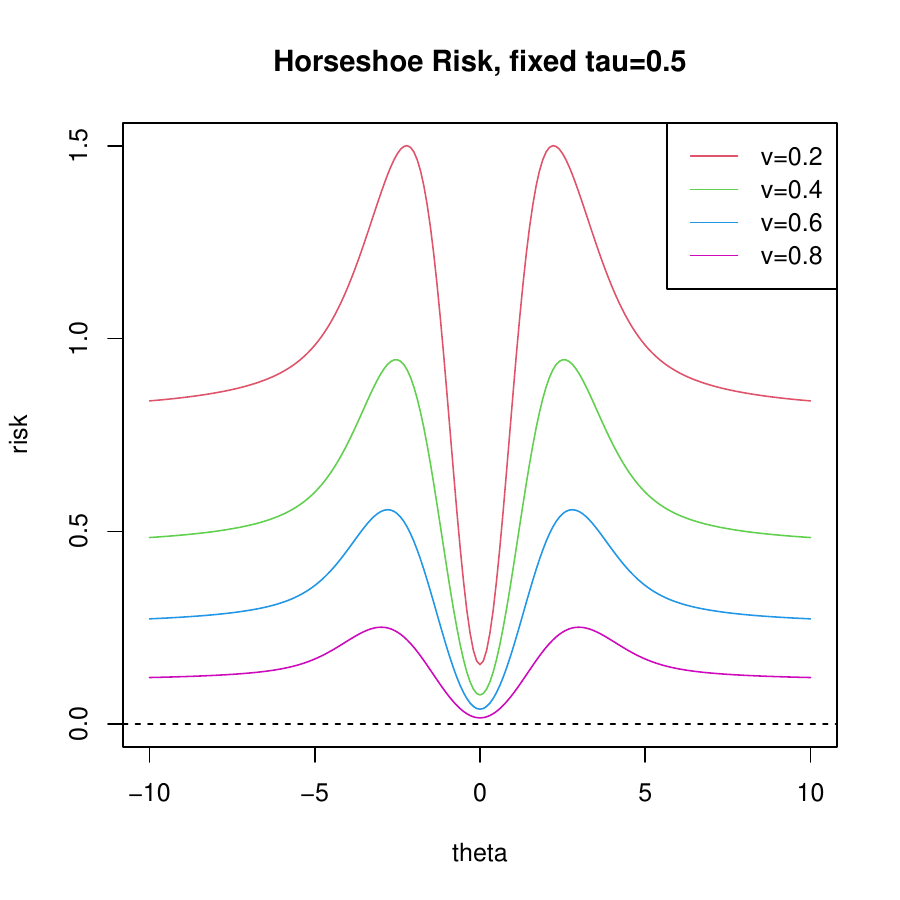}
    \caption{The univariate Horseshoe predictive Kullback-Leibler risk $\rho(\theta, \hat p)$}
    \label{fig:risk}
\end{figure}

From Lemma \ref{lm:risk.decomp}, we can draw the plots of the univariate predictive risk $\rho(\theta,\hat p)$ versus $\theta$ under different values of $\tau$ and $v$.
These plots are shown in Figure \ref{fig:risk}.
When $\tau$ is fixed, a larger value of $v$ generally corresponds to lower risk over all $\theta \in \R$, which agrees with the $(1-v)$ factor in the minimax risk.
On the contrary, with a fixed value of $v$, we see a tradeoff between the risk at zero $\rho(0,\hat p)$ and the maximum risk $\sup_{\theta\in \R}\rho(\theta, \hat p)$.
As $\tau$ decreases toward zero, $\rho(0,\hat p)$ approaches zero, while the maximum risk increases rapidly.
Recall that the overall multivariate predictive risk $\rho_n(\bm\theta, \hat p_\pi)$ is upper bounded by a weighted combination of the two.
This is intuitively indicative to the role that the global shrinkage parameter $\tau$ plays in the Horseshoe prior.
For a lower sparsity level $s_n$, we would like to set a smaller $\tau$ to lower the risk at zero, at the cost of a higher maximum risk.
To achieve this, the global shrinkage parameter $\tau$ should be positively correlated to the sparsity level $s_n$.

Note that this tradeoff is also present in spike-and-slab priors; see Figures 1 and 2 of \cite{rockova2023adaptive} for the risk curves under Dirac spike-and-slab and spike-and-slab Lasso priors.
These priors also feature global shrinkage parameters.
For Dirac spike-and-slab prior, it is the proportion of the Dirac spike.
For spike-and-slab Lasso, it is the rate of the steeper Laplace component.
The parameter $\tau$ in the Horseshoe prior plays a similar role.

\subsection{Proof of Spectroscopy Results}\label{sec:pf.lm.loss.spectroscopy}
Note that the predictive density can be expressed by
\[
\hat{p}_\pi(\tilde{y}\mid y) = \frac{\int \pi(\tilde y\mid \mu) \pi(\mu \mid y) \diff \mu}{\int \pi(\mu \mid y) \diff \mu} = \frac{\int \pi(\tilde{y}\mid \mu) \pi(y\mid \mu) \pi(\mu\mid \tau) \diff \mu}{\int \pi(y\mid \mu) \pi(\mu\mid \tau) \diff \mu}.
\]
For a fixed $\tau$, the denominator is exactly the marginal density of $y$ under the mixture prior, denoted by $m_\pi(y) = \int \pi(y\mid \mu) \pi(\mu\mid \tau) \diff \mu$.
Meanwhile, the numerator can be written as
\begin{align*}
    \int \pi(\tilde{y}\mid \mu) \pi(\mu\mid y) \pi(\mu\mid \tau) \diff \mu &= \int \pi(\tilde{y}\mid \mu) \pi(\mu\mid y) \int \pi(\mu \mid \lambda,\tau) \nu(\lambda)\diff \lambda \diff \mu\\
    &= \int \left\{\int \pi(\tilde y\mid \mu) \pi(y \mid \mu) \pi(\mu \mid \lambda, \tau) \diff \mu\right\} \nu(\lambda) \diff \lambda.
\end{align*}
Under a Gaussian prior with fixed $\lambda$, the marginal distribution of $y$ is $m_\lambda(y) = \int \pi(y\mid \mu) \pi(\mu \mid \lambda,\tau) \diff \mu$, and thus the predictive density is
\[
\hat p_\lambda (\tilde y\mid y) = \frac{\int \pi(\tilde y\mid \mu) \pi(y \mid \mu) \pi(\mu \mid \lambda,\tau) \diff \mu}{m_\lambda(y)}.
\]
Therefore
\[
\hat p_\pi (\tilde y\mid y) = \int \hat p_\lambda(\tilde y \mid y) \frac{m_\lambda(y)}{m_\pi(y)} \nu(\lambda) \diff \lambda.
\]
Note that $\pi(\mu\mid \tau) = \int \pi(\mu\mid \lambda,\tau) \nu(\lambda) \diff \lambda$, then we have $m_\pi(y) = \int m_\lambda(y)\nu(\lambda)\diff \lambda$.
Therefore
\[
\frac{m_\lambda(y)}{m_\pi(y)} \nu(\lambda) = \frac{m_\lambda (y) \nu(\lambda)}{\int m_\lambda(y) \nu(\lambda) \diff \lambda} = \pi(\lambda \mid y),
\]
which leads to
\[
\hat{p}_\pi(\tilde{y}\mid y) = \int \hat{p}_\lambda(\tilde{y}\mid y)\pi(\lambda\mid y)\diff\lambda.
\]
By Jensen's Inequality $\E \log X \leq \log \E X$, the predictive KL loss under the mixture prior can be written as
\begin{align*}
    L(\theta, \hat p_\pi(\cdot \mid y)) &= \int \pi(\tilde y\mid \theta) \log\frac{\pi(\tilde y\mid \theta)}{\int \hat p_\lambda(\tilde y \mid y) \pi(\lambda \mid y)\diff \lambda} \diff \tilde y\\
    &\leq \int \pi(\tilde y\mid \theta) \int \log\frac{\pi(\tilde y\mid \theta)}{\hat p_\lambda(\tilde y \mid y)} \pi(\lambda\mid y)\diff \lambda \diff \tilde y.
\end{align*}
Recall that for any fixed $\lambda$, the KL predictive loss is
\[
L(\theta, \hat p_\lambda(\cdot \mid y)) = \int \pi(\tilde y \mid \theta) \log \frac{\pi(\tilde y \mid \theta)}{\hat p_\lambda (\tilde y \mid y)} \diff \tilde y.
\]
By Fubini's Theorem, we arrive at \eqref{eq:loss.spectroscopy}.

\subsection{Proof of Lemma \ref{lm:risk.spectroscopy}}\label{sec:pf.lm.risk.spectroscopy}
Recall that for a fixed $\lambda$,
\[
L(\theta, \hat p_\lambda(\cdot \mid y)) \leq \frac{1-v}{2} \frac{1+\lambda^2\tau^2}{v+\lambda^2\tau^2} \left(\frac{\lambda^2\tau^2}{1+\lambda^2\tau^2} y - \theta\right).
\]
By \eqref{eq:loss.spectroscopy}, under the posterior of $\lambda$ in \eqref{eq:lambda.posterior},
\begin{align*}
    L(\theta, \hat p(\cdot \mid y)) &\leq \frac{1-v}{2\tau}\frac{1+\lambda^2\tau^2}{v+\lambda^2\tau^2}\frac{\int_0^\infty (\frac{\lambda^2\tau^2}{1+\lambda^2\tau^2} y -\theta)^2 \frac{1}{\sqrt{1+\lambda^2\tau^2}} \frac{1}{1+\lambda^2} e^{-\frac{y^2}{2(1+\lambda^2\tau^2)}} \diff \lambda}{\Phi_1(1,1,\frac{3}{2}, -\frac{y^2}{2}, 1-\tau^2)}.
\end{align*}
With a change of variable $u = \frac{\lambda^2\tau^2}{1+\lambda^2\tau^2}$, the right hand side becomes
\[
\frac{1-v}{4}\frac{1}{u+v(1-u)} e^{-\frac{y^2}{2}}\frac{\int_0^1 (u y -\theta)^2 u^{-1/2}\frac{1}{\tau^2+(1-\tau^2)u} e^{\frac{y^2}{2}u} \diff u}{\Phi_1(1,1,\frac{3}{2}, -\frac{y^2}{2}, 1-\tau^2)}.
\]
The integral on the numerator can be split into three using $(uy-\theta)^2 = u^2(y-\theta)^2 - 2u(1-u)\theta(y-\theta) + (1-u)^2\theta^2$.
For the first two terms, use the bound $\frac{1}{u+v(1-u)} \leq \frac{1}{u}$.
For the third term, use $\frac{1}{u+v(1-u)} \leq \frac{1}{v(1-u)}$.
Thus $L(\theta, \hat p(\cdot \mid y))$ is upper bounded by
\begin{align*}
    \frac{(1-v)e^{-\frac{y^2}{2}}}{4\Phi_1(1,1,\frac{3}{2}, -\frac{y^2}{2}, 1-\tau^2)} \bigg[&(y-\theta)^2 \int_0^1 u^{1/2} \frac{1}{\tau^2+(1-\tau^2)u} e^{\frac{y^2}{2}u} \diff u\\
    &-2\theta(y-\theta) \int_0^1 u^{-1/2}(1-u) \frac{1}{\tau^2+(1-\tau^2)u} e^{\frac{y^2}{2}u} \diff u\\
    &+ \frac{\theta^2}{v} \int_0^1 u^{-1/2}(1-u) \frac{1}{\tau^2+(1-\tau^2)u} e^{\frac{y^2}{2}u} \diff u\bigg].
\end{align*}
Converting the integrals into confluent hypergeometric functions of two variables \citep{gordy1998generalization}, this can be equivalently written as
\begin{align*}
    \frac{1-v}{\Phi_1\left(1,1,\frac{3}{2}, -\frac{y^2}{2}, 1-\tau^2\right)} \bigg[&\frac{1}{6}(y-\theta)^2 \Phi_1\left(1,1,\frac{5}{2}, -\frac{y^2}{2}, 1-\tau^2\right)\\
    &+ \frac{1}{3} \left[\frac{\theta^2}{v} - 2\theta(y-\theta)\right] \Phi_1\left(2,1,\frac{5}{2}, -\frac{y^2}{2}, 1-\tau^2\right)\bigg].
\end{align*}
Note that the predictive risk is the expectation of the loss over $Y\sim N(\theta,1)$, i.e. $\rho(\theta, \hat p) = \E_\theta L(\theta, \hat p(\cdot \mid Y))$, we arrive at
\begin{align*}
    \rho(\theta, \hat p) &\leq \frac{1-v}{6}\E_\theta\left[\frac{\Phi_1(1,1,\frac{5}{2}, -\frac{y^2}{2}, 1-\tau^2)}{\Phi_1(1,1,\frac{3}{2}, -\frac{y^2}{2}, 1-\tau^2)}(Y-\theta)^2\right]\\
    &+ \frac{1-v}{3}\E_\theta\left[ \frac{\Phi_1(2,1,\frac{5}{2}, -\frac{y^2}{2}, 1-\tau^2)}{\Phi_1(1,1,\frac{3}{2}, -\frac{y^2}{2}, 1-\tau^2)} \left(\frac{\theta^2}{v} - 2\theta(Y-\theta)\right)\right].
\end{align*}
This finishes the proof of Lemma \ref{lm:risk.spectroscopy}.

\section{Proof of Theorem \ref{thm:minimax.fixed.tau}}\label{sec:apdx.pf.minimax.fixed.tau}

\subsection{Signal Case: $\theta \neq 0$}\label{sec:known.tau.signal}
For simplicity, denote $\zeta_\tau = \sqrt{2v\log(1/\tau)}$ in this section.

\subsubsection{Large observation: $|z+\theta| > \sqrt{2v\log(1/\tau)}$}\label{sec:known.tau.signal.large.obs}

Using expression \eqref{eq:N.HS.form.B} of $N^{HS}$, $\theta^2/2r$ is cancelled out, thus
\[
g(z,\theta,v) = \log(\sqrt{v}) + \left(1-\frac{1}{\sqrt{v}}\right)\theta z + \log \frac{\int_0^1 (1-u)^{-1/2}\frac{1}{1-(1-\tau^2) u}e^{-\frac{(z+\theta)^2}{2}u}\diff u}{\int_0^1 (1-u)^{-1/2}\frac{1}{1-(1-\frac{\tau^2}{v}) u}e^{-\frac{(\sqrt{v}z+\theta)^2}{2v}u}\diff u}.
\]
Since $0<v<1$, the first term is negative.
For the second term, use
\begin{multline}\label{eq:cross.term.exp.neg}
\E [\theta Z \one{|Z+\theta| > \zeta_\tau}] = \int_{-\infty}^{-\theta-\zeta_\tau} \theta z \phi(z) + \int_{-\theta+\zeta_\tau}^\infty \theta z \phi(z)\\
= \theta[\phi(-\theta+\zeta_\tau) - \phi(-\theta-\zeta_\tau)] > 0.
\end{multline}
Therefore $\E [(1-1/\sqrt{v})\theta Z\one{|Z+\theta| > \zeta_\tau}] < 0$.

For the third term, define $g_v(u) = (1-u)^{-1/2}\frac{1}{1-(1-\frac{\tau^2}{v}) u}$.
Similar to the technique as in \cite{van2014horseshoe}, by the mean value theorem, $g_v(u) = g_v(0) + ug'_v(\Bar{u})$ for some $\Bar{u}\in[0,u]$.
Then for any $s\in[0,1]$, we can split the integral
\begin{multline*}
\int_0^1 g_v(u) e^{-\frac{(\sqrt{v}z+\theta)^2}{2v}u} \diff u \\
= g_v(0)\int_0^s e^{-\frac{(\sqrt{v}z+\theta)^2}{2v}u}\diff u + g'_v(\Bar{u})\int_0^s u e^{-\frac{(\sqrt{v}z+\theta)^2}{2v}u} \diff u + \int_s^1 g_v(u) e^{-\frac{(\sqrt{v}z+\theta)^2}{2v}u} \diff u.
\end{multline*}
Note that $g_v(0)=1$. The first term is equal to
\[
\frac{v}{(\sqrt{v}z+\theta)^2}\left(2-2e^{-\frac{(\sqrt{v}z+\theta)^2}{2v}s}\right) := \frac{v}{(\sqrt{v}z+\theta)^2} h_1^v(z,s,\theta).
\]
The second term is equal to
\[
\frac{v^2 g'_v(\Bar{u})}{(\sqrt{v}z+\theta)^4}\left[4-4\left(1+\frac{(\sqrt{v}z+\theta)^2}{2v}s\right)e^{-\frac{(\sqrt{v}z+\theta)^2}{2v}s}\right]:= \frac{v^2 g'_v(\Bar{u})}{(\sqrt{v}z+\theta)^4} h_2^v(z,s,\theta).
\]
The third term is trivially lower bounded by 0, and upper bounded by
\begin{multline*}
e^{-\frac{(\sqrt{v}z+\theta)^2}{2v}s} \int_s^1 g_v(u)\diff u \\
\leq 2ve^{-\frac{(\sqrt{v}z+\theta)^2}{2v}s} \tau^{-1}(v-\tau^2)^{-1/2}\tan^{-1}\left(\frac{\sqrt{v-\tau^2}}{\tau}\right) := 2ve^{-\frac{(\sqrt{v}z+\theta)^2}{2v}s} h_3^v(\tau).
\end{multline*}
Therefore, we have the following upper bound
\begin{multline*}
\int_0^1 (1-u)^{-1/2}\frac{1}{1-(1-\tau^2) u}e^{-\frac{(z+\theta)^2}{2}u}\diff u\\
\leq \frac{1}{(z+\theta)^2} h_1^1(z,s,\theta) + \frac{g'_1(\Bar{u})}{(z+\theta)^4}h_2^1(z,s,\theta) + 2e^{-\frac{(z+\theta)^2}{2}s} h_3^1(\tau).
\end{multline*}
Note that the third term is the only one related to $\tau$.
Similarly, we have the lower bound
\begin{multline*}
\int_0^1 (1-u)^{-1/2}\frac{1}{1-(1-\frac{\tau^2}{v}) u}e^{-\frac{(\sqrt{v}z+\theta)^2}{2v}u}\diff u\\
\geq \frac{1}{(z+\theta/\sqrt{v})^2} h_1^v(z,s,\theta) + \frac{g'_v(\Bar{u})}{(z+\theta/\sqrt{v})^4} h_2^v(z,s,\theta).
\end{multline*}
We should have
\begin{multline}\label{eq:risk.frac.ub}
\frac{\int_0^1 (1-u)^{-1/2}\frac{1}{1-(1-\tau^2) u}e^{-\frac{(z+\theta)^2}{2}u}\diff u}{\int_0^1 (1-u)^{-1/2}\frac{1}{1-(1-\frac{\tau^2}{v}) u}e^{-\frac{(\sqrt{v}z+\theta)^2}{2v}u}\diff u}\\
\leq \frac{(z + \frac{\theta}{\sqrt{v}})^2}{(z+\theta)^2} \frac{h_1^1(z,s,\theta) + \frac{g'_1(\Bar{u}_1)}{(z+\theta)^2}h_2^1(z,s,\theta) + 2(z+\theta)^2 e^{-\frac{(z+\theta)^2}{2}s} h_3^1(\tau)}{h_1^v(z,s,\theta) + \frac{g'_v(\Bar{u}_v)}{(z+\theta/\sqrt{v})^2} h_2^v(z,s,\theta)}.
\end{multline}
Note that $\left|\frac{z+\theta/\sqrt{v}}{z+\theta}\right| = 1+\frac{(v^{-1/2}-1)|\theta|}{|z+\theta|} \leq 1+(v^{-1/2}-1)|\theta|\zeta_\tau^{-1} \rightarrow 1$ for any fixed value of $\theta$ as $\tau\downarrow 0$.
Also note that the first two terms in the numerator and the denominator do not contain $\tau$, and $h_1^v(z,s,\theta) \leq 2$, $h_2^v(z,s,\theta) \leq 4$.
Since $|z+\theta| > \zeta_\tau$, we also have
\[
h_1^v(z,s,\theta) > 2\left[1-\exp\left(-[1+(v^{-1/2}-1)\theta\zeta_\tau^{-1}]^2\frac{\zeta_\tau^2}{2}s\right)\right] > 2\left(1-e^{-\frac{\zeta_\tau^2}{2}s}\right),
\]
and similarly
\[
h_2^v(z,s,\theta) > 4\left(1-e^{-\frac{\zeta_\tau^2}{2}s}\right)
\]
For simplicity, write $y = z+\theta$. The right hand side of \eqref{eq:risk.frac.ub} is upper bounded by
\begin{equation}\label{eq:risk.frac.ub.bound}
\left(1+ \frac{(v^{-1/2}-1)|\theta|}{|z+\theta|}\right)^2 \frac{2+\frac{4g_1'(\Bar{u}_1)}{\zeta_\tau^2} + 2\zeta_\tau^2 e^{-\frac{\zeta_\tau^2}{2}s} h_3^1(\tau)}{2(1-e^{-\frac{\zeta_\tau^2}{2}s}) + \frac{4g'_v(\Bar{u}_v)}{\zeta_\tau^2}(1-e^{-\frac{\zeta_\tau^2}{2}s})}
\end{equation}
Note that the third term in the numerator dominates as long as
\[
\zeta_\tau^2 e^{-\frac{\zeta_\tau^2}{2}s} h_3^1(\tau) = 2v\tau^{vs-1}(1-\tau^2)^{-1/2}\log\frac{1}{\tau}\tan^{-1}\left(\frac{\sqrt{v-\tau^2}}{\tau}\right) \gtrsim 1.
\]
If we calibrate $\tau \asymp (s_n/n)^\alpha$ for some constant $\alpha$, up to a logarithm factor, we have $\tau\downarrow 0$ as $n\rightarrow\infty$, and this condition is satisfied.
Moreover, we take $s$ close to one.
For any $|y| > \zeta_\tau$,
\begin{multline*}
\log \frac{\int_0^1 (1-u)^{-1/2}\frac{1}{1-(1-\tau^2) u}e^{-\frac{(z+\theta)^2}{2}u}\diff u}{\int_0^1 (1-u)^{-1/2}\frac{1}{1-(1-\frac{\tau^2}{v}) u}e^{-\frac{(\sqrt{v}z+\theta)^2}{2v}u}\diff u}\\
\leq (1-v)\log\frac{1}{\tau} + \log(\pi v(1-\tau^2)^{-1/2}\log(1/\tau)) + 2(v^{-1/2}-1)\frac{|\theta|}{|z+\theta|}.
\end{multline*}
As $\tau\downarrow 0$, the second term is of order $\log \log (1/\tau)$, which is dominated by the first term.
We shall see that the third term is upper bounded by a constant after taking expectation over $Z$ and supremum over $\theta\in \R$.
When $|\theta| \leq \zeta_\tau$, $\frac{|\theta|}{|z+\theta|} \leq 1$.
When $|\theta| > \zeta_\tau$, we further split the value of $|Z|$ by $|\theta|/2$.
By Chernoff bound,
\[
\P(|Z| > \epsilon |\theta|) \leq 2e^{-\frac{1}{2}\epsilon^2\theta^2}.
\]
Meanwhile, as $|Z/\theta| < 1/2$, we have
\[
\frac{|\theta|}{|Z+\theta|} = \frac{1}{|1+Z/\theta|} \leq \sum_{k=0}^\infty \left|\frac{Z}{\theta}\right|^k \leq 2.
\]
We conclude that
\begin{equation}\label{eq:theta.Z.ratio.bound}
\sup_{\theta\in \R}\E\left[\frac{|\theta|}{|Z+\theta|}\one{|Z+\theta|>\zeta_\tau}\right] \leq 1 + 2 + \sup_{|\theta| \geq \zeta_\tau}\frac{|\theta|}{\zeta_\tau}2e^{-\frac{1}{8}\theta^2}.
\end{equation}
As $\zeta_\tau > 2$, which is the case for all $\tau < e^{-2/v}$, the third term in the upper bound can be further upper bounded by $2\tau^{v/4}$.
The bound then becomes $3+2\tau^{v/4}$.
We conclude that
\begin{multline}\label{eq:Eg.large.obs}
\sup_{\theta\in\R}\E\left[g(Z,\theta,v)\one{|Z+\theta| > \sqrt{2v\log(1/\tau)}}\right]\\
\leq (1-v)\log\frac{1}{\tau} + \log(\pi v(1-\tau^2)^{-1/2}\log(1/\tau)) + (v^{-1/2}-1)(6+4\tau^{v/4}).
\end{multline}
As $\tau\downarrow 0$, the first term dominates.

\subsubsection{Small observation: $|z+\theta| \leq \sqrt{2v\log(1/\tau)}$}\label{sec:known.tau.signal.small.obs}

In the case when observed value satisfies $|y| \leq \zeta_\tau$, we use the trivial bound $\log N^{HS}_{\theta,v}(z) \leq 0$. Thus
\[
g(z,\theta,v) \leq \frac{\theta^2}{2r} + \log N^{HS}_{\theta,1}(z).
\]
The first term can be bounded by using the inequality $(a+b)^2 \leq 2a^2 + 2b^2$ and $v/r = 1-v$:
\[
\frac{\theta^2}{2r} \leq \frac{Z^2 + (Z+\theta)^2}{r} \leq \frac{Z^2}{r} + 2(1-v)\log\frac{1}{\tau}.
\]
We shall bound the second term using the expression \eqref{eq:N.HS.form.A}.
The proof is split into two cases according to the value of the variance ratio $r$.
For both cases, we use expression \eqref{eq:N.HS.form.A},
\[
N_{\theta,1}^{HS}(z) = \frac{\tau}{\pi}\int_0^1 u^{-1/2}\frac{1}{\tau^2 + (1-\tau^2)u} \exp\left[\frac{(z+\theta)^2}{2}u\right] \diff u.
\]

\noindent{\bf The case when $0<r\leq 1$}

When $0<r\leq 1$, we have $0<v\leq 1/2$, and $v\leq 1-v$.
For any $0<\tau<1$,
\begin{align*}
N_{\theta,1}^{HS}(z) &\leq \frac{\tau}{\pi}e^{\frac{(z+\theta)^2}{2}} \int_0^1 u^{-1/2}\frac{1}{\tau^2 + (1-\tau^2)u} \diff u\\
& = \frac{2}{\pi}e^{\frac{(z+\theta)^2}{2}} (1-\tau^2)^{-1/2} \tan^{-1}\left(\frac{\sqrt{1-\tau^2}}{\tau}\right)\\
&\leq e^{\frac{(z+\theta)^2}{2}}.
\end{align*}
Since $|z+\theta| \leq \zeta_\tau$, we have $e^{\frac{(z+\theta)^2}{2}} \leq \tau^{-v} \leq \tau^{v-1}$.
We conclude that
\[
\log N_{\theta,1}^{HS}(z) \leq (1-v)\log\frac{1}{\tau} \quad \text{when }r\in(0,1].
\]

\noindent{\bf The case when $r>1$}

When $r>1$, we have $v>1/2$.
It is therefore possible to split the integral by $\frac{1-v}{v}<1$.
\begin{multline*}
    N_{\theta,1}^{HS}(z) = \frac{\tau}{\pi} \int_0^{\frac{1-v}{v}} u^{-1/2}\frac{1}{\tau^2 + (1-\tau^2)u} e^{\frac{(z+\theta)^2}{2}u} \diff u\\
    + \frac{\tau}{\pi} \int_{\frac{1-v}{v}}^1 u^{-1/2}\frac{1}{\tau^2 + (1-\tau^2)u} e^{\frac{(z+\theta)^2}{2}u} \diff u.
\end{multline*}
For the first term, note that
\begin{multline*}
\frac{\tau}{\pi} \int_0^{\frac{1-v}{v}} u^{-1/2}\frac{1}{\tau^2 + (1-\tau^2)u} e^{\frac{(z+\theta)^2}{2}u} \diff u\\
\leq e^{\frac{(z+\theta)^2}{2}\frac{1-v}{v}} \frac{\tau}{\pi} \int_0^{\frac{1-v}{v}} u^{-1/2}\frac{1}{\tau^2 + (1-\tau^2)u} \diff u \leq \frac{1}{\tau^{1-v}}.
\end{multline*}
For the second term, use $\tau^2 + (1-\tau^2)u \geq u$.
Therefore
\begin{multline*}
    \frac{\tau}{\pi} \int_{\frac{1-v}{v}}^1 u^{-1/2}\frac{1}{\tau^2 + (1-\tau^2)u} e^{\frac{(z+\theta)^2}{2}u} \diff u\\
    \leq \frac{\tau}{\pi} \int_{\frac{1-v}{v}}^1 u^{-3/2} e^{\frac{(z+\theta)^2}{2}u} \diff u \leq \frac{2\tau}{\pi} e^{\frac{(z+\theta)^2}{2}} \left(\sqrt{\frac{v}{1-v}}-1\right),
\end{multline*}
which is upper bounded by $\frac{2}{\pi} \left(\sqrt{\frac{v}{1-v}}-1\right) \tau^{1-v}$,
a quantity that converges to zero as $\tau\downarrow 0$.
Therefore, using $\log(1+x) \leq x$ for $x\geq 0$, we arrive at
\[
\log N_{\theta,1}^{HS}(z) \leq (1-v)\log\frac{1}{\tau} + \frac{2}{\pi}\left(\sqrt{\frac{v}{1-v}} -1\right) \tau^{2(1-v)} \quad \text{when }r\in(1,\infty).
\]

\noindent{\bf Putting both cases together}

For the case when the observation is small in scale, we conclude that
\begin{multline}\label{eq:Eg.small.obs}
    \sup_{\theta\in\R}\E\left[g(Z,\theta,v)\one{|Z+\theta| \leq \sqrt{2v\log(1/\tau)}}\right]\\
    \leq (1-v)\log\frac{1}{\tau} + \frac{2}{\pi}\left(\sqrt{\frac{v}{1-v}} -1\right)_+ \tau^{2(1-v)}.
\end{multline}
In a slight abuse of notation, $\left(\sqrt{\frac{v}{1-v}} -1\right)_+$ takes value $\sqrt{\frac{v}{1-v}} -1$ as $r>1$ (or $v>1/2$), and zero otherwise.
Combining the bounds \eqref{eq:Eg.large.obs} and \eqref{eq:Eg.small.obs}, we arrive at the following upper bound on the univariate predictive risk:
\begin{equation}\label{eq:risk.bound.signal}
    \sup_{\theta\in\R} \rho(\theta,\hat{p}) \leq (1-v)\log\frac{1}{\tau} + \tilde C_1(v,\tau)
\end{equation}
for any $\tau\in(0,1)$, where
\begin{multline}\label{eq:C.1.def}
    \tilde C_1(v,\tau) = \log((1-\tau^2)^{-1/2}\log(1/\tau)) + \log(\pi v) \\
    +(v^{-1/2}-1)(6+4\tau^{v/4}) + \frac{2}{\pi}\left(\sqrt{\frac{v}{1-v}} -1\right)_+ \tau^{2(1-v)}.
\end{multline}
Note that as $\tau\downarrow 0$, $C_1(v,\tau)$ becomes
\[
\log\log\frac{1}{\tau} + \log(\pi v) + 3(v^{-1/2}-1).
\]
Despite its divergence around $\tau=0$, $C_1(r,\tau)$ is dominated by the $(1-v)\log(1/\tau)$ term.

\subsection{Noise Case: $\theta = 0$}\label{sec:known.tau.noise}

By Lemma \ref{lm:risk.decomp}, when $\theta = 0$, the KL risk $\rho(0, \hat p)$ can be written as the expectation of $g(Z,0,v)$.
Using the form \eqref{eq:N.HS.form.A},
\[
g(z,0,v) = \frac{1}{2}\log(v) + \log \frac{\int_0^1 u^{-1/2} [\tau^2 + (1-\tau^2)u]^{-1} e^{\frac{z^2}{2}u} \diff u}{\int_0^1 u^{-1/2} \left[\frac{\tau^2}{v} + \left(1-\frac{\tau^2}{v}\right)u\right]^{-1} e^{\frac{z^2}{2}u} \diff u}.
\]
Despite not being directly used for the proof, the following analysis provides an intuitive view on the different behavior of $g(z,0,v)$ under different levels of observation $y=z$.
When $|z|$ is small, the integrals on both the numerator and the denominator will be dominated by small $u$, making $g(z,0,v) \approx \frac{1}{2}\log (v) + \log \frac{1}{v} > 0$.
Meanwhile, when $|z|$ is large enough, the integrals will be dominated by large $u$, leading to an almost identical numerator and denominator, thus a negative $g(z,0,v)$ thanks to the $\log(v)$ term.
This finding is heuristic to our proof strategy; we should expect the upper bound to be very small when $|z|$ is large (rate minimax with a very small constant, or even exceeding the minimax rate).
We shall see that it is indeed the case in the following analysis in Section \ref{sec:known.tau.noise.large.obs}.
In the following proof, we split the observation $|z|$ by $\sqrt{2\log(1/\tau)}$.

\subsubsection{Large observation: $|z| > \sqrt{2\log(1/\tau)}$}\label{sec:known.tau.noise.large.obs}
Recall Section \ref{sec:known.tau.signal.large.obs} that $g(z,0,v)$ can be written as follows using the form \eqref{eq:N.HS.form.B} and $g_v(u) = (1-u)^{-1/2} \frac{1}{1-(1-\frac{\tau^2}{v})u}$,
\[
g(z,0,v) = \log(\sqrt{v}) + \log \frac{\int_0^1 g_1(u) e^{-\frac{z^2}{2}u}\diff u}{\int_0^1 g_v(u) e^{-\frac{z^2}{2}u}\diff u}.
\]
Using the same manipulation that led to \eqref{eq:risk.frac.ub}, we have for any $s\in(0,1)$, there exists $\bar u_1 \in (0,s)$ and $\bar u_v \in (0,s)$ that
\[
\frac{\int_0^1 g_1(u) e^{-\frac{z^2}{2}u}\diff u}{\int_0^1 g_v(u) e^{-\frac{z^2}{2}u}\diff u} \leq \frac{h_1^1(z,s,0) + \frac{g_1'(\Bar{u}_1)}{z^2} h_2^1(z,s,0) + 2z^2 e^{-\frac{z^2}{2}s} h_3^1(\tau)}{h_1^v(z,s,0) + \frac{g_v'(\Bar{u}_v)}{z^2} h_2^v(z,s,0)}
\]
As $z = \sqrt{2\log(1/\tau)}$, the third term in the numerator dominates when $\tau\downarrow 0$ as long as
\[
\lim_{\tau\downarrow 0} z^2 e^{-\frac{z^2}{2}s} h_3^1(\tau) = \frac{\pi}{2\sqrt{v}} \tau^{s-1}\log\frac{1}{\tau} \gtrsim 1.
\]
This condition is satisfied with any choice of $s\in(0,1)$.
Thus
\[
g(z,0,v) \leq \log(\sqrt{v}) + (1-s) \log\frac{1}{\tau} + \log\left(\frac{\pi}{2\sqrt{v}} \log\frac{1}{\tau}\right).
\]
For Mills ratio $R(x) = \frac{1-\Phi(x)}{\phi(x)}$, we have the upper bound $R(x) < 1/x$ \citep{wainwright2019high}, thus
\begin{align*}
    &\E g(Z,0,v)\one{|Z|>\sqrt{2\log(1/\tau)}}\\
    &\leq \left[ \log(\sqrt{v}) + (1-s)\log\frac{1}{\tau} + \log\left(\frac{\pi}{2\sqrt{v}} \log\frac{1}{\tau}\right)\right] 2\int_{\sqrt{2\log(1/\tau)}}^{\infty}\phi(z)\diff z\\
    &\leq \left[(1-s)\log\frac{1}{\tau} + \log\left(\frac{\pi}{2\sqrt{v}} \log\frac{1}{\tau}\right)\right] \frac{2\phi(\sqrt{2\log(1/\tau)})}{\sqrt{2\log(1/\tau)}}\\
    &\leq \frac{1-s}{\sqrt{\pi}}\tau\sqrt{\log\frac{1}{\tau}} + \frac{\tau \log\left(\frac{\pi}{2\sqrt{v}} \log\frac{1}{\tau}\right)}{\sqrt{\pi\log(1/\tau)}}.
\end{align*}
Both terms converge to zero as $\tau\rightarrow 0$, with the first term dominating.
Since the choice of $s$ is arbitrary, under our calibration of $\tau = \frac{s_n}{n}(\log\frac{n}{s_n})^\alpha$, any choice of $s \geq v$ achieves rate-minimaxity.
Specifically, if we choose an $s$ that is close to 1, the constant can be arbitrarily small.

\subsubsection{Small observation: $|z| \leq \sqrt{2\log(1/\tau)}$} \label{sec:known.tau.noise.small.obs}
The proof of this case relies on the upper bound \eqref{eq:risk.spectroscopy} given by the risk spectroscopy result in Lemma \ref{lm:risk.spectroscopy}, which can be rewritten as
\[
g(z,0,v) \leq \frac{1-v}{2} \frac{\int_0^1 u^{1/2} \frac{1}{\tau^2 + (1-\tau^2)u} e^{\frac{z^2}{2}} \diff u}{\int_0^1 u^{-1/2} \frac{1}{\tau^2 + (1-\tau^2)u} e^{\frac{z^2}{2}} \diff u} z^2.
\]
We shall upper bound the denominator and lower bound the numerator by splitting the integrals.
When $u\in (0,\tau^2)$, we use the upper and lower bounds $\tau^2 + (1-\tau^2) u \in (\tau^2, 2\tau^2)$,
whereas the bound for $u \in (\tau^2,1)$ is $\tau^2 + (1-\tau^2) u \in (u, 2u)$.

The integral on the denominator can be split by $(0,\tau^2)$ and $(\tau^2,1)$, such that
\begin{align*}
    \int_0^1 u^{-1/2} \frac{1}{\tau^2 + (1-\tau^2)u} e^{\frac{z^2}{2}} \diff u &\geq \frac{1}{2\tau^2}\int_0^{\tau^2} u^{-1/2}\diff u + \int_{\tau^2}^1 \frac{1}{2}u^{-3/2} e^{\frac{z^2}{2}u} \diff u\\
    &\geq \frac{1}{\tau} + \frac{1}{z^2}\left(e^{\frac{z^2}{2}} - e^{\frac{z^2}{2}\tau^2}\right).
\end{align*}
As $|z| \leq \sqrt{2\log(1/\tau)}$, the second term has a smaller order to $\tau^{-1}$.
Therefore the lower bound is dominated by $1/\tau$, and we arrive at $\int_0^1 u^{-1/2} \frac{1}{\tau^2 + (1-\tau^2)u} e^{\frac{z^2}{2}} \diff u \geq \tau^{-1}$. 

For the numerator, split the integral by $(0,\tau^2)$, $(\tau^2, 1/a)$, and $(1/a,1)$.
Here, $a$ is an arbitrarily chosen constant between $1$ and $1/\tau^2$.
Then
\begin{align*}
    &\int_0^1 u^{1/2} \frac{1}{\tau^2 + (1-\tau^2)u} e^{\frac{z^2}{2}} \diff u\\
    &\leq  \frac{1}{\tau^2} e^{\frac{z^2}{2}\tau^2}\int_0^{\tau^2} u^{1/2}\diff u + \int_{\tau^2}^{1/a} u^{-1/2} e^{\frac{z^2}{2}u}\diff u + \int_{1/a}^1 u^{-1/2} e^{\frac{z^2}{2}u}\diff u\\
    &\leq \frac{2}{3}\tau e^{\frac{z^2}{2}\tau^2} + 2e^{\frac{z^2}{2}\frac{1}{a}}\left(\frac{1}{\sqrt{a}} - \tau\right) + \frac{2\sqrt{a}}{z^2}\left(e^{\frac{z^2}{2}} - e^{\frac{z^2}{2}\frac{1}{a}}\right).
\end{align*}
Therefore we arrive at the upper bound
\begin{equation}\label{eq:g.UB.noise.small.obs}
    g(z,0,v) \leq \frac{1-v}{2} \tau \left[\frac{2}{3}\tau z^2 e^{\frac{z^2}{2}\tau^2} + 2 z^2 e^{\frac{z^2}{2}\frac{1}{a}}\left(\frac{1}{\sqrt{a}} - \tau\right) + 2\sqrt{a} \left(e^{\frac{z^2}{2}} - e^{\frac{z^2}{2}\frac{1}{a}}\right)\right].
\end{equation}
To get an upper bound for the risk, we need to take an expectation over $Z\sim N(0,1)$. For $|Z| < \sqrt{2\log(1/\tau)}$.
Using integration by part,
\begin{align*}
    &2\int_0^{\sqrt{2\log(1/\tau)}} z^2 e^{\frac{z^2}{2}\tau^2} \frac{1}{\sqrt{2\pi}}e^{-\frac{z^2}{2}}\diff z\\
    &= \frac{1}{\sqrt{2\pi}(1-\tau^2)} \left(\int_0^{\sqrt{2\log(1/\tau)}} e^{-\frac{z^2}{2}(1-\tau^2)} \diff z - \tau^{(1-\tau^2) \sqrt{2\log(1/\tau)}}\right)\\
    &\leq (1-\tau^2)^{-3/2}.
\end{align*}
The inequality is due to expanding the integral limit to $(0,\infty)$ and noticing the second term beings  negative.
Similarly,
\begin{align*}
    2\int_0^{\sqrt{2\log(1/\tau)}} z^2 e^{\frac{z^2}{2}\frac{1}{a}} \frac{1}{\sqrt{2\pi}}e^{-\frac{z^2}{2}}\diff z \leq (1-a^{-1})^{-3/2}.
\end{align*}
Meanwhile,
\begin{align*}
    2\int_0^{\sqrt{2\log(1/\tau)}} e^{\frac{z^2}{2}}\frac{1}{\sqrt{2\pi}}e^{-\frac{z^2}{2}}\diff z = \frac{2}{\sqrt{\pi}} \sqrt{\log(1/\tau)}.
\end{align*}
Combining these results, we arrive at
\begin{multline}\label{eq:Eg0.small.z}
        \E g(Z,0,v) \one{|Z|\leq \sqrt{2\log(1/\tau)}}\\
        \leq (1-v)\tau\sqrt{\log\frac{1}{\tau}}\left[\frac{2\sqrt{a}}{\sqrt{\pi}} + \frac{(1-a^{-1})^{-3/2}}{\sqrt{\log(1/\tau)}} + \frac{\tau(1-\tau^2)^{-3/2}}{3 \sqrt{\log(1/\tau)}}\right].
\end{multline}
When $\tau \rightarrow 0$, the last two terms in the bracket converge to zero, and the first term is a constant not related to $\tau$.
Therefore, the upper bound in \eqref{eq:Eg0.small.z} is of order $(1-v)\tau \sqrt{\log(1/\tau)}$.
With the choice of $a$ being close to 1, combining the two cases in Sections \ref{sec:known.tau.noise.large.obs} and \ref{sec:known.tau.noise.small.obs}, we conclude that
\begin{equation}\label{risk.bound.noise}
\rho(0,\hat p) \leq \frac{2}{\sqrt{\pi}} (1-v)\tau\sqrt{\log\frac{1}{\tau}} + \tilde C_2(v,\tau),
\end{equation}
where
\begin{equation}\label{eq:C.2.def}
    \tilde C_2(v,\tau) = \frac{1-v}{3}\tau^2 (1-\tau^2)^{-3/2} + \frac{\tau \log\left(\frac{\pi}{2\sqrt{v}} \log\frac{1}{\tau}\right)}{2\sqrt{\pi\log(1/\tau)}}.
\end{equation}
As $\tau\rightarrow 0$, $\tilde C_2(v,\tau)$ converges to zero at a rate faster than $\tau\sqrt{\log(1/\tau)}$.

\subsection{Conclusion}

Combining the results in Sections \ref{sec:known.tau.signal} and \ref{sec:known.tau.noise}, we conclude that
\[
\sup_{\bm \theta \in \Theta_n(s_n)}\rho_n(\bm \theta, \hat p_\pi) \leq s_n (1-v)\log\frac{1}{\tau} + \frac{2(n-s_n)}{\sqrt{\pi}} (1-v) \tau\sqrt{\log\frac{1}{\tau}} + \tilde C_n(v,\tau),
\]
where
\[
\tilde C_n(v,\tau) = s_n \tilde C_1(v,\tau) + (n-s_n)\tilde C_2(v,\tau).
\]
For the fixed-$\tau$ case, we calibrate $\tau$ such that it takes value $\tau_{n,\alpha} = \frac{s_n}{n}[\log(\frac{n}{s_n})]^\alpha$ where $\alpha\in[0,1/2]$.
Note that $1-v = \frac{1}{1+r}$, therefore
\[
\sup_{\bm \theta \in \Theta_n(s_n)}\rho_n(\bm \theta, \hat p_\pi) \leq \left(1 + \frac{2}{\sqrt{\pi}\log^{1/2 - \alpha}(n/s_n)}\right) \frac{s_n}{1+r} \log \frac{n}{s_n} + \tilde C_n(v, \tau_{n,\alpha}).
\]
By definition of $\tilde C_1(v,\tau)$ in \eqref{eq:C.1.def} and $\tilde C_2(v,\tau)$ in \eqref{eq:C.2.def}, we get
\begin{multline}\label{eq:C.n.def}
    \tilde C_n(v,\tau_{n,\alpha}) = s_n\log\log\frac{n}{s_n} + s_n\frac{\log(\pi/2\sqrt v) + \log \log\frac{n}{s_n}}{2\sqrt{\pi}\log^{1/2-\alpha}(n/s_n)} + \\
    s_n\left\{\log(\pi v) + 
    \left(\frac{1}{\sqrt{v}}-1\right)\left[3+2\left(\frac{s_n}{n}\right)^{v/4}\right] + \frac{2}{\pi}\left(\sqrt{\frac{v}{1-v}}-1\right)_+ \left(\frac{s_n}{n}\right)^{2(1-v)} + \frac{s_n}{n}\right\}.
\end{multline}
Since $s_n/n\rightarrow 0$, we have
\[
\tilde C_n(v,\tau_{n,\alpha}) \asymp s_n\log\log\frac{n}{s_n} \lesssim s_n \log \frac{n}{s_n}.
\]
If we further choose any $\alpha \in [0,1/2)$, we have
\[
\sup_{\bm \theta \in \Theta_n(s_n)}\rho_n(\bm \theta, \hat p_\pi) \leq \frac{s_n}{1+r} \log \frac{n}{s_n}(1+o(1)).
\]
This finishes the proof of Theorem \ref{thm:minimax.fixed.tau}.

\section{Proof of Lemmata in Section \ref{sec:minimax.adaptive}}

\subsection{Risk Decomposition and Spectroscopy for Hierarchical Model}\label{sec:hierarchical.decomp.spec}
The risk decomposition results under fixed $\tau$ given in Lemma \ref{lm:risk.decomp} does not directly apply to the hierarchical Bayes model.
Instead, it is generalized as follows. Let
\[
\tilde g(Y_i, \tilde Y_i, \theta_i, v) = \log \frac{\pi(\tilde Y_i \mid \theta_i)}{\hat p(\tilde Y_i\mid Y_i, \tau)}.
\]
Then we can write $L(\theta_i, \hat p (\cdot \mid Y_i, \tau)) = \E_{\tilde{\bm Y} \mid \bm \theta}[\tilde g(Y_i, \tilde Y_i, \theta_i, v) ]$.
Despite its slightly more complicated form, $\tilde g(Y_i, \tilde Y_i, \theta_i, v)$ can also be decomposed similarly as in Lemma \ref{lm:risk.decomp}, which is formalized by the following lemma.
\begin{lemma}\label{lm:g.tilde.decomp}
    Denote
    \begin{align}
        \begin{split}\label{eq:Nv.D.form.1}
            N_{v}(Y_i, \tilde Y_i) &= \frac{\tau}{\pi\sqrt{v}} \int_0^1 u^{-1/2}\frac{1}{\frac{\tau^2}{v} + (1-\frac{\tau^2}{v})u} \exp \left\{ \frac{v}{2} (Y_i + \tilde Y_i / r)^2 u \right\}\diff u,\\
            D(Y_i) &= \frac{\tau}{\pi} \int_0^1 u^{-1/2}\frac{1}{\tau^2 + (1-\tau^2)u} \exp \left\{ \frac{Y_i^2}{2} u \right\}\diff u,
        \end{split}
    \end{align}
    or equivalently,
    \begin{align}
        \begin{split}\label{eq:Nv.D.form.2}
            N_{v}(Y_i, \tilde Y_i) &= \frac{\tau e^{\frac{v}{2}(Y_i + \tilde Y_i / r)^2}}{\pi\sqrt{v}}  \int_0^1 (1-u)^{-1/2}\frac{1}{1- (1-\frac{\tau^2}{v})u} \exp \left\{ -\frac{v}{2} (Y_i + \tilde Y_i / r)^2 u \right\}\diff u,\\
            D(Y_i) &= \frac{\tau e^{\frac{Y_i^2}{2}}}{\pi} \int_0^1 (1-u)^{-1/2}\frac{1}{1- (1-\tau^2)u} \exp \left\{ -\frac{Y_i^2}{2} u \right\}\diff u.
        \end{split}
    \end{align}
    Then the following decomposition holds for $\tilde g(Y_i, \tilde Y_i, \theta_i, v)$,
    \[
    \tilde g(Y_i, \tilde Y_i, \theta_i, v) = \frac{\tilde Y_i \theta_i}{r} - \frac{\theta_i^2}{2r} - \log N_{v}(Y_i, \tilde Y_i) + \log D (Y_i).
    \]
\end{lemma}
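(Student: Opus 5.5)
We compute the log-ratio directly from the fixed-$\tau$ predictive density. Write $\pi(\tilde Y_i\mid\theta_i)$ for the $N(\theta_i,r)$ density, and express the Horseshoe predictive density at fixed $\tau$ as
\[
\hat p(\tilde Y_i\mid Y_i,\tau)=\frac{\int \pi(\tilde Y_i\mid\mu)\,\pi(Y_i\mid\mu)\,\pi(\mu\mid\tau)\diff\mu}{\int \pi(Y_i\mid\mu)\,\pi(\mu\mid\tau)\diff\mu},
\]
as in Appendix \ref{sec:pf.lm.loss.spectroscopy}.

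The denominator is the marginal $m_\pi(Y_i)$. Using the Gaussian-mixture computation from the proof of Lemma \ref{lm:risk.decomp} with $v=1$, together with the change of variables $u=\lambda^2\tau^2/(1+\lambda^2\tau^2)$, we get
\[
m_\pi(Y_i)=\phi(Y_i)\,D(Y_i),
\]
with $D$ in form \eqref{eq:Nv.D.form.1}. This is exactly the computation that produced \eqref{eq:N.HS.form.A}: the factor $\int e^{\mu Y_i-\mu^2/2}\pi(\mu\mid\tau)\diff\mu$ equals $N^{HS}$ evaluated at $v=1$ and $\sqrt v Z+\theta=Y_i$.

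For the numerator, we complete the square in $\mu$. The exponent is
\[
-\frac{(\tilde Y_i-\mu)^2}{2r}-\frac{(Y_i-\mu)^2}{2}=-\frac{\tilde Y_i^2}{2r}-\frac{Y_i^2}{2}+\mu\,(Y_i+\tilde Y_i/r)-\frac{\mu^2}{2v},
\]
since $1+1/r=1/v$. The numerator is therefore $(2\pi)^{-1}r^{-1/2}e^{-\tilde Y_i^2/2r-Y_i^2/2}$ times
\[
\int \exp\Big\{\frac{\mu\,(\sqrt v\,W)}{\sqrt v}-\frac{\mu^2}{2v}\Big\}\,\pi(\mu\mid\tau)\diff\mu, \qquad \sqrt v\,W:=v\,(Y_i+\tilde Y_i/r).
\]
This is the same Gaussian-mixture integral as $N^{GM}_{\theta,v}$ in Lemma \ref{lm:risk.decomp}, with $(\sqrt v Z+\theta)^2/(2v)$ replaced by $v(Y_i+\tilde Y_i/r)^2/2$. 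Repeating the substitution $u=\lambda^2\tau^2/(v+\lambda^2\tau^2)$ gives $N_v(Y_i,\tilde Y_i)$ in form \eqref{eq:Nv.D.form.1}. The main thing to check here is that the Gaussian normalizing constants, namely the factor $\sqrt{v/(\lambda^2\tau^2+v)}$ and the $(2\pi)^{-1/2}$ from integrating out $\mu$, combine correctly into the prefactor $\tau/(\pi\sqrt v)$.

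Dividing the numerator by the denominator, the $\phi(Y_i)$ factors cancel, leaving
\[
\hat p(\tilde Y_i\mid Y_i,\tau)=\frac{1}{\sqrt{2\pi r}}\,e^{-\tilde Y_i^2/(2r)}\,\frac{N_v(Y_i,\tilde Y_i)}{D(Y_i)}.
\]
On the other hand,
\[
\log\pi(\tilde Y_i\mid\theta_i)=-\tfrac12\log(2\pi r)-\frac{\tilde Y_i^2}{2r}+\frac{\tilde Y_i\theta_i}{r}-\frac{\theta_i^2}{2r}.
\]
Subtracting, the $-\tilde Y_i^2/(2r)$ and normalizing terms cancel, and we obtain
\[
\tilde g(Y_i,\tilde Y_i,\theta_i,v)=\frac{\tilde Y_i\theta_i}{r}-\frac{\theta_i^2}{2r}-\log N_v(Y_i,\tilde Y_i)+\log D(Y_i).
\]

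The equivalent forms \eqref{eq:Nv.D.form.2} follow from the substitution $u\mapsto 1-u$ inside each integral, after factoring out $e^{v(Y_i+\tilde Y_i/r)^2/2}$ or $e^{Y_i^2/2}$ respectively. This is the same step that took \eqref{eq:N.HS.form.A} to \eqref{eq:N.HS.form.B}.

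The only real obstacle is bookkeeping of constants in the completed square, in particular confirming that the denominator $D$ carries no factor $\sqrt v$. This holds because $D$ is the $v=1$ specialization of the same expression.
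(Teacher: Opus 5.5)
Your proposal is correct and follows the paper's proof essentially step by step. Like the paper, you write the fixed-$\tau$ predictive density as a ratio of integrals, complete the square using $1+1/r=1/v$, and reduce both integrals to the Gaussian-mixture computation of Lemma~\ref{lm:risk.decomp} via the substitution $u=\lambda^2\tau^2/(v+\lambda^2\tau^2)$ (with $v=1$ for $D$), then get the second forms from $u\mapsto 1-u$.

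One notational slip: with $\sqrt v\,W:=v(Y_i+\tilde Y_i/r)$, your displayed linear term $\mu(\sqrt v\,W)/\sqrt v$ equals $\mu\sqrt v\,(Y_i+\tilde Y_i/r)$ rather than $\mu(Y_i+\tilde Y_i/r)$; you should divide by $v$ instead. The replacement $(\sqrt v Z+\theta)^2/(2v)\to v(Y_i+\tilde Y_i/r)^2/2$ that you actually use is correct.
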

\begin{proof}
    See Appendix \ref{sec:proof.lm.g.tilde.decomp}.
\end{proof}
Once the hyperprior $\pi(\tau)$ degenerates to a point mass, i.e. $\tau$ is fixed to a single value, Lemma \ref{lm:g.tilde.decomp} becomes identical to Lemma \ref{lm:risk.decomp} by taking expectation over $Y_i$ and $\tilde Y_i$.
Likewise, the risk spectroscopy discussed in Section \ref{sec:spectroscopy} also applies to the hierarchical Bayes model, serving as an alternative way to bound each summand in \eqref{eq:risk.decomp.adaptive}.
The following lemma serves as a generalization to Lemma \ref{lm:risk.spectroscopy}.
\begin{lemma}\label{lm:risk.spectroscopy.adaptive}
    Under the hierarchical Horseshoe prior, for each $1\leq i\leq n$,
    \begin{multline}
        \E_{\bm Y \mid \bm \theta} \E_{\tau \mid \bm Y} [L(\theta_i, \hat p(\cdot \mid Y_i, \tau))] \leq \frac{1-v}{6}\E_{\bm Y \mid \bm \theta} \E_{\tau \mid \bm Y}\left[\frac{\Phi_1(1,1,\frac{5}{2}, -\frac{Y_i^2}{2}, 1-\tau^2)}{\Phi_1(1,1,\frac{3}{2}, -\frac{Y_i^2}{2}, 1-\tau^2)}(Y_i-\theta_i)^2\right]\\
        + \frac{1-v}{3}\E_{\bm Y \mid \bm \theta} \E_{\tau \mid \bm Y}\left[ \frac{\Phi_1(2,1,\frac{5}{2}, -\frac{Y_i^2}{2}, 1-\tau^2)}{\Phi_1(1,1,\frac{3}{2}, -\frac{Y_i^2}{2}, 1-\tau^2)} \left(\frac{\theta_i^2}{v} - 2\theta_i(Y_i-\theta_i)\right)\right].
    \end{multline}
    Specifically, in the case that $\theta_i=0$, this reduces to
    \begin{equation}\label{eq:risk.spectroscopy.adaptive}
        \E_{\bm Y \mid \bm \theta} \E_{\tau \mid \bm Y} [L(0, \hat p(\cdot \mid Y_i, \tau))] \leq \frac{1-v}{6}\E_{\bm Y \mid \bm \theta} \E_{\tau \mid \bm Y}\left[\frac{\Phi_1(1,1,\frac{5}{2}, -\frac{Y_i^2}{2}, 1-\tau^2)}{\Phi_1(1,1,\frac{3}{2}, -\frac{Y_i^2}{2}, 1-\tau^2)}Y_i^2\right].
    \end{equation}
\end{lemma}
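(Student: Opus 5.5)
The plan is to reduce Lemma \ref{lm:risk.spectroscopy.adaptive} to the fixed-$\tau$ bound already proved, applied pointwise in $\tau$ and then integrated against $\pi(\tau\mid\bm Y)$. First observe that for each fixed $\tau$ the quantity $L(\theta_i,\hat p(\cdot\mid Y_i,\tau))$ is the KL loss of the univariate predictive density under the separable Horseshoe prior with that fixed $\tau$. It depends on $\bm Y$ only through $Y_i$, since conditionally on $\tau$ the prior factorizes and the $i$-th predictive marginal depends only on $Y_i$. The argument of Section \ref{sec:pf.lm.loss.spectroscopy} therefore applies verbatim for every fixed $\tau$. It shows that $\hat p(\tilde y_i\mid Y_i,\tau)=\int \hat p_\lambda(\tilde y_i\mid Y_i)\pi(\lambda\mid Y_i,\tau)\diff\lambda$, and Jensen's inequality then gives the spectroscopy bound \eqref{eq:loss.spectroscopy} with weights $\pi(\lambda\mid Y_i,\tau)$ from \eqref{eq:lambda.posterior}.

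Next, I will reproduce the loss-level (not risk-level) computation from Section \ref{sec:pf.lm.risk.spectroscopy}, still for a fixed $\tau$ and a fixed realization $Y_i=y$. The steps are as follows.
\begin{enumerate}
\item Bound the Gaussian KL loss by $\frac{1-v}{2}\frac{1+\lambda^2\tau^2}{v+\lambda^2\tau^2}\bigl(\frac{\lambda^2\tau^2}{1+\lambda^2\tau^2}y-\theta_i\bigr)^2$.
\item Change variables to $u=\lambda^2\tau^2/(1+\lambda^2\tau^2)$.
\item Expand $(uy-\theta_i)^2=u^2(y-\theta_i)^2-2u(1-u)\theta_i(y-\theta_i)+(1-u)^2\theta_i^2$.
\item Bound $1/(u+v(1-u))$ by $1/u$ in the first two terms and by $1/(v(1-u))$ in the third.
\item Identify the resulting integrals with $\Phi_1$ functions.
\end{enumerate}
This yields the deterministic inequality
\[
L(\theta_i,\hat p(\cdot\mid y,\tau))\le \frac{1-v}{6}\frac{\Phi_1(1,1,\frac52,-\frac{y^2}{2},1-\tau^2)}{\Phi_1(1,1,\frac32,-\frac{y^2}{2},1-\tau^2)}(y-\theta_i)^2+\frac{1-v}{3}\frac{\Phi_1(2,1,\frac52,-\frac{y^2}{2},1-\tau^2)}{\Phi_1(1,1,\frac32,-\frac{y^2}{2},1-\tau^2)}\Bigl(\frac{\theta_i^2}{v}-2\theta_i(y-\theta_i)\Bigr),
\]
valid for every $\tau>0$ and every $y$. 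The proof of Lemma \ref{lm:risk.spectroscopy} in fact established exactly this pointwise bound before taking $\E_{Y\mid\theta}$.

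Finally, I substitute $y=Y_i$, integrate both sides against $\pi(\tau\mid\bm Y)$ given in Lemma \ref{lm:post.tau}, and then take $\E_{\bm Y\mid\bm\theta}$. Both operations are linear and preserve inequalities, so the claimed bound follows. The case $\theta_i=0$ is immediate because the second term vanishes.

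The only delicate point is justifying that these expectations are well defined, since the cross term $-2\theta_i(Y_i-\theta_i)$ can be negative. I would handle this by noting that the ratios of $\Phi_1$ functions are bounded in $[0,1]$ (their integral representations have the same kernel with an extra factor $u\le1$ or $(1-u)\le1$). Hence all integrands are dominated by polynomials in $Y_i$, which are integrable under the Gaussian law regardless of the $\tau$-posterior, and Fubini applies.

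A second subtlety is that the hierarchical predictive density mixes over $\tau$ as well. The statement, however, concerns $\E_{\tau\mid\bm Y}$ of the fixed-$\tau$ loss, so no further Jensen step over $\tau$ is needed here.
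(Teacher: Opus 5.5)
Your reduction (apply the fixed-$\tau$ spectroscopy bound for each $\tau$, then integrate against $\pi(\tau\mid\bm Y)$ and take $\E_{\bm Y\mid\bm\theta}$) is exactly what the paper means by ``an extra layer of expectation over $\tau$''. However, the deterministic inequality it rests on is false when $\theta_i\neq 0$.

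\textbf{Where it fails.} The error is in your step 4. It is the same step used in the paper's proof of Lemma~\ref{lm:risk.spectroscopy}, so citing that proof does not help. The middle term $-2u(1-u)\theta_i(y-\theta_i)/(u+v(1-u))$ is negative whenever $\theta_i(y-\theta_i)>0$. Replacing $1/(u+v(1-u))$ by the larger $1/u$ makes it more negative, which moves the bound in the wrong direction.

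\textbf{Counterexample.} Write $u=\lambda^2\tau^2/(1+\lambda^2\tau^2)$ under its posterior. The two $\Phi_1$ ratios then equal $3\E[u\mid y,\tau]$ and $\tfrac32\E[1-u\mid y,\tau]$, and for fixed $y$ one has $\E[u\mid y,\tau]=O(\tau)$. Hence, as $\tau\downarrow 0$, your right-hand side tends to
\[
\frac{1-v}{2}\,\theta_i\Bigl(\frac{\theta_i}{v}-2(y-\theta_i)\Bigr).
\]
This is negative whenever $0<\theta_i<2vy/(1+2v)$. For instance, $v=\tfrac12$, $y=1$, $\theta_i=\tfrac14$ gives $-\tfrac1{16}$, while the left-hand side is a KL loss and hence nonnegative. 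So no argument that integrates a pointwise-in-$y$ bound of this form can work; the inequality holds only after averaging over $Y_i$.

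A minor point: those two ratios lie in $[0,3]$ and $[0,\tfrac32]$, not $[0,1]$, because of the Gamma normalizations. This is harmless for your Fubini step.

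\textbf{Repair.} Replace step 4 by
\[
\frac{(a-b)^2}{w_1+w_2}\le \frac{a^2}{w_1}+\frac{b^2}{w_2},\qquad a=u(y-\theta_i),\; b=(1-u)\theta_i,\; w_1=u,\; w_2=v(1-u).
\]
This is valid pointwise and gives your right-hand side with the $-2\theta_i(Y_i-\theta_i)$ term deleted. The statement as written then follows once $\theta_i\,\E[G(\bm Y)(Y_i-\theta_i)]\le 0$, where $G(\bm Y)=\E_{\tau\mid\bm Y}\E[1-u\mid Y_i,\tau]$. Conditioning on $\bm Y_{-i}$ and pairing $z$ with $-z$ gives
\[
\E[G(\bm Y)(Y_i-\theta_i)\mid\bm Y_{-i}]=\int_0^\infty z\phi(z)\bigl\{G(\theta_i+z)-G(\theta_i-z)\bigr\}\diff z .
\]
This is $\le 0$ when $\theta_i>0$ and $\ge 0$ when $\theta_i<0$, as soon as $G$ is even and nonincreasing in $|Y_i|$ for fixed $\bm Y_{-i}$.

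That monotonicity follows from monotone-likelihood-ratio arguments:
\begin{itemize}
\item The $u$-posterior, with density proportional to $u^{-1/2}(\tau^2+(1-\tau^2)u)^{-1}e^{y^2u/2}$, is stochastically increasing in $y^2$ and in $\tau$. Hence $\E[1-u\mid y,\tau]$ is nonincreasing in both.
\item The $\tau$-posterior is stochastically increasing in $Y_i^2$. This is because $\pi(y\mid\tau)=\int_0^\infty (2\pi(1+s^2))^{-1/2}e^{-y^2/(2(1+s^2))}\,\frac{2}{\pi}\frac{\tau}{\tau^2+s^2}\diff s$ is a composition of two TP$_2$ kernels, hence TP$_2$ in $(y^2,\tau)$.
\end{itemize}

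None of this is needed for the case $\theta_i=0$ in \eqref{eq:risk.spectroscopy.adaptive}, which is the only case the paper uses later. There the expansion contains only the nonnegative term $u^2y^2$, and your steps are valid pointwise.
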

The rates are similar to those in Lemma \ref{lm:risk.spectroscopy}, but with an extra layer of expectation over $\tau$.

\subsection{A Useful Event under Theta-min Condition}

Recall that $Y_i = \theta_i + Z_i$ for all $1\leq i\leq n$, where $Z_i$ are i.i.d.~standard Gaussian.
Then the following concentration bound holds.
By Lemma 4 in \cite{castillo2015bayesian},
\[
\P\left(\max_{1\leq i\leq n}|Z_i| \leq 2\sqrt{\log n}\right) \geq 1 - \frac{2}{n}.
\]
When we assume the theta-min condition $\bm\theta\in\Theta_n(s_n,c)$, any nonzero $\theta_i$ satisfies $|\theta_i| > c\sqrt{2\log n}$.
When $\max_{1\leq i\leq n}|Z_i| \leq 2\sqrt{\log n}$, by triangle inequality, for all $i\in\mathcal S$, i.e. all $i$ such that $\theta_i \neq 0$, we have $|Y_i| \geq |\theta_i| - |Z_i| > (c-\sqrt{2})\sqrt{2\log n}$.
Denote the event
\[
\mathcal B_n = \{\min_{i\in\mathcal S} |Y_i| > (c-\sqrt{2})\sqrt{2\log n}\}.
\]
We have $\P(\mathcal B_n^c) \leq 2/n$.
This event will be used multiple times in the subsequent proof process.
To exclude the trivial case where $\mathcal B_n^c$ is empty, we require the choice of $c>\sqrt{2}$.

\subsection{Proof of Lemma \ref{lm:g.tilde.decomp}}\label{sec:proof.lm.g.tilde.decomp}
Under a fixed $\tau$, the predictive density is additive.
For each $1\leq i\leq n$, it can be written as
\[
\hat p(\tilde Y_i \mid Y_i, \tau) = \int \pi(\tilde Y_i \mid \mu_i) \pi(\mu_i \mid Y_i, \tau) \diff \mu_i = \frac{\int \pi(\tilde Y_i \mid \mu_i) \pi(Y_i\mid \mu_i) \pi(\mu_i \mid \tau) \diff \mu_i}{\int \pi(Y_i\mid \mu_i) \pi(\mu_i \mid \tau) \diff \mu_i}.
\]
Recall that $Y_i \mid \mu_i \sim N(\mu_i, 1)$, $\tilde Y_i \mid \mu_i \sim N(\mu_i, r)$.
We can therefore write $\tilde g(Y_i, \tilde Y_i, \theta_i, \tau)$ as
\[
\tilde g(Y_i, \tilde Y_i, \theta_i, \tau) = \frac{\tilde Y_i \theta_i}{r} - \frac{\theta_i^2}{2r} + \log \frac{\int \exp\{-\frac{\mu_i^2}{2v} + (Y_i + \tilde Y_i / r)\mu_i\} \pi(\mu_i\mid \tau) \diff \mu_i}{\int \exp\{-\frac{\mu_i^2}{2} + Y_i \mu_i\} \pi(\mu_i\mid \tau) \diff \mu_i}.
\]
The remaining proof is same as that of Lemma \ref{lm:risk.decomp}.
Note that for the Horseshoe prior, $\theta_i \mid \lambda_i, \tau \sim N(0, \lambda_i^2\tau^2)$, and $\lambda_i \sim C^+(0,1)$. 
Thus
\[
\pi(\mu_i\mid \tau) = \int \frac{1}{\sqrt{2\pi}\lambda_i \tau} \exp\left\{\frac{-\mu^2}{2\lambda_i^2\tau^2}\right\} \frac{2}{\pi}\frac{1}{1+\lambda_i^2} \diff \lambda_i.
\]
After applying Fubini's theorem, integrating out $\mu_i$, and changing variable $u = \frac{\lambda_i^2 \tau^2}{v+\lambda_i^2 \tau^2}$, we arrive at
\begin{multline*}
    \int \exp\left\{-\frac{\mu_i^2}{2v} + (Y_i + \tilde Y_i / r)\mu_i\right\} \pi(\mu_i\mid \tau) \diff \mu_i\\
    = \frac{\tau}{\pi\sqrt{v}} \int_0^1 u^{-1/2}\frac{1}{\frac{\tau^2}{v} + (1-\frac{\tau^2}{v})u} \exp \left\{ \frac{v}{2} (Y_i + \tilde Y_i / r)^2 u \right\}\diff u,
\end{multline*}
and
\[
    \int \exp\left\{-\frac{\mu_i^2}{2} + Y_i \mu_i\right\} \pi(\mu_i\mid \tau) \diff \mu_i
    = \frac{\tau}{\pi} \int_0^1 u^{-1/2}\frac{1}{\tau^2 + (1-\tau^2)u} \exp \left\{ \frac{Y_i^2}{2} u \right\}\diff u.
\]
This finishes the proof of Lemma \ref{lm:g.tilde.decomp} with the form \eqref{eq:Nv.D.form.1}.
The alternative form \eqref{eq:Nv.D.form.2} is attained by another change of variable from $u$ to $1-u$.

\subsection{Proof of Lemma \ref{lm:tau.not.overshooting}}\label{sec:pf.lm.tau.not.overshooting}

We aim to bound the posterior expectation $\sup_{\bm\theta\in\Theta_n(s_n,c)}\E_{\bm Y \mid \bm\theta} \E[\tau \mid \bm Y]$ where the prior is $\pi(\tau) = n e^{-n\tau} \one{\tau > 0}$.
Let $\mathcal S = \{i: \theta_i \neq 0\}$ be the active set of signals.
To bound $\sup_{\bm\theta \in \Theta_n(s_n,c)}\E_{\bm Y\mid\bm\theta}\E[\tau \mid \bm Y]$, we split the integral in the numerator by $K\tau_{n,0} = Ks_n/n$.
Specifically, we write
\[
\E[\tau \mid \bm Y] = \frac{\int_0^\infty \tau \pi(\bm Y \mid \tau)\pi(\tau)\diff\tau}{\int_0^\infty\pi(\bm Y\mid\tau) \pi(\tau)\diff\tau} = \frac{\mathcal N_1 + \mathcal N_2}{\mathcal D}.
\]
Here, $\mathcal N_1 = \int_0^{K\tau_{n,0}} \tau \pi(\bm Y \mid \tau)\pi(\tau)\diff\tau$, and $\mathcal N_2 = \int_{K\tau_{n,0}}^\infty \tau \pi(\bm Y \mid \tau)\pi(\tau)\diff\tau$.
We can write $\pi(\bm Y \mid \tau) = \prod_{i\in\mathcal S}\pi(Y_i \mid \tau) \prod_{i\notin \mathcal S}\pi(Y_i \mid \tau)$.
The denominator $\mathcal D$ is lower bounded by $\int_{s_n/n}^{2s_n/n} \pi(\bm Y\mid\tau) \pi(\tau)\diff\tau$.
When $i\in\mathcal S$ and $\tau \in (s_n/n, 2s_n/n)$, by Lemma \ref{lm:HS.pdf.UB.LB},
\begin{align*}
    \pi(Y_i \mid \tau) &\geq \int_{Y_i - |Y_i|/2}^{Y_i + |Y_i|/2} \frac{1}{\sqrt{2\pi}} e^{-\frac{(Y_i - \mu_i)^2}{2}} \frac{1}{2\sqrt{2\pi^3}\tau} \log\left(1 + \frac{4\tau^2}{\mu_i^2}\right) \diff \mu_i\\
    &\geq \frac{1}{2\sqrt{2\pi^3}\tau} \frac{16\tau^2}{9Y_i^2 + 16\tau^2} \left[1-2\Phi\left(-\frac{|Y_i|}{2}\right)\right]\\
    &\geq \frac{8}{\sqrt{2\pi^3}} \frac{Y_i^2[1-2\Phi(-|Y_i|/2)]}{9Y_i^2 + 64s_n^2/n^2} \cdot \frac{\tau}{Y_i^2}\\
    & := C_{1,n}(Y_i) \cdot \frac{\tau}{Y_i^2}.
\end{align*}
Thus the product has a lower bound
\begin{equation}\label{eq:D.i.in.S}
    \prod_{i\in\mathcal S} \pi(Y_i\mid\tau) \geq \tau^{s_n} \prod_{i\in\mathcal S} \frac{C_{1,n}(Y_i)}{Y_i^2}.
\end{equation}

For $i\notin \mathcal S$, on the contrary, we can write
\[
\pi(Y_i \mid \tau) = \phi(Y_i) \int_0^\infty (e^{Y_i\mu_i} + e^{-Y_i\mu_i}) e^{-\frac{\mu_i^2}{2}}\pi(\mu_i\mid\tau) \diff\mu_i.
\]
Using $\cosh(Y_i\mu_i) \geq 1$ and Fubini's Theorem, we have the lower bound
\begin{align*}
    \pi(Y_i \mid \tau) &\geq \phi(Y_i) \int_{-\infty}^\infty e^{-\frac{1}{2}\mu_i^2} \pi(\mu_i\mid\tau) \diff\mu_i\\
    &= \phi(Y_i) \int_0^\infty \frac{2}{\pi}\frac{1}{1+\lambda_i^2} \int_{-\infty}^\infty \frac{1}{\sqrt{2\pi}\lambda_i\tau} \exp\left(-\frac{\mu_i^2}{2}\frac{1+\lambda_i^2\tau^2}{\lambda_i^2\tau^2}\right)  \diff\mu_i \diff\lambda_i\\
    &= \phi(Y_i) \int_0^\infty \frac{2}{\pi}\frac{1}{1+\lambda_i^2} \frac{1}{\sqrt{1+\lambda_i^2\tau^2}} \diff\lambda_i.
\end{align*}
Using $1/\sqrt{1+x} \geq (1-x/2)_+$ for $x>0$, where $(\cdot)_+$ denotes the maximum of the value and zero, as well as $\arctan x \geq \pi/2-1/x$ for $x\geq 1$, considering that we are currently discussing $\tau \in (s_n/n, 2s_n/n)$ which is asymptotically close to zero, we obtain
\begin{align*}
\pi(Y_i \mid \tau) &\geq \phi(Y_i) \left(\frac{2}{\pi}\arctan\frac{\sqrt{2}}{\tau}-\frac{\tau^2}{\pi}\int_0^{\sqrt{2}/\tau} \frac{\lambda_i^2}{1+\lambda_i^2}\diff\lambda_i\right)\\
&\geq \phi(Y_i)\left(1-\frac{2\sqrt{2}}{\pi}\tau\right)\\
&\geq \phi(Y_i) e^{-\tau}.
\end{align*}
Therefore we end up with
\begin{equation}\label{eq:D.i.notin.S}
    \prod_{i\notin \mathcal S} \pi(Y_i\mid\tau) \geq \prod_{i\notin S}\phi(Y_i) \cdot \exp\left[-(n-s_n)\tau\right].
\end{equation}
Combining Equations \eqref{eq:D.i.in.S} and \eqref{eq:D.i.notin.S}, we obtain a lower bound for $\mathcal D$:
\begin{align*}
    \mathcal D &\geq \prod_{i\in \mathcal S} \frac{C_{1,n}(Y_i)}{Y_i^2} \cdot \prod_{i\notin\mathcal S}\phi(Y_i) \cdot \int_{s_n/n}^{2s_n/n} \tau^{s_n} \exp\left[-(n-s_n)\tau\right] ne^{-n\tau} \diff\tau\\
    &\geq \prod_{i\in \mathcal S} \frac{C_{1,n}(Y_i)}{Y_i^2} \cdot \prod_{i\notin\mathcal S}\phi(Y_i) \cdot ne^{-4s_n} \int_{s_n/n}^{2s_n/n} \tau^{s_n}\diff\tau\\
    &\geq \prod_{i\in \mathcal S} \frac{C'_{1,n}(Y_i)}{Y_i^2} \cdot \prod_{i\notin\mathcal S}\phi(Y_i) \cdot\left(\frac{s_n}{n}\right)^{s_n},
\end{align*}
where $C'_{1,n}(Y_i) = 2e^{-4}\cdot C_{1,n}(Y_i)$.

Next, we shall consider the numerators.
The first numerator $\mathcal N_1$ is straight-forward, considering that
\[
\mathcal N_1 = \int_0^{K\tau_{n,0}} \tau\pi(\bm Y\mid \tau) \pi(\tau) \diff \tau \leq K\tau_{n,0} \mathcal D.
\]
It remains to provide an upper bound on the second numerator.
We have
\[
\frac{\mathcal N_2}{\mathcal D} \leq  \int_{K\tau_{n,0}}^\infty
\prod_{i\in\mathcal S} \frac{Y_i^2 \pi(Y_i \mid \tau)}{C'_{1,n}(Y_i)}\cdot
\prod_{i\notin\mathcal S} \frac{\pi(Y_i\mid\tau)}{\phi(Y_i)}\cdot
\left(\frac{n}{s_n}\right)^{s_n} \tau \pi(\tau) \diff \tau.
\]
Note that we are eventually interested in its expectation over the distribution of $\bm Y \mid \bm\theta$.
We may split that expectation as
\[
\E_{\bm Y\mid \bm\theta}\left[\frac{\mathcal N_2}{\mathcal D}\right]  = \E_{\bm Y\mid \bm\theta}\left[\frac{\mathcal N_2}{\mathcal D}\one{\mathcal B_n}\right] + \E_{\bm Y\mid \bm\theta}\left[\frac{\mathcal N_2}{\mathcal D}\one{\mathcal B_n^c}\right],
\]
where the event $\mathcal B_n = \{\min_{i\in\mathcal S}|Y_i| > (c-\sqrt{2})\sqrt{2\log n}\}$.
By Lemma 4 of \cite{castillo2015bayesian} and Assumption \ref{assumption:beta.min}, we know that this event occurs at probability no less than $1-2/n$.

We start from the first truncated expectation.
By applying Fubini's Theorem, we get
\begin{multline*}
\E_{\bm Y\mid \bm\theta}\left[\frac{\mathcal N_2}{\mathcal D}\one{\mathcal B_n}\right] \leq \\
\int_{K\tau_{n,0}}^\infty
\E_{\bm Y\mid\bm \theta}\left[\prod_{i\in\mathcal S} \frac{Y_i^2 \pi(Y_i \mid \tau)}{C'_{1,n}(Y_i)}\one{\mathcal B_n}\right]\cdot
\E_{\bm Y\mid\bm \theta}\left[ \prod_{i\notin\mathcal S} \frac{\pi(Y_i\mid\tau)}{\phi(Y_i)}\right] \cdot
\left(\frac{n}{s_n}\right)^{s_n} \tau \pi(\tau) \diff \tau.
\end{multline*}
First, note that as $i\notin \mathcal S$, we have $Y_i \sim N(0,1)$. Then the expectation
\[
\E_{Y_i\mid\theta_i}\left[ \frac{\pi(Y_i\mid\tau)}{\phi(Y_i)}\right] = \int \frac{\pi(y_i\mid\tau)}{\phi(y_i)} \phi(y_i) \diff y_i = 1.
\]
It remains the product of the other expectation to bound.
For $i\in\mathcal S$ and $\tau \geq K\tau_{n,0}$, we can split the integral by
\[
\pi(Y_i \mid \tau) = \int_{|\mu_i|\leq |Y_i|/2} \phi(Y_i-\mu_i) \pi(\mu_i\mid\tau) \diff\mu_i + \int_{|\mu_i|> |Y_i|/2} \phi(Y_i-\mu_i) \pi(\mu_i\mid\tau) \diff\mu_i.
\]
For the first integral, using $(Y_i - \mu_i)^2 \geq Y_i^2/4$, we have
\[
\int_{|\mu_i|\leq |Y_i|/2} \phi(Y_i-\mu_i) \pi(\mu_i\mid\tau) \diff\mu_i \leq \frac{1}{\sqrt{2\pi}}e^{-\frac{Y_i^2}{8}} \int \pi(\mu_i\mid\tau)\diff\mu_i = \frac{1}{\sqrt{2\pi}}e^{-\frac{Y_i^2}{8}}.
\]
The second integral can be upper bounded by Lemma \ref{lm:HS.pdf.UB.LB}.
Also considering $\log(1+x) < x$ for $x>0$ and $1/\mu_i^2 < 4/Y_i^2$, we have
\[
\int_{|\mu_i|> |Y_i|/2} \phi(Y_i-\mu_i) \pi(\mu_i\mid\tau) \diff\mu_i \leq \frac{8}{\sqrt{2\pi^3}}\frac{\tau}{Y_i^2} \int_{|\mu_i|>|Y_i|/2} \phi(Y_i-\mu_i) \diff\mu_i = \frac{8}{\sqrt{2\pi^3}}\frac{\tau}{Y_i^2}.
\]
We conclude that when $i\in\mathcal S$,
\begin{equation*}
    \pi(Y_i \mid \tau) \leq \frac{1}{\sqrt{2\pi}}e^{-\frac{Y_i^2}{8}} + \frac{8}{\sqrt{2\pi^3}}\frac{\tau}{Y_i^2},
\end{equation*}
and thus
\begin{equation}\label{eq:Y.pi.over.C}
\frac{Y_i^2 \pi(Y_i \mid \tau)}{C'_{1,n}(Y_i)} \leq
\frac{\sqrt{2\pi^3}e^4}{16} \frac{9Y_i^2 + 64s_n^2/n^2}{Y_i^2[1-2\Phi(-|Y_i|/2)]} \left(\frac{8}{\sqrt{2\pi^3}}\tau + \frac{1}{\sqrt{2\pi}}Y_i^2 e^{-\frac{Y_i^2}{8}}\right).
\end{equation}
On the event $\mathcal B_n$, we have $|Y_i| > (c-\sqrt{2})\sqrt{2\log n}$ for all $i\in\mathcal S$.
Also note that
\[
\E_{Y_i \mid \theta_i}\left[Y_i^2 e^{-\frac{Y_i^2}{8}}\right] = \left(\frac{4}{5}\right)^{3/2} \left(1 + \frac{4}{5}\theta_i^2\right) e^{-\theta_i^2/10}.
\]
Under the theta-min condition in Assumption \ref{assumption:beta.min}, all $|\theta_i| > c\sqrt{2\log n}$, thus it is further upper bounded by the order of $n^{-c^2/5}\log n$.
Therefore
\begin{equation}\label{eq:signal.product.Bn.bound}
\E_{\bm Y \mid \bm\theta}\left[\prod_{i\in\mathcal S}\frac{Y_i^2\pi(Y_i\mid\tau)}{C'_{1,n}} \one{\mathcal B_n}\right] \leq \left(\frac{9e^4}{2}\tau + \frac{36\pi e^4}{25\sqrt 5}c^2n^{-c^2/5}\log n\right)^{s_n}.
\end{equation}
Note that in this discussion, $\tau > K\tau_{n,0}$.
For any choice of $c>\sqrt{6}$, the second term in the parenthesis on the right hand side of \eqref{eq:signal.product.Bn.bound} is dominated by the first.
This leads to
\begin{align*}
    \E_{\bm Y\mid \bm\theta}\left[\frac{\mathcal N_2}{\mathcal D}\one{\mathcal B_n}\right]
    &\leq \left(\frac{9 e^{4}n}{2s_n}\right)^{s_n}\int_{K\tau_{n,0}}^\infty \tau^{s_n+1} ne^{-n\tau} \diff \tau\\
    &= \left(\frac{9 e^{4}}{2}\right)^{s_n} \cdot n \int_{K\tau_{n,0}}^\infty \tau\exp\left(s_n\log\frac{n\tau}{s_n} - n\tau \right) \diff \tau\\
    &:= \left(\frac{9 e^{4}}{2}\right)^{s_n} \cdot n \int_{K\tau_{n,0}}^\infty \tau \exp[g(\tau)] \diff\tau.
\end{align*}
Note that $g'(\tau) = s_n/\tau - n$, and $g''(\tau) = -s_n/\tau^2 <0$.
Thus for any $\tau > K\tau_{n,0}$, when $n$ is large enough, for any $K\geq 2$,
\[
g'(\tau) < g'(K\tau_{n,0}) = \frac{n}{K} - n \leq -\frac{n}{2},
\]
and $g(\tau) \leq g(K\tau_{n,0}) - n(\tau-K\tau_{n,0})/2$. Therefore
\begin{align*}
\int_{K\tau_{n,0}}^\infty \tau \exp[g(\tau)] \diff\tau 
&\leq \exp[g(K\tau_{n,0})] \int_{K\tau_{n,0}}^\infty \tau e^{-\frac{n}{2}(\tau - K\tau_{n,0})} \diff \tau \\
&= \exp[g(K\tau_{n,0})]\left(\frac{2K\tau_{n,0}}{n} + \frac{4}{n^2}\right).
\end{align*}
We conclude that when $n$ is sufficiently large,
\begin{align*}
    \E_{\bm Y\mid \bm\theta}\left[\frac{\mathcal N_2}{\mathcal D}\one{\mathcal B_n}\right]
    &\leq 3K\tau_{n,0} \exp\left[g(K\tau_{n,0}) + s_n \log(9 e^{4}/2)\right]\\
    &\leq 3K\tau_{n,0} \exp\left[-s_n \left(K - \log K - \log(9 e^{4}/2)\right)\right]\\
    &= o(\tau_{n,0}),
\end{align*}
as long as we choose the constant $K$ large enough such that $K - \log K - \log(9 e^{4}/2) >0$.
Such a $K$ is a universal constant.
For instance, $K=8$ is a valid choice.


On the event $\mathcal B_n^c$, on the contrary, the following observation is useful in the proof.
Consider any observed $Y_i \in \R$.
For any $\tau_1 \geq \tau_2 > 0$, we have
\begin{equation}\label{eq:marginal.monotone}
    \pi(Y_i \mid \tau_1) \leq \frac{\tau_1}{\tau_2} \pi(Y_i \mid \tau_2).
\end{equation}
The proof of \eqref{eq:marginal.monotone} is straight forward.
Note that the marginal density is
\begin{align*}
\pi(Y_i\mid\tau)
&=
\int_0^\infty 
\frac{1}{\sqrt{2\pi(1+\lambda_i^2\tau^2)}}
\exp\!\left(-\frac{y^2}{2(1+\lambda_i^2\tau^2)}\right)
\frac{2}{\pi}\frac{1}{1+\lambda_i^2}
\diff\lambda_i\\
&=
\int_0^\infty 
\frac{1}{\sqrt{2\pi(1+u^2)}}
\exp\!\left(-\frac{y^2}{2(1+u^2)}\right)
\frac{2}{\pi}\frac{\tau}{\tau^2+u^2}
\diff u.
\end{align*}
\eqref{eq:marginal.monotone} is then proved by noting that for any $u\geq 0$,
\[
\frac{\tau_1}{\tau_1^2+u^2}
\le
\frac{\tau_1}{\tau_2}\frac{\tau_2}{\tau_2^2+u^2}.
\]
By multiplying \eqref{eq:marginal.monotone} over $i=1,\cdots,n$, we get
\[
\pi(\bm Y \mid \tau_1) \le \left(\frac{\tau_1}{\tau_2}\right)^n \pi(\bm Y \mid \tau_2).
\]
Equivalently, the function $w_{\bm Y}(\tau) := \frac{\pi(\bm Y \mid \tau)}{\tau^n}$ is nonincreasing in $\tau$.
Moreover, under the exponential hyperprior of $\tau$, the posterior density of $\tau$ satisfies 
\[
\pi(\tau\mid\bm Y) \propto \pi(\bm Y \mid \tau) \pi(\tau) =  n e^{-n\tau}\tau^n w_{\bm Y}(\tau).
\]
Let $q$ denote the density of $\Gamma(n+1,n)$ distribution,
\[
q(\tau)=\frac{n^{n+1}}{\Gamma(n+1)}\tau^n e^{-n\tau}.
\]
Then the posterior mean of $\tau$ can be equivalently written as a mean over this Gamma distribution,
\[
\E[\tau\mid\bm Y]
=
\frac{\E_{\tau \sim q(\tau)}[\tau w_{\bm Y}(\tau)]}{\E_{\tau \sim q(\tau)}[w_{\bm Y}(\tau)]}.
\]

Because the map $\tau \mapsto w_{\bm Y}(\tau)$ is decreasing, we have
$\text{Cov}(\tau,w_{\bm Y}(\tau))\le0$.
Hence
\[
\E_{\tau \sim q(\tau)}[\tau w_{\bm Y}(\tau)]
\leq
\E_{\tau \sim q(\tau)}[\tau]\E_{\tau \sim q(\tau)}[w_{\bm Y}(\tau)].
\]
Since $\E_{\tau\sim q(\tau)}[\tau] = \frac{n+1}{n}$, we therefore arrive at the following bound on the posterior mean of $\tau$.
For any observation $\bm Y \in \R^n$,
\begin{equation}\label{eq:tau.posterior.mean.bound}
\E[\tau\mid\bm Y]
\leq
\frac{n+1}{n}.
\end{equation}
Because the ratio $\mathcal N_2 / \mathcal D$ is precisely the truncated posterior mean $\E[\tau \one{\tau > K\tau_{n,0}} \mid \bm Y]$, it is bounded by $\E[\tau \mid \bm Y]$.
Therefore we arrive at
\begin{align*}
\E_{\bm Y\mid \bm\theta}\left[\frac{\mathcal N_2}{\mathcal D}\one{\mathcal B_n^c}\right]
&\leq \E_{\bm Y\mid \bm\theta}\left[\E[\tau\mid \bm Y]\one{\mathcal B_n^c}\right]\\
&\leq \frac{n+1}{n} \P\{\mathcal B_n^c\} \leq \frac{4}{n} = o(\tau_{n,0}).
\end{align*}
Combining all above, we have
\[
\sup_{\bm\theta\in\Theta_n(s_n,c)}\E_{\bm Y\mid \bm\theta}\E[\tau\mid\bm Y] \leq K\tau_{n,0} (1+o(1)).
\]
This finishes the proof of Lemma \ref{lm:tau.not.overshooting}.

\subsection{Proof of Lemma \ref{lm:tau.not.undershooting}}\label{sec:pf.lm.tau.not.undershooting}

To prove this lemma, a number of results in the previous proof of Lemma \ref{lm:tau.not.overshooting} can be reused.
First, fix a quantity $\tau^\ast_n = C_\tau s_n/n$, where the universal constant $C_\tau$ will be chosen later in the proof.
Note that
\[
\E\left[\log\frac{1}{\tau} \mid \bm Y\right] = \E\left[\log\frac{\tau^\ast_n}{\tau} \mid \bm Y\right] + \log\frac{1}{\tau^\ast_n}.
\]
The main goal of this proof is to show that the first term on the right-hand side is negligible.
\[
\E\left[\log\frac{\tau^\ast_n}{\tau} \mid \bm Y\right] = \frac{\int \log(\tau^\ast_n/\tau)\pi(\bm Y \mid \tau) \pi(\tau) \diff\tau}{\int \pi(\bm Y \mid \tau)\pi(\tau)\diff\tau}.
\]
The denominator is exactly $\mathcal D$ that we defined in Appendix \ref{sec:pf.lm.tau.not.overshooting}.
We can similarly split the integral in the numerator into $\mathcal N'_1 + \mathcal N'_2$ by $\tau^\ast_n$.
Then we have
\begin{align*}
\mathcal N'_2
= \int_{\tau^\ast_n}^\infty \log\frac{\tau^\ast_n}{\tau} \pi(\bm Y \mid \tau) \pi(\tau) \diff\tau \leq \mathcal D.
\end{align*}
Rather more complexity arises from the other term
\[
\mathcal N'_1 = \int_0^{\tau^\ast_n} \log\frac{\tau^\ast_n}{\tau} \pi(\bm Y \mid \tau) \pi(\tau) \diff\tau.
\]
To tackle this integral, we need to split into the two events, $\mathcal C_n$ and $\mathcal C_n^c$, where $\mathcal C_n=\{\min_{i\in\mathcal S}|Y_i|\ge 1\}$.
Note that since $(c-\sqrt{2})\sqrt{2\log n}\rightarrow\infty$, when $n$ is large enough, we have $\mathcal B_n\subseteq \mathcal C_n$ and $\P\{\mathcal C_n^c\} \leq \P\{\mathcal B_n^c\} \leq 2/n$.
We decompose
\begin{equation}\label{eq:BnC.split}
\E_{\bm Y\mid\bm\theta}\left[\frac{\mathcal N'_1}{\mathcal D}\right]
=
\E_{\bm Y\mid\bm\theta}\left[\frac{\mathcal N'_1}{\mathcal D}\one{\mathcal C_n}\right]
+
\E_{\bm Y\mid\bm\theta}\left[\frac{\mathcal N'_1}{\mathcal D}\one{\mathcal C_n^c}\right].
\end{equation}
Note that
\begin{align*}
\frac{\mathcal N'_1}{\mathcal D} = \E\left[\log\frac{\tau^\ast_n}{\tau}\one{\tau < \tau^\ast_n} \mid \bm Y\right] = \int_0^\infty \Pi\left\{\tau < \tau^\ast_n e^{-t} \mid \bm Y\right\} \diff t.
\end{align*}
Here, the last step applied Fubini's Theorem and the identity that for any random variable $X$, $\log(1/X) \one{X<1} = \int_0^\infty \one{X < e^{-t}} \diff t$.
The problem then boils down to controlling
\begin{equation}\label{eq:Pi.tau}
    \Pi\{\tau < \tau^\ast_n e^{-t} \mid \bm Y\}
    = \frac{\int_0^{\tau^\ast_n e^{-t}}\pi(\bm Y \mid \tau) \pi(\tau) \diff\tau}{\int_0^\infty \pi(\bm Y \mid \tau) \pi(\tau) \diff\tau}.
\end{equation}
The control is slightly different on the events $\mathcal C_n$ and $\mathcal C_n^c$.
We first consider $\mathcal C_n$, where for all $i\in \mathcal S$, the scale of the observation $|Y_i|$ is larger than one.
In this case, we apply the lower bound on the denominator $\mathcal D$ used in \eqref{eq:D.i.in.S} and \eqref{eq:D.i.notin.S}.
Using the lower bound of $\mathcal D$, \eqref{eq:Pi.tau} can be upper bounded by
\[
\Pi\{\tau < \tau^\ast_n e^{-t} \mid \bm Y\}
\leq \int_0^{\tau^\ast_n e^{-t}} \prod_{i\in\mathcal S} \frac{Y_i^2 \pi(Y_i \mid \tau)}{C'_{1,n}(Y_i)} \cdot \prod_{i\notin\mathcal S}\frac{\pi(Y_i\mid\tau)}{\phi(Y_i)} \cdot \left(\frac{n}{s_n}\right)^{s_n} \pi(\tau) \diff \tau.
\]
We reuse the bound \eqref{eq:Y.pi.over.C} on the event $\mathcal C_n$.
On $\mathcal C_n$ we have $|Y_i|\ge 1$ for all $i\in\mathcal S$, and therefore
\[
1-2\Phi(-|Y_i|/2)\geq 1-2\Phi(-1/2),
\]
and thus
\[
\frac{9Y_i^2+64s_n^2/n^2}{Y_i^2\,[1-2\Phi(-|Y_i|/2)]}\leq \frac{10}{1-2\Phi(-1/2)} := C_\Phi,
\]
which is a universal constant.
Following the same derivation that led to \eqref{eq:signal.product.Bn.bound},
we have for $\tau\in(0,\tau_n^\ast)$, with the choice $c>\sqrt{6}$,
\[
\E_{\bm Y \mid \bm\theta}\!\left[\prod_{i\in\mathcal S}\frac{Y_i^2\pi(Y_i\mid\tau)}{C'_{1,n}(Y_i)} \one{\mathcal C_n}\right]
\le \left(\frac{C_\Phi e^4}{2}\tau+ C_\Phi \delta_n\right)^{s_n},
\]
where $\delta_n$ is the second term in the parenthesis on the right hand side of \eqref{eq:signal.product.Bn.bound}. As $c>\sqrt{6}$, we still have $\delta_n=o(\tau_n^\ast)$.
Together with the fact that $\E_{Y_i \mid \theta_i}[\pi(Y_i \mid \tau) / \phi(Y_i)] = 1$, and Fubini's Theorem, we arrive at
\begin{align*}
\E_{\bm Y \mid \bm\theta}[\Pi\{\tau < \tau^\ast_n e^{-t} \mid \bm Y\}\one{\mathcal C_n}]
&\leq \left(\frac{n}{s_n}\right)^{s_n} \int_0^{\tau^\ast_ne^{-t}} \left( \frac{C_\Phi e^4}{2}\tau + C_\Phi \delta_n \right)^{s_n} ne^{-n\tau} \diff \tau\\
&\leq \left(\frac{C_\Phi e^4n}{2s_n}\right)^{s_n} \frac{n}{s_n+1} (\tau^\ast_n)^{s_n+1} e^{-(s_n+1)t} (1+o(1)).
\end{align*}
For the last inequality, we used the trivial bound $e^{-n\tau}\leq 1$ and solved the remaining integral by a linear change of variable.
For any choice of constant $C_\tau > 2/(C_\Phi e^4)$, then the right hand side is further upper bounded by $C_\tau e^{-(s_n+1)t}$.
Therefore we have
\begin{align*}
\E_{\bm Y \mid \bm \theta}\left[\frac{\mathcal N'_1}{\mathcal D} \one{\mathcal C_n}\right] 
\leq  C_\tau \int_0^\infty e^{-(s_n+1)t}\diff t = \frac{C_\tau}{s_n+1}.
\end{align*}

Next, we shall control the behavior of $\mathcal N_1'/\mathcal D$ on the low-probability event $\mathcal C_n^c$ and bound the second term on the right-hand side of \eqref{eq:BnC.split}.
We may write $\mathcal N_1'/\mathcal D = \E[f(\tau) \mid \bm Y]$, where
\[
f(\tau) = \log\!\left(\frac{\tau_n^\ast}{\tau}\right)\one{\tau<\tau_n^\ast}
\]
for $\tau>0$.
By Lemma \ref{lm:post.tau} and the exponential hyperprior $\pi(\tau)=n e^{-n\tau}\one{\tau>0}$,
the posterior density can be written as
\[
\pi(\tau\mid \bm Y)
=
\frac{n e^{-n\tau}M_{\bm Y}(\tau)}{\int_0^\infty n e^{-nt}M_{\bm Y}(t)\diff t},
\quad \text{where }
M_{\bm Y}(\tau)=\prod_{i=1}^n H(Y_i,\tau).
\]
Here, the confluent hypergeometric function of two variables can be rewritten in the integral form,
\[
H(y,\tau)
=\tau \Phi_1\!\left(1,1,\frac{3}{2},-\frac{y^2}{2},1-\tau^2\right)
=
\frac{\tau}{2}\int_0^1 u^{-1/2} \frac{1}{1-(1-\tau^2)u}e^{-y^2u/2} \diff u.
\]

We shall now prove that for every $y\in\mathbb R$, the map $\tau\mapsto H(y,\tau)$ is strictly increasing on $(0,\infty)$.
To see this, for any fixed $y\in\mathbb R$ and $\tau>0$, we have the partial derivative
\[
\partial_\tau H(y,\tau)
=
\frac{1}{2} \int_0^1 e^{-y^2u/2} h_\tau(u)\diff u,
\qquad
h_\tau(u)
:=
\frac{1-u-\tau^2u}{u^{1/2}[1-(1-\tau^2)u]^2}.
\]
Let $u_0=1/(1+\tau^2)$. Then $h_\tau(u)\geq 0$ for $u\in(0,u_0]$ and $h_\tau(u)\leq 0$ for $u\in[u_0,1)$.
Since $u\mapsto e^{-y^2u/2}$ is decreasing, we have the pointwise bounds $e^{-y^2u/2}\geq e^{-y^2u_0/2}$ for $u\geq u_0$, and $e^{-y^2u/2}\leq e^{-y^2u_0/2}$ for $u\leq u_0$.
Multiplying by $h_\tau(u)$ and integrating yields
\begin{multline*}
\partial_\tau H(y,\tau)
=
\int_0^{u_0} e^{-y^2u/2}h_\tau(u)\diff u
+
\int_{u_0}^1 e^{-y^2u/2}h_\tau(u)\diff u\\
\geq
\int_0^{u_0} e^{-y^2u_0/2}h_\tau(u)\diff u
+
\int_{u_0}^1 e^{-y^2u_0/2}h_\tau(u)\diff u
=
e^{-y^2u_0/2} \partial_\tau H(0,\tau).
\end{multline*}
It remains to show $\partial_\tau H(0,\tau)>0$ for all $\tau>0$.
For $y=0$, substitute $u=t^2$ to obtain
\[
H(0,\tau)=\tau\int_0^1\frac{1}{1+(\tau^2-1)t^2}\,dt
=
\begin{cases}
\dfrac{\tau}{\sqrt{1-\tau^2}}\tanh^{-1}(\sqrt{1-\tau^2}), & 0<\tau<1,\\
1, & \tau=1,\\
\dfrac{\tau}{\sqrt{\tau^2-1}}\tan^{-1}(\sqrt{\tau^2-1}), & \tau>1.
\end{cases}
\]
Differentiating these expressions gives
\[
\partial_\tau H(0,\tau)
=
\begin{cases}
\dfrac{\tanh^{-1}(\sqrt{1-\tau^2})-\sqrt{1-\tau^2}}{(1-\tau^2)^{3/2}}, & 0<\tau<1,\\
\dfrac{\sqrt{\tau^2-1}-\tan^{-1}(\sqrt{\tau^2-1})}{(\tau^2-1)^{3/2}}, & \tau>1,
\end{cases}
\]
which is strictly positive since $\tanh^{-1}(a)>a$ for any $a\in(0,1)$ and $\tan^{-1}(a)<a$ for any $a>0$.
Hence $\partial_\tau H(y,\tau)>0$ for all $y$ and $\tau>0$.
Consequently $M_{\bm Y}(\tau)$ is strictly increasing in $\tau$ for every fixed $\bm Y$.
Note that the function $f(\tau)$ is nonincreasing with $\E_{\tau\sim\pi(\tau)}[|f(\tau)|]<\infty$.
Then we arrive at
\[
\E[f(\tau)\mid \bm Y]
=
\frac{\E_{\tau\sim\pi(\tau)}[f(\tau)M_{\bm Y}(\tau)]}{\E_{\tau\sim\pi(\tau)}[M_{\bm Y}(\tau)]}
\le
\E_{\tau\sim\pi(\tau)}[f(\tau)].
\]
Note that the left-hand side is exactly $\mathcal N'_1/\mathcal D$. Therefore we have
\begin{align*}
\frac{\mathcal N_1'}{\mathcal D}
&\le
\int_0^{\tau_n^\ast}\log\left(\frac{\tau_n^\ast}{\tau}\right)n e^{-n\tau}\diff\tau
\le
n\int_0^{\tau_n^\ast}\log\left(\frac{\tau_n^\ast}{\tau}\right)\diff\tau
=
n\tau_n^\ast
=
C_\tau s_n.
\end{align*}
Consequently, on the event $\mathcal C_n^c$,
\[
\E_{\bm Y\mid\bm\theta}\!\left[\frac{\mathcal N_1'}{\mathcal D}\one{\mathcal C_n^c}\right]
\le
C_\tau s_n\,\P\{\mathcal C_n^c\} \leq 2C_\tau\frac{s_n}{n}.
\]

We conclude that
\[
\E_{\bm Y \mid \bm \theta}\left[\frac{\mathcal N'_1}{\mathcal D} \right]  \leq \log \frac{C_\tau}{s_n+1} + 2C_\tau \frac{s_n}{n},
\]
and moreover
\[
\sup_{\bm\theta\in\Theta_n(s_n,c)}\E_{\bm Y \mid \bm\theta}\E\left[\log\frac{1}{\tau} \mid \bm Y\right] \leq \log\frac{n}{s_n} + 4 + \log\frac{9}{2} + \log \frac{2}{9e^4(s_n+1)} + \frac{C_\tau}{s_n+1} + 2C_\tau \frac{s_n}{n}.
\]
The only dominating term is $\log(n/s_n)$, and all others are of order $O(1)$ or less.
This finishes the proof of Lemma \ref{lm:tau.not.undershooting}.

\subsection{A Corollary of Lemma \ref{lm:tau.not.overshooting}}

The following lemma is useful in the proof of Theorem \ref{thm:adaptive.beta.min}.
When fixing an observation $Y_i$ from the noise group, that is, $i\notin\mathcal S$, it might be useful to specifically isolate the effect of $Y_i$ from the posterior $\pi(\tau \mid \bm Y)$.

\begin{corollary}\label{cor:loo.overshoot.fixedy}
Let $i\notin\mathcal S$, so $\theta_i=0$.
Fix any $y\in\R$ and define the posterior based on the sample $(\bm Y_{-i},Y_i=y)$ under the same
hyperprior $\pi(\tau)=ne^{-n\tau}\one{\tau>0}$.
Then under Assumption~\ref{assumption:beta.min} with $c>\sqrt6$,
\[
\sup_{\bm\theta\in\Theta_n(s_n,c)}
\E_{\bm Y_{-i}\mid\bm\theta}\E[\tau\mid \bm Y_{-i},Y_i=y]
\leq
K\tau_{n,0}(1+o(1)).
\]
Here, $K$ is a universal constant that satisfies $K - \log K - \log(9 e^{4}/2) > 0$.
\end{corollary}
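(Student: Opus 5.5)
We will rerun the proof of Lemma \ref{lm:tau.not.overshooting}, with the single change that the $i$-th coordinate is held fixed at $Y_i=y$ rather than averaged over. We write
\[
\E[\tau\mid \bm Y_{-i},Y_i=y]=(\mathcal N_1+\mathcal N_2)/\mathcal D,
\]
splitting at $K\tau_{n,0}$ as before. The bound $\mathcal N_1\le K\tau_{n,0}\mathcal D$ is unchanged.

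For the denominator we use the same lower bound. The signal factors are controlled by \eqref{eq:D.i.in.S}. For the noise indices $j\notin\mathcal S$, including the fixed $j=i$, we use \eqref{eq:D.i.notin.S}, which holds pointwise in the observation. This gives
\[
\mathcal D\ge \prod_{j\in\mathcal S}\frac{C'_{1,n}(Y_j)}{Y_j^2}\prod_{j\notin\mathcal S}\phi(Y_j)\,(s_n/n)^{s_n},
\]
with $\phi(y)$ in place of the random $\phi(Y_i)$. In the resulting bound for $\mathcal N_2/\mathcal D$, the $i$-th noise factor becomes the deterministic ratio $\pi(y\mid\tau)/\phi(y)$; it is no longer a random variable with expectation one.

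The key new ingredient is a uniform bound on this ratio over $\tau$. Write
\[
\frac{\pi(y\mid\tau)}{\phi(y)}=\int e^{y\mu-\mu^2/2}\pi(\mu\mid\tau)\diff\mu .
\]
By the Gaussian-mixture computation in Lemma \ref{lm:risk.decomp}, this equals
\[
\int \frac{1}{\sqrt{1+\lambda^2\tau^2}}\exp\!\Big(\frac{\lambda^2\tau^2}{1+\lambda^2\tau^2}\frac{y^2}{2}\Big)\nu(\lambda)\diff\lambda\le e^{y^2/2},
\]
a constant depending only on $y$ and not on $n$ or $\tau$.

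On $\mathcal B_n$ we apply Fubini exactly as before. The remaining $n-s_n-1$ noise expectations are each equal to one, and the signal product is bounded by \eqref{eq:signal.product.Bn.bound}. This yields
\[
\E_{\bm Y_{-i}}\!\left[\frac{\mathcal N_2}{\mathcal D}\one{\mathcal B_n}\right]\le e^{y^2/2}\cdot 3K\tau_{n,0}\,e^{-s_n(K-\log K-\log(9e^4/2))}.
\]
For fixed $y$ this is $o(\tau_{n,0})$, provided $s_n\to\infty$. If $s_n$ stays bounded, the factor $e^{y^2/2}$ is still a fixed constant, and since the corollary is stated for fixed $y$ with $o(1)$ allowed to depend on $y$, we only need the bound to be $O(\tau_{n,0})$. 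In that regime we would absorb the constant by enlarging $K$, or else follow the sharper route below. Here $\mathcal B_n$ is now an event for $\bm Y_{-i}$ only; it involves only signal indices, so $\P(\mathcal B_n^c)\le 2/n$ still holds.

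A cleaner alternative avoids the $e^{y^2/2}$ factor entirely. By \eqref{eq:marginal.monotone}, the ratio $\pi(y\mid\tau)/\phi(y)$ is nondecreasing in $\tau$ by the same monotonicity argument used for $H(y,\tau)$. Bounding it by its value over the relevant range then changes only constants.

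On $\mathcal B_n^c$ we reuse the deterministic bound \eqref{eq:tau.posterior.mean.bound}, $\E[\tau\mid\bm Y]\le (n+1)/n$. That bound holds for every observation vector, in particular for $(\bm Y_{-i},y)$. Hence this contribution is at most $4/n=o(\tau_{n,0})$. Summing the three pieces and taking the supremum over $\Theta_n(s_n,c)$ gives $K\tau_{n,0}(1+o(1))$.

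The main obstacle is the factor $\pi(y\mid\tau)/\phi(y)$, which cannot be averaged away as in the original proof. The bound $e^{y^2/2}$ works for fixed $y$. If uniformity in $y$ is later needed for the use in Theorem \ref{thm:adaptive.beta.min}, I would instead bound the ratio by $1+C\tau e^{y^2/2}/(1+y^2)$, using the Horseshoe tail estimate of Lemma \ref{lm:HS.pdf.UB.LB}, and then integrate against $\phi(y)$ at that later stage.
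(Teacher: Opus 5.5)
Your skeleton matches the paper's: the split at $K\tau_{n,0}$, the pointwise lower bound on $\mathcal D$, Fubini on $\mathcal B_n$ with the other noise ratios integrating to one, and the deterministic bound $\E[\tau\mid\bm Y]\le (n+1)/n$ on $\mathcal B_n^c$. Your bound $\pi(y\mid\tau)/\phi(y)\le e^{y^2/2}$ is also correct. The problem is that it charges coordinate $i$ a $y$-dependent price, so what you actually prove is
\[
\E_{\bm Y_{-i}\mid\bm\theta}\E[\tau\mid\bm Y_{-i},y]\le K\tau_{n,0}+3K\tau_{n,0}\,e^{y^2/2}e^{-s_n(K-\log K-\log(9e^4/2))}+\tfrac{4}{n}.
\]
For one fixed $y$, with the $o(1)$ allowed to depend on $y$ (and $s_n\to\infty$, which the paper also implicitly needs), this suffices.

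It is not uniform in $y$, however. The paper's argument gives the bound for all $y\in\R$ at once, and that uniformity is the point of the corollary. In the noise/small-observation step of Theorem~\ref{thm:adaptive.beta.min} the bound is integrated over $|y|\le\zeta_{n,1}=\sqrt{2\log(n/s_n)}$ against $e^{y^2/2}\phi(y)=(2\pi)^{-1/2}$, i.e.\ against Lebesgue measure, not against $\phi(y)$ as your last paragraph suggests. Near $|y|=\zeta_{n,1}$ your extra factor equals $n/s_n$. Your $\mathcal B_n$-term then becomes $3Ke^{-s_n(K-\log K-\log(9e^4/2))}$, which is not $o(s_n/n)$ unless $s_n\gtrsim\log(n/s_n)$. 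The same $y$-dependence rules out your ``enlarge $K$'' fix for bounded $s_n$, since $K$ must be universal.

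The missing idea is the paper's use of \eqref{eq:marginal.monotone}: for every $y$, the map $\tau\mapsto\pi(y\mid\tau)/\tau$ is nonincreasing. This gives two comparisons with the same anchor value:
\begin{itemize}
\item for $\tau\in[s_n/n,2s_n/n]$, $\pi(y\mid\tau)\ge\frac{n\tau}{2s_n}\pi(y\mid 2s_n/n)$, used in $\mathcal D$;
\item for $\tau\ge K\tau_{n,0}\ge 2s_n/n$, $\pi(y\mid\tau)\le\frac{n\tau}{2s_n}\pi(y\mid 2s_n/n)$, used in $\mathcal N_2$.
\end{itemize}
The factor $\pi(y\mid 2s_n/n)$ then cancels. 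Coordinate $i$ costs one factor $n\tau/s_n$, exactly like one more signal coordinate. The same Gamma-tail computation, now with $\tau^{s_n+2}$, yields $2K^2\tau_{n,0}e^{-s_n(K-\log K-\log(9e^4/2))}$ with no dependence on $y$.

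Your fallback bound $\pi(y\mid\tau)/\phi(y)\le 1+C\tau e^{y^2/2}/(1+y^2)$ could be pushed through on $|y|\le\zeta_{n,1}$, since $\tau_{n,0}e^{y^2/2}\le 1$ there. But it is unproved, and it is uniform only on that range, not on all of $\R$.

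Drop the ``cleaner alternative'' entirely, for three reasons:
\begin{itemize}
\item \eqref{eq:marginal.monotone} is a statement about $\pi(y\mid\tau)/\tau$, not about $\pi(y\mid\tau)/\phi(y)$.
\item $\pi(y\mid\tau)/\phi(y)$ is \emph{not} nondecreasing in $\tau$: $\pi(0\mid\tau)=\int(2\pi(1+\lambda^2\tau^2))^{-1/2}\nu(\lambda)\diff\lambda$ is strictly decreasing. Since $\pi(y\mid\tau)$ is proportional to $\tau\Phi_1(1,1,\tfrac32,-\tfrac{y^2}{2},1-\tau^2)$, the increasing-in-$\tau$ property claimed for $H$ cannot be borrowed here.
\item Even a nondecreasing ratio would not be controlled on the unbounded range $[K\tau_{n,0},\infty)$ by its value at a single point.
\end{itemize}
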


\begin{proof}
The proof is very similar to that of Lemma \ref{lm:tau.not.overshooting} except for a few details.
Fix $y\in\R$. We can still split the integral by $K\tau_{n,0}$,
\[
\E(\tau\mid \bm Y_{-i},y)
=
\frac{\int_0^\infty \tau\,\pi(\bm Y_{-i},y\mid\tau)\pi(\tau)\,d\tau}{\int_0^\infty \pi(\bm Y_{-i},y\mid\tau)\pi(\tau)\,d\tau}
=
\frac{\mathcal N_1(y)+\mathcal N_2(y)}{\mathcal D(y)},
\]
where $\mathcal N_1(y)=\int_0^{K\tau_{n,0}}\tau\pi(\bm Y_{-i},y\mid\tau)\pi(\tau)\diff \tau$, 
$\mathcal N_2(y)=\int_{K\tau_{n,0}}^{\infty}\tau\pi(\bm Y_{-i},y\mid\tau)\pi(\tau)\diff \tau$,
and $\mathcal D(y)=\int_0^\infty \pi(\bm Y_{-i},y\mid\tau)\pi(\tau)\diff\tau$.
Trivially, we still have $\mathcal N_1(y)\le K\tau_{n,0} \mathcal D(y)$.
It remains to control $\mathcal N_2(y)/\mathcal D(y)$.
Recall the monotonicity inequality in \eqref{eq:marginal.monotone}.
For $\tau\in[s_n/n, 2s_n/n]$, we have
\[
\pi(y\mid\tau)\geq \frac{n\tau}{2s_n}\pi(y\mid \tau = 2s_n/n).
\]
Therefore, restricting the denominator integral to $\tau\in(s_n/n,2s_n,n)$ and using the lower bounds from the proof of Lemma \ref{lm:tau.not.overshooting} for the remaining coordinates, we get that when $Y_i=y$ is fixed, the denominator is lower bounded by
\begin{equation*}
\mathcal D(y)
\geq
\frac{\pi(y\mid \tau = 2s_n/n)}{2s_n/n}
\cdot \prod_{j\in\mathcal S}\frac{C'_{1,n}(Y_j)}{Y_j^2}
\cdot \prod_{\substack{j\notin\mathcal S\\ j\neq i}}\phi(Y_j)
\cdot \left(\frac{s_n}{n}\right)^{s_n+1},
\end{equation*}
where $C'_{1,n}(\cdot)$ is the same factor as in Appendix \ref{sec:pf.lm.tau.not.overshooting} and the $(s_n/n)^{s_n+1}$ power appears because of the extra multiplicative $\tau$ factor above.

For the numerator $\mathcal N_2(y)$, note that $\tau\geq K\tau_{n,0}\geq 2s_n/n$.
Applying the monotonicity inequality in \eqref{eq:marginal.monotone} yields
\[
\pi(y\mid\tau)\leq \frac{n\tau}{2s_n}\pi(y\mid\tau = 2s_n/n),
\]
and thus
\begin{equation*}
\mathcal N_2(y)
\leq
\frac{\pi(y\mid \tau = 2s_n/n)}{2s_n/n}
\int_{K\tau_{n,0}}^\infty \tau^2 \pi(\bm Y_{-i}\mid\tau) \pi(\tau)\diff \tau.
\end{equation*}
Therefore, the $\pi(y\mid\tau=2s_n/n)$ factors cancel and we obtain
\begin{equation*}
\frac{\mathcal N_2(y)}{\mathcal D(y)}
\leq
\left(\frac{n}{s_n}\right)^{s_n+1}
\int_{K\tau_{n,0}}^{\infty}
\prod_{j\in\mathcal S}\frac{Y_j^2\,\pi(Y_j\mid\tau)}{C'_{1,n}(Y_j)} \cdot
\prod_{\substack{j\notin\mathcal S\\ j\neq i}}\frac{\pi(Y_j\mid\tau)}{\phi(Y_j)}\cdot
\tau^2\pi(\tau)\diff \tau.
\end{equation*}

Same as in Appendix \ref{sec:pf.lm.tau.not.overshooting}, we may split according to the event $\mathcal B_n$.
Since $\mathcal B_n$ depends only on $\{Y_j:j\in\mathcal S\}\subset\bm Y_{-i}$, its probability bound is unaffected by fixing $Y_i=y$.
We still have $\P\{\mathcal B_n^c\} \leq 2/n$.
On $\mathcal B_n$, the signal product in \eqref{eq:signal.product.Bn.bound} still holds.
Moreover, for $j\notin\mathcal S$, $Y_j\sim N(0,1)$ and we still have
$\E[\pi(Y_j\mid\tau)/\phi(Y_j)]=1$.
By Fubini's Theorem, we get
\begin{align*}
\E_{\bm Y_{-i}\mid\bm\theta} \left[\frac{\mathcal N_2(y)}{\mathcal D(y)}\one{\mathcal B_n}\right]
&\leq
\left(\frac{9e^4}{2}\right)^{s_n} \left(\frac{n}{s_n}\right)^{s_n+1}
\int_{K\tau_{n,0}}^\infty \tau^{s_n+2} n e^{-n\tau}\diff\tau\\
&=
\left(\frac{9e^4}{2}\right)^{s_n}\frac{n^2}{s_n}\int_{K\tau_{n,0}}^\infty \tau^2\exp[g(\tau)]\diff \tau.
\end{align*}
Recall that $g(\tau) =s_n\log\frac{n\tau}{s_n}-n\tau$.
Using a similar manipulation of $g(\tau)$ as in Appendix \ref{sec:pf.lm.tau.not.overshooting}, we get
\[
\int_{K\tau_{n,0}}^\infty \tau^2 e^{g(\tau)}\diff \tau
\leq 
\exp[g(K\tau_{n,0})]\left(\frac{2K^2\tau_{n,0}^2}{n}+\frac{8K\tau_{n,0}}{n^2}+\frac{16}{n^3}\right).
\]
We may thus conclude that when $n$ is sufficiently large, for all $y\in \R$,
\begin{align*}
\E_{\bm Y_{-i}\mid\bm\theta}\left[\frac{\mathcal N_2(y)}{\mathcal D(y)}\one{\mathcal B_n}\right]
&\leq 2K^2\tau_{n,0}\exp\left[-s_n\left( K - \log K - \log(9e^4/2) \right)\right]
= o(\tau_{n,0})
\end{align*}
for our choice of universal constant $K$.
On $\mathcal B_n^c$, we use the similar bound as in Appendix \ref{sec:pf.lm.tau.not.overshooting},
\[
\E_{\bm Y_{-i}\mid\bm\theta} \left[\frac{\mathcal N_2(y)}{\mathcal D(y)}\one{\mathcal B_n^c}\right]
\leq
\E_{\bm Y_{-i}\mid\bm\theta} \left[\E(\tau\mid \bm Y_{-i},y)\one{\mathcal B_n^c}\right]
\leq
\frac{n+1}{n}\P(\mathcal B_n^c)
\leq \frac{4}{n}.
\]
The proof of Corollary \ref{cor:loo.overshoot.fixedy} is finished by combining the two bounds on $\mathcal B_n$ and $\mathcal B_n^c$.
\end{proof}

\section{Proof of Theorem \ref{thm:adaptive.beta.min}}\label{sec:pf.thm.adaptive.beta.min}
\subsection{Signal Case: $\theta_i \neq 0$}
Note that
\[
\E_{\bm Y \mid \bm \theta} \E_{\tau \mid \bm Y} [L(\theta_i, \hat p(\cdot \mid Y_i, \tau))] = \E_{\bm Y \mid \bm \theta} \E_{\tau \mid \bm Y} \E_{\tilde {\bm Y} \mid \bm \theta} [\tilde g(Y_i, \tilde Y_i, \theta_i, v)].
\]
Similar to the proof of Theorem \ref{thm:minimax.fixed.tau}, we provide an upper bound of this quantity under two separate cases, when the observation $|Y_i|$ is large or small.
We consider the assumption that the true underlying $\bm\theta \in \Theta_n(s_n,c)$ as defined in \eqref{eq:min.signal.set}.
One difference, however, is that since the parameter $\tau$ is no longer fixed, the threshold for $|Y_i|$ is now set to $\zeta_{n,v} = \sqrt{2v\log(n / s_n)}$.
Although $\zeta_{n,v}$ contains the true underlying sparsity level $s_n$ that is unknown, it is only used as an intermediate step in the proof.
The full-Bayes guarantee holds without the knowledge of $s_n$.

\subsubsection{Large observation: $|Y_i| > \sqrt{2v \log(n/s_n)}$}
By the decomposition of $\tilde g(Y_i, \tilde Y_i, \theta_i, v)$ in Lemma \ref{lm:g.tilde.decomp}, using the expressions of $N_v(Y_i, \tilde Y_i)$ and $D(Y_i)$ in \eqref{eq:Nv.D.form.2}, we get
\begin{multline}\label{eq:g.tilde.decomp.signal.large.obs}
\tilde g(Y_i, \tilde Y_i, \theta_i, v) = \frac{\tilde Y_i \theta_i}{r} - \frac{\theta_i^2}{2r} + \log(\sqrt{v}) - \frac{v}{2}(Y_i + \tilde Y_i / r)^2 + \frac{1}{2}Y_i^2\\
+ \log \frac{\int_0^1 (1-u)^{-1/2}\frac{1}{1-(1-\tau^2)u}e^{-\frac{Y_i^2}{2}u} \diff u}{\int_0^1 (1-u)^{-1/2}\frac{1}{1-(1-\tau^2/v)u}e^{-\frac{v}{2}(Y_i + \tilde Y_i / r)^2 u} \diff u}.
\end{multline}
Note that $\tau$ is not involved in any of the first five terms on the right hand side of \eqref{eq:g.tilde.decomp.signal.large.obs}.
Let $Z_i$ be a standard Gaussian random variable, and write $Y_i = \theta_i + Z_i$ and $\tilde Y_i = \theta_i + \sqrt{r}Z_i$. Therefore
\begin{multline*}
\E_{\bm Y \mid \bm \theta} \E_{\tau \mid \bm Y} \E_{\tilde {\bm Y} \mid \bm \theta}\left[\left(\frac{\tilde Y_i \theta_i}{r} - \frac{\theta_i^2}{2r} + \log(\sqrt{v}) - \frac{v}{2}(Y_i + \tilde Y_i / r)^2 + \frac{1}{2}Y_i^2\right) \one{|Y_i| > \zeta_{n,v}}\right]\\
= \log(\sqrt{v}) - (v^{-1/2} - 1) \E[\theta_i Z_i\one{|Z_i+\theta_i| > \zeta_{n,v}}].
\end{multline*}
Using $\log(\sqrt{v})< 0$ and \eqref{eq:cross.term.exp.neg}, the right hand side is upper bounded by zero.
This leaves us with only the last term in \eqref{eq:g.tilde.decomp.signal.large.obs}.
Recall that $g_v(u) = (1-u)^{-1/2} \frac{1}{1-(1-\tau^2/v)u}$.
By the mean value theorem, for any $u\in [0,1]$, there exists $\bar u_v \in [0,u]$ such that
\[
g_v(u) = g_v(0) + ug'_v(\bar u_v).
\]
Following the similar procedure in Appendix \ref{sec:known.tau.signal.large.obs} that led to \eqref{eq:risk.frac.ub}, for any choice of $1\leq s\leq 1$, we have
\begin{multline}\label{eq:adaptive.risk.frac.ub}
    \frac{\int_0^1 (1-u)^{-1/2}\frac{1}{1-(1-\tau^2)u}e^{-\frac{Y_i^2}{2}u} \diff u}{\int_0^1 (1-u)^{-1/2}\frac{1}{1-(1-\tau^2/v)u}e^{-\frac{v}{2}(Y_i + \tilde Y_i / r)^2 u} \diff u}\\
    \leq \frac{v(Y_i + \tilde Y_i / r)^2}{Y_i^2} \frac{h^\ast_1(Y_i,s) + \frac{g'_1(\bar u_1)}{Y_i^2} h^\ast_2(Y_i,s) + 2Y_i^2 e^{-\frac{Y_i^2}{2}s} h_3^1(\tau)}{\tilde h_1^v(Y_i, \tilde Y_i, s) + \frac{g'_v(\bar u_v)}{v(Y_i+\tilde Y_i/r)^2} \tilde h_2^v(Y_i,\tilde Y_i, s)},
\end{multline}
where
\begin{align*}
    h^\ast_1(Y_i,s) &= 2-2e^{-\frac{Y_i^2}{2}s},\\
    h^\ast_2(Y_i,s) &= 4-4\left(1+\frac{Y_i^2}{2}s\right)e^{-\frac{Y_i^2}{2}s},\\
    \tilde h_1^v(Y_i, \tilde Y_i, s) &= 2-2e^{-\frac{v}{2}(Y_i + \tilde Y_i/r)^2 s},\\
    \tilde h_2^v(Y_i, \tilde Y_i, s) &= 4-4\left(1+\frac{v}{2}(Y_i + \tilde Y_i /r)^2 s \right)e^{-\frac{v}{2}(Y_i + \tilde Y_i / r)^2 s},
\end{align*}
and recall that $h_3^1(\tau) = \tau^{-1}(1-\tau^2)^{-1/2} \tan^{-1}(\sqrt{1-\tau^2} / \tau)$.
The task of bounding the expectation over the logarithm of this right hand side is twofold.

Since the first factor $v(Y_i + \tilde Y_i/r)^2 / Y_i^2$ does not involve $\tau$, using the similar procedure as in \eqref{eq:theta.Z.ratio.bound}, we can upper bound 
\begin{align*}
&\E_{\bm Y\mid \bm\theta}\E_{\tau\mid\bm Y} \E_{\tilde{\bm Y}\mid \bm\theta} \left[\log \frac{v(Y_i + \tilde Y_i/r)^2}{Y_i^2} \one{|Y_i| > \zeta_{n,v}}\right]\\
&= 2\E\left[\log\left(1+\frac{(v^{-1/2}-1)|\theta_i|}{|Z_i + \theta_i|} \right) \one{|Z_i + \theta_i| > \zeta_{n,v}} \right]\\
&\leq (v^{-1/2}-1) [6+4(s_n/n)^{v/4}].
\end{align*}

Meanwhile, similar to \eqref{eq:risk.frac.ub.bound}, the second factor on the right hand side of \eqref{eq:adaptive.risk.frac.ub} can be upper bounded by
\begin{equation}\label{eq:adaptive.signal.large.obs.key.ratio}
\frac{2 + \frac{4g'_1(\bar u_1)}{Y_i^2} + 2Y_i^2 e^{-\frac{Y_i^2}{2}s} h_3^1(\tau)}{2(1-e^{-\frac{v}{2}(Y_i+\tilde Y_i/r)^2 s}) + \frac{4g'_v(\bar u_v)}{v(Y_i+\tilde Y_i/r)^2 }(1-e^{-\frac{\zeta_{n,v}^2}{2}s})}.
\end{equation}

We fix $s=1/2$, and for brevity define $W_i = (Y_i+\tilde Y_i/r)^2$.
Then the ratio in \eqref{eq:adaptive.signal.large.obs.key.ratio} can be written as
\[
R_i(\tau)
:=
\frac{2 + \frac{4g'_1(\bar u_1)}{Y_i^2} + 2Y_i^2 e^{-\frac{Y_i^2}{4}} h_3^1(\tau)}
{2(1-e^{-vW_i/4}) + \frac{4g'_v(\bar u_v)}{vW_i }(1-e^{-\frac{\zeta_{n,v}^2}{4}}) }.
\]

By elementary algebra, we can see that for all $\tau\in(0,1)$ and $u\in(0,1/2)$, the derivative is bounded, i.e., $|g'_v(u)| \leq C_v$ for some finite constant $C_v$ depending only on $v$.
We may thus bound the denominator of $R_i(\tau)$ from below on the event $\mathcal W_i:=\{W_i\ge w_0\}$ as follows:
\begin{equation}\label{eq:den.lower.bound.onFi}
2(1-e^{-vW_i/4})+\frac{4g'_v(\bar u_v)}{vW_i}(1-e^{-\zeta_{n,v}^2 s/2})
\ge 2(1-e^{-vW_i/4})-\frac{4C_v}{vW_i}.
\end{equation}
We may choose a fixed constant $w_0\ge 1$ related to $v$ that is so large such that $\frac{4C_v}{vw_0}\le 1-e^{-\frac v4 w_0}$.
Then on $\mathcal W_i$, \eqref{eq:den.lower.bound.onFi} can be further lower bounded by $1-e^{-vw_0/4}$, a constant that is bounded away from zero.

Meanwhile, on the event $\{|Y_i|>\zeta_{n,v}\}$, the $4g'_1(\bar u_1) / Y_i^2$ term is of order $o(1)$ and thus negligible.
Also, for $\tau\in(0,1)$ we have the standard bound
\[
h_3^1(\tau)
=
\tau^{-1}(1-\tau^2)^{-1/2}\tan^{-1}\left(\frac{\sqrt{1-\tau^2}}{\tau}\right)
\le \frac{\pi}{2}\cdot \frac{1}{\tau}.
\]
Then on the intersection of the events $\mathcal W_i\cap\{|Y_i|>\zeta_{n,v}\}$ and for $\tau\in(0,1)$,
\begin{equation*}
R_i(\tau)
\le
\frac{2+o(1)+2Y_i^2 e^{-Y_i^2/4} h_3^1(\tau)}{1-e^{-vw_0/4}}
\leq
\frac{2}{1-e^{-vw_0/4}}\left(1+Y_i^2 e^{-Y_i^2/4} \frac{1}{\tau}\right).
\end{equation*}
Now consider a fixed observation $\bm Y$.
We have shown in Appendix \ref{sec:pf.lm.tau.not.undershooting} that the posterior density has the form $\pi(\tau\mid\bm Y)\propto ne^{-n\tau}M_{\bm Y}(\tau)$ with $M_{\bm Y}(\tau)$ increasing in $\tau$.
On the contrary, note that the map $\tau\mapsto \log(1+a_i/\tau)$ is nonincreasing.
Using $\log(1+x)\leq \log 2\cdot \one{x\leq 1}+\log(2x)\cdot \one{x>1}$, we obtain
\begin{align*}
\E\left[\log R_i(\tau) \one{\tau<1} \mid \bm Y\right]
&\leq
\E_{\tau\sim\pi(\tau)}\left[\log\left(1+\frac{Y_i^2 e^{-Y_i^2/4}}{\tau}\right)\right] + \log\frac{2}{1-e^{-vw_0/4}}\\
&\leq
\log 2 + \int_0^{Y_i^2 e^{-Y_i^2/4}} \log\left(\frac{2Y_i^2 e^{-Y_i^2/4}}{\tau}\right)ne^{-n\tau}\diff\tau + \log\frac{2}{1-e^{-vw_0/4}}\\
&\leq n(1+\log 2) Y_i^2 e^{-Y_i^2/4} + \log\frac{4}{1-e^{-vw_0/4}}
\end{align*}
Here, we used the trivial bound $e^{-n\tau}\leq 1$ in the last inequality.

Taking expectation over $Y_i\sim N(\theta_i,1)$ and using the theta-min condition $|\theta_i|\ge c\sqrt{2\log n}$,
a direct computation yields
\[
\E_{Y_i\mid\theta_i}\!\left[Y_i^2e^{-Y_i^2/4}\right]
=
\Big(\frac{2}{3}\Big)^{5/2}\Big(\theta_i^2+\frac{3}{2}\Big)e^{-\theta_i^2/6}
\lesssim \log n \cdot n^{-c^2/3},
\]
which is of order $o(n^{-1})$ as long as $c>2$.
Therefore,
\begin{equation}\label{eq:Fi.exp.conclusion}
\sup_{\bm\theta\in\Theta_n(s_n,c)}
\E_{\bm Y\mid\bm\theta}\E_{\tau\mid\bm Y}\E_{\tilde{\bm Y}\mid\bm\theta}[\tilde g(Y_i, \tilde Y_i, \theta_i,v)\one{\tau<1}\one{\mathcal W_i}\one{|Y_i|>\zeta_{n,v}}]
= O(1).
\end{equation}

Next we shall consider the complement event $\mathcal W_i^c$.
Note that $v^{1/2}(Y_i + \tilde Y_i/r) \sim N(\theta_i/v^{1/2}, 1)$.
Then since $|\theta_i| > c\sqrt{2\log n}$,
\begin{align*}
\P\{\mathcal W_n^c\}
&=
\P\left(\left|v^{1/2}(Y_i + \tilde Y_i/r)\right|<\sqrt{vw_0}\right)\\
&\leq 2\Phi\left(\sqrt{vw_0}-\frac{|\theta_i|}{\sqrt v}\right)
\leq \exp\left\{-\frac{1}{2}\left(\frac{|\theta_i|}{\sqrt v}-\sqrt{vw_0}\right)^2\right\}
\lesssim n^{-c^2/v+o(1)},
\end{align*}
which is polynomially small with our choice of $c>\sqrt{6}$.
Recall Lemma \ref{lm:g.tilde.decomp} that
\[
\tilde g(Y_i,\tilde Y_i,\theta_i,v)
=
\frac{\tilde Y_i\theta_i}{r}-\frac{\theta_i^2}{2r}-\log N_v(Y_i,\tilde Y_i)+\log D(Y_i).
\]
For $\tau\in(0,1)$, we may bound $D(Y_i)$ from above and $N_v(Y_i,\tilde Y_i)$ from below.
Using the representation \eqref{eq:Nv.D.form.1}, we have $\tau^2+(1-\tau^2)u\geq \tau^2$ for $\tau\in(0,1)$, and hence
\[
\log D(Y_i)
\leq
\log\left(\frac{\tau}{\pi}\cdot \frac{e^{Y_i^2/2}}{\tau^2}\int_0^1 u^{-1/2}\diff u\right)
=
\frac{Y_i^2}{2}+\log\frac{1}{\tau}+\log\frac{2}{\pi}.
\]
Meanwhile, note that $\tau^2/v + (1-\tau^2/v)u \leq 1/v$ for $\tau\in(0,1)$, hence using $e^x\geq 1$,
\[
\log N_v(Y_i,\tilde Y_i)\geq \log\left(\frac{\tau}{\pi\sqrt v}\cdot v \int_0^1 u^{-1/2}\diff u\right)
=
-\log\frac{1}{\tau}+\log\frac{\pi}{2\sqrt v}.
\]
Also note that
\[
\frac{\tilde Y_i\theta_i}{r} - \frac{\theta_i^2}{2r}  = \frac{1}{2r}\left(\tilde Y_i^2 - (\tilde Y_i-\theta_i)^2\right) \leq \frac{\tilde Y_i^2}{2r}.
\]
Hence by Lemma \ref{lm:g.tilde.decomp}, we conclude that for $\tau\in(0,1)$,
\[
\tilde g(Y_i,\tilde Y_i,\theta_i,v) \leq \frac{\tilde Y_i^2}{2r} + \frac{Y_i^2}{2} - \log\sqrt{v} + 2\log\frac{1}{\tau}.
\]
Therefore, by Lemma \ref{lm:tau.not.undershooting}, we get
\begin{align*}
&\sup_{\bm\theta\in\Theta_n(s_n,c)}
\E_{\bm Y\mid\bm\theta}\E_{\tau\mid\bm Y}\E_{\tilde{\bm Y}\mid\bm\theta}
[\tilde g (Y_i,\tilde Y_i,\theta_i,v) \one{\tau<1} \one{\mathcal W_n^c}]\\
&\leq 2\sup_{\bm\theta\in\Theta_n(s_n,c)}\E_{\bm Y\mid\bm\theta} \E\left[\log\frac{1}{\tau}\mid\bm Y\right] \cdot \P\{\mathcal W_n^c\} + 
2-\log\sqrt{v}\\
&\lesssim n^{-c^2/v + o(1)}\log\frac{n}{s_n} + O(1),
\end{align*}
which is of order $O(1)$.
Combining with \eqref{eq:Fi.exp.conclusion}, we arrive at
\begin{equation}\label{eq:adaptive.signal.bound.tau.small}
    \sup_{\bm\theta\in\Theta_n(s_n,c)}
\E_{\bm Y\mid\bm\theta}\E_{\tau\mid\bm Y}\E_{\tilde{\bm Y}\mid\bm\theta}
[\tilde g (Y_i,\tilde Y_i,\theta_i,v) \one{\tau<1} \one{|Y_i|>\zeta_{n,v}}] = O(1).
\end{equation}
Finally, when $\tau\geq 1$, we may have a crude bound on $\tilde g$ as well.
To do so, we can provide an upper bound for $D(Y_i)$ and a lower bound for $N_v(Y_i,\tilde Y_i)$.
By \eqref{eq:Nv.D.form.1},
first note that $\tau^2 + (1-\tau^2)u \geq 1$. Then
\[
\log D(Y_i) \leq \log\left(\frac{\tau}{\pi}e^{Y_i^2/2}\int_0^1 u^{-1/2} \diff u\right) = \frac{Y_i^2}{2}+\log\tau + \log\frac{2}{\pi}.
\]
Meanwhile, note that $\tau^2/v + (1-\tau^2/v)u \leq \tau^2/v$,
\[
\log N_v(Y_i, \tilde Y_i) \geq \log\left(\frac{\sqrt{v}}{\pi\tau}\int_0^1 u^{-1/2}\diff u\right) \geq -\log \tau + \log\frac{2\sqrt{v}}{\pi}.
\]
Also note that
\[
\frac{\tilde Y_i\theta_i}{r} - \frac{\theta_i^2}{2r}  = \frac{1}{2r}\left(\tilde Y_i^2 - (\tilde Y_i-\theta_i)^2\right) \leq \frac{\tilde Y_i^2}{2r},
\]
hence by Lemma \ref{lm:g.tilde.decomp}, we conclude that for $\tau>1$,
\[
\E_{\tilde{\bm Y}\mid\bm\theta}[\tilde g(Y_i,\tilde Y_i,\theta_i,v)] \leq 1 + 2\log\tau + \frac{Y_i^2}{2} - \log\sqrt{v}.
\]
Also note that $\log\tau < \tau$ for $\tau\geq 1$. Thus by Lemma \ref{lm:tau.not.overshooting} and Markov's Inequality, we get
\begin{align*}
&\sup_{\bm\theta\in\Theta_n(s_n,c)}
\E_{\bm Y\mid\bm\theta}\E_{\tau\mid\bm Y}\E_{\tilde{\bm Y}\mid\bm\theta}
[\tilde g (Y_i,\tilde Y_i,\theta_i,v) \one{\tau\ge 1}]\\
&\leq 2\sup_{\bm\theta\in\Theta_n(s_n,c)}\E_{\bm Y\mid\bm\theta} \E[\tau\mid\bm Y] + 
\left(2-\log\sqrt{v}\right)\sup_{\bm\theta\in\Theta_n(s_n,c)}\E_{\bm Y\mid\bm\theta} \Pi\{\tau\geq 1 \mid \bm Y\}\\
&\leq \left(4-\log\sqrt{v}\right) \sup_{\bm\theta\in\Theta_n(s_n,c)}\E_{\bm Y\mid\bm\theta} \E[\tau\mid\bm Y]\\
& \leq \left(4-\log\sqrt{v}\right) K\frac{s_n}{n} (1+o(1)),
\end{align*}
which is of order $o(1)$.
Combining with \eqref{eq:adaptive.signal.bound.tau.small}, we conclude that
\[
\sup_{\bm\theta\in\Theta_n(s_n,c)}
\E_{\bm Y \mid \bm\theta} \E_{\tau \mid \bm Y} \E_{\tilde {\bm Y} \mid \bm\theta}
[\tilde g(Y_i,\tilde Y_i,\theta_i,v)\one{|Y_i|>\zeta_{n,v}}]
=O(1).
\]

\subsubsection{Small observation: $|Y_i| \leq \sqrt{2v\log(n/s_n)}$}
In this case, we use the trivial bound $\log N_v(Y_i, \tilde Y_i) \leq 0$. Thus
\[
\tilde g(Y_i, \tilde Y_i, \theta_i, v) \leq \frac{\theta_i^2}{2r} + \log D(Y_i).
\]
The first term can be upper bounded using the inequality $(a+b)^2 \leq 2a^2 + 2b^2$ and $v/r = 1-v$,
\[
\frac{\theta_i^2}{2r} \leq \frac{(Y_i-\theta_i)^2 + Y_i^2}{r} \leq \frac{(Y_i-\theta_i)^2}{r} + 2(1-v) \log\frac{n}{s_n}.
\]
For the $\log D(Y_i)$ term, following the same proof in Appendix \ref{sec:known.tau.signal.small.obs}, we split into two cases $0<r\leq 1$ and $r>1$ again.

\noindent{\bf The case when $0<r\leq 1$}

In this case, $0 < v \leq 1/2$, and $v\leq 1-v$.
Using the expression of $D(Y_i)$ in \eqref{eq:Nv.D.form.1},
\begin{align*}
    D(Y_i) &\leq \frac{\tau}{\pi} e^{Y_i^2/2} \int_0^1 u^{-1/2} \frac{1}{\tau^2 + (1-\tau^2)u} \diff u \leq e^{Y_i^2/2}.
\end{align*}
Since $|Y_i| \leq \zeta_{n,v}$, we have $e^{Y_i^2/2} \leq (n/s_n)^v \leq (n/s_n)^{1-v}$.
Therefore,
\[
\log D(Y_i) \leq (1-v)\log\frac{n}{s_n} \quad\text{ when } r\in(0,1].
\]
We may conclude that as $r\in(0,1]$,
\[
\tilde g(Y_i, \tilde Y_i, \theta_i, v) \leq \frac{(Y_i - \theta_i)^2}{r} + 3(1-v) \log\frac{n}{s_n},
\]
and therefore
\begin{multline*}
\E_{\bm Y \mid \bm\theta} \E_{\tau\mid\bm Y} \E_{\tilde{\bm Y} \mid \bm\theta}[\tilde g(Y_i, \tilde Y_i, \theta_i, v) \one{|Y_i|\leq \zeta_{n,v}}]\\
\leq \E_{\bm Y \mid \bm\theta}\left[\frac{(Y_i - \theta_i)^2}{r}\right] + 3(1-v)\log\frac{n}{s_n}\cdot \P\{|Y_i| \leq \zeta_{n,v}\}.
\end{multline*}
Here, the first term on the right hand side is $1/r$.
For the second term, note that under Assumption \ref{assumption:beta.min}, using Mill's ratio bound,
\[
\P\{|Y_i| \leq \zeta_{n,v}\} \leq 2\Phi\left(-(c-\sqrt{v})\sqrt{2\log n}\right) \leq \frac{1}{(c-\sqrt{v})\sqrt{\pi \log n}} n^{-(c-\sqrt{v})^2}.
\]
Therefore, for any choice of $c>\sqrt{v}$ and $r\in (0,1]$,
\[
\sup_{\bm\theta\in\Theta_n(s_n,c)}\E_{\bm Y \mid \bm\theta} \E_{\tau\mid\bm Y} \E_{\tilde{\bm Y} \mid \bm\theta}[\tilde g(Y_i, \tilde Y_i, \theta_i, v) \one{|Y_i|\leq \zeta_{n,v}}] \leq \frac{1}{r} + o(1).
\]

\noindent{\bf The case when $r > 1$}

In this case, we split the integral in the expression of $D(Y_i)$ in \eqref{eq:Nv.D.form.1} by $\frac{1-v}{v}<1$.
For the first part, recall the bound $\frac{\tau}{\pi} \int_0^1 u^{-1/2} \frac{1}{\tau^2+(1-\tau^2)u} \diff u \leq 1$, we have
\[
\frac{\tau}{\pi}\int_0^{\frac{1-v}{v}} u^{-1/2} \frac{1}{\tau^2+(1-\tau^2)u} e^{\frac{Y_i^2}{2}u} \diff u \leq e^{\frac{Y_i^2}{2}\frac{1-v}{v}} \leq (n/s_n)^{1-v}.
\]
For the second part, note that $\tau^2 + (1-\tau^2)u \geq u$ for all $\tau>0$ and $0<u<1$. Thus
\begin{multline*}
\frac{\tau}{\pi}\int_{\frac{1-v}{v}}^1 u^{-1/2} \frac{1}{\tau^2+(1-\tau^2)u} e^{\frac{Y_i^2}{2}u} \diff u\\
\leq \frac{2\tau}{\pi}e^{Y_i^2/2}\left(\sqrt{\frac{v}{1-v}} - 1\right) \leq \frac{2}{\pi} \left(\sqrt{\frac{v}{1-v}} - 1\right) \left(\frac{n}{s_n}\right)^v \tau.
\end{multline*}
This is, again, different from the fixed-$\tau$ scenario where $(n/s_n)^v\tau$ converges to zero.
Using $\log(1+x) \leq x$ for $x\geq 0$, we arrive at
\[
\log D(Y_i) \leq \frac{2}{\pi}\left(\sqrt{\frac{v}{1-v}} -1\right) \left(\frac{n}{s_n}\right)^{2v-1} \tau, \quad \text{when }r\in(1,\infty).
\]
By Lemma \ref{lm:tau.not.overshooting}, we have for $r>1$,
\[
\sup_{\bm\theta\in\Theta_n(s_n,c)}\E_{\bm Y \mid \bm\theta} \E_{\tau\mid\bm Y} \E_{\tilde{\bm Y} \mid \bm\theta}[\log D(Y_i)] \leq \frac{2K}{\pi}\left(\sqrt{\frac{v}{1-v}} -1\right) \left(\frac{n}{s_n}\right)^{2v-2}.
\]
Note that this bound is of order $o(1)$, since $v<1$.
For the remaining terms in $\tilde g(Y_i, \tilde Y_i, \theta_i, v)$, using the same method as the $r\in(0,1]$ case, we have for any choice of $c>\sqrt{v}$ and $r>1$,
\[
\sup_{\bm\theta\in\Theta_n(s_n,c)}\E_{\bm Y \mid \bm\theta} \E_{\tau\mid\bm Y} \E_{\tilde{\bm Y} \mid \bm\theta}[\tilde g(Y_i, \tilde Y_i, \theta_i, v) \one{|Y_i|\leq \zeta_{n,v}}] \leq \frac{1}{r} + o(1).
\]
Combining the $0<r\leq 1$ case and the $r>1$ case, we conclude that this same bound holds for any $r>0$ and $c>\sqrt{v}$.

As a conclusion to the signal case, by combining the two cases for the scale of $|Y_i|$, we arrive at
\begin{equation}\label{eq:adaptive.signal.case}
\sup_{\bm\theta \in \Theta_n(s_n,c)} \E_{\bm Y\mid \bm\theta} \E_{\tau \mid \bm Y} [L(\theta_i, \hat p(\cdot \mid Y_i, \tau))] = O(1).
\end{equation}
Eventually, all the signal terms in the predictive risk contributes the order of $O(s_n)$, which is slightly smaller than the minimax rate.

\subsection{Noise Case: $\theta_i = 0$}

\subsubsection{Large observation: $|Y_i|>\sqrt{2\log (n/s_n)}$}

Fix an index $i$ such that $\theta_i=0$, so that $Y_i\sim N(0,1)$.
Recall $\zeta_{n,1}=\sqrt{2\log(n/s_n)}$.
We bound
\[
\E_{\bm Y\mid\bm\theta}\E_{\tau\mid\bm Y} [L(0,\hat p(\cdot\mid Y_i,\tau))\one{|Y_i|>\zeta_{n,1}}].
\]

By the spectroscopy result \eqref{eq:loss.spectroscopy}, for each fixed $\tau>0$ and $y\in\R$,
\[
L(0,\hat p(\cdot\mid y,\tau))
\leq
\int_0^\infty L(0,\hat p_{\lambda_i}(\cdot\mid y,\tau))\,\pi(\lambda_i\mid y,\tau) \diff \lambda_i,
\]
where $\hat p_\lambda(\cdot\mid y,\tau)$ is the predictive density under the Gaussian prior with fixed local scale $\lambda$.

For fixed $\tau$ and $\lambda$, recall that the form of the predictive density $\hat p_{\lambda_i}(\tilde Y_i \mid Y_i)$ given in \eqref{eq:predictive.density.fixed.lambda} and its corresponding tight upper bound in KL loss, that is, with $\theta_i=0$,
\[
L(0, \hat p_{\lambda_i}(\cdot \mid Y_i,\tau)) \leq \frac{1-v}{2}\frac{1+\lambda_i^2\tau^2}{v+\lambda_i^2\tau^2} \left(\frac{\lambda_i^2\tau^2}{1+\lambda_i^2\tau^2}Y_i\right)^2 \leq \frac{1-v}{2}Y_i^2.
\]
Since this bound does not directly involve $\lambda_i$, for any $\tau>0$ and any observation $Y_i \in \R$, \eqref{eq:loss.spectroscopy} implies
\[
L(0,\hat p(\cdot\mid y,\tau))
\leq
\frac{1-v}{2}Y_i^2.
\]
Therefore
\[
\sup_{\bm\theta\in\Theta_n(s_n)}\E_{\bm Y\mid\bm\theta}\E_{\tau\mid\bm Y}[L(0,\hat p(\cdot\mid Y_i,\tau))\one{|Y_i|>\zeta_{n,1}}]
\leq
\frac{1-v}{2}\E[Z^2\one{|Z|>\zeta_{n,1}}],
\]
where $Z\sim N(0,1)$.
Using integration by parts,
\begin{align*}
\E[Z^2\one{|Z|>\zeta_{n,1}}]
&=2\int_{\zeta_{n,1}}^\infty z^2\phi(z)\diff z\\
&=2\zeta_{n,1}\phi(\zeta_{n,1})+2\Phi(-\zeta_{n,1})\\
&\leq 2\phi(\zeta_{n,1})\left(\zeta_{n,1} + \frac{1}{\zeta_{n,1}}\right) \\
&= \frac{2}{\sqrt{\pi}}\frac{s_n}{n}\sqrt{\log \frac{n}{s_n}}.
\end{align*}
Therefore,
\begin{equation}\label{eq:noise.large.obs.final}
\sup_{\bm\theta\in\Theta_n(s_n)}
\E_{\bm Y\mid\bm\theta}\E_{\tau\mid\bm Y}[L(0,\hat p(\cdot\mid Y_i,\tau))\one{|Y_i|>\zeta_{n,1}}]
\leq
\frac{1}{\sqrt{\pi}}(1-v)\frac{s_n}{n}\sqrt{\log\frac{n}{s_n}}.
\end{equation}

\subsubsection{Small observation: $|Y_i| \leq \sqrt{2\log(n/s_n)}$}
In this case, we apply the spectroscopy result in Lemma \ref{lm:risk.spectroscopy.adaptive}.
By \eqref{eq:risk.spectroscopy.adaptive}, it suffices to consider the upper bound of
\[
\frac{1-v}{6}\frac{\Phi_1(1,1,\frac{5}{2}, -\frac{Y_i^2}{2}, 1-\tau^2)}{\Phi_1(1,1,\frac{3}{2}, -\frac{Y_i^2}{2}, 1-\tau^2)}Y_i^2,
\]
which is equivalent to
\[
\frac{1-v}{2} \frac{\int_0^1 u^{1/2} \frac{1}{\tau^2 + (1-\tau^2)u} e^{\frac{Y_i^2}{2}} \diff u}{\int_0^1 u^{-1/2} \frac{1}{\tau^2 + (1-\tau^2)u} e^{\frac{Y_i^2}{2}} \diff u} Y_i^2.
\]
Following the same proof technique as Appendix \ref{sec:known.tau.noise.small.obs}, for any choice of constant $1<a<\tau^{-2}$, this quantity is upper bounded by
\[
\frac{1-v}{3} \tau^2 Y_i^2 e^{\frac{Y_i^2}{2}\tau^2} + (1-v) \tau Y_i^2 e^{\frac{Y_i^2}{2}\frac{1}{a}}\left(\frac{1}{\sqrt{a}} - \tau\right) + (1-v)\sqrt{a}\tau e^{\frac{Y_i^2}{2}}.
\]
We focus on the third term in the following analysis, and the first two terms can be bounded with a similar method.
Recall that $i\notin\mathcal S$ so that $\theta_i=0$.  Conditioning on $Y_i=y$ and using
$e^{y^2/2}\phi(y) =  (2\pi)^{-1/2}$, by Corollary \ref{cor:loo.overshoot.fixedy},
\begin{align*}
\E_{\bm Y\mid\bm\theta}\E_{\tau\mid\bm Y}\!\left[\tau e^{Y_i^2/2}\one{|Y_i|\leq \zeta_{n,1}}\right]
&=
\E_{\bm Y_{-i}\mid\bm\theta} \left[\int_{-\zeta_{n,1}}^{\zeta_{n,1}}
e^{y^2/2}\phi(y) \E(\tau\mid \bm Y_{-i},y)\diff y\right]\\
&=
\frac{1}{\sqrt{2\pi}}\int_{-\zeta_{n,1}}^{\zeta_{n,1}}
\E_{\bm Y_{-i}\mid\bm\theta} \left[\E(\tau\mid \bm Y_{-i},y)\right]\diff y\\
&\leq
\frac{2}{\sqrt{2\pi}}\zeta_{n,1} K\tau_{n,0}\\
&= \frac{2}{\sqrt{\pi}}\frac{s_n}{n} K \sqrt{\log\frac{n}{s_n}}.
\end{align*}
Therefore
\[
\sup_{\bm\theta\in\Theta_n(s_n,c)}
\E_{\bm Y\mid\bm\theta}\E_{\tau\mid\bm Y}\left[ (1-v)\sqrt{a} \tau e^{Y_i^2/2}\one{|Y_i|\leq \zeta_{n,1}}\right]
\leq
\frac{2K}{\sqrt{\pi}}\sqrt{a}(1-v)\frac{s_n}{n}\sqrt{\log\frac{n}{s_n}}.
\]
This gives the desired per-coordinate bound for the dominant term in the noise--small-observation analysis.

Also by Corollary \ref{cor:loo.overshoot.fixedy}, using the same proof technique, we can show that
\begin{multline*}
\sup_{\bm\theta\in \Theta_n(s_n,c)}\E_{\bm Y\mid \bm\theta} \E_{\tau\mid \bm Y}\left[(1-v) \tau Y_i^2 e^{\frac{Y_i^2}{2}\frac{1}{a}}\left(\frac{1}{\sqrt{a}} - \tau\right) \one{|Y_i| \leq \zeta_{n,1}}\right]\\
\lesssim (1-v) a^{-1/2}(1-a^{-1})^{-3/2}\frac{s_n}{n},
\end{multline*}
and using additionally $e^{\frac{Y_i^2}{2}\tau^2} \leq e^{\frac{Y_i^2}{2}}$, we have
\begin{equation*}
\sup_{\bm\theta\in \Theta_n(s_n,c)}\E_{\bm Y\mid \bm\theta} \E_{\tau\mid \bm Y}\left[\frac{1-v}{3} \tau^2 Y_i^2 e^{\frac{Y_i^2}{2}\tau^2} \one{|Y_i| \leq \zeta_{n,1}}\right]
\lesssim (1-v)\frac{s_n^2}{n^2}\log^{3/2}\frac{n}{s_n}.
\end{equation*}
Adding up the three terms, with the choice of $a$ as a constant close to one, we conclude that
\begin{equation}\label{eq:noise.small.obs.final}
\sup_{\bm\theta \in \Theta_n(s_n,c)}\E_{\bm Y \mid \bm \theta} \E_{\tau \mid \bm Y} [L(0, \hat p(\cdot \mid Y_i, \tau)) \one{|Y_i| \leq \zeta_{n,1}}]
\leq
\frac{2K}{\sqrt{\pi}} (1-v)\frac{s_n}{n}\sqrt{\log\frac{n}{s_n}} (1+ o(1)).
\end{equation}

By combining \eqref{eq:noise.large.obs.final} and \eqref{eq:noise.small.obs.final}, we arrive at a conclusion to the noise case,
\begin{equation}\label{eq:adaptive.noise.case}
    \sup_{\bm\theta \in \Theta_n(s_n,c)}\E_{\bm Y \mid \bm \theta} \E_{\tau \mid \bm Y} [L(0, \hat p(\cdot \mid Y_i, \tau)) ]
    \leq
    \frac{2}{\sqrt{\pi}}(K+1) (1-v)\frac{s_n}{n}\sqrt{\log\frac{n}{s_n}} (1+o(1)).
\end{equation}

\subsection{Conclusion}

When we impose a hyperprior $\pi(\tau)$ on random hyperparameter $\tau$, by Jensen's inequality, the Kullback-Leibler loss under the hierarchical prior is upper bounded by a scale mixture of losses under fixed $\tau$,
\begin{equation}
    L(\bm \theta, \hat p(\cdot \mid \bm y)) \leq \int L(\bm \theta, \hat p(\cdot \mid \bm y, \tau)) \pi(\tau\mid \bm y) \diff \tau.
\end{equation}
This leads to the following upper bound on the risk,
\begin{multline}\label{eq:risk.decomp.adaptive}
    \rho_n (\bm \theta, \hat p) \leq \E_{\bm Y \mid \bm \theta} \E_{\tau \mid \bm Y} [L(\bm \theta, \hat p(\cdot \mid \bm Y, \tau))]\\
    = \sum_{i\in\mathcal S} \E_{\bm Y \mid \bm \theta} \E_{\tau \mid \bm Y} [L(\theta_i, \hat p (\cdot \mid Y_i, \tau))] + (n-s_n) \E_{\bm Y \mid \bm \theta} \E_{\tau \mid \bm Y} [L(0, \hat p(\cdot \mid Y_i,\tau))].
\end{multline}
By combining \eqref{eq:adaptive.signal.case} with \eqref{eq:adaptive.noise.case} and plugging into \eqref{eq:risk.decomp.adaptive}, we conclude that for any $c>\sqrt{6}$,
\begin{multline*}
\sup_{\bm\theta\in\Theta_n(s_n,c)}\rho_n(\bm\theta,\hat p)
\leq s_n\cdot O(1) + \frac{n-s_n}{n} \frac{2}{\sqrt{\pi}}(K+1) (1-v) s_n \sqrt{\log\frac{n}{s_n}} (1+ o(1))\\
= \frac{2}{\sqrt{\pi}}(K+1) (1-v) s_n\sqrt{\log\frac{n}{s_n}} (1+ o(1)).
\end{multline*}
This finishes the proof of Theorem \ref{thm:adaptive.beta.min}.

\section{Additional Simulations}

\subsection{KL Risk under Fixed Global Shrinkage Parameter}\label{sec:max.KL}

We start from the oracle case when the sparsity level $s_n$ is known.
In this case, both the spike-and-slab family and the Horseshoe prior of  $\bm\theta$ are separable, either given a fixed proportion of slab, $\eta$, or a fixed global shrinkage parameter, $\tau$.
The maximum predictive risk can thus be written as
\begin{equation}\label{eq:max.risk.fixed}
\sup_{\bm\theta \in \Theta_n(s_n)}\rho_n(\bm\theta, \hat p) = (n-s_n)\rho(0,\hat p) + s_n \sup_{\theta\in \R} \rho(\theta, \hat p).
\end{equation}
A good shrinkage prior should find a balance between predictive risks contributed by signal and noise, reaching an optimal multivariate predictive risk.
The following quantitative experiment compares this maximum KL risk with the minimax risk in \eqref{eq:minimax.rate}.

\begin{figure}[h]
    \centering
    \includegraphics[width=12cm]{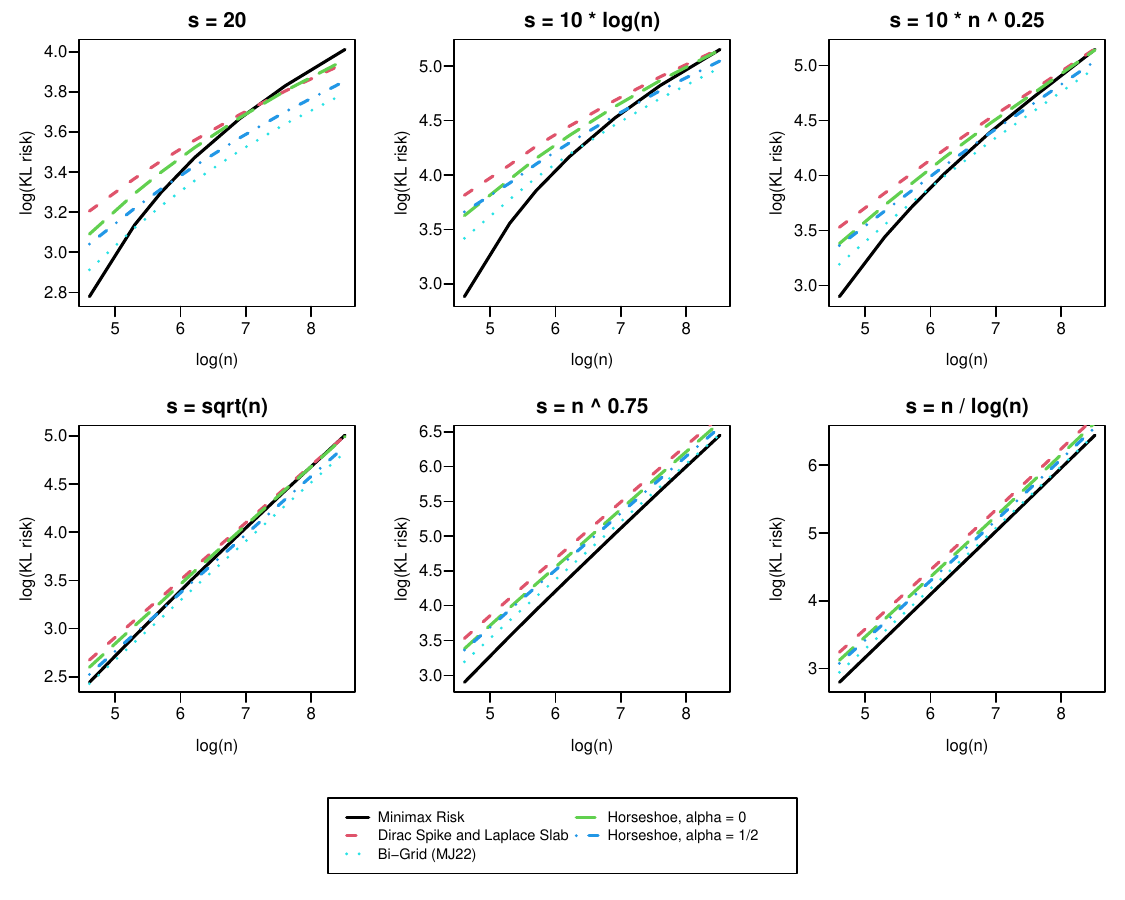}
    \caption{A comparison of maximum risk of various methods to the minimax KL risk under increasing $n$. Six schemes are considered, with the true underlying sparsity level $s_n$ increasing with $n$ at different rates.}
    \label{fig:KL.fixed.tau}
\end{figure}

We consider a series of increasing $n$, and series of $s_n$ that depends on $n$ in six different schemes, from slow to fast:
(1) $s_n$ fixed to $20$,
(2) $s_n = 10\log n$,
(3) $s_n = 10n^{1/4}$,
(4) $s_n = n^{1/2}$,
(5) $s_n = n^{3/4}$,
and (6) $s_n = n/\log n$,
Except for the first one, all other schemes satisfy the assumption that $s_n \rightarrow \infty$, and all six schemes satisfy the nearly-black assumption of $s_n/n\rightarrow 0$.

We consider the following two calibrations of the Horseshoe prior for each scheme, with $\alpha = 0$, i.e. $\tau = s_n/n$,
and with $\alpha = 1/2$, i.e. $\tau = s_n \sqrt{\log(n/s_n)} / n$.
Under the setting where $s_n$ is known, we compute their respective univariate risk curves, find the value at zero and the maximum, then add them up for the maximum predictive risk by \eqref{eq:max.risk.fixed}.
The results are shown in Figure \ref{fig:KL.fixed.tau}, where we plot the logarithm of $\sup_{\bm\theta \in \Theta_n(s_n)} \rho_n(\bm\theta, \hat p)$ versus the logarithm of $n$.
As a comparison, the minimax risk in \eqref{eq:minimax.rate} is also plotted,
as well as the maximum predictive risk achieved by Dirac spike-and-slab prior \citep{rockova2023adaptive}, and the bi-grid prior by \cite{mukherjee2022minimax}.
We see that the maximum risk curves of
the Horseshoe prior are relatively close to the minimax risk, especially for larger $n$.
This phenomenon persists for all six growth scheme of $s_n$ with respect to $n$, indicating that the Horseshoe prior with fixed $\tau$ is capable of balancing the signal and the noise.
Despite being a continuous mixture of Gaussian, the Horseshoe prior features a spike near zero and a long-tailed slab that imitates the performance of a spike-and-slab prior.

\begin{figure}[t]
    \centering
    \includegraphics[width=0.3\linewidth]{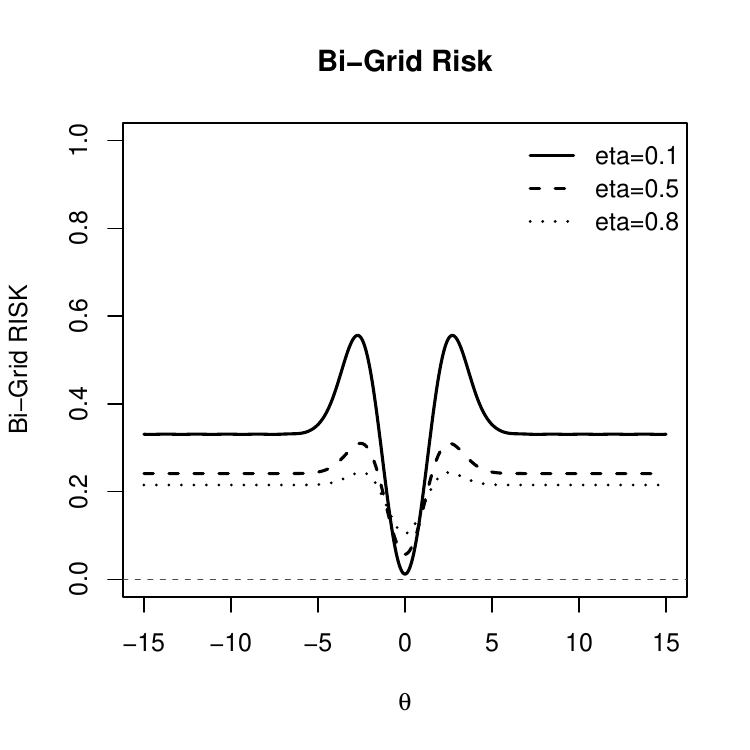}
    \includegraphics[width=0.3\linewidth]{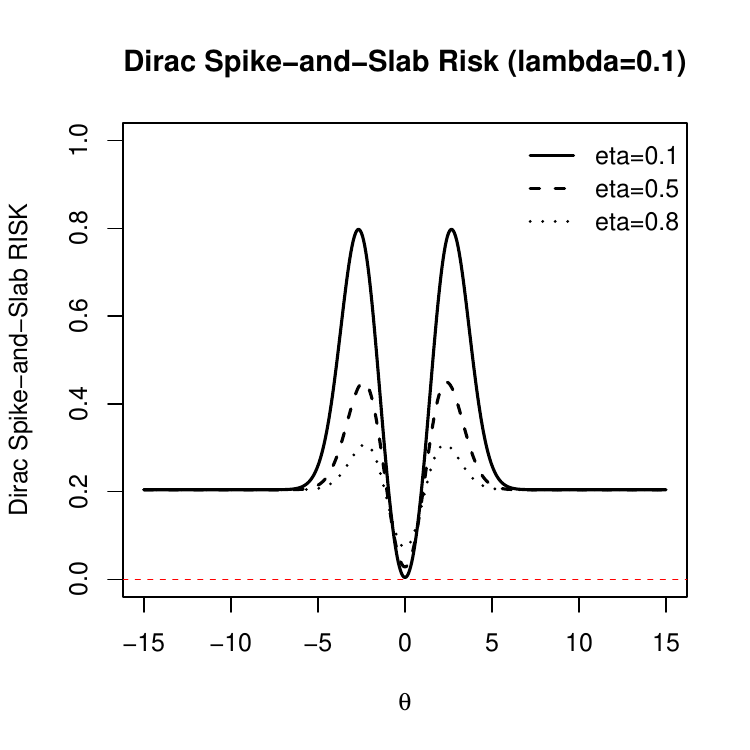}
    \includegraphics[width=0.3\linewidth]{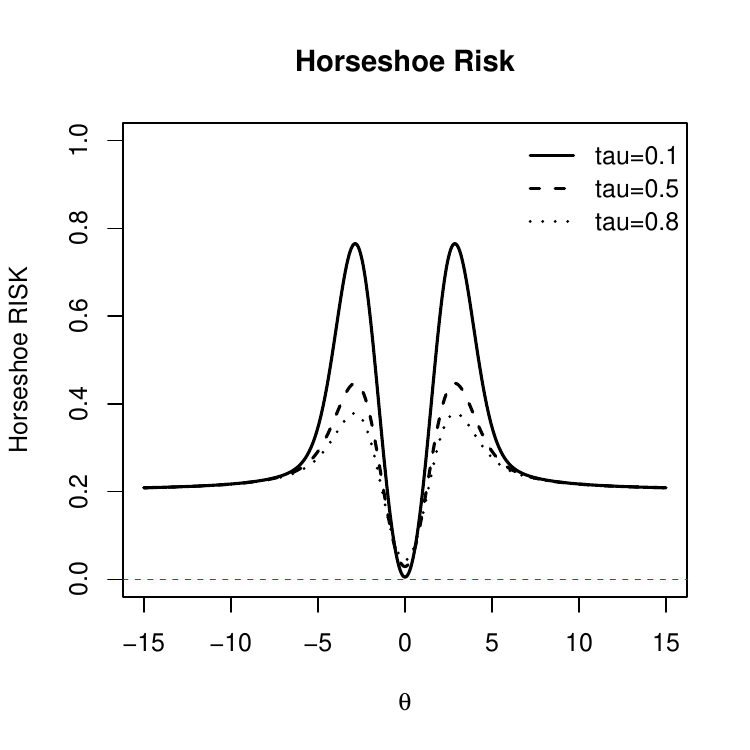}
    \caption{Univariate risk plots for the Bi-Grid, the Dirac spike-and-slab, and the Horseshoe priors.}
    \label{fig:uni.risk.plots}
\end{figure}

We also plot the univariate KL risk of the three priors with respect to $\theta$ in Figure \ref{fig:uni.risk.plots}.
While tuning the main hyperparameter ($\eta$ for the spike-and-slab family and $\tau$ for the Horseshoe), all three priors exhibit a tradeoff between the risk at zero and the maximum risk, a sign of balance between signal and noise.
The bi-grid prior \citep{mukherjee2022minimax} attains a maximum univariate risk that is lower than the other two priors, which explains our observation in Figure \ref{fig:KL.fixed.tau} that it generally attains the lowest maximum multivariate KL risk.
Beyond this worst-case scenario, the Dirac spike-and-slab prior \citep{rockova2023adaptive} and the Horseshoe prior generally attain a lower univariate risk for large $|\theta|$.
While all priors have theoretical guarantees for the maximum predictive KL risk, practitioners may still want to select the prior according to the problem setup.
When a large number of strong signals are present, and we are more concerned about the case-specific risk instead of the worst-case scenario risk, the Dirac spike-and-slab and the Horseshoe may achieve lower risks.
See, for instance, the simulation results in Tables \ref{tab:KL.risk.setup.B} and \ref{tab:KL.risk.setup.A}.

\subsection{Details on Predictive Risk Simulation}\label{sec:risk.sim.detail}

When the hyperparameters like $\tau$ for the Horseshoe and $\eta$ for the spike-and-slab priors are fixed, the predictive risk can be directly computed using the risk decomposition results, e.g., Lemma \ref{lm:risk.decomp}.
But for the full-Bayes approach, the multivariate prior $\pi(\bm\theta)$ and the predictive density $\hat p(\tilde{\bm y} \mid \bm y)$ are no longer separable.
Instead, we take the following Algorithm \ref{algo:random.tau} to evaluate the predictive risk under random $\tau$ for the Horseshoe prior.
For other priors, we may evaluate the risk with a similar approach.

\begin{algorithm}
\caption{Estimation of Adaptive Predictive Risk $\rho_n(\bm\theta, \hat p_\pi)$}
\label{algo:random.tau}
\begin{algorithmic}[1]
    \State Sample $\bm y^{(1)}, \cdots, \bm y^{(B)} \sim N(\bm\theta, I_n)$ and $\tilde{\bm y}^{(1)}, \cdots, \tilde{\bm y}^{(B)} \sim N(\bm\theta, rI_n)$.
    \For{$1 \leq b \leq B$} (Estimate $\hat p_\pi(\bm{\tilde y}^{(b)} \mid \bm y^{(b)})$)
        \State Sample $\tau^{(1)}, \cdots, \tau^{(Q)}$ from the posterior $\pi(\tau\mid \bm y^{(b)})$.
        \For{$1 \leq q \leq Q$ and $1 \leq i \leq n$} (Estimate $\E_{\lambda_i \mid y_i^{(b)},\tau^{(q)}} \hat p_{\lambda_i}(\tilde y_i^{(b)} \mid y_i^{(b)}, \tau^{(q)})$)
            \State  Sample $\lambda_i^{(1)},\cdots, \lambda_i^{(L)}$ from the posterior $\pi(\lambda_i \mid y_i^{(b)},\tau^{(q)})$.
            \State  Compute average value of $\hat p_{\lambda_i^{(l)}}(\tilde y_i^{(b)} \mid y_i^{(b)}, \tau^{(q)})$ over $1\leq l \leq L$.
        \EndFor
        \State Compute average value of $\hat p_\pi(\tilde{\bm y}^{(b)} \mid \bm y^{(b)}, \tau^{(q)})$ over $1\leq q\leq Q$.
    \EndFor
    \State Compute average value of $\log\{\pi(\tilde{\bm y}^{(b)}\mid \bm\theta) / \hat p_\pi(\tilde{\bm y}^{(b)} \mid \bm y^{(b)}) \}$ over $1\leq b\leq B$.
\end{algorithmic}
\end{algorithm}

In this experiment, our choice of the technical parameters is $B=1000$, $Q=200$, and $L=300$.

For the Dirac spike-and-slab prior \citep{rockova2023adaptive}, specifically, a tuning parameter is $\lambda$, the rate of the Laplace slab.
In each of our fixed-hyperparameter simulation settings, we consider a grid $\lambda \in \{0.05, 0.1, 0.2, 0.5, 1, 1.5, 2, 2.5, 3\}$.
For each fixed-$\eta$ simulation setup for the DSnS prior, we compute a predictive KL risk for each $\lambda$ value, and choose the minimal risk.
In the full-Bayes approach, we consider the same grid of $\lambda$.
For each simulation setup, we generate 1,200 samples, $\{\bm Y^{(i)}\}_{i=1}^{N}$ and $\{\tilde {\bm Y}^{(i)}\}_{i=1}^{N}$, use $N_H=200$ of them as a held-out set, select a value of $\lambda$ by
\[
\hat\lambda = \argmax_\lambda \frac{1}{N_H} \sum_{i=1}^{N_H} \log \hat p_{\lambda}(\tilde {\bm Y}^{(i)} \mid \bm Y^{(i)}),
\]
and then use the value $\hat\lambda$ on the remaining 1,000 samples to evaluate the predictive KL risk under the Dirac spike-and-slab prior.

\subsection{Additional Predictive Risk Simulation}\label{sec:risk.sim.setup.A}

In the main text, we have explored a simulation setup where we have strong signals, weak signals and pure noise.
We provide a supplementary experiment where we only have either strong signals or pure noise.
For a fixed $n$, assign multiple sparsity levels $s_n$.
For each sparsity level, we let $s_n$ many entries of $\bm\theta$ be $c\sqrt{2\log n}$ and the rest zero.
Here, the choice of factor $c$ is still 2, 3, or 4.
We still let $n=500$, and $s_n \in \{25, 50, 100\}$.

\begin{table}[t]
\centering
\caption{Kullback-Leibler risks for parameter vector $\bm\theta$ under the alternative setup with no weak signal.
When adaptive to $s_n$, the risks are computed on 1000 data vectors $\bm Y$ and $\tilde{\bm Y}$ generated from $\bm\theta$.
}
\footnotesize
\label{tab:KL.risk.setup.A}
\renewcommand{\arraystretch}{1.15}
\begin{tabular}{l *{9}{r}}
\toprule
 &
 \multicolumn{3}{c}{$s_n=25$} &
 \multicolumn{3}{c}{$s_n=50$} &
 \multicolumn{3}{c}{$s_n=100$} \\
\cmidrule(lr){2-4}\cmidrule(lr){5-7}\cmidrule(lr){8-10}
Signal Level $c$ & 2 & 3 & 4 & 2 & 3 & 4 & 2 & 3 & 4 \\
\midrule
\multicolumn{10}{c}{\textit{As if $s_n$ were known}}\\

Bi-Grid
& 18.16 & 18.14 & 18.13
& 33.19 & 33.18 & 33.15
& 59.48 & 59.47 & 59.47\\

DSnS
& 9.72 & 9.70 & 9.70
& 19.20 & 19.18 & 19.18
& 37.76 & 37.74 & 37.74 \\
HS, $\alpha=1/2$  
& 13.82 & 13.06 & 12.84
& 26.89 & 25.39 & 24.95
& 50.62 & 47.66 & 46.78 \\
HS, $\alpha=0$  
& 11.96 & 11.20 & 10.98
& 24.20 & 22.69 & 22.24
& 48.14 & 45.16 & 44.28 \\
\addlinespace[2pt]
\multicolumn{10}{c}{\textit{$s_n$ unknown; calibration fixed as if $s_n=1$}}\\

Bi-Grid
& 21.70 & 17.52 & 21.56
& 43.13 & 34.77 & 42.85
& 86.00 & 69.27 & 85.44\\

DSnS
& 8.93 & 8.73 & 8.73
& 17.80 & 17.41 & 17.41
& 35.55 & 34.76 & 34.76 \\
HS, $\alpha=1/2$  
& 10.15 & 9.29 & 9.07
& 20.16 & 18.43 & 17.99
& 40.16 & 36.71 & 35.83 \\
HS, $\alpha=0$  
& 10.06 & 9.21 & 8.99
& 20.05 & 18.36 & 17.91
& 40.03 & 36.64 & 35.76 \\

\addlinespace[2pt]
\multicolumn{10}{c}{\textit{Adapts to $s_n$ --- full-Bayes approach}}\\

DSnS-Beta
& 9.49 & 9.31 & 9.27
& 18.66 & 18.51 & 18.41
& 36.79 & 36.34 & 37.04 \\
HS-Exp
& 11.37 & 10.52 & 10.58
& 23.50 & 21.74 & 21.14
& 46.76 & 43.50 & 42.46 \\

\bottomrule
\end{tabular}
\end{table}

The experiment results under this setup is given in Table \ref{tab:KL.risk.setup.A}.
This setup represents the best case scenario when the signal and the noise are clearly and perfectly separated.
In this case, the Dirac spike-and-slab prior leads to the optimal predictive risk under multiple settings.
The structure of $\bm\theta$ features multiple entries at exactly zero, and some more entries further away.
A separate Dirac spike is more suitable for this setup compared to the Horseshoe's continuous spike near zero.

A comparison between the oracle and adaptive cases in Table \ref{tab:KL.risk.setup.A} shows that the KL risks are not exceedingly sensitive to the choice of fixed hyperparameters (e.g. $\eta$ for DSnS and $\tau$ for HS), especially when the signal levels are high.
For instance, the choice of $\tau = 1/n$ for the Horseshoe, which is vastly underestimated, is not considerably detrimental to the predictive KL risk compared to the theoretically optimal choice of $\tau = s_n/n$.

Regarding the adaptiveness of the full-Bayes hierarchical models, both DSnS-Beta and HS-Exp are capable of recovering predictive KL risks similar to those in the oracle cases where $s_n$ are known.
These priors are also theoretically proven to be able to recover unknown sparsity level.
Note that this setup satisfies the theta-min condition (Assumption \ref{assumption:beta.min}), and therefore the theoretical guarantee of rate minimaxity should hold.

\section{Supplementary Real Data Experiments}

\subsection{JAFFE Details and Extra Experiments}\label{sec:apdx.JAFFE}

\subsubsection{Daubechies-4 wavelet decomposition}\label{sec:daubechies}

Each image is represented as a two-dimensional array $X = \{x_{u,v}\}_{1\leq u,v\leq m}$, where $m$ denotes the spatial resolution.
In our example, $m=256=2^8$.
We can decompose $X$ via discrete wavelet transform.
Denote our choice of wavelet basis by $\{\psi_{jk}\}$, where $0\leq j\leq 8$ is the level of frequency resolution (the smaller the coarser), and $k = (k_1,k_2), 1\leq k_1,k_2 < 2^j$ is the position index for each level.
Specifically, we take Daubechies-4 wavelet basis throughout this data experiment.
We can then obtain wavelet coefficients $\{y_{jk}\}$ by projecting the 2-dimensional image signal onto the basis functions,
\[
y_{jk} = \sum_{u=1}^m \sum_{v=1}^m x_{u,v} \psi_{jk}(u,v).
\]
Since discrete wavelet transform corresponds to an orthogonal matrix operator, it can be seen as an orthogonal transformation from the pixel domain to the wavelet domain.
If we assume i.i.d.~Gaussian noise in the pixel domain, this property can be inherited by the wavelet coefficients.
Moreover, natural images like human faces are usually characterized by piecewise smooth regions separated by edges.
Therefore, the wavelet coefficient can be considered as a Gaussian random vector around a sparse mean.
Here, the mean is zero for the smooth regions on the image, and is nonzero over the edges.
We expect more zero means in higher frequency resolution levels (larger $j$).

For the task of facial recognition, specifically, we are more concerned about the coarser levels of the wavelet coefficients.
They tend to encode the invariant biological properties of the subject, like the bone structure, distance between eyes, and the contours of the nose and jawline.
These features remain relatively stable despite the different facial expressions.
Meanwhile, changes in facial expression often involve local elastic deformation in soft tissue, which is usually filtered out.
Therefore, we may test whether the wavelet coefficients of two images share the same underlying mean vector, and therefore effectively test for identity.

In our example, we take $0\leq j\leq J$, with $J=3$.
By selecting these coarser levels, the high-frequency noise (e.g., skin texture, wrinkles caused by facial expressions) is filtered out, and the nonzero signals are prominent.
Our choice of Daubechies-4 wavelet is also efficient at compressing structural information, like major facial features, into a few high-magnitude coefficients at the coarser levels.
Therefore, the technical condition on the minimum signal strength is more likely to hold under our settings.

\subsubsection{Rank-based predictive score}

Apart from the energy score \eqref{eq:JAFFE.energy.score}, this section provides two additional metrics for whether the new observation (i.e., image $i_2$) fits in the predictive distribution from image $i_1$.
We shall repeat the JAFFE experiments in Section \ref{sec:JAFFE} with these two metrics.

We introduce a rank-based score based on predictive samples that measures the centrality of the new observation $\bm y_{i_2}$ with respect to the predictive distribution based on $\bm y_{i_1}$.
\begin{equation}\label{eq:JAFFE.P.score}
P_{i_1,i_2} = \frac{1}{N} \sum_{l=1}^N  \one{\|\bm y_{i_2} - \bar{\bm y}_{i_1} \|_2 < \|\hat{\bm y}_{i_1}^{(l)} - \bar{\bm y}_{i_1}\|_2},
\end{equation}
where $\bar{\bm y}_{i_1} = N^{-1}\sum_{l=1}^N \hat{\bm y}_{i_1}^{(l)}$.
A rank-based predictive score close to one means that image $i_2$ is close to the geometric center of the Bayesian predictive set, which suggests that image $i_2$ may be based on the same sparse mean vector as image $i_1$.
On the contrary, a score close to zero represents a large discrepancy detected.
Despite the notation, this is not a p-value in the frequentist sense, since its distribution is not uniform but will rather be skewed to the right under the null distribution.

For all 213 images, we can therefore run pairwise tests and obtain a square matrix $\bm P$.
The diagonal entries of $\bm P$ are set to be one.
Similar to the energy score matrix $\bm E$, $\bm P$ is also generally asymmetric.
Figure \ref{fig:JAFFE.P.matrix} shows a visualization.
To test any pairs of images $i_1$ and $i_2$, we can let $\tilde P_{i_1,i_2} = (P_{i_1,i_2} + P_{i_2,i_1}) / 2$, and accept that the pair are from the same subject if $\tilde P_{i_1,i_2}$ is larger than a threshold $\bar P$.
Again, this cutoff value $\bar P$ can be treated as a tuning parameter.
By varying $\bar P$, we obtain a ROC curve in Figure \ref{fig:JAFFE.ROC.P}.
In this case, the Horseshoe predictive inference method achieves an AUC of 0.923.
Figure \ref{fig:JAFFE.threshold.P} provides a plot of precision, recall, and F1 scores with respect to the changing threshold $\bar P$.
The global F1 score is maximized at $\bar P = 0.35$.
We can also use the three data-driven ways mentioned in Section \ref{sec:JAFFE} to select $\bar P$ according to the observed data.
The held-out method leads to $\bar P = 0.23$.
By setting 10 target clusters, we have $\bar P = 0.38$.
Note that the distribution of the rank-based predictive scores are concentrated around zero and one.
So fully unsupervised method finds the valley at $\bar P=0.655$, which is relatively far from the oracle value.
The accuracy of the four choices of $\bar P$ is demonstrated in Figure \ref{fig:JAFFE.P.results}.

A main advantage of this rank-based predictive score is robustness to the choice of cutoff $\bar P$.
The variation of global F1 score \ref{fig:JAFFE.threshold.P} is much smaller compared to that in \ref{fig:JAFFE.threshold.E}.
Likewise, even if the unsupervised choice $\bar P=0.655$ is far from the oracle value, it is still capable of recovering a reasonable accuracy.

\begin{figure}[t]
    \centering
    \begin{subfigure}[b]{0.3\textwidth}
        \centering
        \includegraphics[width=\textwidth]{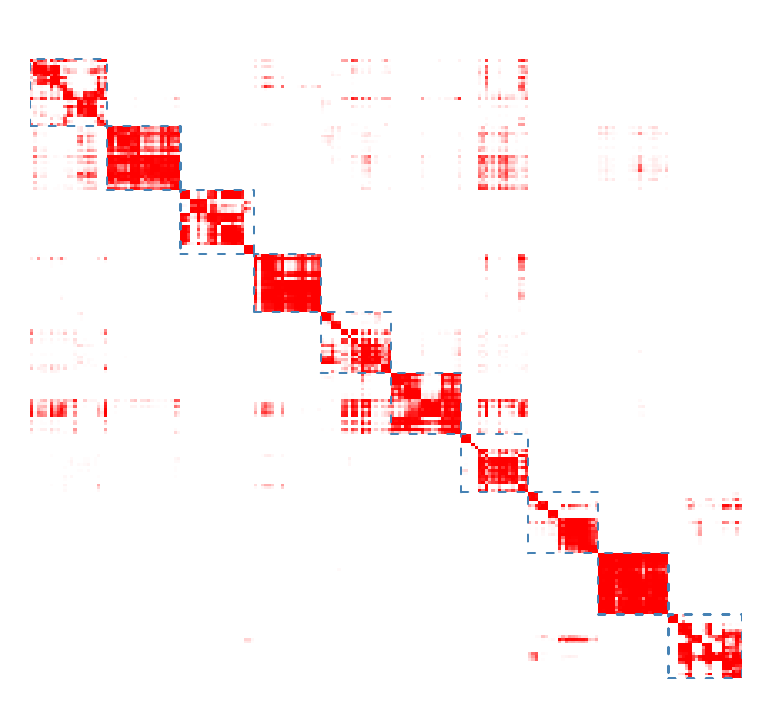}
        \caption{Score matrix $\bm P$}
        \label{fig:JAFFE.P.matrix}
    \end{subfigure}
    \hfill 
    \begin{subfigure}[b]{0.3\textwidth}
        \centering
        \includegraphics[width=\textwidth]{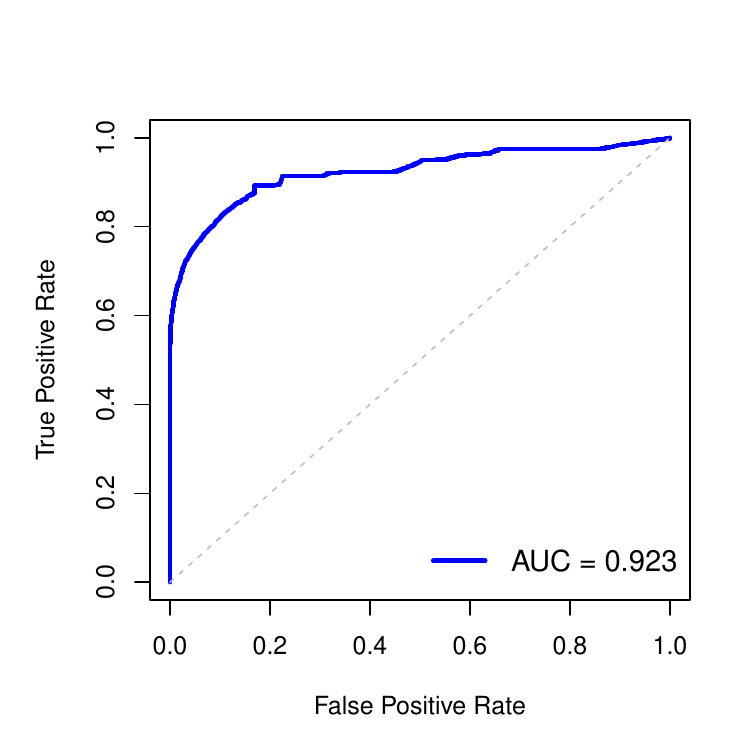}
        \caption{ROC Curve}
        \label{fig:JAFFE.ROC.P}
    \end{subfigure}
    \hfill 
    \begin{subfigure}[b]{0.3\textwidth}
        \centering
        \includegraphics[width=\textwidth]{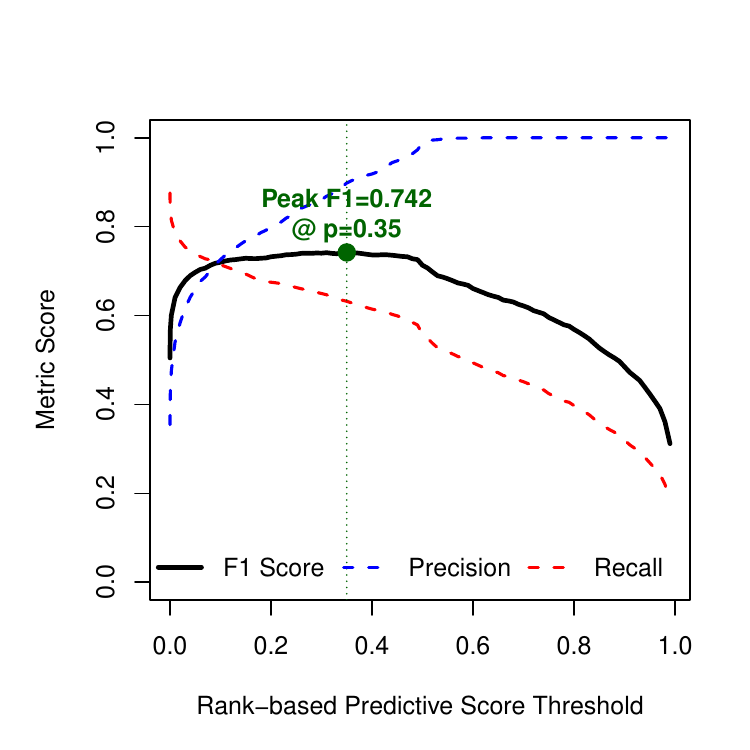}
        \caption{Accuracy of varying $\bar P$}
        \label{fig:JAFFE.threshold.P}
    \end{subfigure}
    
    \caption{Results of the Horseshoe predictive inference method on the JAFFE dataset, using rank-based predictive score as the metric.
    (a) Heatmap of rank-based predictive score matrix $\bm P$. 
    Each row and column represents an image.
    Larger rank-based predictive scores are represented by darker red.
    The blue dashed boxes represent images of one subject.
    (b) The ROC curve obtained by varying the pairwise testing cutoff value $\bar P$.
    (c) In-sample selection of optimal threshold $\bar P$, with precision, recall, and F1 score plotted against varying $\bar P$.}
    \label{fig:JAFFE.P.varying}
\end{figure}

\begin{figure}[t]
    \centering
    \begin{subfigure}[b]{0.24\textwidth}
        \centering
        \includegraphics[width=\textwidth]{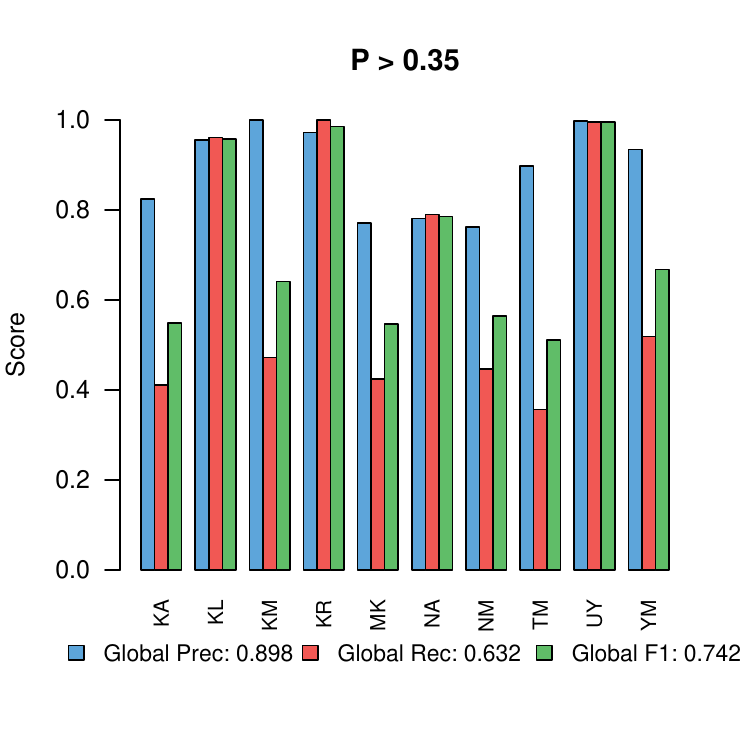}
        \caption{Oracle $\bar P$}
        \label{fig:P.MEAN.oracle}
    \end{subfigure}
    \begin{subfigure}[b]{0.24\textwidth}
        \centering
        \includegraphics[width=\textwidth]{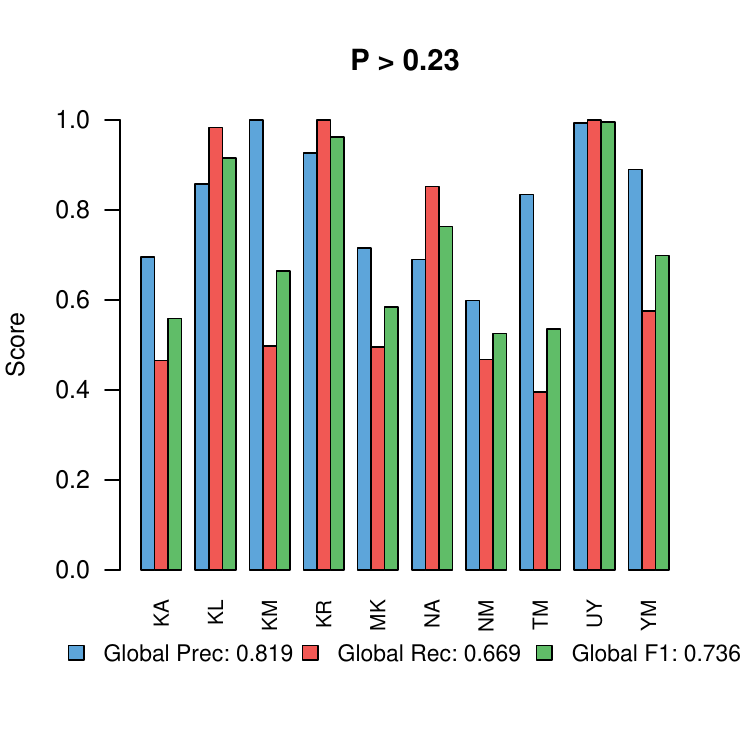}
        \caption{$\bar P$ from held-out set}
        \label{fig:P.MEAN.heldout}
    \end{subfigure}
    \begin{subfigure}[b]{0.24\textwidth}
        \centering
        \includegraphics[width=\textwidth]{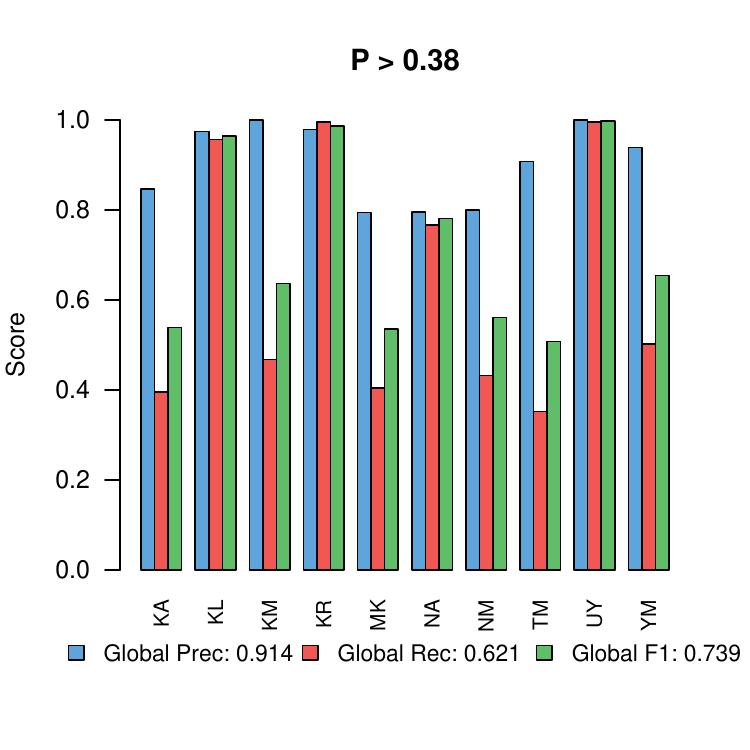}
        \caption{$\bar P$ for 10 clusters}
        \label{fig:P.MEAN.cluster}
    \end{subfigure}
    \begin{subfigure}[b]{0.24\textwidth}
        \centering
        \includegraphics[width=\textwidth]{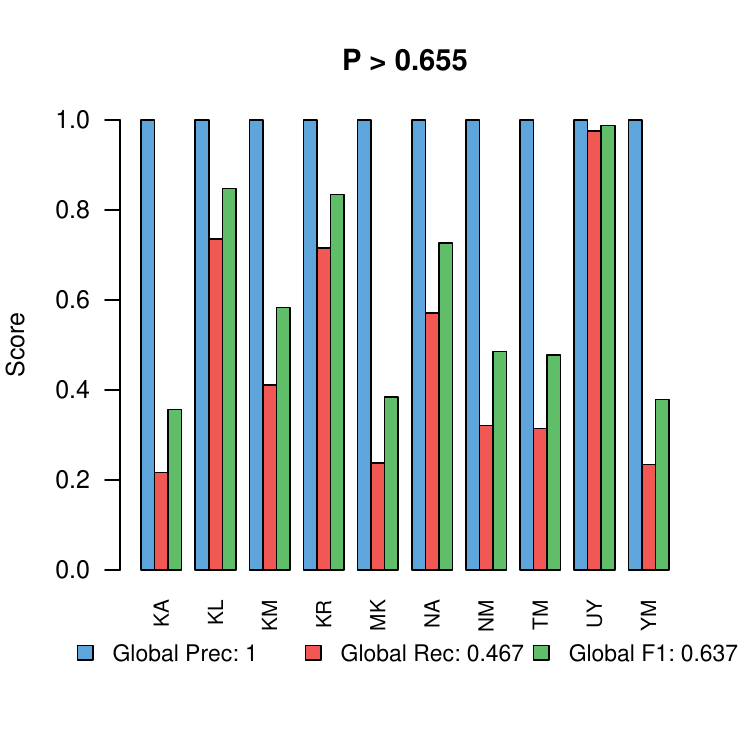}
        \caption{Valley $\bar P$}
        \label{fig:P.MEAN.valley}
    \end{subfigure}
    
    \caption{Accuracy of JAFFE facial recognition under different choices of cutoff $\bar P$. The predictive inference is done using the Horseshoe prior.}
    \label{fig:JAFFE.P.results}
\end{figure}

\subsubsection{Predictive coverage rate}

A perhaps more intuitive score is the entrywise coverage rate in the predictive inference.
This may be useful considering that the Daubechies wavelet decomposition essentially divides the image into adjacent tiles of different sizes, and that the ratio of tiles of image $i_2$ that fall into the predictive interval can also be used to test the null hypothesis that the two images are based on the same mean.
The coverage rate is defined as
\begin{equation}\label{eq:JAFFE.C.score}
    C_{i_1, i_2} = \frac{1}{n}\sum_{j=1}^n\one{q_{\alpha/2}(\hat y_{i_1,j}) < y_{i_2, j} < q_{1-\alpha/2}(\hat y_{i_1,j})}
\end{equation}
Here, $\alpha = 0.1$ is the significance level, $q_{\alpha/2}(\hat y_{i_1,j})$ and $q_{1-\alpha/2}(\hat y_{i_1,j})$ are the lower and upper quantiles of the marginal predictive distributions based on $\bm y_{i_1}$.
Recall that $n$ is the number of tiles, instead of the number of samples from the predictive distribution.
For any two images based on the same mean, $C_{i_1,i_2}$ should be very close to one.
A score far from one indicates that there may be discrepancy in mean vectors.

For all 213 images, we can therefore run pairwise tests and obtain a square matrix $\bm C$.
The diagonal entries of $\bm C$ are set to be one.
Note that $\bm C$ is also generally asymmetric.
Figure \ref{fig:JAFFE.C.matrix} shows a visualization.
To test any pairs of images $i_1$ and $i_2$, we can let $\tilde C_{i_1,i_2} = (C_{i_1,i_2} + C_{i_2,i_1}) / 2$, and accept that the pair are from the same subject if $\tilde C_{i_1,i_2}$ is larger than a threshold $\bar C$ -- in a similar logic to the $\bm P$ metric.
Again, this cutoff value $\bar C$ can be treated as a tuning parameter.
By varying $\bar C$, we obtain a ROC curve in Figure \ref{fig:JAFFE.ROC.C}.
In this case, the Horseshoe predictive inference method achieves an AUC of 0.907.
Figure \ref{fig:JAFFE.threshold.C} provides a plot of precision, recall, and F1 scores with respect to the changing threshold $\bar C$.
The global F1 score is maximized at $\bar C = 0.905$.
We can also use the three data-driven ways mentioned in Section \ref{sec:JAFFE} to select $\bar C$ according to the observed data.
The held-out method leads to $\bar C = 0.903$.
By setting 10 target clusters, we have $\bar C = 0.912$.
The valley method leads to $\bar C=0.883$, which is relatively far from the oracle value.
The accuracy of the four choices of $\bar C$ is demonstrated in Figure \ref{fig:JAFFE.C.results}.

\begin{figure}[t]
    \centering
    \begin{subfigure}[b]{0.3\textwidth}
        \centering
        \includegraphics[width=\textwidth]{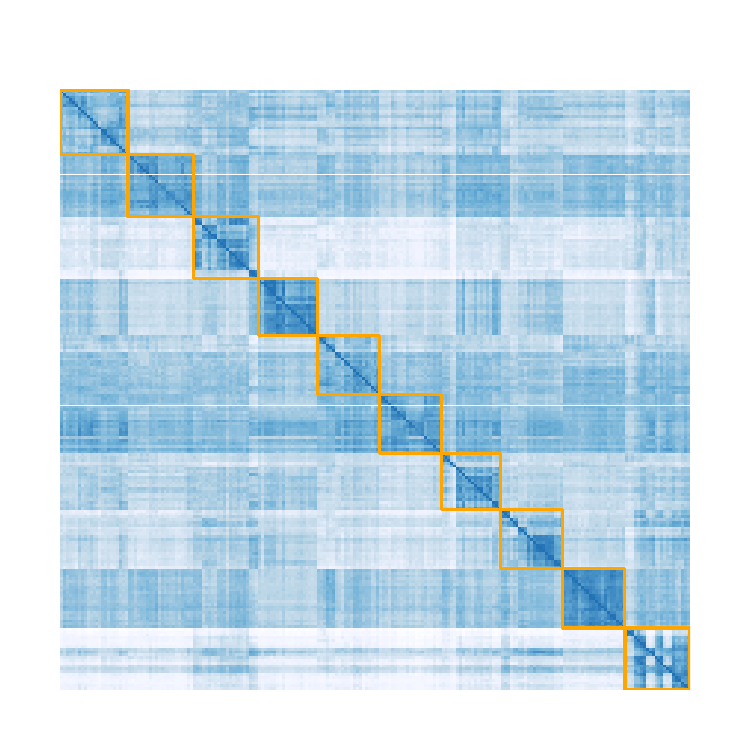}
        \caption{Score matrix $\bm C$}
        \label{fig:JAFFE.C.matrix}
    \end{subfigure}
    \hfill 
    \begin{subfigure}[b]{0.3\textwidth}
        \centering
        \includegraphics[width=\textwidth]{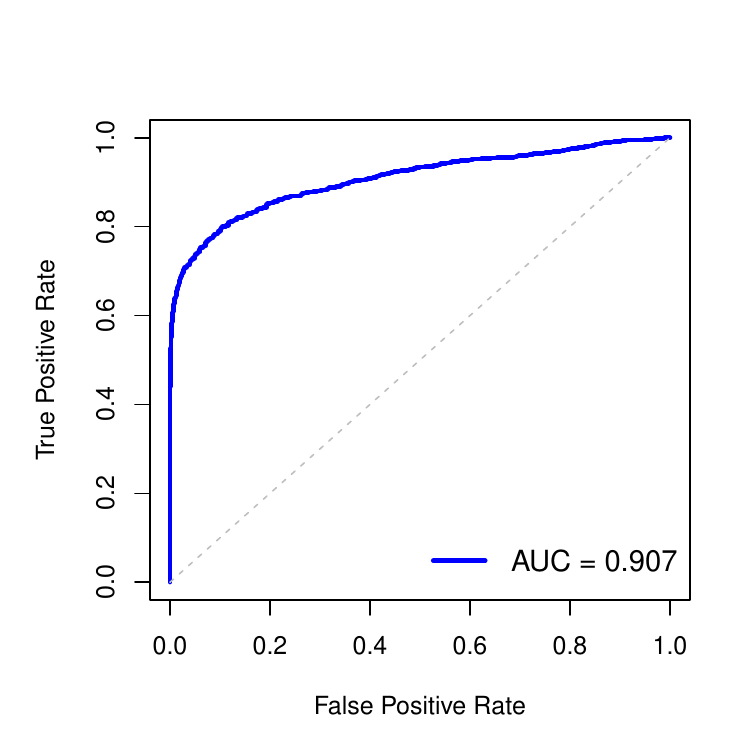}
        \caption{ROC Curve}
        \label{fig:JAFFE.ROC.C}
    \end{subfigure}
    \hfill 
    \begin{subfigure}[b]{0.3\textwidth}
        \centering
        \includegraphics[width=\textwidth]{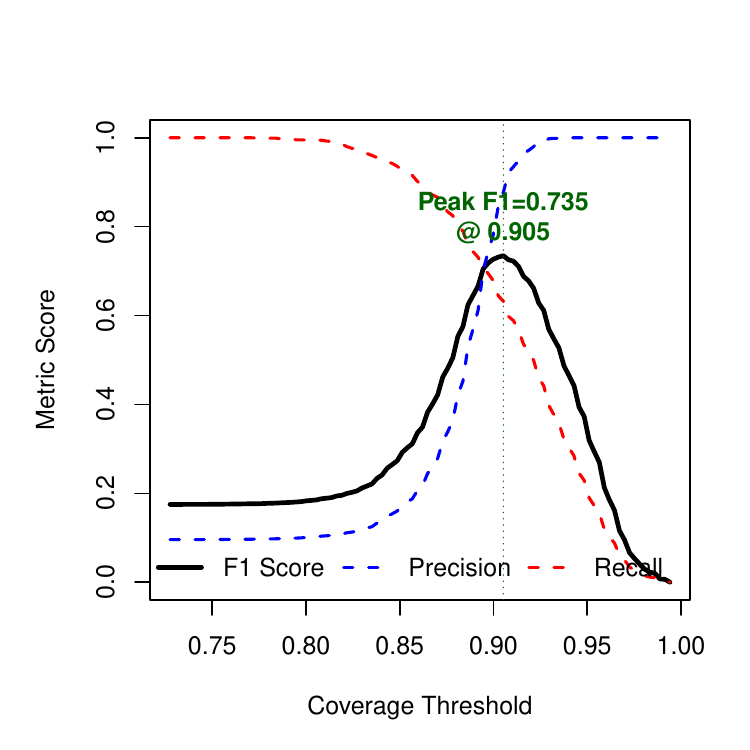}
        \caption{Accuracy of varying $\bar C$}
        \label{fig:JAFFE.threshold.C}
    \end{subfigure}
    
    \caption{Results of the Horseshoe predictive inference method on the JAFFE dataset, using coverage rate as the metric.
    (a) Heatmap of coverage rate matrix $\bm C$. 
    Each row and column represents an image.
    Larger coverage rates are represented by darker blue.
    The blue dashed boxes represent images of one subject.
    (b) The ROC curve obtained by varying the pairwise testing cutoff value $\bar C$.
    (c) In-sample selection of optimal threshold $\bar C$, with precision, recall, and F1 score plotted against varying $\bar C$.}
    \label{fig:JAFFE.C.varying}
\end{figure}

\begin{figure}[t]
    \centering
    \begin{subfigure}[b]{0.24\textwidth}
        \centering
        \includegraphics[width=\textwidth]{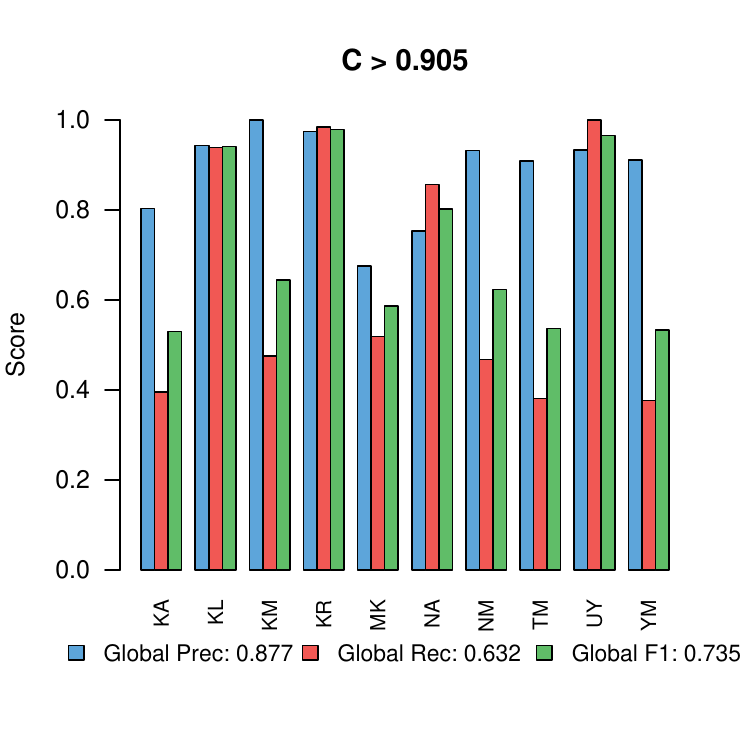}
        \caption{Oracle $\bar C$}
        \label{fig:C.MEAN.oracle}
    \end{subfigure}
    \begin{subfigure}[b]{0.24\textwidth}
        \centering
        \includegraphics[width=\textwidth]{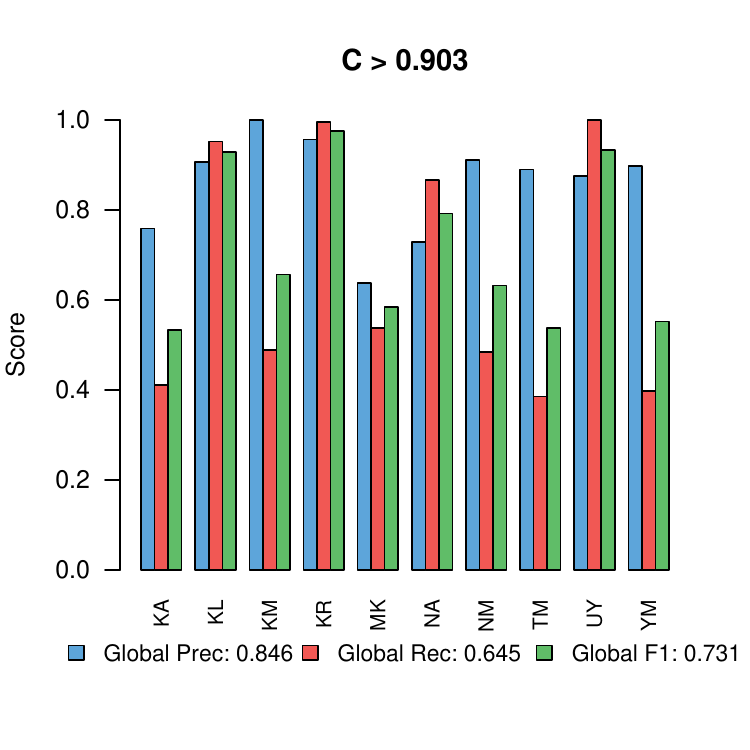}
        \caption{$\bar C$ from held-out set}
        \label{fig:C.MEAN.heldout}
    \end{subfigure}
    \begin{subfigure}[b]{0.24\textwidth}
        \centering
        \includegraphics[width=\textwidth]{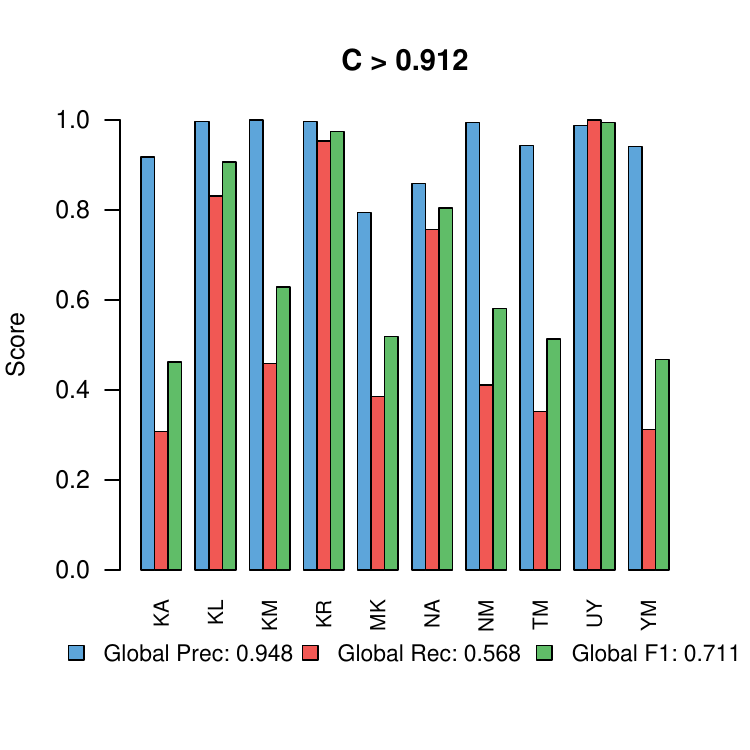}
        \caption{$\bar C$ for 10 clusters}
        \label{fig:C.MEAN.cluster}
    \end{subfigure}
    \begin{subfigure}[b]{0.24\textwidth}
        \centering
        \includegraphics[width=\textwidth]{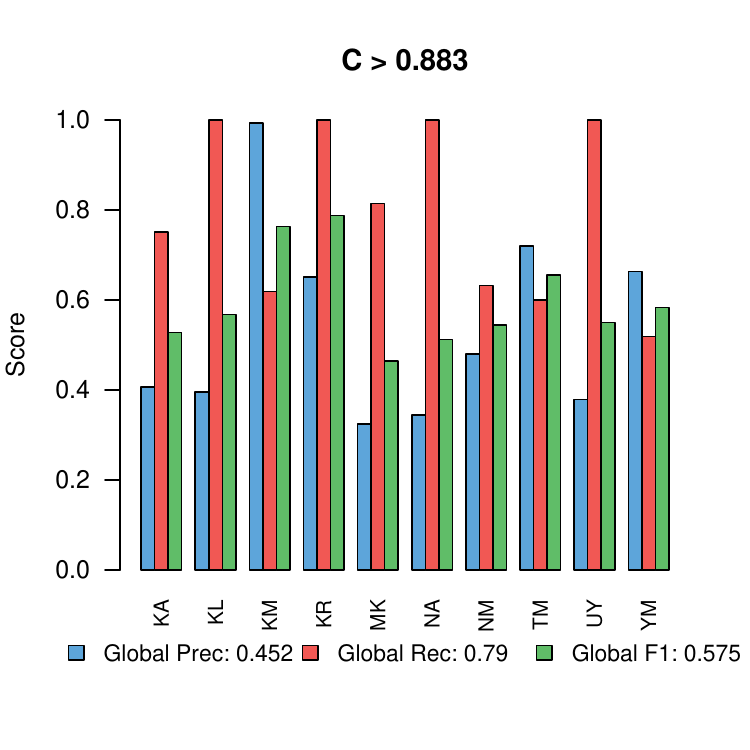}
        \caption{Valley $\bar C$}
        \label{fig:C.MEAN.valley}
    \end{subfigure}
    
    \caption{Accuracy of JAFFE facial recognition under different choices of cutoff $\bar C$. The predictive inference is done using the Horseshoe prior.}
    \label{fig:JAFFE.C.results}
\end{figure}

\subsection{ABIDE Details and Extra Experiments}

\subsubsection{Functional principal component analysis (FPCA)}\label{sec:apdx.FPCA}

Given an autocovariance kernel $K_{jj}(t,t') = \text{Cov}(g_{ij}(t), g_{ij}(t'))$, there exist eigenpairs $\{\lambda_{jm}, \phi_{jm}(\cdot)\}_{m\in\mathbb{N}}$ (Theorem 7.2.6 of \cite{hsing2015theoretical}), where $\{\lambda_{jm}\}_{m\in\mathbb{N}}$ are the eigenvalues, and $\{\phi_{jm}(\cdot)\}_{m\in\mathbb N}$ are orthonormal eigenfunctions.
Since the covariance kernel is symmetric and positive semidefinite, we assume $\lambda_{j1} \geq \lambda_{j2} \geq \cdots \geq 0$ without loss of generality.
According to the Karhunen-Lo\`eve Theorem (Theorem 1.5 of \cite{bosq2000linear}), we have the following infinite-dimensional representation,
\[
g_{ij}(\cdot) = \sum_{m=1}^\infty a_{ijm} \phi_{jm}(\cdot),
\]
where the coefficients (i.e. FPC scores) are
\[
a_{ijm} = \langle g_{ij}, \phi_{jm}\rangle = \int g_{ij}(t) \phi_{jm}(t) \diff t,
\]
which follows a Gaussian distribution with a variance of $\lambda_{jm}$,
with $a_{ijm}$ independent of $a_{ijm'}$ for any $m\neq m'$.
The eigenfunctions $\{\phi_{jm}(\cdot)\}_{m\in\mathbb N}$ form a basis that we refer to as the FPCA basis.

In practice, since the true underlying covariance kernel $K_{jj}$ is unknown, we may first estimate the empirical version $\hat K_{jj}(t,t') = n^{-1}\sum_{i=1}^n g_{ij}(t) g_{ij}(t')$ over $n$ patients, and subsequently obtain the estimated eigenpairs $\{\hat\lambda_{jm}, \hat\phi_{jm}(\cdot)\}_{m\in\mathbb{N}}$.
We may choose only the first $M$ basis functions and estimate FPC scores $\hat a_{ijm} = \langle g_{ij}, \hat\phi_{jm}\rangle$ for $1\leq m\leq M$.
The FPC scores, essentially projection scores from a dimension reduction procedure, will then inherit the correlation between brain areas.
What is important to our analysis, though, is that the FPC score vectors within the same brain area can be considered mutually independent, satisfying Gaussian sequence model.

\subsubsection{Benjamini--Yekutieli (BY) correction}\label{sec:apdx.BY}

To account for multiple testing issue across 54 pairs of ROIs, it is necessary to control false discovery rate (FDR).
Considering the dense multicollinearity and complex dependency structure for time series signals across the brain, we apply the Benjamini--Yekutieli (BY) procedure \citep{benjamini2001control} for FDR control.
The BY procedure introduces a penalty factor defined by the harmonic number $c_K = \sum_{k=1}^K \frac{1}{k}$.
For $K=54$ tests, this penalty factor is $c_{54} \approx 4.56$.
Let $p_{(1)} \leq p_{(2)} \leq \dots \leq p_{(K)}$ denote the ordered raw $p$-values obtained from the Wilcoxon rank-sum tests, and let $H_{(1)}, H_{(2)}, \dots, H_{(K)}$ be the corresponding null hypotheses.
The BY step-up procedure rejects all hypotheses $H_{(i)}$ for $i = 1, \dots, k$, where $k$ is the largest integer such that $p_{(k)} \leq \frac{k}{m \cdot c_m} \alpha$.
To present these results continuously rather than relying on a binary decision threshold at a fixed $\alpha$, we compute the BY-adjusted $p$-values. The adjusted $p$-value represents the smallest FDR level $\alpha$ at which a given hypothesis would be rejected.
For the $i$-th ordered test, the nominal adjusted value is $p_{(i)} \cdot \frac{K \cdot c_K}{i}$.

To ensure the adjusted $p$-values remain monotonically increasing (i.e., if a hypothesis with a larger raw $p$-value is rejected, all hypotheses with smaller raw $p$-values must also be rejected), we apply a step-down minimum constraint.
The final BY-adjusted $p$-value for the $i$-th ordered test is calculated as:
\begin{equation}
p_{\text{BY}}^{(i)} = \min_{j \ge i} \left\{ \min\left(1, p_{(j)} \cdot \frac{K \cdot c_K}{j}\right) \right\}.
\end{equation}
These $p_{\text{BY}}$ values are reported in our results, and regional symmetry changes are deemed significant if $p_{\text{BY}} < 0.05$.

\begin{table}[htbp]
\centering
\begin{singlespacing}
\renewcommand{\arraystretch}{1.2}
\caption{The 54 symmetric pairs of brain regions and Wilcoxon rank-sum test results based on energy scores.
The alternative hypothesis is that the distribution of energy scores differs between the ASD and Control groups.
Bold values indicate a significant change in symmetry pattern after Benjamini--Yekutieli procedure.
\emph{Hyper} indicates increased symmetry and strengthened interconnectivity.
\emph{Hypo} indicates decreased symmetry and weakened interconnectivity.
}
\label{tab:ABIDE.wilcoxon}
\small
\setlength{\tabcolsep}{2pt}
\begin{tabular}{clccc|clccc}
\toprule
\textbf{$k$} & \textbf{Region Pair} & {$p_{\text{raw}}$} & {$p_{\text{BY}}$} & \textbf{Status} & \textbf{$k$} & \textbf{Region Pair} & {$p_{\text{raw}}$} & {$p_{\text{BY}}$} & \textbf{Status} \\
\midrule
1 & Precentral & 0.290 & 1.000 & - & 28 & Fusiform & 0.818 & 1.000 & - \\
2 & Frontal\_Sup & 0.797 & 1.000 & - & 29 & Postcentral & 0.374 & 1.000 & - \\
3 & Frontal\_Sup\_Orb & 0.012 & 0.190 & - & 30 & Parietal\_Sup & 0.003 & 0.066 & - \\
4 & Frontal\_Mid & 0.029 & 0.343 & - & 31 & Parietal\_Inf & 0.046 & 0.515 & - \\
5 & Frontal\_Mid\_Orb & 0.066 & 0.653 & - & 32 & SupraMarginal & \textbf{0.001} & \textbf{0.027} & Hyper \\
6 & Frontal\_Inf\_Oper & 0.060 & 0.617 & - & 33 & Angular & 0.015 & 0.210 & - \\
7 & Frontal\_Inf\_Tri & \textbf{0.001} & \textbf{0.027} & Hypo & 34 & Precuneus & 0.845 & 1.000 & - \\
8 & Frontal\_Inf\_Orb & 0.329 & 1.000 & - & 35 & Paracentral\_Lob & 0.013 & 0.194 & - \\
9 & Rolandic\_Oper & \textbf{0.000} & \textbf{0.004} & Hypo & 36 & Caudate & 0.354 & 1.000 & - \\
10 & Supp\_Motor & 0.057 & 0.612 & - & 37 & Putamen & 0.941 & 1.000 & - \\
11 & Olfactory & 0.974 & 1.000 & - & 38 & Pallidum & 0.006 & 0.101 & - \\
12 & Frontal\_Sup\_Med & \textbf{0.000} & \textbf{0.000} & Hypo & 39 & Thalamus & 0.120 & 1.000 & - \\
13 & Frontal\_Med\_Orb & 0.974 & 1.000 & - & 40 & Heschl & \textbf{0.001} & \textbf{0.027} & Hyper \\
14 & Rectus & 0.004 & 0.079 & - & 41 & Temporal\_Sup & 0.855 & 1.000 & - \\
15 & Insula & 0.625 & 1.000 & - & 42 & Temporal\_Pole\_Sup & 0.811 & 1.000 & - \\
16 & Cingulum\_Ant & 0.436 & 1.000 & - & 43 & Temporal\_Mid & 0.174 & 1.000 & - \\
17 & Cingulum\_Mid & 0.264 & 1.000 & - & 44 & Temporal\_Pole\_Mid & 0.951 & 1.000 & - \\
18 & Cingulum\_Post & 0.597 & 1.000 & - & 45 & Temporal\_Inf & 0.441 & 1.000 & - \\
19 & Hippocampus & 0.212 & 1.000 & - & 46 & Cerebellum\_Crus1 & 0.005 & 0.097 & - \\
20 & ParaHippocampal & 0.766 & 1.000 & - & 47 & Cerebellum\_Crus2 & 0.934 & 1.000 & - \\
21 & Amygdala & \textbf{0.000} & \textbf{0.004} & Hypo & 48 & Cerebellum\_3 & 0.005 & 0.097 & - \\
22 & Calcarine & \textbf{0.000} & \textbf{0.000} & Hyper & 49 & Cerebellum\_4\_5 & \textbf{0.000} & \textbf{0.000} & Hyper \\
23 & Cuneus & 0.020 & 0.255 & - & 50 & Cerebellum\_6 & 0.024 & 0.290 & - \\
24 & Lingual & \textbf{0.000} & \textbf{0.002} & Hypo & 51 & Cerebellum\_7b & 0.120 & 1.000 & - \\
25 & Occipital\_Sup & 0.491 & 1.000 & - & 52 & Cerebellum\_8 & 0.539 & 1.000 & - \\
26 & Occipital\_Mid & 0.843 & 1.000 & - & 53 & Cerebellum\_9 & \textbf{0.001} & \textbf{0.027} & Hypo \\
27 & Occipital\_Inf & 0.348 & 1.000 & - & 54 & Cerebellum\_10 & 0.350 & 1.000 & - \\
\bottomrule
\end{tabular}
\end{singlespacing}
\end{table}

\subsubsection{Rank-based predictive score}

Apart from the energy score in \eqref{eq:ABIDE.energy.score},
the proximity between two ROIs can also be measured by the rank-based predictive score,
\begin{equation}\label{eq:ABIDE.P.score}
P^i_{j_1,j_2} = \frac{1}{N} \sum_{l=1}^N  \one{\|\bm y_{i,j_2} - \bar{\bm y}_{i,j_1} \|_2 < \|\hat{\bm y}_{i,j_1}^{(l)} - \bar{\bm y}_{i,j_1}\|_2},
\end{equation}
where $\bar{\bm y}_{i,j_1} = N^{-1}\sum_{l=1}^N \hat{\bm y}_{i,j_1}^{(l)}$.
Again, $P^i_{j_1,j_2} \in [0,1]$.
For a test for ROI pair $k$ that contains regions $j_1$ and $j_2$, we consider the aggregated rank-based predictive score $\tilde P^i_{j_1,j_2} = (P^i_{j_1, j_2} + P^i_{j_2, j_1})/2$.
We compare the distribution of $\{\tilde P^i_{j_1,j_2}\}_{i\in \mathcal I_{\text{ASD}}}$ versus that of $\{\tilde P^i_{j_1,j_2}\}_{i\in \mathcal I_{\text{Ctrl}}}$ with Wilcoxon rank-sum test.
A boxplot is given in Figure \ref{fig:ABIDE.P.boxplot}.
A visualized 3D brain image of significantly hypo- or hyper-symmetric areas is given in Figure \ref{fig:ABIDE.P.brain.image}.
Table \ref{tab:ABIDE.wilcoxon.P} provides the p-values in the two-sided Wilcoxon rank-sum tests.

\begin{figure}[p]
    \centering
    
    \includegraphics[width=0.95\linewidth]{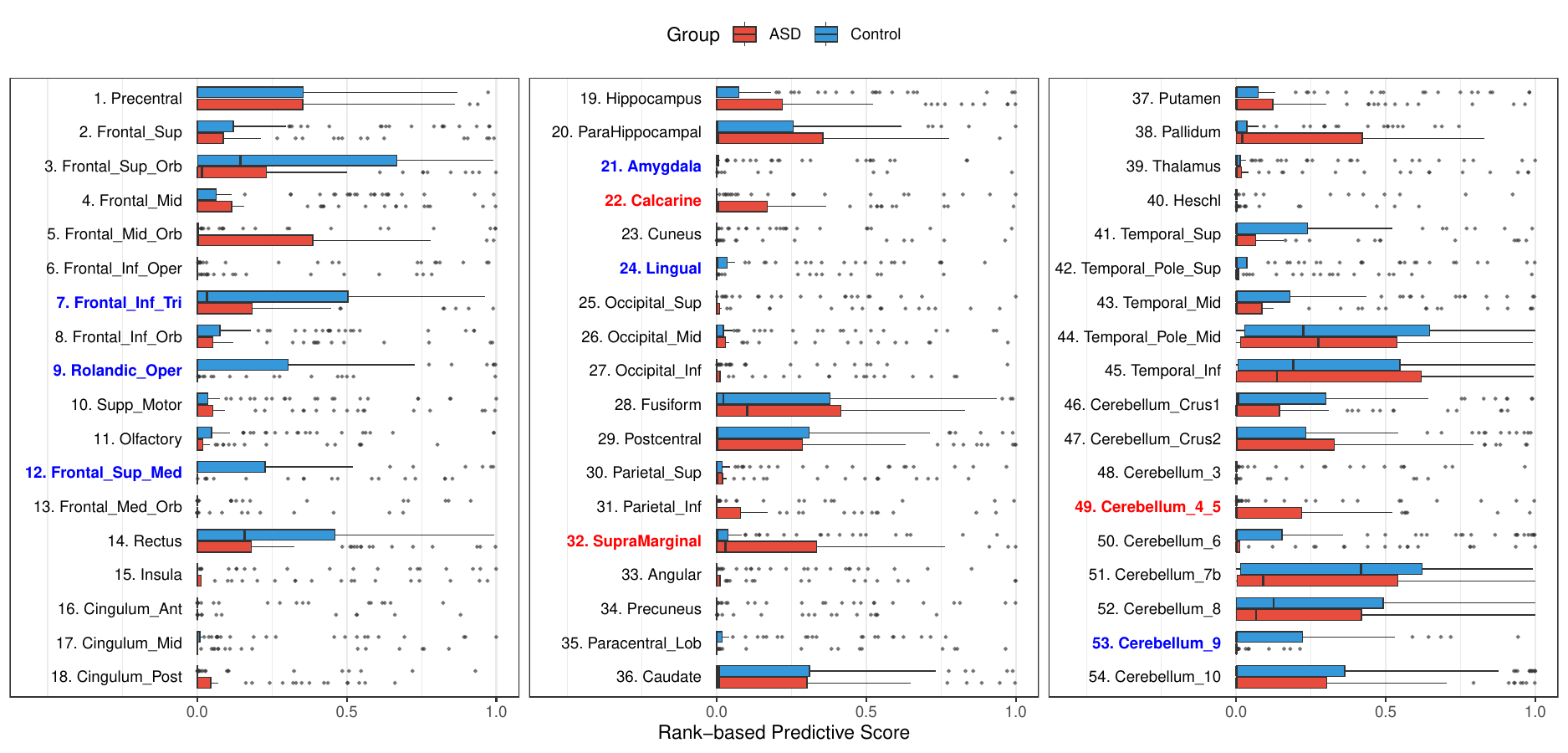}
    \caption{Distribution of rank-based predictive scores across 54 brain regions.
    Boxplots display the distribution of these scores for the ASD group (red) and control group (blue).
    Region labels are color-coded based on significant group differences in Wilcoxon rank-sum test, subject to Benjamini--Yekutieli correction.
    Hypo-symmetric (i.e., ASD less symmetric than control) regions are marked in blue, while hyper-symmetric (i.e., ASD more symmetric than control) regions are in red.}
    \label{fig:ABIDE.P.boxplot}
    
    \vspace{1cm} 
    
    \includegraphics[width=0.45\linewidth]{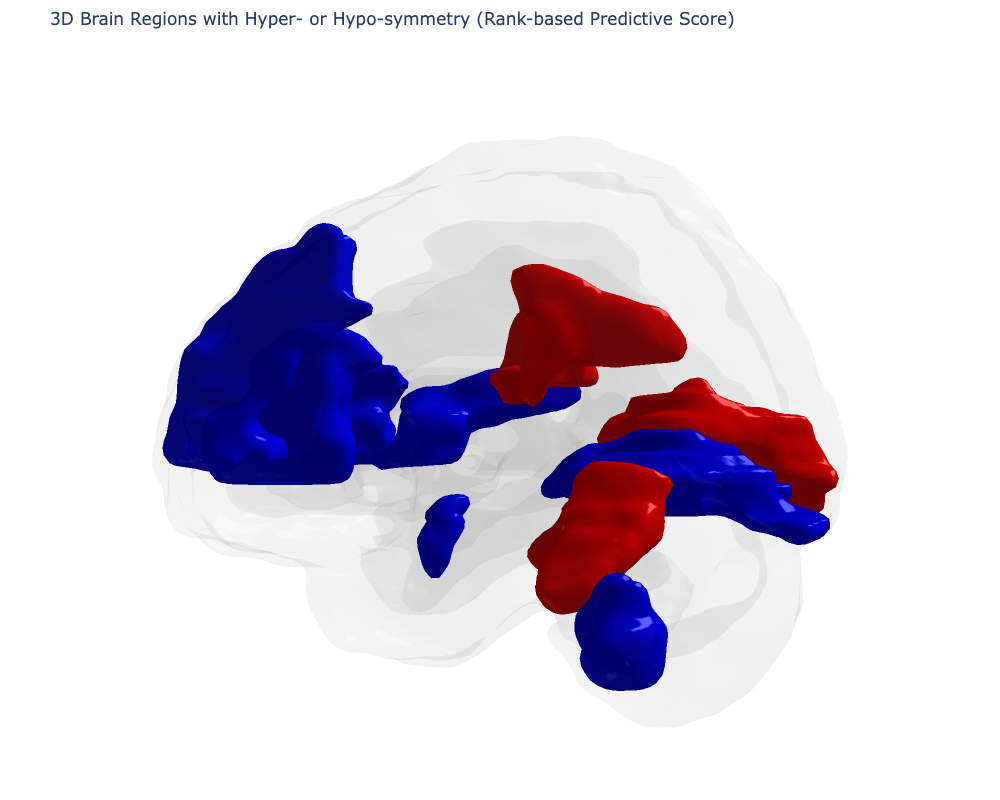}
    \includegraphics[width=0.45\linewidth]{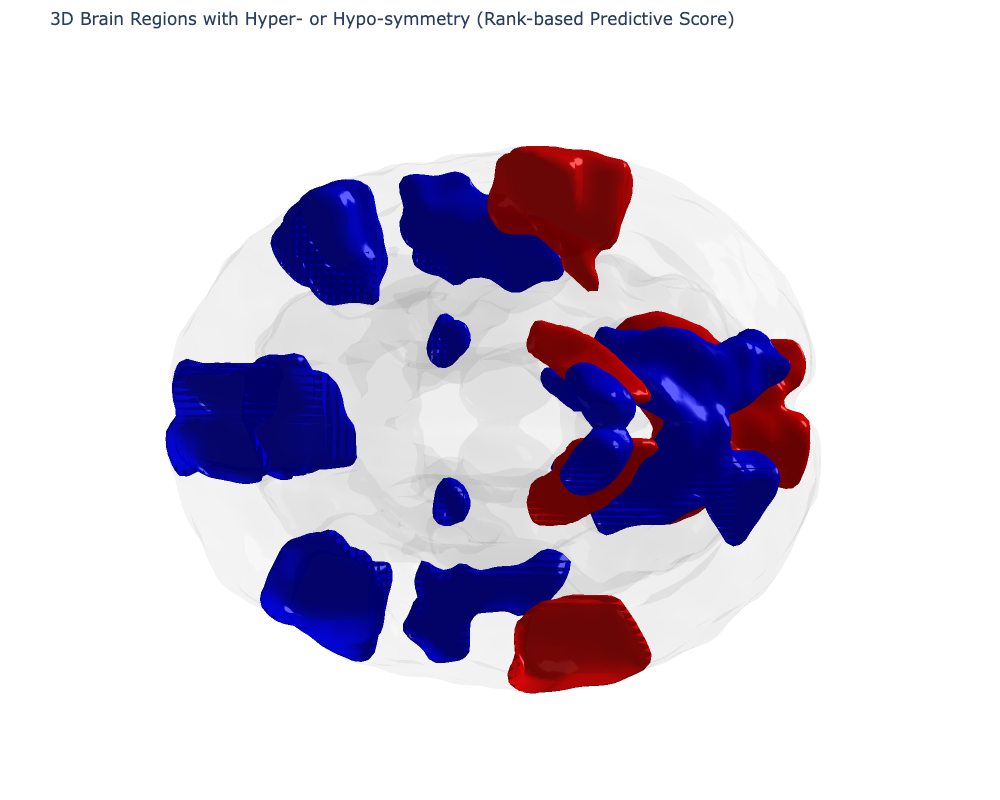}
    \caption{Anatomical topography of brain regions with significant discrepancy between ASD and control groups, based on rank-based predictive score.
    Hypo-symmetric regions are marked in blue, while hyper-symmetric regions are in red.
    The first image shows the view from the left, while the second image shows the view from above.
    In both images, the front of the brain is on the left, and the rear is on the right.}
    \label{fig:ABIDE.P.brain.image}
    
\end{figure}

\begin{table}[htbp]
\centering
\begin{singlespacing}
\renewcommand{\arraystretch}{1.2}
\caption{The 54 symmetric pairs of brain regions and Wilcoxon rank-sum test results based on rank-based predictive scores.
The alternative hypothesis is that the distribution of rank-based predictive scores differs between the ASD and Control groups.
Bold values indicate a significant change in symmetry pattern after Benjamini--Yekutieli procedure.
\emph{Hyper} indicates increased symmetry and strengthened interconnectivity.
\emph{Hypo} indicates decreased symmetry and weakened interconnectivity.
}
\label{tab:ABIDE.wilcoxon.P}
\small
\setlength{\tabcolsep}{2pt}
\begin{tabular}{clccc|clccc}
\toprule
\textbf{$k$} & \textbf{Region Pair} & {$p_{\text{raw}}$} & {$p_{\text{BY}}$} & \textbf{Status} & \textbf{$k$} & \textbf{Region Pair} & {$p_{\text{raw}}$} & {$p_{\text{BY}}$} & \textbf{Status} \\
\midrule
1 & Precentral & 0.520 & 1.000 & - & 28 & Fusiform & 0.787 & 1.000 & - \\
2 & Frontal\_Sup & 0.734 & 1.000 & - & 29 & Postcentral & 0.534 & 1.000 & - \\
3 & Frontal\_Sup\_Orb & 0.010 & 0.187 & - & 30 & Parietal\_Sup & 0.007 & 0.153 & - \\
4 & Frontal\_Mid & 0.042 & 0.611 & - & 31 & Parietal\_Inf & 0.050 & 0.649 & - \\
5 & Frontal\_Mid\_Orb & 0.045 & 0.611 & - & 32 & SupraMarginal & \textbf{0.002} & \textbf{0.047} & Hyper \\
6 & Frontal\_Inf\_Oper & 0.130 & 1.000 & - & 33 & Angular & 0.014 & 0.244 & - \\
7 & Frontal\_Inf\_Tri & \textbf{0.000} & \textbf{0.013} & Hypo & 34 & Precuneus & 0.516 & 1.000 & - \\
8 & Frontal\_Inf\_Orb & 0.624 & 1.000 & - & 35 & Paracentral\_Lob & 0.097 & 1.000 & - \\
9 & Rolandic\_Oper & \textbf{0.000} & \textbf{0.002} & Hypo & 36 & Caudate & 0.312 & 1.000 & - \\
10 & Supp\_Motor & 0.355 & 1.000 & - & 37 & Putamen & 0.711 & 1.000 & - \\
11 & Olfactory & 0.804 & 1.000 & - & 38 & Pallidum & 0.004 & 0.102 & - \\
12 & Frontal\_Sup\_Med & \textbf{0.001} & \textbf{0.039} & Hypo & 39 & Thalamus & 0.645 & 1.000 & - \\
13 & Frontal\_Med\_Orb & 0.807 & 1.000 & - & 40 & Heschl & 0.062 & 0.740 & - \\
14 & Rectus & 0.009 & 0.179 & - & 41 & Temporal\_Sup & 0.656 & 1.000 & - \\
15 & Insula & 0.705 & 1.000 & - & 42 & Temporal\_Pole\_Sup & 0.926 & 1.000 & - \\
16 & Cingulum\_Ant & 0.760 & 1.000 & - & 43 & Temporal\_Mid & 0.305 & 1.000 & - \\
17 & Cingulum\_Mid & 0.555 & 1.000 & - & 44 & Temporal\_Pole\_Mid & 0.738 & 1.000 & - \\
18 & Cingulum\_Post & 0.383 & 1.000 & - & 45 & Temporal\_Inf & 0.434 & 1.000 & - \\
19 & Hippocampus & 0.666 & 1.000 & - & 46 & Cerebellum\_Crus1 & 0.024 & 0.402 & - \\
20 & ParaHippocampal & 0.640 & 1.000 & - & 47 & Cerebellum\_Crus2 & 0.892 & 1.000 & - \\
21 & Amygdala & \textbf{0.000} & \textbf{0.005} & Hypo & 48 & Cerebellum\_3 & 0.042 & 0.611 & - \\
22 & Calcarine & \textbf{0.000} & \textbf{0.001} & Hyper & 49 & Cerebellum\_4\_5 & \textbf{0.000} & \textbf{0.018} & Hyper \\
23 & Cuneus & 0.085 & 0.952 & - & 50 & Cerebellum\_6 & 0.268 & 1.000 & - \\
24 & Lingual & \textbf{0.000} & \textbf{0.003} & Hypo & 51 & Cerebellum\_7b & 0.063 & 0.740 & - \\
25 & Occipital\_Sup & 0.201 & 1.000 & - & 52 & Cerebellum\_8 & 0.534 & 1.000 & - \\
26 & Occipital\_Mid & 0.937 & 1.000 & - & 53 & Cerebellum\_9 & \textbf{0.001} & \textbf{0.021} & Hypo \\
27 & Occipital\_Inf & 0.809 & 1.000 & - & 54 & Cerebellum\_10 & 0.581 & 1.000 & - \\
\bottomrule
\end{tabular}
\end{singlespacing}
\end{table}

\subsubsection{Predictive coverage rate}

We can also use the entrywise coverage rate in the predictive inference as the metric. 
\begin{equation}\label{eq:ABIDE.C.score}
    C^i_{j_1, j_2} = \frac{1}{n}\sum_{v=1}^n\one{q_{\alpha/2}(\hat y_{i,j_1,v}) < y_{i,j_2, v} < q_{1-\alpha/2}(\hat y_{i,j_1,v})}
\end{equation}
Here, $\alpha = 0.1$ is the significance level, $q_{\alpha/2}(\hat y_{i,j_1,v})$ and $q_{1-\alpha/2}(\hat y_{i,j_1,v})$ are the lower and upper quantiles of the marginal predictive distributions based on $\bm y_{i,j_1}$.
Again, $C^i_{j_1,j_2} \in [0,1]$.
For a test for ROI pair $k$ that contains regions $j_1$ and $j_2$, we consider the aggregated convergence rate $\tilde C^i_{j_1,j_2} = (C^i_{j_1, j_2} + C^i_{j_2, j_1})/2$.
We compare the distribution of $\{\tilde C^i_{j_1,j_2}\}_{i\in \mathcal I_{\text{ASD}}}$ versus that of $\{\tilde C^i_{j_1,j_2}\}_{i\in \mathcal I_{\text{Ctrl}}}$ with Wilcoxon rank-sum test.
A boxplot is given in Figure \ref{fig:ABIDE.C.boxplot}.
A visualized 3D brain image of significantly hypo- or hyper-symmetric areas is given in Figure \ref{fig:ABIDE.C.brain.image}.
Table \ref{tab:ABIDE.wilcoxon.C} provides the p-values in the two-sided Wilcoxon rank-sum tests.

\begin{figure}[p]
    \centering
    
    \includegraphics[width=0.95\linewidth]{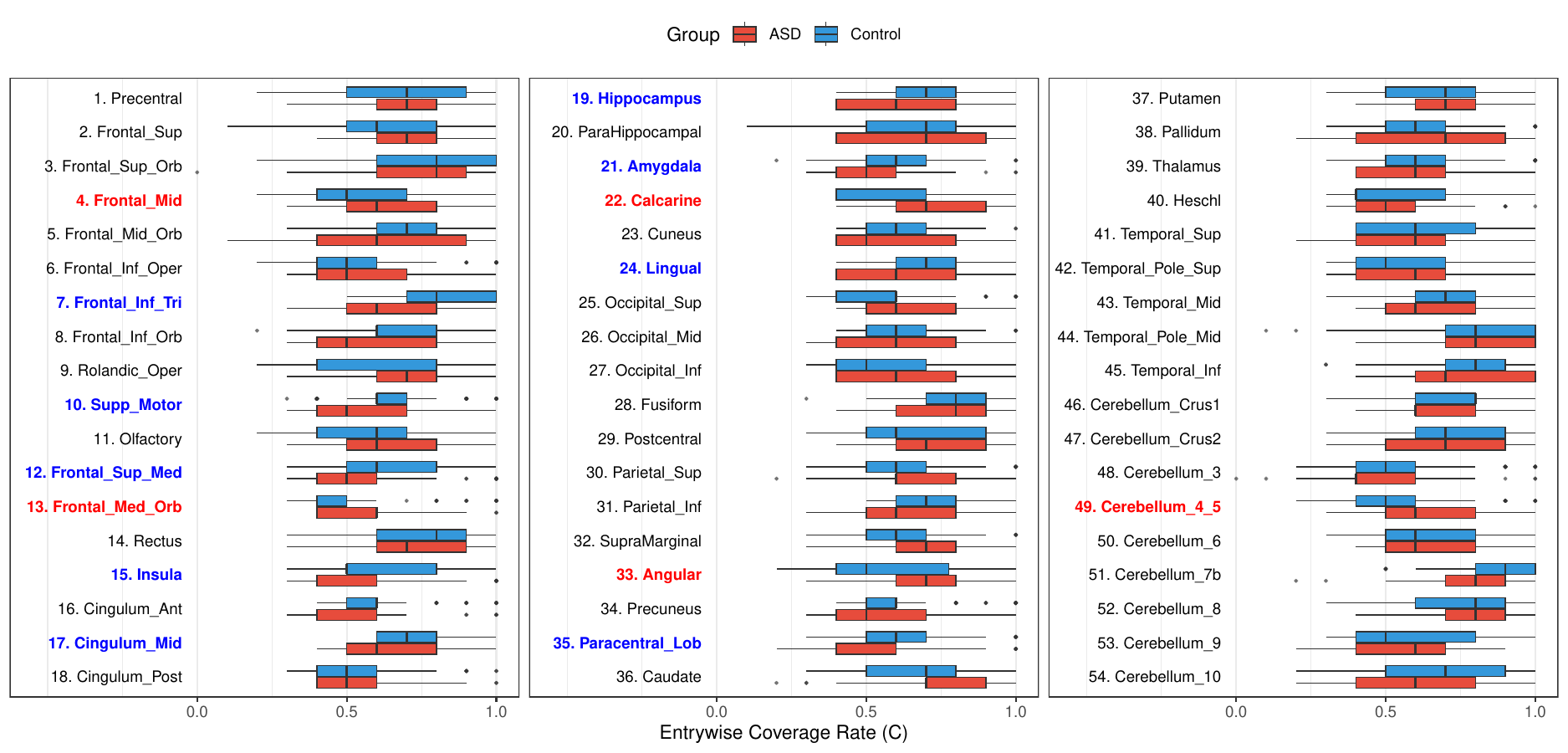}
    \caption{Distribution of coverage rates across 54 brain regions.
    Boxplots display the distribution of these scores for the ASD group (red) and control group (blue).
    Region labels are color-coded based on significant group differences in Wilcoxon rank-sum test, subject to Benjamini--Yekutieli correction.
    Hypo-symmetric (i.e., ASD less symmetric than control) regions are marked in blue, while hyper-symmetric (i.e., ASD more symmetric than control) regions are in red.}
    \label{fig:ABIDE.C.boxplot}
    
    \vspace{1cm} 
    
    \includegraphics[width=0.45\linewidth]{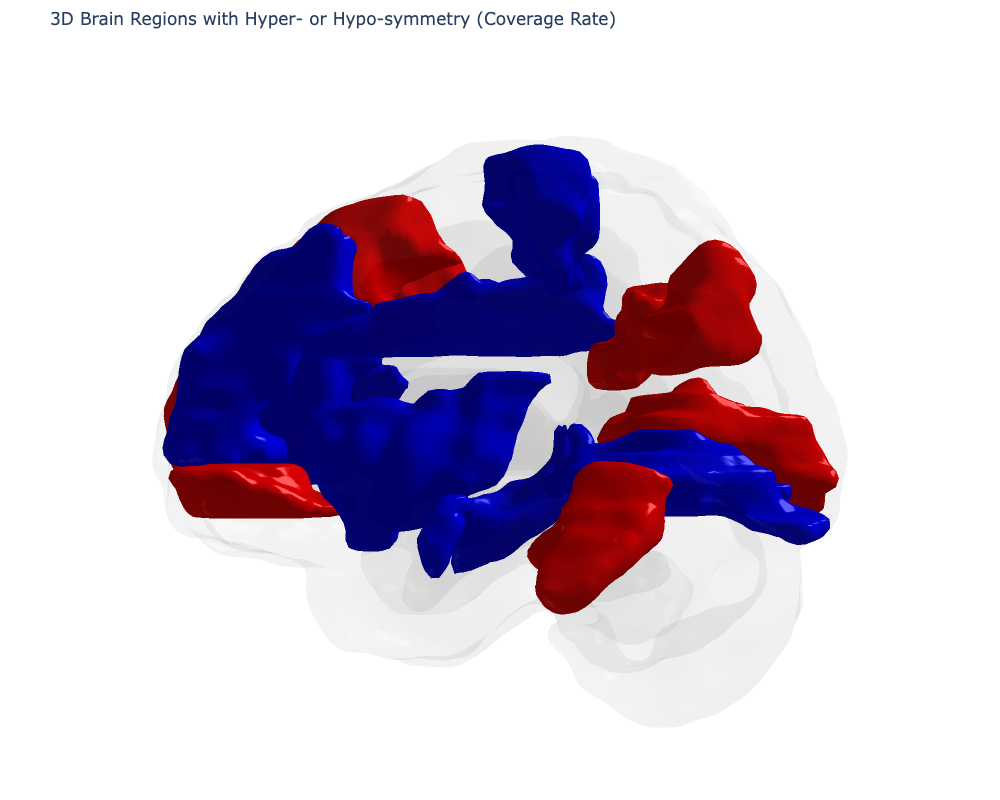}
    \includegraphics[width=0.45\linewidth]{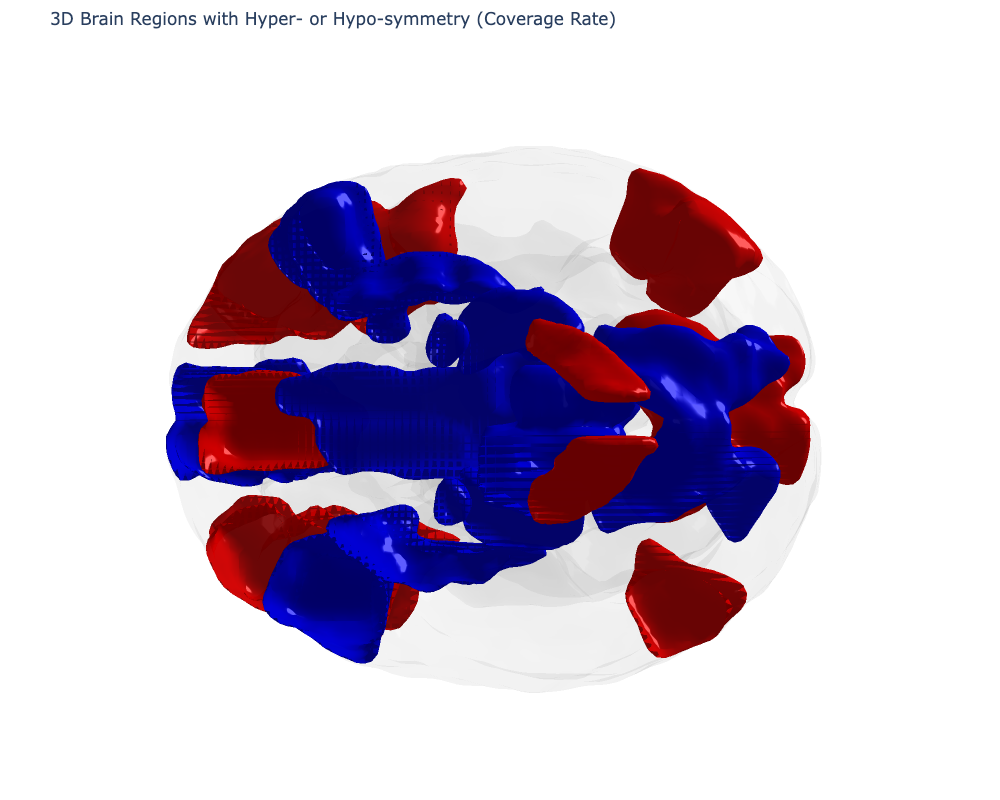}
    \caption{Anatomical topography of brain regions with significant discrepancy between ASD and control groups, based on coverage rate.
    Hypo-symmetric regions are marked in blue, while hyper-symmetric regions are in red.
    The first image shows the view from the left, while the second image shows the view from above.
    In both images, the front of the brain is on the left, and the rear is on the right.}
    \label{fig:ABIDE.C.brain.image}
    
\end{figure}

\begin{table}[htbp]
\centering
\begin{singlespacing}
\renewcommand{\arraystretch}{1.2}
\caption{The 54 symmetric pairs of brain regions and Wilcoxon rank-sum test results based on coverage rates.
The alternative hypothesis is that the distribution of coverage rates differs between the ASD and Control groups.
Bold values indicate a significant change in symmetry pattern after Benjamini--Yekutieli procedure.
\emph{Hyper} indicates increased symmetry and strengthened interconnectivity.
\emph{Hypo} indicates decreased symmetry and weakened interconnectivity.
}
\label{tab:ABIDE.wilcoxon.C}
\small
\setlength{\tabcolsep}{2pt}
\begin{tabular}{clccc|clccc}
\toprule
\textbf{$k$} & \textbf{Region Pair} & {$p_{\text{raw}}$} & {$p_{\text{BY}}$} & \textbf{Status} & \textbf{$k$} & \textbf{Region Pair} & {$p_{\text{raw}}$} & {$p_{\text{BY}}$} & \textbf{Status} \\
\midrule
1 & Precentral & 0.473 & 1.000 & - & 28 & Fusiform & 0.234 & 1.000 & - \\
2 & Frontal\_Sup & 0.156 & 1.000 & - & 29 & Postcentral & 0.014 & 0.205 & - \\
3 & Frontal\_Sup\_Orb & 0.203 & 1.000 & - & 30 & Parietal\_Sup & 0.029 & 0.330 & - \\
4 & Frontal\_Mid & \textbf{0.002} & \textbf{0.039} & Hyper & 31 & Parietal\_Inf & 0.075 & 0.712 & - \\
5 & Frontal\_Mid\_Orb & 0.417 & 1.000 & - & 32 & SupraMarginal & 0.025 & 0.289 & - \\
6 & Frontal\_Inf\_Oper & 0.237 & 1.000 & - & 33 & Angular & \textbf{0.001} & \textbf{0.016} & Hyper \\
7 & Frontal\_Inf\_Tri & \textbf{0.000} & \textbf{0.000} & Hypo & 34 & Precuneus & 0.047 & 0.484 & - \\
8 & Frontal\_Inf\_Orb & 0.020 & 0.264 & - & 35 & Paracentral\_Lob & \textbf{0.000} & \textbf{0.001} & Hypo \\
9 & Rolandic\_Oper & 0.849 & 1.000 & - & 36 & Caudate & 0.079 & 0.720 & - \\
10 & Supp\_Motor & \textbf{0.000} & \textbf{0.008} & Hypo & 37 & Putamen & 0.830 & 1.000 & - \\
11 & Olfactory & 0.017 & 0.228 & - & 38 & Pallidum & 0.088 & 0.780 & - \\
12 & Frontal\_Sup\_Med & \textbf{0.000} & \textbf{0.009} & Hypo & 39 & Thalamus & 0.714 & 1.000 & - \\
13 & Frontal\_Med\_Orb & \textbf{0.000} & \textbf{0.001} & Hyper & 40 & Heschl & 0.548 & 1.000 & - \\
14 & Rectus & 0.052 & 0.509 & - & 41 & Temporal\_Sup & 0.768 & 1.000 & - \\
15 & Insula & \textbf{0.000} & \textbf{0.001} & Hypo & 42 & Temporal\_Pole\_Sup & 0.503 & 1.000 & - \\
16 & Cingulum\_Ant & 0.004 & 0.060 & - & 43 & Temporal\_Mid & 0.205 & 1.000 & - \\
17 & Cingulum\_Mid & \textbf{0.001} & \textbf{0.015} & Hypo & 44 & Temporal\_Pole\_Mid & 0.751 & 1.000 & - \\
18 & Cingulum\_Post & 0.749 & 1.000 & - & 45 & Temporal\_Inf & 0.281 & 1.000 & - \\
19 & Hippocampus & \textbf{0.001} & \textbf{0.012} & Hypo & 46 & Cerebellum\_Crus1 & 0.023 & 0.278 & - \\
20 & ParaHippocampal & 1.000 & 1.000 & - & 47 & Cerebellum\_Crus2 & 0.259 & 1.000 & - \\
21 & Amygdala & \textbf{0.000} & \textbf{0.008} & Hypo & 48 & Cerebellum\_3 & 0.206 & 1.000 & - \\
22 & Calcarine & \textbf{0.000} & \textbf{0.001} & Hyper & 49 & Cerebellum\_4\_5 & \textbf{0.000} & \textbf{0.008} & Hyper \\
23 & Cuneus & 0.178 & 1.000 & - & 50 & Cerebellum\_6 & 0.762 & 1.000 & - \\
24 & Lingual & \textbf{0.000} & \textbf{0.009} & Hypo & 51 & Cerebellum\_7b & 0.034 & 0.361 & - \\
25 & Occipital\_Sup & 0.006 & 0.095 & - & 52 & Cerebellum\_8 & 0.232 & 1.000 & - \\
26 & Occipital\_Mid & 0.816 & 1.000 & - & 53 & Cerebellum\_9 & 0.749 & 1.000 & - \\
27 & Occipital\_Inf & 0.323 & 1.000 & - & 54 & Cerebellum\_10 & 0.320 & 1.000 & - \\
\bottomrule
\end{tabular}
\end{singlespacing}
\end{table}

\end{appendix}

\end{document}